\newtheorem{theorem}{Theorem}
\newtheorem{acknowledgement}[theorem]{Acknowledgement}
\newtheorem{claim}[theorem]{Claim}
\newtheorem{corollary}[theorem]{Corollary}
\newtheorem{definition}[theorem]{Definition}
\newtheorem{lemma}[theorem]{Lemma}
\newtheorem{problem}[theorem]{Problem}
\newtheorem{proposition}[theorem]{Proposition}
\newenvironment{proof}[1][Proof]{\noindent\textbf{#1.} }{\ \rule{0.5em}{0.5em}}
\begin{document}

\date{}
\title{The ultrafilter and almost disjointness numbers}
\author{Osvaldo Guzm\'{a}n
\and Damjan Kalajdzievski \thanks{The first author was partially supported by a CONACyT grant A1-S-16164 and PAPIIT grant IN104220. The second author was supported by York University. \newline%
\textit{keywords: }MAD families, ultrafilters, almost disjointness number,
P-points ultrafilter number. \newline\textit{AMS classification: }03E17,
03E35, 03E05}}
\maketitle

\begin{abstract}
We prove that every \textsf{MAD }family can be destroyed by a proper forcing
that preserves $P$-points. With this result, we prove that it is consistent
that $\omega_{1}=\mathfrak{u}<\mathfrak{a,}$ solving a nearly 20 year old
problem of Shelah, and a problem of Brendle. We will also present a simple
proof of a result of Blass and Shelah that the inequality $\mathfrak{u<s}$ is consistent.

\end{abstract}

\section{Introduction}

Ultrafilters and \textsf{MAD} families\footnote{The reader may consult the
next section for the definitions of the undefined concepts used in the
introduction.} play a fundamental role on infinite combinatorics, set
theoretic topology and other branches of mathematics. For this reason, it is
interesting to study the relationship between these two objects. In this paper,
we will focus on the cardinal invariants associated with each of them. The
\emph{ultrafilter number }$\mathfrak{u}$ is defined as the smallest size of a
base of an ultrafilter, and the \emph{almost disjointness number
}$\mathfrak{a}$ is the smallest size of a \textsf{MAD} family. The consistency
of the inequality $\mathfrak{a<u}$ is well known and easy to prove, in fact,
it holds in the Cohen, random, and Silver models, among many others. Proving
the consistency of the inequality $\mathfrak{u<a}$ is much harder and used to
be an open problem for a long time; In fact, it follows by the theorems of
Hru\v{s}\'{a}k, Moore and D\v{z}amonja that the inequality $\mathfrak{u<a}$
can not be obtained by using countable support iteration of proper Borel
partial orders (see theorem 6.6 and theorem 7.2 of \cite{ParametrizedDiamonds}%
). The consistency of $\mathfrak{u<a}$ was finally established by Shelah in
\cite{ShelahTemplates} (see also \cite{MADfamiliesandUltrafilters}) where he
proved the following theorem:

\begin{theorem}
[Shelah]Let $V$ be a model of \textsf{GCH, }$\kappa$ a measurable cardinal and
$\mu,\lambda$ two regular cardinals such that $\kappa<\mu<\lambda.$ There is a
\textsf{c.c.c. }forcing extension of $V$ that satisfies $\mu=\mathfrak{b=d=u}$
and $\lambda=\mathfrak{a=c}.$ In particular, \emph{CON}(\textsf{ZFC }$+$
\textquotedblleft there is a measurable cardinal\textquotedblright) implies
\emph{CON}(\textsf{ZFC }$+$ \textquotedblleft$\mathfrak{u<a}$%
\textquotedblright).
\end{theorem}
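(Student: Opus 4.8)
The plan is to produce the extension by a \emph{template iteration} in the sense of Shelah --- a finite-support iteration indexed not by an ordinal but by a linear order carrying a well-founded family of subsets that prescribe the ``memory'' of each coordinate --- where the combinatorial richness of the index structure is manufactured from the measurability of $\kappa$. First I would fix a normal ultrafilter $U$ on $\kappa$ and, by a recursion of length $\lambda$ exploiting the elementary embeddings $U$ induces (and the strong partition properties that come with them), construct a linear order $L$ with $|L|=\lambda$, a family $\mathcal{I}\subseteq\mathcal{P}(L)$ (the ``ideal'') closed under finite unions and containing enough initial and bounded sets to support the recursion, and a well-founded rank on $\mathcal{I}$. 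The crucial feature I would build in is a large supply of mutually ``indiscernible'' coordinates: for every $A\in\mathcal{I}$ with $|A|<\lambda$ there should be cofinally many $x\in L$ whose memory $\mathrm{mem}(x)\in\mathcal{I}$ is an end-extension of $A$ such that the template restricted to $\mathrm{mem}(x)$, together with the bookkeeping on it, is isomorphic over $A$ to the corresponding structure below any other such copy. Two kinds of coordinate are then singled out by a bookkeeping function of length $\lambda$: a cofinal ``spine'' of order type $\mu$ carrying Hechler forcing (to control $\mathfrak{b}$ and $\mathfrak{d}$), and ``diagonalizing'' coordinates carrying, for a book-kept name $\dot{\mathcal{A}}$ for an almost disjoint family, the $\sigma$-centered poset $\{(s,F):s\in[\omega]^{<\omega},\ F\in[\dot{\mathcal{A}}]^{<\omega}\}$ that adjoins an infinite set almost disjoint from every member of $\dot{\mathcal{A}}$.

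I would then define $\mathbb{P}=\mathbb{P}_L$ by recursion on the rank of $\mathcal{I}$: a condition is a finite partial function $p$ with $\mathrm{dom}(p)\in\mathcal{I}$ such that, for each $x\in\mathrm{dom}(p)$, $p(x)$ is a $\mathbb{P}_{\mathrm{mem}(x)}$-name for a condition in the iterand attached to $x$ (where $\mathbb{P}_{\mathrm{mem}(x)}$ is the template iteration along $\mathrm{mem}(x)$), ordered coordinatewise. Since both iterands are $\sigma$-centered and supports are finite, $\mathbb{P}$ is ccc --- in fact $\sigma$-linked --- and $|\mathbb{P}|=\lambda$, so the bookkeeping can run through all $\mathbb{P}$-names for reals and $\mathfrak{c}=\lambda$ holds in $V^{\mathbb{P}}$. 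For $\mathfrak{b}=\mathfrak{d}=\mu$ I would use the spine: every real lives in $\mathbb{P}_{Z}$ for some $Z\in\mathcal{I}$ with $|Z|\le\aleph_{0}$, and the template is arranged so that some spine coordinate $s$ has $\mathrm{mem}(s)\supseteq Z$, whence the Hechler real at $s$ dominates it; so the $\mu$ spine reals dominate, $\mathfrak{d}\le\mu$. Dually, any fewer than $\mu$ reals live in a single $\mathbb{P}_{Z}$ with $|Z|<\mu$ (as $\mu$ is regular uncountable) and are dominated by one later spine Hechler real, so $\mathfrak{b}\ge\mu$; with $\mathfrak{b}\le\mathfrak{d}$ this pins both to $\mu$.

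The heart of the proof is $\mathfrak{a}=\lambda$. Suppose toward a contradiction that $\dot{\mathcal{A}}=\{\dot{A}_\xi:\xi<\theta\}$ with $\theta<\lambda$ is forced by $\mathbb{P}$ to be MAD. By ccc and finite support each $\dot{A}_\xi$ is (equivalent to) a nice name supported on a countable set of coordinates, so $\dot{\mathcal{A}}$ is supported on some $Y\in\mathcal{I}$ with $|Y|\le\theta+\aleph_{0}<\lambda$, which I may take to underlie a sub-template over which $\dot{\mathcal{A}}$ already names an almost disjoint family. By the indiscernibility built into $L$ there is a coordinate $x\notin Y$ whose memory end-extends $Y$ and which the bookkeeping assigned precisely the diagonalizing poset for (an isomorphic copy of) $\dot{\mathcal{A}}$; the generic real added at $x$ is then forced to be almost disjoint from every realization of every $\dot{A}_\xi$, so $\dot{\mathcal{A}}\cup\{\dot r\}$ is almost disjoint and $\dot{\mathcal{A}}$ is not maximal --- a contradiction. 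Making the template rich enough that this argument goes through for \emph{every} small $\dot{\mathcal{A}}$ regardless of where it is supported, and cleanly transporting ``almost disjoint'' back and forth along the isomorphic copies (the ``isomorphism of names'' bookkeeping), is exactly what the measurable cardinal is needed for, and it is here that I expect the real difficulty: by the Hru\v{s}\'{a}k--Moore--D\v{z}amonja result quoted above, no linear finite- or countable-support iteration of definable forcings can separate $\mathfrak{d}$ from $\mathfrak{a}$, so the non-linear memory structure and the reflection it derives from $U$ are indispensable, and the combinatorics of verifying the template's properties is intricate.

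Finally, $\mathfrak{u}=\mu$. The lower bound is immediate: $\mu=\mathfrak{b}\le\mathfrak{r}\le\mathfrak{u}$. The upper bound cannot come from preservation --- since $V\models\mathsf{GCH}$, every ground-model ultrafilter has character $\aleph_{1}<\mu$ and so (were it to survive) would contradict $\mathfrak{u}\ge\mu$ --- so instead I would dovetail with the main bookkeeping a recursion of length $\mu$ building a $\subseteq$-increasing chain of filter bases, using the $\sigma$-centeredness of the iterands to pass the ultrafilter property from each stage to the next and the template's reflection to arrange that every real of $V^{\mathbb{P}}$ is decided by some member of the chain; its union, of size $\mu$, then generates an ultrafilter, so $\mathfrak{u}\le\mu$. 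These facts together yield a model of $\mu=\mathfrak{b}=\mathfrak{d}=\mathfrak{u}<\lambda=\mathfrak{a}=\mathfrak{c}$; and taking $V$ a model of \textsf{GCH} with a measurable $\kappa$ and, say, $\mu=\kappa^{+}$ and $\lambda=\kappa^{++}$ gives the ``in particular'' clause.
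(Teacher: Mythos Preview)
The paper does not prove this theorem; it is quoted in the introduction as Shelah's result from \cite{ShelahTemplates} (see also \cite{MADfamiliesandUltrafilters}) precisely to motivate the questions the paper goes on to answer. There is therefore no proof in the paper against which to compare your proposal.

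That said, your sketch does capture the broad architecture of Shelah's argument --- a finite-support template iteration with Hechler coordinates along a $\mu$-spine and MAD-destroying coordinates handled by an isomorphism-of-names argument --- but it misidentifies both the iterand at the diagonalizing coordinates and the role of the measurable. The poset you write down, $\{(s,F):s\in[\omega]^{<\omega},\ F\in[\dot{\mathcal{A}}]^{<\omega}\}$, is just the standard $\sigma$-centered forcing that adds a real almost disjoint from $\dot{\mathcal{A}}$; it does destroy $\dot{\mathcal{A}}$, but it gives you no handle on $\mathfrak{u}$, and your separate ``dovetailed recursion of length $\mu$ building filter bases'' cannot be made to cohere with it. In Shelah's actual construction the diagonalizing coordinates carry Mathias forcing $\mathbb{M}(\mathcal{U})$ relative to an ultrafilter $\mathcal{U}$ that \emph{simultaneously} extends the $\mu$-sized base being built and is disjoint from $\mathcal{I}(\dot{\mathcal{A}})$. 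The measurable is used, via an ultrapower, to show that such a $\mathcal{U}$ can always be found in the appropriate intermediate model --- it is not primarily there to create indiscernible copies of subtemplates, as you suggest, but to reflect the existence of the right ultrafilter. Without this, your two pieces (destroy $\dot{\mathcal{A}}$; keep $\mathfrak{u}$ small) pull against each other and the argument does not close.
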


\qquad\ \ \ \ 

This theorem was one of the first results proved using \textquotedblleft
template iterations\textquotedblright, which is a very powerful method that
has been very useful and has continued applications to this day (see for
example, \cite{BrendleTemplates}, \cite{MADfamiliesandUltrafilters},
\cite{amaybeSingular},
\cite{VeraDiego},
\cite{VeraTornquist},
\cite{TemplateIterationsnonDefinable}). In spite of the beauty of this result,
it leaves open the following questions:

\begin{problem}
[Shelah \cite{Shelah666}]Does \emph{CON}(\textsf{ZFC}) imply \emph{CON}%
(\textsf{ZFC }$+$ \textquotedblleft$\mathfrak{u<a}$\textquotedblright)?
\end{problem}

\begin{problem}
[Brendle \cite{MADfamiliesandUltrafilters}]Is it consistent that $\omega
_{1}=\mathfrak{u<a}$?
\end{problem}

\qquad\ \ \ 

In this paper, we will provide a positive answer to both questions, by proving
(without appealing to large cardinals) that every \textsf{MAD }family can be
destroyed by a proper forcing that preserves $P$-points. We will also present
an alternative proof of the consistency of $\mathfrak{u<s},$ which was proved
first by Blass and Shelah in \cite{BlassShelah} (see also \cite{Barty}).

\qquad\ \ \ 

Our motivation comes from the theorems of Shelah that establishes that the
statements \textquotedblleft$\omega_{1}=\mathfrak{b=a}<$ $\mathfrak{s}%
=\omega_{2}$\textquotedblright\ and \textquotedblleft$\omega_{1}%
=\mathfrak{b<a}=$ $\mathfrak{s}=\omega_{2}$\textquotedblright\ are consistent
(see \cite{OnCardinalInvariantsoftheContinuum}, or \cite{ProperandImproper}). After these impressive results, different
models of $\omega_{1}=\mathfrak{b<a}=\omega_{2}$ had been constructed (see
for example \cite{TotallyMAD},
\cite{MadFamiliesSplittingFamiliesandLargeContinuum}, \cite{VeraSteprans} and
\cite{MobandMAD}). In every case, the forcings used add Cohen reals, so no
ultrafilter is preserved.

\qquad\ \ \ 

In order to construct the models of $\mathfrak{b<s}$ and $\mathfrak{b<a},$
Shelah used a creature forcing (see \cite{OnCardinalInvariantsoftheContinuum}, \cite{ProperandImproper} and
\cite{AbrahamHandbook}). In \cite{BrendleRaghavan} Brendle and Raghavan found
a simpler representation of Shelah forcing as a two step iteration, which we
will briefly describe (more details of the forcing will be studied in the next section).

\qquad\ \ \ 

The most natural way to increase the splitting number is to \emph{diagonalize
}an ultrafilter. In order to build a model of $\mathfrak{b<s},$ it is enough
to construct (or force) an ultrafilter that can be diagonalized without adding
dominating reals (even in the iteration). Denote by $\mathbb{F}_{\sigma}$ the
set of all $F_{\sigma}$-filters\footnote{We view filters as subspaces of
$2^{\omega},$ the notion of Borel or $F_{\sigma}$ is taken using the usual
topology on $2^{\omega}$.} on $\omega$. If $\mathcal{F},\mathcal{G}%
\in\mathbb{F}_{\sigma}$ we define $\mathcal{F}\leq\mathcal{G}$ if
$\mathcal{G\subseteq F}.$ It is not hard to see that $\mathbb{F}_{\sigma}$
naturally adds an ultrafilter $\mathcal{U}_{gen}.$ The reader wishing to learn
more about $\mathbb{F}_{\sigma}$ may consult
\cite{ForcingwithFiltersandCompleteCombinatorics} and
\cite{ForcingwithF_sigmaandwithSummableFilters}. It turns out that the forcing
of Shelah is equivalent to the two step iteration $\mathbb{F}_{\sigma}%
\ast\mathbb{M(}\mathcal{\dot{U}}_{gen}),$ where $\mathbb{M(}\mathcal{U}%
_{gen})$ is the Mathias forcing relative to $\mathcal{U}_{gen}$ (see
\cite{BrendleRaghavan}). It can be proved that $\mathbb{M(}\mathcal{U}_{gen})$
does not add dominating reals, even when iterated (see\ \cite{BrendleRaghavan}
or \cite{CanjarFiltersII}). \ \ 

\qquad\qquad\qquad

The method to build a model of $\mathfrak{b<a}$ is similar: Given a
\textsf{MAD} family $\mathcal{A},$ denote by $\mathbb{F}_{\sigma}\left(
\mathcal{A}\right)  $ the set of all $F_{\sigma}$-filters $\mathcal{F}$ such
that $\mathcal{F\cap I}\left(  \mathcal{A}\right)  =\emptyset$ (where
$\mathcal{I}\left(  \mathcal{A}\right)  $ is the ideal generated by
$\mathcal{A}$). Once again, we order $\mathbb{F}_{\sigma}\left(
\mathcal{A}\right)  $ with inclusion. It is easy to see that $\mathbb{F}%
_{\sigma}\left(  \mathcal{A}\right)  $ naturally adds an ultrafilter
$\mathcal{U}_{gen}\left(  \mathcal{A}\right)  ,$ furthermore, diagonalizing
$\mathcal{\dot{U}}_{gen}\left(  \mathcal{A}\right)  $ destroys the maximality
of $\mathcal{A}.$ In this case, it can be proved that $\mathbb{C}_{\omega_{1}%
}\ast\mathbb{F}_{\sigma}\ast\mathbb{M(}\mathcal{\dot{U}}_{gen}\left(
\mathcal{A}\right)  )$ does not add dominating reals, even when iterated
($\mathbb{C}_{\omega_{1}}$ denotes the forcing for adding $\omega_{1}$-Cohen
reals), see \cite{BrendleRaghavan} or \cite{CanjarFiltersII}. We want to point
out that both the forcing of Shelah (\cite{OnCardinalInvariantsoftheContinuum},\cite{ProperandImproper}) and the
forcing used by Brendle in \cite{MobandMAD} require adding Cohen reals in an
explicit way. Our work shows that adding the Cohen reals was in fact not needed.

\qquad\ \ \ \qquad\ \ \ 

We take a similar approach in order to build our model of $\mathfrak{u}$
$\mathfrak{<s}$. We will first force with $\mathbb{F}_{\sigma}$ and then we
will diagonalize $\mathcal{\dot{U}}_{gen}.$ The difference, is that instead of
using Mathias forcing, we will use a variant of Miller forcing. The same
technique will be used to build the model of $\omega_{1}=\mathfrak{u}$
$\mathfrak{<a}.$ Given a \textsf{MAD} family $\mathcal{A},$ we will force with
$\mathbb{F}_{\sigma}\left(  \mathcal{A}\right)  $ and then diagonalize in the
same way as before. In both cases, we will prove that the forcings preserve
all $P$-points of the ground model.

\qquad\ \ \ \ \qquad\ \ \ 

There is a huge body of work regarding the cardinal invariants $\mathfrak{b,s},\mathfrak{d,u}$, and $\mathfrak{a}$;
To provide some more historical context to this work, as was mentioned before, the story began when Shelah
(\cite{ProperandImproper}) constructed models of $\omega_{1}=\mathfrak{b<s}$
and $\omega_{1}=\mathfrak{b<a=s}.$ To achieve this, Shelah used a countable support iteration
of creature forcings. In \cite{MobandMAD}, Brendle used \textsf{c.c.c.}
forcings for constructing models of $\kappa=\mathfrak{b<a=\kappa}^{+}$, where
$\kappa$ is any uncountable regular cardinal, and later in \cite{VeraSteprans} Fischer
and Stepr\={a}ns constructed models of $\kappa=\mathfrak{b<s=\kappa}^{+}$,
where again $\kappa$ can be any uncountable regular cardinal. In
\cite{MadFamiliesSplittingFamiliesandLargeContinuum} Brendle and Fischer used
matrix iterations to prove that for any regular cardinals $\kappa<\lambda,$ it
is consistent that $\kappa=\mathfrak{b=a<s=\lambda}$, and if $\kappa\ $is
bigger than a measurable cardinal, then it is consistent that$\ \kappa
=\mathfrak{b<a=s=\lambda}$. The consistency of
$\omega_{1}<\mathfrak{d<a}$ and $\omega_{1}<\mathfrak{u<a}$ was obtained by
Shelah in \cite{ShelahTemplates} where he developed the technique of forcing
along a template (see also \cite{BrendleTemplates} and
\cite{MADfamiliesandUltrafilters}). 
Finally, in \cite{VeraDiego} Fischer and Mej\'{\i}a
proved that it is consistent that $\omega_{1}<\mathfrak{s<b<a}$ (see also
\cite{MatrixIterationsandCichonDiagram} and \cite{VeraDiegoDiana}).

\qquad\ \ \ \ 

There are still many interesting open questions remaining:

\begin{problem}
[Roitman]\textit{Does }$\mathfrak{d}=\omega_{1}$ \textit{imply that}
$\mathfrak{a}=\omega_{1}?$
\end{problem}

\begin{problem}
[Brendle and Raghavan]\textit{Does }$\mathfrak{b=s}=\omega_{1}$ \textit{imply
that} $\mathfrak{a}=\omega_{1}?$
\end{problem}

\qquad\ \ \ 

Note that a positive solution to the question of Brendle and Raghavan would
provide a positive solution to the problem of Roitman.

\section{Preliminaries and notation\qquad\ \ \ }

Let $f,g\in\omega^{\omega}$, define $f\leq g$ if and only if $f\left(
n\right)  \leq g\left(  n\right)  $ for every $n\in\omega$ and $f\leq^{\ast}g$
if and only if $f\left(  n\right)  \leq g\left(  n\right)  $ holds for all
$n\in\omega$ except finitely many. We say a family $\mathcal{B}\subseteq
\omega^{\omega}$ is \emph{unbounded }if $\mathcal{B}$ is unbounded with
respect to $\leq^{\ast}.$ A family $\mathcal{D}\subseteq\omega^{\omega}$ is a
\emph{dominating family }if for every $f\in\omega^{\omega},$ there is
$g\in\mathcal{D}$ such that $f\leq^{\ast}g.$ The \emph{bounding number
}$\mathfrak{b}$ is the size of the smallest unbounded family and the
\emph{dominating number }$\mathfrak{d}$ is the smallest size of a dominating
family. We say that $S$ \emph{splits }$X$ if $S\cap X$ and $X\setminus S$ are
both infinite. A family $\mathcal{S\subseteq}$ $\left[  \omega\right]
^{\omega}$ is a \emph{splitting family }if for every $X\in\left[
\omega\right]  ^{\omega}$ there is $S\in\mathcal{S}$ such that $S$ splits $X.$
The \emph{splitting number }$\mathfrak{s}$ is the smallest size of a splitting
family. A family $\mathcal{A}\subseteq\left[  \omega\right]  ^{\omega}$ is
\emph{almost disjoint (\textsf{AD}) }if the intersection of any two different
elements of $\mathcal{A}$ is finite, a \textsf{MAD }\emph{family}%
\textsf{\emph{ }}is a\textsf{ }maximal almost disjoint family. By
\textsf{cov}$\left(  \mathcal{M}\right)  $ we denote the smallest size of a
family of meager sets that covers the Baire space. The reader may consult the
surveys \cite{HandbookBlass} for the main properties of the cardinal
invariants used in this paper, \cite{AlmostDisjointFamiliesandTopology} to
learn more about almost disjoint families and
\cite{CombinatoricsofFiltersandIdeals} for a survey on filters and ideals.

\qquad\ \ \ \qquad\ \ \ \qquad\ \ \qquad\ \ 

In this paper, a \emph{tree }is a set of sequences closed under taking
restrictions (i.e. $p$ is a tree if whenever $s\in p$ and $n<\left\vert
s\right\vert $ then $s\upharpoonright n\in p).$ If $s,t\in\omega^{<\omega},$
by $s^{\frown}t$ we denote the concatenation of $s$ and $t$. If $n\in\omega,$
we will often write $s^{\frown}n$ instead of $s^{\frown}\left\langle
n\right\rangle .$ In this paper, we will say that $s\in p$ is a
\emph{splitting node }if $suc_{p}\left(  s\right)  =\left\{  n\mid s^{\frown
}n\in p\right\}  $ is infinite. We say that a splitting node $s\in p$ is the
\emph{stem of }$p$ (denoted by $stem\left(  p\right)  $ in case it exists) if
every predecessor of $s$ has exactly one immediate successor. If $p$ is a
tree, the \emph{set of branches of }$p$ is defined as $\left[  p\right]
=\left\{  x\mid\forall n\left(  x\upharpoonright n\in p\right)  \right\}  .$
For every $s\in p,$ we define the tree $p_{s}=\left\{  t\in p\mid s\subseteq
t\vee t\subseteq s\right\}  .$ Given $s\in\omega^{<\omega}$ define \emph{the
cone of }$s$ as $\left\langle s\right\rangle =\left\{  f\in\omega^{\omega}\mid
s\subseteq f\right\}  .$

\qquad\ \qquad\ \qquad\ \qquad\ \ \qquad\ \qquad\ \qquad\ \qquad\ \ 

Let $\mathcal{I}$ be an ideal on $\omega$, $\mathcal{F}$ a filter on $\omega$
and $\mathcal{A}$ a \textsf{MAD} family. Define\footnote{By $\wp\left(
a\right)  $ we denote the powerset of $a.$} $\mathcal{I}^{+}=\wp\left(
\omega\right)  \backslash\mathcal{I}$ i.e. the subsets of $\omega$ that are
not in $\mathcal{I}.$ We say that a forcing notion $\mathbb{P}$%
\emph{\ destroys }$\mathcal{I}$ if $\mathbb{P}$ adds an infinite subset of
$\omega$ that is almost disjoint with every element of $\mathcal{I}.$ We say
that $\mathbb{P}$ \emph{diagonalizes }$\mathcal{F}$ if $\mathbb{P}$ adds an
infinite set almost contained in every element of $\mathcal{F}.$ It is easy to
see that $\mathbb{P}$ destroys $\mathcal{I}$ if and only if $\mathbb{P}$
diagonalizes the filter $\mathcal{I}^{\ast}=\left\{  \omega\setminus A\mid
A\in\mathcal{I}\right\}  .$ By $\mathcal{I}\left(  \mathcal{A}\right)  $ we
denote the ideal generated by $\mathcal{A}$ (and the finite sets). We say that
$\mathbb{P}$ \emph{destroys }$\mathcal{A}$ if $\mathcal{A}$ is no longer
maximal after forcing with $\mathbb{P}.$ Note that $\mathbb{P}$ destroys
$\mathcal{A}$ if and only if $\mathbb{P}$ destroys the ideal $\mathcal{I}%
\left(  \mathcal{A}\right)  .$ The following result is well known (as well as
easy to prove) and will be frequently used:

\begin{lemma}
Let $\mathcal{A}$ be a \textsf{MAD} family. If $B\subseteq\omega,$ the
following are equivalent:

\begin{enumerate}
\item $B\in\mathcal{I}\left(  \mathcal{A}\right)  ^{+}.$

\item There is $\mathcal{A}_{1}\in\left[  \mathcal{A}\right]  ^{\omega}$ such
that $\left\vert B\cap A\right\vert =\omega$ for every $A\in\mathcal{A}_{1}.$
\end{enumerate}
\end{lemma}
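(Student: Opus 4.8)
The plan is to prove both implications by unwinding the definition of $\mathcal{I}\left(\mathcal{A}\right)$ as the ideal generated by $\mathcal{A}$ together with the finite sets, so that $B\in\mathcal{I}\left(\mathcal{A}\right)$ means precisely that there are finitely many $A_{1},\dots,A_{n}\in\mathcal{A}$ with $B\subseteq^{\ast}A_{1}\cup\cdots\cup A_{n}$. The key structural fact to keep in sight throughout is that, since $\mathcal{A}$ is almost disjoint, $A\cap A_{i}$ is finite whenever $A\in\mathcal{A}\setminus\{A_{i}\}$.

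For the direction $(2)\Rightarrow(1)$ I would argue by contraposition. Assume $B\in\mathcal{I}\left(\mathcal{A}\right)$, fix $A_{1},\dots,A_{n}\in\mathcal{A}$ and a finite set $F$ with $B\subseteq\left(A_{1}\cup\cdots\cup A_{n}\right)\cup F$, and let $A\in\mathcal{A}$ be any element with $A\notin\{A_{1},\dots,A_{n}\}$. Then $B\cap A\subseteq\bigcup_{i\le n}\left(A\cap A_{i}\right)\cup F$, which is a finite union of finite sets, hence finite. Thus the set of $A\in\mathcal{A}$ with $\left\vert B\cap A\right\vert=\omega$ is contained in $\{A_{1},\dots,A_{n}\}$ and in particular is finite, so no infinite $\mathcal{A}_{1}\subseteq\mathcal{A}$ as in (2) can exist.

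For $(1)\Rightarrow(2)$ I would use maximality of $\mathcal{A}$ twice. First, $B\in\mathcal{I}\left(\mathcal{A}\right)^{+}$ forces $B$ to be infinite (every finite set lies in the ideal), so by maximality $B$ has infinite intersection with at least one member of $\mathcal{A}$; let $\mathcal{A}_{1}=\left\{A\in\mathcal{A}\mid\left\vert B\cap A\right\vert=\omega\right\}$, which is therefore nonempty. Suppose toward a contradiction that $\mathcal{A}_{1}=\{A_{1},\dots,A_{n}\}$ is finite, and set $B^{\prime}=B\setminus\left(A_{1}\cup\cdots\cup A_{n}\right)$. Then $B^{\prime}\cap A$ is finite for every $A\in\mathcal{A}$: for $A\in\{A_{1},\dots,A_{n}\}$ this is immediate, and for $A\notin\{A_{1},\dots,A_{n}\}$ it holds because $A\notin\mathcal{A}_{1}$. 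By maximality of $\mathcal{A}$, a subset of $\omega$ almost disjoint from every member of a MAD family must be finite, so $B^{\prime}$ is finite; but then $B\subseteq^{\ast}A_{1}\cup\cdots\cup A_{n}$, i.e. $B\in\mathcal{I}\left(\mathcal{A}\right)$, contradicting (1). Hence $\mathcal{A}_{1}$ is infinite and witnesses (2).

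There is no real obstacle here: the only point requiring a moment of care is handling the finite "error sets" built into the definition of $\mathcal{I}\left(\mathcal{A}\right)$ and of almost disjointness, so that the two uses of maximality (first to get one $A$ with infinite intersection, then to rule out finiteness of $\mathcal{A}_{1}$) go through cleanly.
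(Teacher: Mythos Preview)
Your proof is correct. The paper does not actually supply a proof of this lemma; it simply states that the result is well known and easy, and then moves on. Your argument is the standard one and there is nothing to compare.
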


\qquad\qquad\qquad\ \ \ 

We will need to recall the definition of the Kat\v{e}tov order:

\begin{definition}
Let $A\ $and $B$ be two countable sets,$\ \mathcal{I},\mathcal{J}$ be ideals
on $X$ and $Y$ respectively and $f:Y\longrightarrow X$.

\begin{enumerate}
\item We say that $f$ is a \emph{Kat\v{e}tov morphism from }$\left(
Y,\mathcal{J}\right)  $ \emph{to} $\left(  X,\mathcal{I}\right)  $ if
$f^{-1}\left(  A\right)  \in\mathcal{J}$ for every $A\in\mathcal{I}.$

\item We define $\mathcal{I}\leq_{\mathsf{K}}$ $\mathcal{J}$ ($\mathcal{I}$ is
Kat\v{e}tov smaller that $\mathcal{J}$ or $\mathcal{J}$ is Kat\v{e}tov above
$\mathcal{I}$) if there is a Kat\v{e}tov morphism from $\left(  Y,\mathcal{J}%
\right)  $ to $\left(  X,\mathcal{I}\right)  .$
\end{enumerate}
\end{definition}

\qquad\ \ \qquad\ \qquad\ 

The reader may consult \cite{KatetovOrderonBorelIdeals} for an interesting
survey of the Kat\v{e}tov order on Borel ideals. The Kat\v{e}tov order does
not play a crucial result in this article, but we include it in order to state
some results of Sabok and Zapletal below. The \emph{nowhere dense ideal
}\textsf{nwd }is the ideal on $2^{<\omega}$ where $A\in$ \textsf{nwd }if and
only if for every $s\in2^{<\omega}$ there is $t\in2^{<\omega}$ extending $s$
such that no further extension of $t$ is in $A.$ It is easy to see that
\textsf{nwd }is an ideal. For every $n\in\omega$ we define $C_{n}=\left\{
\left(  n,m\right)  \mid m\in\omega\right\}  $ and if $f:\omega\longrightarrow
\omega$ let $D\left(  f\right)  =\left\{  \left(  n,m\right)  \mid m\leq
f(n)\right\}  .$ The ideal \textsf{fin}$\times$\textsf{fin} is the ideal on
$\omega\times\omega$ generated by $\left\{  C_{n}\mid n\in\omega\right\}
\cup\left\{  D\left(  f\right)  \mid f\in\omega^{\omega}\right\}  .$

\qquad\ \ \qquad\ \ \ 

If $\mathcal{F}$ is a filter on $\omega$ (or on any countable set) we define
the \emph{Mathias forcing }$\mathbb{M}\left(  \mathcal{F}\right)  $ \emph{with
respect to }$\mathcal{F}$ as the set of all pairs $\left(  s,A\right)  $ where
$s\in\left[  \omega\right]  ^{<\omega}$ and $A\in\mathcal{F}$. If $\left(
s,A\right)  ,\left(  t,B\right)  \in\mathbb{M}\left(  \mathcal{F}\right)  $
then $\left(  s,A\right)  \leq\left(  t,B\right)  $ if the following
conditions hold:

\begin{enumerate}
\item $t$ is an initial segment of $s.$

\item $A\subseteq B.$

\item $\left(  s\setminus t\right)  \subseteq B.$\ \qquad\ \ \ 
\end{enumerate}

\qquad\ \ \ \ \ \qquad\qquad\ \ \qquad\ \ 

The \emph{Laver forcing }$\mathbb{L}\left(  \mathcal{F}\right)  $ \emph{with
respect to }$\mathcal{F}$ is the set of all trees $p$ such that $suc_{p}%
\left(  s\right)  \in\mathcal{F}$ for every $s\in p$ extending the stem of
$p.$ We say $p\leq q$ if $p\subseteq q.$

\qquad\ \ \ \ 

While $\mathbb{L}\left(  \mathcal{F}\right)  $ always adds a dominating real,
this may not be the case for $\mathbb{M}\left(  \mathcal{F}\right)  .$ A
trivial example is taking $\mathcal{F}$ to be the cofinite filter in $\omega,$
since in this case $\mathbb{M}\left(  \mathcal{F}\right)  $ is forcing
equivalent to Cohen forcing. A more interesting example was found by Canjar in
\cite{Canjar}, where an ultrafilter whose Mathias forcing does not add
dominating reals was constructed under $\mathfrak{d=c}$ (see also
\cite{CanjarFilters}). We say that a filter $\mathcal{F}$ is \emph{Canjar }if
$\mathbb{M}\left(  \mathcal{F}\right)  $ does not add dominating reals. Let
$\mathcal{F}$ be a filter on $\omega.$ We define the filter $\mathcal{F}%
^{<\omega}$ on $\left[  \omega\right]  ^{<\omega}\setminus\left\{
\emptyset\right\}  $ generated by $\left\{  \left[  A\right]  ^{<\omega
}\setminus\left\{  \emptyset\right\}  \mid A\in\mathcal{F}\right\}  .$ Note
that $X\in\left(  \mathcal{F}^{<\omega}\right)  ^{+}$ if and only if for every
$A\in\mathcal{F}$, there is $s\in X$ such that $s\subseteq A.$ It is important
to emphasize that if $X\in\left(  \mathcal{F}^{<\omega}\right)  ^{+}$ then by
convention $\emptyset\notin X$ (recall that $\mathcal{F}^{<\omega}$ is a
filter on $\left[  \omega\right]  ^{<\omega}\setminus\left\{  \emptyset
\right\}  $). In \cite{MathiasPrikryandLaverPrikryTypeForcing} Hru\v{s}\'{a}k
and Minami showed that the forcing properties of $\mathbb{M}\left(
\mathcal{F}\right)  $ are closely related to the combinatorics of
$\mathcal{F}^{<\omega}.$ They proved the following result:

\begin{proposition}
[\cite{MathiasPrikryandLaverPrikryTypeForcing}]Let $\mathcal{F}$ be a filter
on $\omega.$ The following are equivalent:

\begin{enumerate}
\item $\mathcal{F}$ is Canjar.

\item For every $\left\{  X_{n}\mid n\in\omega\right\}  \subseteq\left(
\mathcal{F}^{<\omega}\right)  ^{+}$ there are $Y_{n}\in\left[  X_{n}\right]
^{<\omega}$ such that $%
{\textstyle\bigcup\limits_{n\in\omega}}
Y_{n}\in\left(  \mathcal{F}^{<\omega}\right)  ^{+}$.
\end{enumerate}
\end{proposition}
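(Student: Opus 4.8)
The plan is to prove the two implications separately: $\neg(2)\Rightarrow\neg(1)$, and $(2)\Rightarrow(1)$; together they give the equivalence. Throughout I use that $\mathcal{F}$ is a free filter, so that the Mathias generic real $\dot{r}$ is forced to be infinite and $\mathcal{F}$ is closed under removing finite sets.

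\textbf{$\neg(2)\Rightarrow\neg(1)$.} Suppose $\{X_{n}\mid n\in\omega\}\subseteq(\mathcal{F}^{<\omega})^{+}$ is such that $\bigcup_{n}Y_{n}\notin(\mathcal{F}^{<\omega})^{+}$ for every choice of $Y_{n}\in[X_{n}]^{<\omega}$. I will produce an $\mathbb{M}(\mathcal{F})$-name $\dot{g}$ for an element of $\omega^{\omega}$ forced to dominate every ground model real, so $\mathcal{F}$ is not Canjar. Using the Mathias generic real $\dot{r}$, define recursively $s_{0}$ to be the least (in a fixed enumeration of $[\omega]^{<\omega}$) member of $X_{0}$ contained in $\dot{r}$, and $s_{n+1}$ the least member of $X_{n+1}$ contained in $\dot{r}$ with $\min(s_{n+1})>\max(s_{n})$; put $\dot{g}(n)=\max(s_{n})$. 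A density argument using that each $X_{n}$ is positive shows that below any condition one can extend the stem so as to absorb the next piece $s_{n}$ (extending the stem by one further point if needed to keep the pieces nonempty); hence $\dot{g}$ is forced total, and since the $s_{n}$ are pairwise disjoint with strictly increasing minima, $\min(s_{n})\geq n$. Now fix $h\in\omega^{\omega}\cap V$ and apply $\neg(2)$ to the choice $Y_{n}=\{u\in X_{n}\mid\max(u)\leq h(n)\}$, which is finite because $[\{0,\dots,h(n)\}]^{<\omega}$ is finite; this yields $A_{h}\in\mathcal{F}$ such that no member of $\bigcup_{n}Y_{n}$ is contained in $A_{h}$, i.e.\ $\max(u)>h(n)$ whenever $u\in X_{n}$ and $u\subseteq A_{h}$. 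Since $\dot{r}\subseteq^{\ast}A_{h}$, there is $k_{0}$ with $\dot{r}\setminus[0,k_{0})\subseteq A_{h}$, and then for $n\geq k_{0}$ we have $s_{n}\subseteq\dot{r}\setminus[0,k_{0})\subseteq A_{h}$ (as $\min(s_{n})\geq n$), whence $\dot{g}(n)=\max(s_{n})>h(n)$. Thus $\dot{g}>^{\ast}h$ is forced for every ground model $h$, as required.

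\textbf{$(2)\Rightarrow(1)$.} Assume $(2)$ and, toward a contradiction, fix a name $\dot{g}$ for an element of $\omega^{\omega}$ and a condition $(s_{0},A_{0})\Vdash$ ``$\dot{g}$ dominates $\omega^{\omega}\cap V$''. The essential point is that one must decide values of $\dot{g}$ using the \emph{entire} current stem, since $\mathbb{M}(\mathcal{F})$ need not have pure decision — a decision made with a short stem is useless once the stem grows, as it constrains the whole initial segment of the generic. So for each finite $a\supseteq s_{0}$ with $a\setminus s_{0}\subseteq A_{0}$ and each $n\in\omega$, let $X^{a}_{n}$ be the set of nonempty finite $u\subseteq A_{0}$ with $\min(u)>\max(a)$ such that $(a\cup u,B)$ decides $\dot{g}(n)$ for some $B\in\mathcal{F}$. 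Density of the conditions deciding $\dot{g}(n)$ (together with a possible one-point extension of the stem) shows $X^{a}_{n}\in(\mathcal{F}^{<\omega})^{+}$, and for $u\in X^{a}_{n}$ the decided value $v^{a}_{n}(u)$ is unambiguous since two witnesses $B,B'$ have the common extension $(a\cup u,B\cap B')$. Apply $(2)$ to $\langle X^{a}_{n}\mid n\in\omega\rangle$ to get finite $Y^{a}_{n}\subseteq X^{a}_{n}$ with $W^{a}:=\bigcup_{n}Y^{a}_{n}\in(\mathcal{F}^{<\omega})^{+}$, and set $w^{a}(n)=\max\{v^{a}_{n}(u)\mid u\in Y^{a}_{n}\}$. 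There are only countably many stems $a$, so fix $h\in\omega^{\omega}\cap V$ dominating every $w^{a}$.

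It then suffices to show $(s_{0},A_{0})\Vdash\exists^{\infty}n\,(\dot{g}(n)<h(n))$, contradicting $(s_{0},A_{0})\Vdash h\leq^{\ast}\dot{g}$. Given $(t,C)\leq(s_{0},A_{0})$ and $N\in\omega$, write $a=t$ and let $j$ be its index in a fixed enumeration of the stems. Since $(\mathcal{F}^{<\omega})^{+}$ is closed under removing finitely many members, $W^{a}\setminus\bigcup_{n\leq\max(N,j)}Y^{a}_{n}$ is positive, so it contains some $u\subseteq C\setminus[0,\max(a)]$, say $u\in Y^{a}_{n^{\ast}}$ with $n^{\ast}>\max(N,j)$. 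Fixing $B\in\mathcal{F}$ witnessing $u\in X^{a}_{n^{\ast}}$, the condition $\bigl(a\cup u,\,B\cap C\setminus[0,\max(u)]\bigr)\leq(t,C)$ forces $\dot{g}(n^{\ast})=v^{a}_{n^{\ast}}(u)\leq w^{a}(n^{\ast})<h(n^{\ast})$ with $n^{\ast}>N$, establishing density below $(s_{0},A_{0})$ and finishing the proof. The main obstacle, as indicated, is the second direction: getting the stem-indexed families $X^{a}_{n}$ to make decisions compose along a growing stem, and then extracting a single ground model bound $h$ from the countably many $w^{a}$; once those are in place the positivity computations, the common-extension remarks and the closure of $(\mathcal{F}^{<\omega})^{+}$ under finite shrinking are all routine.
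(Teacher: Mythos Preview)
Your proof is correct. The paper itself does not give a proof of this proposition (it is cited from Hru\v{s}\'{a}k--Minami), but it does prove the $\mathcal{B}$-Canjar generalization immediately afterwards, and that proof specializes to the present statement, so it is natural to compare against it.

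For $\neg(2)\Rightarrow\neg(1)$ you and the paper do essentially the same thing in contrapositive form: the generic real is used to pick, for each $n$, an element of $X_n$ inside $\dot r$, and the failure of (2) translates into the resulting function dominating every ground model real. For $(2)\Rightarrow(1)$ your organization genuinely differs from the paper's. The paper first uses a pigeonhole argument (for each $f$ pick a condition $(s_f,F_f)$ and threshold $n_f$ forcing $f<^{*}\dot g$, then pass to a single stem $s$ on which a cofinal family of $f$'s agree) and then exhibits a \emph{single} sequence $\langle X_m\rangle$ witnessing $\neg(2)$. You instead keep all stems $a$ in play, apply (2) separately to each family $\langle X^a_n\rangle$ to produce bounds $w^a$, and then diagonalize over the countably many stems to get one $h$. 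Both routes work; the paper's buys a cleaner witness to $\neg(2)$, while yours avoids the reduction to a single stem at the cost of an extra diagonalization. One small point: when you write ``let $j$ be its index \dots\ and take $n^*>\max(N,j)$'', you are implicitly choosing $h$ by the diagonal $h(n)>\max_{j\le n}w^{a_j}(n)$ rather than an arbitrary $\le^{*}$-bound; this is standard but worth saying explicitly.
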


\qquad\qquad\qquad\ \ \ \qquad\ \ \ \ \ \ 

In \cite{MathiasForcingandCombinatorialCoveringPropertiesofFilters} it was
proved that a filter is Canjar if and only if is has the Menger property (as a
subspace of $\wp\left(  \omega\right)  $). Canjar filters have been further
studied in \cite{HrusakVerner}, \cite{CanjarFilters}, \cite{CanjarFiltersII},
\cite{NonDominatingUltrafilters} and \cite{RodrigoPaul}.\qquad\ \ \ 

We will say that a family of functions $\mathcal{B}\subseteq\omega^{\omega}$
is a $\mathfrak{b}$\emph{-family }if the following holds:

\begin{enumerate}
\item Every element of $\mathcal{B}$ is an increasing function.

\item Given $\left\{  f_{n}\mid n\in\omega\right\}  \subseteq\mathcal{B}$
there is $g\in\mathcal{B}$ such that $f_{n}\leq^{\ast}g$ for every $n\in
\omega.$

\item $\mathcal{B}$ is unbounded.\qquad\ \ 
\end{enumerate}

\qquad\ \ 

An example of a $\mathfrak{b}$-family would be a well-ordered unbounded
family, another example is the set of all increasing functions. If
$\mathcal{B}$ is a $\mathfrak{b}$-family and $\mathbb{P}$ is a partial order,
we say that $\mathbb{P}$ \emph{preserves }$\mathcal{B}$ if $\mathcal{B}$ is
still unbounded after forcing with $\mathbb{P}.$ Note that if $\mathbb{P}$ is
a proper forcing that preserves $\mathcal{B},$ then $\mathcal{B}$ is still a
$\mathfrak{b}$-family in the extension. We will need the following easy lemma:

\begin{lemma}
Let $\mathcal{B}$ be a $\mathfrak{b}$-family. If $\mathcal{B}=%
{\textstyle\bigcup\limits_{n\in\omega}}
\mathcal{B}_{n},$ then there is $n\in\omega$ such that $\mathcal{B}_{n}$ is
cofinal in $\mathcal{B}$ (i.e. for every $f\in\mathcal{B}$ there is
$g\in\mathcal{B}_{n}$ such that $f\leq^{\ast}g$).
\end{lemma}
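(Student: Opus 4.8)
The plan is to argue by contradiction, exploiting properties (1)–(3) in the definition of a $\mathfrak{b}$-family. Suppose $\mathcal{B}=\bigcup_{n\in\omega}\mathcal{B}_{n}$ but no single $\mathcal{B}_{n}$ is cofinal in $\mathcal{B}$. Without loss of generality we may assume the $\mathcal{B}_{n}$ are increasing in $n$ (replace $\mathcal{B}_{n}$ by $\bigcup_{k\le n}\mathcal{B}_{k}$; a finite union of non-cofinal sets is non-cofinal because, picking for each witness a function above it, property (2) gives a single $g\in\mathcal{B}$ above all of them, and then $g$ witnesses that the union is not cofinal). So for each $n$ there is $f_{n}\in\mathcal{B}$ such that no $g\in\mathcal{B}_{n}$ satisfies $f_{n}\leq^{\ast}g$.

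Now apply property (2) of a $\mathfrak{b}$-family to the countable set $\{f_{n}\mid n\in\omega\}\subseteq\mathcal{B}$: there is $h\in\mathcal{B}$ with $f_{n}\leq^{\ast}h$ for every $n$. Since $\mathcal{B}=\bigcup_{n}\mathcal{B}_{n}$, we have $h\in\mathcal{B}_{m}$ for some $m$. But then $f_{m}\leq^{\ast}h$ with $h\in\mathcal{B}_{m}$, contradicting the choice of $f_{m}$. This contradiction shows some $\mathcal{B}_{n}$ must be cofinal in $\mathcal{B}$. Property (1) (that elements are increasing) and property (3) (unboundedness) are not actually needed for this particular lemma; the only ingredient is the $\sigma$-directedness expressed in (2), which is why the argument is so short.

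The only point requiring slight care is the reduction to the increasing case: one must check that the union of finitely many non-cofinal subfamilies is again non-cofinal. Given non-cofinal $\mathcal{C}_{0},\dots,\mathcal{C}_{k}$, choose $u_{i}\in\mathcal{B}$ not $\leq^{\ast}$-dominated by any member of $\mathcal{C}_{i}$; by (2) pick $v\in\mathcal{B}$ with $u_{i}\leq^{\ast}v$ for all $i\le k$. If some $w\in\bigcup_{i\le k}\mathcal{C}_{i}$ had $v\leq^{\ast}w$, say $w\in\mathcal{C}_{j}$, then $u_{j}\leq^{\ast}v\leq^{\ast}w$ contradicts the choice of $u_{j}$; hence the union is not cofinal. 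I expect no real obstacle here — this is a routine diagonalization against a countable family using $\sigma$-directedness.
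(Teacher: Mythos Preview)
Your proof is correct and follows essentially the same argument as the paper's: pick for each $n$ a witness $f_n\in\mathcal{B}$ not $\leq^\ast$-dominated by any element of $\mathcal{B}_n$, use $\sigma$-directedness to find $g\in\mathcal{B}$ above all the $f_n$, and derive a contradiction from $g\in\mathcal{B}_m$ for some $m$. The reduction to increasing $\mathcal{B}_n$ is harmless but unnecessary---the paper omits it, since the final contradiction $f_m\leq^\ast g\in\mathcal{B}_m$ already works without any monotonicity assumption on the $\mathcal{B}_n$.
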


\begin{proof}
We will argue a proof by contradiction. Assume this is not the case, so for every
$n\in\omega$ there is $f_{n}\in\mathcal{B}$ such that $f_{n}$ is not bounded
by any element of $\mathcal{B}_{n}.$ Since $\mathcal{B}$ is a $\mathfrak{b}%
$-family, we can find $g\in\mathcal{B}$ such that $f_{n}\leq^{\ast}g$ for
every $n\in\omega.$ Since $\mathcal{B}=%
{\textstyle\bigcup\limits_{n\in\omega}}
\mathcal{B}_{n},$ there must be $m\in\omega$ such that $g\in\mathcal{B}_{m},$
hence $f_{m}\leq^{\ast}g$ which is a contradiction.
\end{proof}

\qquad\ \ \ \ 

Given a sequence $\overline{X}=\left\{  X_{n}\mid n\in\omega\right\}
\subseteq\left[  \omega\right]  ^{<\omega}\backslash\left\{  \emptyset
\right\}  $ and $f\in\omega^{\omega},$ we define the set $\overline{X}%
_{f}=\bigcup\limits_{n\in\omega}\left(  X_{n}\cap\wp\left(  f\left(  n\right)
\right)  \right)  $. Note that $\mathcal{F}$ is \emph{Canjar} if for every
sequence $\overline{X}=\left\{  X_{n}\mid n\in\omega\right\}  \subseteq\left(
\mathcal{F}^{<\omega}\right)  ^{+}$ there is $f\in\omega^{\omega}$ such that
$\overline{X}_{f}\in\left(  \mathcal{F}^{<\omega}\right)  ^{+}.$ Recall that
by a theorem of Hru\v{s}\'{a}k and Minami (see
\cite{MathiasPrikryandLaverPrikryTypeForcing}), a filter $\mathcal{F}$ is
Canjar if and only if $\mathbb{M}\left(  \mathcal{F}\right)  $ does not add
dominating reals. If $\mathcal{B}$ is a $\mathfrak{b}$-family, we say that
$\mathcal{F}$ is $\mathcal{B}$\emph{-Canjar} if for every sequence
$\overline{X}=\left\{  X_{n}\mid n\in\omega\right\}  \subseteq\left(
\mathcal{F}^{<\omega}\right)  ^{+}$ there is $f\in\mathcal{B}$ such that
$\overline{X}_{f}\in\left(  \mathcal{F}^{<\omega}\right)  ^{+}$ (such
$\overline{X}_{f}$ is called a \emph{pseudointersection according to}
$\mathcal{B}$). Note that if $\mathcal{F}$ is $\mathcal{B}$-Canjar (for some
$\mathfrak{b}$-family $\mathcal{B}$), then $\mathcal{F}$ is Canjar. As
expected, $\mathcal{B}$-Canjar filters have a similar characterization as the
one of Canjar. The following is a slight strengthening of proposition 1 of
\cite{CanjarFiltersII}, which is a generalization of the theorem of
Hru\v{s}\'{a}k and Minami:

\begin{proposition}
Let $\mathcal{B}$ be a $\mathfrak{b}$-family. A filter $\mathcal{F}$ is a
$\mathcal{B}$-Canjar filter if and only if $\mathbb{M}\left(  \mathcal{F}%
\right)  $ preserves $\mathcal{B}.$
\end{proposition}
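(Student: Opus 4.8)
The plan is to prove both implications, following the template of the Hru\v{s}\'{a}k--Minami characterization. I use throughout that the Mathias generic real $m$ diagonalizes $\mathcal{F}$: for each $A\in\mathcal{F}$ the conditions $(t,B)$ with $B\subseteq A$ are dense, so $\Vdash m\subseteq^{\ast}A$. For ``$\mathbb{M}(\mathcal{F})$ preserves $\mathcal{B}\ \Rightarrow\ \mathcal{F}$ is $\mathcal{B}$-Canjar'' I would argue the contrapositive. Fix $\overline{X}=\{X_{n}:n\in\omega\}\subseteq(\mathcal{F}^{<\omega})^{+}$ witnessing that $\mathcal{F}$ is not $\mathcal{B}$-Canjar, so for each $f\in\mathcal{B}$ there is $A_{f}\in\mathcal{F}$ with $X_{n}\cap\wp(A_{f}\cap f(n))=\emptyset$ for all $n$ (natural numbers identified with their sets of predecessors). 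Shrinking $X_{n}$ to $\{u\in X_{n}:\min u\geq n\}$ we may assume $\min u\geq n$ for $u\in X_{n}$ (still positive, since applying positivity of $X_{n}$ to $A\setminus n\in\mathcal{F}$ places such a $u$ inside any given member of $\mathcal{F}$, and this only strengthens the conclusion I want). In $V[G]$ set $\dot{g}(n)=\min\{k:\exists u\in X_{n}\ (u\subseteq m\cap k)\}$, where $m\cap k=\{i\in m:i<k\}$; this is well defined because $\{(t,B):\exists u\in X_{n}\ (u\subseteq t)\}$ is dense (extend $(t,B)$ by any $u\in X_{n}$ with $u\subseteq B$). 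Then $\dot{g}$ dominates $\mathcal{B}$: given $f\in\mathcal{B}$, fix $j$ with $m\setminus j\subseteq A_{f}$; if $n\geq j$ and $\dot{g}(n)<f(n)$ then some $u\in X_{n}$ has $u\subseteq m\cap\dot{g}(n)$, so $\max u<f(n)$ and $\min u\geq n\geq j$, hence $u\subseteq m\setminus j\subseteq A_{f}$, contradicting the choice of $A_{f}$. So $f\leq^{\ast}\dot{g}$ for every $f\in\mathcal{B}$, i.e. $\mathcal{B}$ is bounded in $V[G]$.

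For the converse I would argue by contradiction: assume $p=(s,A)$ and a name $\dot{g}$ satisfy $p\Vdash$ ``$\dot{g}\in\omega^{\omega}$ and $\dot{g}$ dominates $\check{\mathcal{B}}$'', and produce $f\in\mathcal{B}$ with $p\Vdash f\not\leq^{\ast}\dot{g}$, which is absurd since $p\Vdash\check{f}\leq^{\ast}\dot{g}$. The definition to carry along is: for $n\in\omega$ let $X_{n}$ be the set of nonempty $u\in[A]^{<\omega}$ with $\min u>\max s$ such that \emph{for every} $v\subseteq A\cap[\max s+1,\min u)$ there is $B\in\mathcal{F}$ with $B\subseteq A$, $\min B>\max u$, and $(s\cup v\cup u,B)\Vdash\dot{g}(n)\leq\max u$. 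The clause ``for every $v$'' costs nothing because $A\cap[\max s+1,\min u)$ is finite, and it is exactly what lets a witness be glued below an \emph{arbitrary} extension of $p$ in the final step.

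The first real task is to check $X_{n}\in(\mathcal{F}^{<\omega})^{+}$. Given $C\in\mathcal{F}$ (reduced to $C\subseteq A$ with $\min C>\max s$), put $m_{0}=\min C$, $P=A\cap[\max s+1,m_{0})$, and enumerate the finite set $\wp(P)$. One builds, by a single recursion over $\wp(P)$, an increasing chain $\{m_{0}\}=u^{(0)}\subseteq\cdots\subseteq u^{(|\wp(P)|)}\subseteq C$ together with associated filter sets, at each stage extending a condition $(s\cup v\cup u^{(i)},\cdot)$ to one deciding $\dot{g}(n)$ and recording its stem and filter set; one padding step at the end makes $\max u$ exceed all the finitely many decided values, so the resulting $u\subseteq C$ lies in $X_{n}$ (since then $A\cap[\max s+1,\min u)=P$). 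The care needed is keeping the finitely many witnesses $B_{v}$ coherent --- shrinking each as the stem grows --- so that one $u$ serves all $v\in\wp(P)$ at once. Next, $\mathcal{B}$-Canjar applied to $\overline{X}$ yields $f\in\mathcal{B}$ with $\overline{X}_{f}=\bigcup_{n}(X_{n}\cap\wp(f(n)))\in(\mathcal{F}^{<\omega})^{+}$; since each $X_{n}\cap\wp(f(n))$ is finite, $\bigcup_{n\geq N}(X_{n}\cap\wp(f(n)))\in(\mathcal{F}^{<\omega})^{+}$ for every $N$ as well (a positive set minus a finite set is positive). Finally, to get $p\Vdash f\not\leq^{\ast}\dot{g}$ it is enough that for every $(s\cup v,C)\leq p$ and every $N$ there is a stronger condition forcing $\dot{g}(n)<f(n)$ for some $n\geq N$: apply positivity of $\bigcup_{n\geq N}(X_{n}\cap\wp(f(n)))$ to $C$ to obtain $n\geq N$ and $u\in X_{n}$ with $\max u<f(n)$, $u\subseteq C$; since $\min C>\max v$ we have $v\subseteq A\cap[\max s+1,\min u)$, so the definition of $X_{n}$ supplies $B$ with $(s\cup v\cup u,B)\Vdash\dot{g}(n)\leq\max u<f(n)$, and $(s\cup v\cup u,B\cap C)\leq(s\cup v,C)$ is the desired extension.

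I expect the main obstacle to be Step 1 of this second implication, i.e. seeing that $X_{n}$ is positive. One must be certain the ``for every $v$'' clause does not trivialize $X_{n}$, which rests on the finiteness of $A\cap[\max s+1,\min u)$ and on the coordinated recursion that produces a single tail $u$ together with the finitely many filter sets $B_{v}$. That clause cannot be dropped: without it, the $u$ returned by positivity would typically sit on a Mathias stem incompatible with the stem $s\cup v$ of the condition one is trying to extend --- the standard difficulty for Mathias-type forcing over filters that need not be Ramsey.
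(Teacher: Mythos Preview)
Your proof is correct. The direction ``preserves $\Rightarrow$ $\mathcal{B}$-Canjar'' is essentially the paper's argument run contrapositively: the paper works directly, defining in $V[r_{gen}]$ a function $g$ with $(r_{gen}\setminus n)\cap g(n)$ containing an element of $X_n$, then picking $f\in\mathcal{B}$ with $f\nleq^\ast g$ and verifying $\overline{X}_f\in(\mathcal{F}^{<\omega})^+$; your contrapositive packaging with the preliminary shrinking $\min u\geq n$ amounts to the same computation.

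The direction ``$\mathcal{B}$-Canjar $\Rightarrow$ preserves'' genuinely diverges from the paper. The paper first invokes the $\mathfrak{b}$-family lemma (a countable partition of $\mathcal{B}$ has a cofinal piece) to pass to a cofinal $\mathcal{B}'\subseteq\mathcal{B}$ on which the stem $s$ and threshold $n$ witnessing $f\leq^\ast\dot g$ are \emph{constant}; with $s$ fixed, the paper can use the simple sets $X_m=\{t:\exists F\ ((s\cup t,F)\text{ decides }\dot g\!\upharpoonright(m{+}1)\text{ and forces }\dot g(m)<\max t)\}$ and derive the contradiction by choosing $t\subseteq F_f$, so that $(s\cup t,F\cap F_f)$ extends the witnessing condition for $f$. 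You avoid the cofinal-subfamily lemma entirely by building the universal quantifier over the ``gap'' $v\subseteq A\cap[\max s+1,\min u)$ directly into the definition of $X_n$, and then running a finite bookkeeping recursion over $\wp(P)$ to prove positivity. This buys you a self-contained argument that works below an arbitrary condition $p$ without any normalization step, at the price of a more delicate positivity proof; the paper's route is shorter once the $\mathfrak{b}$-family lemma is in hand. Both approaches are standard in spirit, and your handling of the coherence of the witnesses $B_v$ (shrinking each as the stem grows, so that one final $u$ serves all $v$) is exactly the care the construction needs.
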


\begin{proof}
Assume that $\mathcal{F}$ does not preserve $\mathcal{B}$, in other words,
there is a name $\dot{g}$ for an increasing function such that $1_{\mathbb{M}%
\left(  \mathcal{F}\right)  }\Vdash``\dot{g}$ is an upper bound for
$\mathcal{B}\textquotedblright.$ For every function $f\in\mathcal{B}$ let
$s_{f}\in$ $\left[  \omega\right]  ^{<\omega},n_{f}\in\omega$ and $F_{f}%
\in\mathcal{F}$ such that $\left(  s_{f},F_{f}\right)  \Vdash``\forall i\geq
n_{f}(f\left(  i\right)  <_{n_{f}}\dot{g}\left(  i\right)  )\textquotedblright%
.$ Since $\mathcal{B}$ is a $\mathfrak{b}$-family, there are $s\in$ $\left[
\omega\right]  ^{<\omega},n\in\omega$ and a cofinal family $\mathcal{B}%
^{\prime}\subseteq\mathcal{B}$ such that $s_{f}=s$ and $n_{f}=n$ for every
$f\in\mathcal{B}^{\prime}.$


For every $m\in\omega$ let $X_{m}$ be the set of all $t\in\lbrack\omega$
$\backslash\bigcup s]^{<\omega}$ such that there is $F\in\mathcal{F}$ with the
property that $\left(  s\cup t,F\right)  $ decides $\left\langle \dot
{g}\left(  0\right)  ,\ldots,\dot{g}\left(  m\right)  \right\rangle $ and
$\left(  s\cup t,F\right)  \Vdash``\dot{g}\left(  m\right)  <max\left(
t\right)  \textquotedblright.$ It is easy to see that $\overline{X}=\left\{
X_{m}\mid m\in\omega\right\}  $ is a sequence of sets in $\left(
\mathcal{F}^{<\omega}\right)  ^{+}$. We will see that it has no
pseudointersection according to $\mathcal{B}.$ Since $\mathcal{B}^{\prime}$ is
cofinal in $\mathcal{B},$ it is enough to show that $\overline{X}$ has no
pseudointersection according to $\mathcal{B}^{\prime}.$


Aiming for a contradiction, assume that there is $f\in\mathcal{B}^{\prime}$
such that $\overline{X}_{f}$ is positive. Since $\overline{X}_{f}\cap\left[
F_{f}\right]  ^{<\omega}$ is infinite, pick $t\in$ $\overline{X}_{f}%
\cap\left[  F_{f}\right]  ^{<\omega}$ such that $t\in X_{k}\cap\wp\left(
f\left(  k\right)  \right)  $ with $k>n.$ Since $t\in X_{k}$ there is
$F\in\mathcal{F}$ such that $\left(  s\cup t,F\right)  \Vdash``\dot{g}\left(
k\right)  \leq max\left(  t\right)  \textquotedblright$ and note that $\left(
s\cup t,F\right)  \Vdash``\dot{g}\left(  k\right)  \leq f\left(  k\right)
\textquotedblright.$ In this way, $\left(  s\cup t,F_{h}\cap F\right)  $
forces both $``f\left(  k\right)  <\dot{g}\left(  k\right)  \textquotedblright%
$ and $``\dot{g}\left(  k\right)  \leq f\left(  k\right)  \textquotedblright,$
which is a contradiction.

\qquad\ \ \ \ \ \ \ \ \ \ \qquad\ \ \ 

Now assume that $\mathbb{M}\left(  \mathcal{F}\right)  $ preserves
$\mathcal{B}.$ Let $\overline{X}=\left\langle X_{n}\mid n\in\omega
\right\rangle $ be a sequence of sets in $\left(  \mathcal{F}^{<\omega
}\right)  ^{+}$. Let $r_{gen}$ be a $\left(  V,\mathbb{M}\left(
\mathcal{F}\right)  \right)  $-generic real, observe that $\left[
r_{gen}\right]  ^{<\omega}$ intersect infinitely every member of $\left(
\mathcal{F}^{<\omega}\right)  ^{+}.$ In this way, in $V\left[  r_{gen}\right]
$ we may define an increasing function $g:\omega\longrightarrow\omega$ such
that $\left(  r_{gen}\setminus n\right)  \cap g\left(  n\right)  $ contains a
member of $X_{n}.$ Since $\mathcal{F}$ preserves $\mathcal{B}$, then there is
$f\in\mathcal{B}$ such that $f\nleq^{\ast}g,$ we will see that $\overline
{X}_{f}$ is positive. Let $F\in\mathcal{F}$ we must prove that $\overline
{X}_{f}\cap\left[  F\right]  ^{<\omega}$ is not empty. Since $F\in\mathcal{F}%
$, we know that $r_{gen}\subseteq^{\ast}F$ so there is $k\in\omega$ such that
$g\left(  k\right)  <f\left(  k\right)  $ and $r_{gen}\setminus k\subseteq F$
and hence $\overline{X}_{f}\cap\left[  F\right]  ^{<\omega}\neq\emptyset.$
\end{proof}

\qquad\ \ \qquad\ \ \ \ 

Moreover, Canjar filters satisfy the following stronger property:

\begin{lemma}
Let $\mathcal{B}$ be a $\mathfrak{b}$-family and $\mathcal{F}$ a $\mathcal{B}%
$-Canjar filter. For every family $\overline{X}=\left\{  X_{n}\mid n\in
\omega\right\}  \subseteq\left(  \mathcal{F}^{<\omega}\right)  ^{+}$ there is
$f\in\mathcal{B}$ such that for every $n\in\omega,$ if $Y_{n}=\left\{  s\in
X_{n}\mid s\subseteq\lbrack f\left(  n-1\right)  ,f\left(  n\right)
)\right\}  $ (where $f\left(  -1\right)  =0$) then $%
{\textstyle\bigcup\limits_{n\in\omega}}
Y_{n}\in\left(  \mathcal{F}^{<\omega}\right)  ^{+}.$ \label{CanjarParticion}
\end{lemma}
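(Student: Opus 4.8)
The plan is to establish this by forcing with the Mathias forcing $\mathbb{M}(\mathcal{F})$ and then transferring an absolute statement back to the ground model $V$. Since $\mathcal{F}$ is $\mathcal{B}$-Canjar, the preceding proposition gives that $\mathbb{M}(\mathcal{F})$ preserves $\mathcal{B}$, so $\mathcal{B}$ remains unbounded in $V[r_{gen}]$ (indeed still a $\mathfrak{b}$-family, since $\mathbb{M}(\mathcal{F})$ is proper), where $r_{gen}$ is the Mathias generic real. Two routine facts will be used: (a) for every $Y\in(\mathcal{F}^{<\omega})^{+}$ lying in $V$, the set $[r_{gen}]^{<\omega}\cap Y$ is infinite (a density argument using that $\mathcal{F}$ is a free filter); and (b) $\mathbb{M}(\mathcal{F})$ adds an unbounded real — for instance the increasing enumeration of $r_{gen}$ is not $\leq^{\ast}$-dominated by any function of $V$ — so the ground model family $\mathcal{B}$ is not dominating in $V[r_{gen}]$.

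Working in $V[r_{gen}]$, I would recursively construct an increasing $g:\omega\to\omega$ (with $g(-1)=0$) meeting two requirements at each stage $n$: (i) $g\not\leq^{\ast}f$ for every $f\in\mathcal{B}$, arranged by forcing $g$ to dominate pointwise some fixed function of $V[r_{gen}]$ that is not $\leq^{\ast}$-below any member of $\mathcal{B}$ (such a function exists by (b)); and (ii) for every $m\leq g(n-1)$ there is $s\in X_{n}\cap[r_{gen}]^{<\omega}$ with $\min s\geq m$ and $\max s<g(n)$. Requirement (ii) can be met because for each $m$ the set $X_{n}\cap\{s:\min s\geq m\}$ is again in $(\mathcal{F}^{<\omega})^{+}$ (since $\mathcal{F}$ is free), hence meets $[r_{gen}]^{<\omega}$ by (a); thus at stage $n$ only finitely many relevant finite sets occur and $g(n)$ may be chosen to exceed all of their maxima and simultaneously to satisfy (i).

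Using that $\mathcal{B}$ is unbounded in $V[r_{gen}]$, fix $f\in\mathcal{B}$ with $f\not\leq^{\ast}g$; by (i), $g\not\leq^{\ast}f$ as well, so $f$ and $g$ are $\leq^{\ast}$-incomparable, i.e.\ both $\{n:f(n)>g(n)\}$ and $\{n:f(n)<g(n)\}$ are infinite. An elementary argument then yields infinitely many $n$ with $f(n-1)\leq g(n-1)$ and $f(n)\geq g(n)$ at once: given $a$ with $f(a)<g(a)$, pick $b>a$ with $f(b)>g(b)$ and take $n-1$ to be the largest index in $[a,b]$ at which $f\leq g$. I claim that this $f\in\mathcal{B}\cap V$ witnesses the lemma. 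Indeed, fix $F\in\mathcal{F}$ and $N$ with $r_{gen}\setminus N\subseteq F$, then pick one of the infinitely many good $n$ with $f(n-1)\geq N$; by (ii) applied to $m=f(n-1)\leq g(n-1)$ there is $s\in X_{n}\cap[r_{gen}]^{<\omega}$ with $f(n-1)\leq\min s$ and $\max s<g(n)\leq f(n)$, so $s\in X_{n}$, $s\subseteq[f(n-1),f(n))$, and $\min s\geq N$ gives $s\subseteq r_{gen}\setminus N\subseteq F$. Thus $\bigcup_{n}\{s\in X_{n}:s\subseteq[f(n-1),f(n))\}$ meets $[F]^{<\omega}$, so it is in $(\mathcal{F}^{<\omega})^{+}$ in $V[r_{gen}]$; since this statement mentions only ground model objects it is absolute, hence holds in $V$, and $f\in\mathcal{B}$ is as required.

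The delicate point is requirement (i) together with the incomparability step: a naive attempt to simply thin each $X_{n}$ from the left and feed the result into the basic $\mathcal{B}$-Canjar property fails because the interval $[f(n-1),f(n))$ is rigid — $f$ cannot merely dominate an auxiliary function, it has to be $\leq^{\ast}$-incomparable to $g$, so that it crosses $g$ from below infinitely often. Obtaining such an $f$ inside $\mathcal{B}$ is exactly what forces us to use that $\mathbb{M}(\mathcal{F})$ is not $\omega^{\omega}$-bounding (fact (b)); the remaining ingredients are standard density and absoluteness arguments.
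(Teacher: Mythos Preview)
Your proof is correct and follows essentially the same approach as the paper's. Both arguments force with $\mathbb{M}(\mathcal{F})$, construct an increasing $g$ in the extension that is unbounded over $V$ and satisfies a ``witness from $X_n$ inside $[g(n-1),g(n))$'' condition, use preservation of $\mathcal{B}$ to find $f\in\mathcal{B}$ with $f\nleq^{\ast}g$, and then exploit the resulting $\leq^{\ast}$-incomparability of $f$ and $g$ to locate infinitely many crossings $n$ with $[g(n-1),g(n))\subseteq[f(n-1),f(n))$. The only cosmetic difference is that the paper carries out the endgame with forcing conditions (extending below $(s,B)$ to decide the crossing and the witness $w\subseteq t\subseteq B$), whereas you work directly in $V[r_{gen}]$ with your slightly stronger requirement~(ii) and then invoke absoluteness; your requirement~(ii) is precisely what lets you avoid the condition-chasing.
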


\begin{proof}
The idea of the proof is similar to the previous one. Let $r_{gen}$ be a
$\left(  \mathbb{M}\left(  \mathcal{F}\right)  ,V\right)  $-generic real. In
$V\left[  r_{gen}\right]  $ we find an increasing function $g\in\omega
^{\omega}$ such that for every $n\in\omega,$ the following holds: $r_{gen}%
\cap\lbrack g\left(  n-1\right)  ,g\left(  n\right)  )$ contains an element of
$X_{n}$ (where $g\left(  -1\right)  =0$). Furthermore, we may assume that $g$
is unbounded over $V$ (this is possible since $\mathbb{M}\left(
\mathcal{F}\right)  $ adds an unbounded real\footnote{It is well known that
every $\sigma$-centered forcing adds an unbounded real.}). Since $\mathcal{F}$
is $\mathcal{B}$-Canjar, we can find $\left(  s,A\right)  \in\mathbb{M}\left(
\mathcal{F}\right)  $ and $f\in\mathcal{B}$ such that $\left(  s,A\right)
\Vdash``f\nleq^{\ast}\dot{g}\textquotedblright.$ We claim that $f$ is the
function we are looking for. Define $Y_{n}=\left\{  s\in X_{n}\mid
s\subseteq\lbrack f\left(  n-1\right)  ,f\left(  n\right)  )\right\}  $ (for
every $n\in\omega$) and $Y=$ $%
{\textstyle\bigcup\limits_{n\in\omega}}
Y_{n},$ we must prove that $Y\in\left(  \mathcal{F}^{<\omega}\right)  ^{+}.$
Let $B\in\mathcal{F},$ we must show that $Y$ contains an element of $B.$
Without loss of generality, we may assume that $B\subseteq A.$ We can now find
$m\in\omega$, $t\in\left[  \omega\right]  ^{<\omega}$ and $C\in\mathcal{F}$
such that the following holds:

\begin{enumerate}
\item $max\left(  s\right)  <m,$ $C\subseteq B,$ $t\subseteq B.$

\item $max\left(  s\right)  <min\left(  t\right)  .$

\item $\left(  s\cup t,C\right)  \leq\left(  s,B\right)  .$

\item There is $w\in X_{m}$ such that $\left(  s\cup t,C\right)
\Vdash``w\subseteq\lbrack\dot{g}\left(  m-1\right)  ,\dot{g}\left(  m\right)
)\textquotedblright.$

\item $\left(  s\cup t,C\right)  \Vdash``\left(  f\left(  m-1\right)  \leq
\dot{g}\left(  m-1\right)  \right)  \textquotedblright$ and $\left(  s\cup
t,C\right)  \Vdash``\left(  \dot{g}\left(  m\right)  <f\left(  m\right)
\right)  \textquotedblright.$
\end{enumerate}

\qquad\ \ \ 

Such $m,t$ and $C$ can be obtained since $\left(  s,B\right)  \leq\left(
s,A\right)  ,$ so $\left(  s,B\right)  $ forces that $\dot{g}$ is an unbounded
real that does not dominate $f.$ Note that $w\subseteq t,$ so $w\subseteq B$
and $\left(  s\cup t,C\right)  \Vdash``[\dot{g}\left(  m-1\right)  ,\dot
{g}\left(  m\right)  )\subseteq\lbrack f\left(  m-1\right)  ,f\left(
m\right)  )\textquotedblright,$ which implies that $w\in Y_{m}.$ This finishes
the proof.
\end{proof}

\qquad\ \ \ \ \ \ 

\section{Miller forcing based on filters\qquad\ \ \ \ \ \ \ }

The theory of destructibility of ideals is very important in forcing theory,
since many important forcing properties may be stated in these terms. For
example, it is well known that a forcing $\mathbb{P}$ adds a dominating real
if and only if $\mathbb{P}$ destroys \textsf{fin}$\times$\textsf{fin. }The
reader may consult \cite{OrderingMADFamiliesalaKatetov},
\cite{ForcingIndestructibilityofMADFamilies}, \cite{KurilicMAD},
\cite{ForcingwithQuotients} or \cite{ZappingSmallFilters} for more on
destructibility of ideals.

\qquad\ \ \ 

In order to build models where $\mathfrak{a}$ is big and some other cardinal
invariant is small, we need to be able to destroy a \textsf{MAD} family by
dealing the least ``damage''
as possible to
the ground model. The most well known forcings to destroy an ideal (or to
diagonalize a filter) are the Mathias or Laver forcings relative to the ideal
(filter). The following result is well known:

\begin{proposition}
Let $\mathcal{F}$ be a filter on $\omega.$

\begin{enumerate}
\item $\mathbb{L}\left(  \mathcal{F}\right)  $ adds a dominating real.

\item $\mathbb{M}\left(  \mathcal{F}\right)  $ adds a Cohen real if and only
if $\mathcal{F}$ is not a Ramsey ultrafilter.

\item If $\mathcal{F}$ is a Ramsey ultrafilter, then $\mathbb{M}\left(
\mathcal{F}\right)  $ adds a dominating real.
\end{enumerate}
\end{proposition}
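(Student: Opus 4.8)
The plan is to establish the three parts of the proposition separately, as they are essentially independent classical facts about relative Laver and Mathias forcing.

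For part (1), I would show that the generic branch of $\mathbb{L}(\mathcal{F})$ dominates the ground model. Let $\dot{\ell}$ be the name for the generic real, i.e.\ the unique branch of $\bigcap$ of the generic filter. Given any $f\in\omega^\omega$ in the ground model and any condition $p\in\mathbb{L}(\mathcal{F})$, I would inductively prune $p$ above its stem: at each splitting node $s$ (whose successor set $suc_p(s)$ lies in $\mathcal{F}$), since $\mathcal{F}$ is a filter on $\omega$ the set $suc_p(s)\setminus f(|s|)$ is still in $\mathcal{F}$ (it differs from $suc_p(s)$ by finitely many points, and $\mathcal{F}$ is a filter containing the cofinite sets — or, if one does not assume that, one instead uses that $suc_p(s)\in[\omega]^\omega$ and shifts along it). Thus I obtain $q\leq p$ in which every value taken along a branch after level $|stem(p)|$ exceeds $f$ of the preceding level, so $q\Vdash f\leq^\ast\dot{\ell}$. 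A genericity/density argument then gives $\Vdash f\leq^\ast\dot{\ell}$ for every ground-model $f$, i.e.\ $\dot{\ell}$ is dominating. The one subtlety is the convention on whether filters contain the Fréchet filter; since the paper views filters as $F_\sigma$ subspaces of $2^\omega$ and in all intended applications $\mathcal{F}$ is free, I would just note that $\mathcal{F}$ extends the cofinite filter.

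For part (2), the forward direction is the harder one. If $\mathcal{F}$ is not an ultrafilter, pick $A$ with $A,\omega\setminus A\notin\mathcal{F}$; then the map sending the Mathias generic $r$ to the characteristic sequence $\langle [a_n\in A] : n\in\omega\rangle$, where $a_0<a_1<\cdots$ enumerates $r$, is forced to be Cohen-generic over $V$ — one checks that below any condition $(s,B)$ one can extend $s$ inside $B$ to realize any finite $0$--$1$ pattern, using that both $B\cap A$ and $B\setminus A$ are infinite (which holds because $B\in\mathcal{F}$ while $A\notin\mathcal{F}$ forces $B\not\subseteq^\ast A$, and symmetrically). If $\mathcal{F}$ is an ultrafilter but not Ramsey, there is a partition of $\omega$ into finite intervals (or a coloring $c:[\omega]^2\to 2$) with no homogeneous/selective set in $\mathcal{F}$; I would use such a partition $\langle I_n\rangle$ to read a Cohen real off which $I_n$ the successive elements of $r$ land in, again verifying the density of realizing arbitrary finite patterns. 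For the reverse direction: if $\mathcal{F}$ is a Ramsey ultrafilter, Mathias forcing $\mathbb{M}(\mathcal{F})$ has pure decision and the Prikry property, hence is ${}^{\omega}\omega$-bounding modulo the generic... actually it adds a dominating real but no Cohen real — this is exactly part (3) plus the fact that it does not add a Cohen real, which follows from the Prikry/pure-decision property (a Cohen real cannot be added by a forcing with the Laflamme--Mathias pure decision property over a Ramsey ultrafilter). I would cite the standard Mathias analysis for this.

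For part (3), assuming $\mathcal{F}$ is Ramsey, I would show the increasing enumeration $e_r$ of the Mathias generic $r=\{a_0<a_1<\cdots\}$ dominates $V\cap\omega^\omega$. Given $f\in\omega^\omega$ increasing in $V$ and a condition $(s,B)$, using that $\mathcal{F}$ is Ramsey (hence a P-point and Q-point) I thin $B$ to $B'\in\mathcal{F}$ that is ``fast'' relative to $f$ — e.g.\ so that the $n$-th element of $B'$ exceeds $f(n)$ — which is possible because a Q-point lets one select one point from each piece of any interval partition, and a P-point lets one absorb the finitely-many exceptions; then $(s,B')\Vdash f\leq^\ast e_r$. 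A density argument finishes it. The main obstacle across the whole proposition is packaging the "not Ramsey $\Rightarrow$ adds Cohen" implication cleanly: one has to split into the non-ultrafilter case and the ultrafilter-but-not-selective case and produce in each an explicit Borel function carrying the generic to a Cohen real, checking the genericity by a finite-extension density argument. Everything else is a routine pruning/fusion argument on conditions, and I would keep those brief, citing the standard references on Mathias and Laver forcing relative to filters.
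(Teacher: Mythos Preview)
The paper does not prove this proposition; it is stated as well known and left without proof. Your sketch is essentially the standard argument and is correct in outline.

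Two small remarks. In part (3), the thinning of $B$ to a fast $B'\in\mathcal{F}$ already follows from the $Q$-point property together with closure under finite intersections: take a selector $S\in\mathcal{F}$ for the interval partition determined by $f$ and set $B'=B\cap S$; no separate appeal to the $P$-point property is needed at that step. In part (2), your reverse direction is the place where the sketch is thinnest: pure decision alone does not immediately rule out Cohen reals, but for a Ramsey ultrafilter $\mathbb{M}(\mathcal{F})$ in fact has the Laver property (via pure decision plus a fusion over a diagonal set in $\mathcal{F}$), and the Laver property does preclude Cohen reals. Your plan to cite the classical Mathias analysis is the right call for a folklore statement of this kind. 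The forward direction of (2) in the ultrafilter-but-not-Ramsey case deserves one more sentence: the cleanest uniform argument uses that a non-selective ultrafilter admits a finite-to-one map that is not one-to-one on any set in $\mathcal{F}$, and one reads a Cohen real off the positions of the generic within the fibers; your interval-partition idea handles the non-$Q$-point case but you should also indicate how the non-$P$-point case goes.
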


\qquad\ \ 

In particular, it follows that $\mathbb{M}\left(  \mathcal{F}\right)  $ adds
either a Cohen or a dominating real. In this section, we will introduce a
forcing relative to a filter $\mathcal{F}$ that in some cases, might destroy
$\mathcal{F}$ without adding Cohen or dominating reals.

\qquad\ \ \ \ \qquad\ \ \ \ 

We say that a tree $p\subseteq\omega^{<\omega}$ is a \emph{Miller tree }if the
following conditions hold:

\begin{enumerate}
\item $p$ consists of increasing sequences.

\item $p$ has a stem.

\item For every $s\in p,$ there is $t\in p$ such that $s\subseteq t$ and $t$
is a splitting node.\footnote{Recall that $s$ is a splitting node of $p$ if
$suc_{p}\left(  s\right)  $ is infinite.}
\end{enumerate}

\qquad\ \ \ 

Usually, Miller trees are required to satisfy the following extra condition:

\begin{enumerate}
\item[4.] Every node of $p$ either is a splitting node or it has exactly one
immediate successor.
\end{enumerate}

\qquad\ \ \ \ \qquad\qquad\qquad\ \ \ 

However, we will not assume this extra requirement. The \emph{Miller forcing}
$\mathbb{PT}$ consists of all Miller trees ordered by
inclusion.\footnote{Obviously, the trees satisfying property 4 are dense in
Miller forcing. However, this does not seem to be the case for our forcings.}
Miller forcing (also called \textquotedblleft super perfect
forcing\textquotedblright) was introduced by Miller in \cite{Rat}, and is one
of the most useful and studied forcings for adding new reals (see
\cite{Barty}, \cite{Rat} or \cite{Combinatorial}). By $split\left(  p\right)
$ we denote the collection of all splitting nodes, and by $split_{n}\left(
p\right)  $ we denote the collection of $n$-splitting nodes (i.e. $s\in
split_{n}\left(  p\right)  $ if $s\in split\left(  p\right)  $ and $s$ has
exactly $n$-restrictions that are splitting nodes). Note that $split_{0}%
\left(  p\right)  =\left\{  stem\left(  p\right)  \right\}  .$

\qquad\ \ \ 

In \cite{ZapletalSabok}, Sabok and Zapletal introduced the following
parametrized version of Miller forcing\footnote{In \cite{ZapletalSabok} the
authors use ideals instead of filters. Evidently, this choice is superflous.}:

\begin{definition}
Let $\mathcal{F}$ be a filter. By $\mathbb{Q}\left(  \mathcal{F}\right)  $ we
denote the set of all Miller trees $p\in\mathbb{PT}$ such that $suc_{p}\left(
s\right)  \in\mathcal{F}^{+}$ for every splitting node $s.$ We order
$\mathbb{Q}\left(  \mathcal{F}\right)  $ by inclusion.
\end{definition}

\qquad\ \ \ \qquad\ \ \ \ 

Sabok and Zapletal proved some very interesting results, like the following:
(the reader may consult \cite{ZapletalSabok} and \cite{TesisDavid} for the
definitions of \textsf{spl }and the \emph{Solecki ideal} $\mathcal{S}$).

\begin{proposition}
[\cite{ZapletalSabok}]Let $\mathcal{F}$ be a filter.

\begin{enumerate}
\item $\mathbb{Q}\left(  \mathcal{F}\right)  $ does not add Cohen reals if and
only if \textsf{nwd }$\nleq_{\mathsf{K}}\mathcal{F}^{\ast}\upharpoonright A $
for every $A\in\mathcal{F}^{+}.$

\item If $\mathcal{F}$ is a Borel filter, then$\ \mathbb{Q}\left(
\mathcal{F}\right)  $ preserves outer Lebesgue measure if and only if
$\mathcal{S}$\textsf{\ }$\nleq_{\mathsf{K}}\mathcal{F}^{\ast}\upharpoonright
A$ for every $A\in\mathcal{F}^{+}.$

\item If $\mathcal{F}$ is a Borel filter, then$\ \mathbb{Q}\left(
\mathcal{F}\right)  $ does not add splitting reals if and only if \textsf{spl
}$\nleq_{\mathsf{K}}\mathcal{F}^{\ast}\upharpoonright A$ for every
$A\in\mathcal{F}^{+}.$
\end{enumerate}
\end{proposition}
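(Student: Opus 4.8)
The plan is to prove all three equivalences from one template, since in each case the right-hand side says that the Kat\v{e}tov-canonical ideal for the relevant generic real --- $\mathsf{nwd}$ for a Cohen real, the Solecki ideal $\mathcal{S}$ for ``killing'' outer Lebesgue measure, and $\mathsf{spl}$ for a splitting real --- does not Kat\v{e}tov-reduce to any positive restriction of $\mathcal{F}^{\ast}$. Write $\mathcal{J}$ for the relevant target ideal, on a countable set $Z$, so that the right-hand side reads $\mathcal{J}\nleq_{\mathsf{K}}\mathcal{F}^{\ast}\upharpoonright A$ for every $A\in\mathcal{F}^{+}$; recall that the subsets of $A$ that are positive with respect to $\mathcal{F}^{\ast}\upharpoonright A$ are exactly the sets $B\subseteq A$ with $B\in\mathcal{F}^{+}$, and that a Kat\v{e}tov morphism $g\colon (A,\mathcal{F}^{\ast}\upharpoonright A)\longrightarrow(Z,\mathcal{J})$ is the same thing as a map $g\colon A\longrightarrow Z$ sending each $\mathcal{F}^{+}$-large subset of $A$ to a $\mathcal{J}$-positive subset of $Z$. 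I will describe the argument for part $(1)$ and indicate the changes for $(2)$ and $(3)$.

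\emph{From a Kat\v{e}tov reduction to a generic real.} Suppose $g\colon A\longrightarrow 2^{<\omega}$ witnesses $\mathsf{nwd}\leq_{\mathsf{K}}\mathcal{F}^{\ast}\upharpoonright A$ for some $A\in\mathcal{F}^{+}$; I want $\mathbb{Q}(\mathcal{F})$ to add a Cohen real. First I would fix $p\in\mathbb{Q}(\mathcal{F})$ all of whose splitting nodes above the stem have successor set contained in $A$ (hence still in $\mathcal{F}^{+}$); such a $p$ exists because $A\in\mathcal{F}^{+}$. Below $p$ the generic branch passes through infinitely many splitting nodes $s_{0}\subsetneq s_{1}\subsetneq\cdots$ with $a_{k}\in A$ the value taken at $s_{k}$, and I would let $\dot{c}$ be a name built by decoding the sequence $\langle g(a_{k}):k\in\omega\rangle$ into an element of $2^{\omega}$. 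The point is the density computation: since at every splitting node of every $q\leq p$ the successor set is $\mathcal{F}^{+}$-large, its $g$-image is $\mathsf{nwd}$-positive, i.e.\ somewhere dense in $2^{<\omega}$; using this one shows that for every ground-model dense open $D\subseteq 2^{<\omega}$ the set of $q\leq p$ forcing some initial segment of $\dot{c}$ into $D$ is dense. Choosing the decoding carefully --- so that $\mathsf{nwd}$-positivity of the successor images is exactly what lets one steer $\dot{c}$ through an arbitrary $V$-coded dense open set, across as many splitting levels as needed --- gives that $\dot{c}$ is forced to be Cohen over $V$. For $(2)$ one replaces $2^{<\omega}$ by the underlying set of $\mathcal{S}$ and ``somewhere dense'' by ``$\mathcal{S}$-positive'', using this to drive $\dot{c}$ away from every $V$-coded null Borel set, witnessing that outer measure is not preserved; for $(3)$ one uses $\mathsf{spl}$ to build a real that both meets and avoids every infinite ground-model set.

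\emph{From a generic real to a Kat\v{e}tov reduction.} Conversely, assume $\mathbb{Q}(\mathcal{F})$ adds a Cohen real and fix a name $\dot{c}$ forced to be Cohen over $V$. By a fusion argument I would pass to a condition $p$ that \emph{continuously reads} $\dot{c}$, i.e.\ there is a continuous $\Phi\colon[p]\longrightarrow 2^{\omega}$ with $p\Vdash\dot{c}=\Phi(\dot{x}_{gen})$; here one must be mildly careful because, as the paper stresses, conditions of $\mathbb{Q}(\mathcal{F})$ need not be Miller-normal (non-splitting nodes may branch finitely), so finite branching has to be absorbed into $\Phi$. For a splitting node $s$ of $p$ let $g_{s}\colon\mathrm{suc}_{p}(s)\longrightarrow 2^{<\omega}$ record the part of $\dot{c}$ that $\Phi$ commits to once the generic takes the corresponding successor, and let $c_{s}$ be the commitment just before branching at $s$. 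Now run a fusion that at the $n$-th splitting node $s$ tries to shrink $\mathrm{suc}_{p}(s)$ to an $\mathcal{F}^{+}$-subset on which the $g_{s}$-values diverge from a chosen extension $h(c_{s})$ of $c_{s}$; the obstruction to doing this at $s$ is precisely that $g_{s}$ restricted to $\mathrm{suc}_{p}(s)$ is a Kat\v{e}tov morphism to $\mathsf{nwd}$, and since $\mathrm{suc}_{p}(s)\in\mathcal{F}^{+}$ this already exhibits $\mathsf{nwd}\leq_{\mathsf{K}}\mathcal{F}^{\ast}\upharpoonright\mathrm{suc}_{p}(s)$ with $\mathrm{suc}_{p}(s)\in\mathcal{F}^{+}$ --- which is what the right-hand side rules out. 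If no such obstruction ever occurs, the fusion limit $q$ together with the ground-model function $h$ forces $\dot{c}$ into the meager Borel set $\{y\in 2^{\omega}:\exists^{\infty}n\ h(y\restriction n)\not\subseteq y\}$, contradicting that $\dot{c}$ is Cohen over $V$. Parts $(2)$ and $(3)$ are the same with $\mathsf{nwd}$ replaced by $\mathcal{S}$ (and ``meager'' by ``null'', trapping via the generators of $\mathcal{S}$) and by $\mathsf{spl}$ (and ``meager'' by ``not splitting over $V$'').

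\emph{Main obstacle.} The delicate part is the fusion in the last paragraph: one must dovetail the two alternatives --- ``thin so as to trap $\dot{c}$'' versus ``read off a local Kat\v{e}tov morphism'' --- into a single construction, and get the bookkeeping right so that the trapping branch really yields a \emph{genuinely} small $V$-coded set. This needs care because $\mathsf{nwd}$, $\mathcal{S}$ and $\mathsf{spl}$ are not $\sigma$-ideals, so the trap cannot be a naive countable union of small sets but has to be manufactured from a single ground-model object (a function, a measure, a colouring), which is exactly where the combinatorics of each ideal enters. A secondary point is that for $(2)$ and $(3)$ the continuous reading of names and the relevant absoluteness come from the Sabok--Zapletal theory of definable forcing, which is why those parts assume $\mathcal{F}$ is Borel while $(1)$ does not.
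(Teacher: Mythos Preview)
The proposition you are attempting to prove is not proved in this paper. It is stated with an explicit attribution to Sabok and Zapletal (the citation \cite{ZapletalSabok}), and the authors give no argument for it; they quote the result only to situate their own forcing $\mathbb{PT}(\mathcal{F})$ relative to the earlier $\mathbb{Q}(\mathcal{F})$, and immediately move on. So there is no proof in the paper to compare your proposal against.

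That said, your outline follows the strategy one would expect from the Sabok--Zapletal framework: continuous reading of names via fusion, and at each splitting node a dichotomy between ``can thin the successor set to trap the name in a small ground-model set'' and ``the local successor map is already a Kat\v{e}tov morphism to the target ideal.'' The broad shape is right, but several steps are stated as intentions rather than arguments --- in particular the ``decoding'' of the sequence $\langle g(a_k)\rangle$ into a genuine Cohen real in the forward direction is not actually specified, and in the backward direction you have not shown how the local obstructions at individual splitting nodes assemble into a \emph{single} Kat\v{e}tov morphism on one fixed $A\in\mathcal{F}^{+}$ (you only produce, for each bad node $s$, a morphism on $\mathrm{suc}_p(s)$; these sets vary with $s$). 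You correctly identify this bookkeeping as the main obstacle. For a complete proof you would need to consult the original Sabok--Zapletal paper.
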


\qquad\ \ \ \ \qquad\ \ 

In some cases, $\mathbb{Q}\left(  \mathcal{F}\right)  $ may diagonalize
$\mathcal{F}$ while in some others cases not, as can be seen with the following:

\begin{lemma}
\qquad\ \ \ 

\begin{enumerate}
\item $\mathbb{Q(}$\textsf{fin}$\times$\textsf{fin}$^{\ast})$ does not destroy
\textsf{fin}$\times$\textsf{fin}$.$

\item $\mathbb{Q(}$\textsf{nwd}$^{\ast})$ destroys \textsf{nwd}$.$
\end{enumerate}
\end{lemma}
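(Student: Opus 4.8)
The plan is to prove the two statements separately, as they reflect opposite behaviours.

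\textbf{Part 1: $\mathbb{Q}(\mathsf{fin}\times\mathsf{fin}^{\ast})$ does not destroy $\mathsf{fin}\times\mathsf{fin}$.} Recall that destroying an ideal means adding an infinite set almost disjoint from every member of the ideal; equivalently, $\mathbb{Q}(\mathsf{fin}\times\mathsf{fin}^\ast)$ destroys $\mathsf{fin}\times\mathsf{fin}$ iff it diagonalizes the filter $\mathsf{fin}\times\mathsf{fin}^\ast$. Since a pseudointersection of $\mathsf{fin}\times\mathsf{fin}^\ast$ must have infinite intersection with all but finitely many columns $C_n$ and must eventually lie below the graph of any ground-model $f\in\omega^\omega$ on each column, such a set would in particular code a function dominating $\omega^\omega\cap V$. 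So it suffices to show $\mathbb{Q}(\mathsf{fin}\times\mathsf{fin}^\ast)$ adds no dominating real. I would prove this by a fusion argument: given $p\in\mathbb{Q}(\mathsf{fin}\times\mathsf{fin}^\ast)$ and a name $\dot h$ for an element of $\omega^\omega$, build a fusion sequence $p=p_0\geq p_1\geq\cdots$ together with finite sets $F_n$ of $n$-splitting nodes so that below each node in $F_n$ enough of the successor set (which lies in $(\mathsf{fin}\times\mathsf{fin})^+$, i.e. meets infinitely many columns, or contains a ``vertical'' infinite set, or contains a graph-transcending set) is retained to decide $\dot h(n)$; the key point is that $(\mathsf{fin}\times\mathsf{fin})^+$ is closed under the relevant shrinking so that $\mathbb{Q}$-conditions survive fusion. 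One then reads off from the fusion that $\dot h$ is dominated by a ground-model function, namely $n\mapsto\max$ of the finitely many values $\dot h(n)$ forced below the finitely many nodes in $F_n$. The main obstacle here is verifying that the shrunk successor sets remain $\mathsf{fin}\times\mathsf{fin}$-positive: this uses that $(\mathsf{fin}\times\mathsf{fin})^+$ sets, when partitioned into finitely many pieces, always have one positive piece, which lets the fusion proceed.

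\textbf{Part 2: $\mathbb{Q}(\mathsf{nwd}^\ast)$ destroys $\mathsf{nwd}$.} Here $\mathsf{nwd}$ is the nowhere dense ideal on $2^{<\omega}$, so $\mathsf{nwd}^\ast$ is the filter of co-nowhere-dense (i.e. somewhere comeager-in-the-tree) sets. A condition $p\in\mathbb{Q}(\mathsf{nwd}^\ast)$ is a Miller tree all of whose successor sets $\mathrm{suc}_p(s)$ are $\mathsf{nwd}$-positive, meaning they are \emph{somewhere dense} subsets of $2^{<\omega}$: there is some $t\in2^{<\omega}$ below which $\mathrm{suc}_p(s)$ is dense. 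The generic branch $x_{gen}\in(2^{<\omega})^\omega$ enumerates an infinite subset $X=\mathrm{ran}(x_{gen})$ of $2^{<\omega}$, and the claim is that $X$ is $\mathsf{nwd}$-positive, i.e. somewhere dense. I would show this by a density/genericity argument: it suffices to prove that the set of conditions $p$ forcing ``$X$ is dense below $t_p$'' for some $t_p$ is dense, and in fact that given any $p$ and any $s\in2^{<\omega}$ one can extend $p$ to decide, below a fixed splitting node, a successor leading into the cone of some extension of $s$ — but a cleaner route is to fix a condition $p$, pass below its stem, and use that $\mathrm{suc}_p(\mathrm{stem}(p))$ is dense below some $t$; then any extension $t'\supseteq t$ is extended by infinitely many members of $\mathrm{suc}_p(\mathrm{stem}(p))$, so by a straightforward induction one shrinks $p$ to a condition all of whose branches pass through elements of $2^{<\omega}$ dense below $t$. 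The main obstacle is bookkeeping: one must ensure simultaneously that $X$ hits a dense-below-$t$ set and that the shrinking leaves a legitimate $\mathbb{Q}(\mathsf{nwd}^\ast)$-condition, which again reduces to the fact that a dense-below-$t$ subset of $2^{<\omega}$, when intersected with the cone above a single extension $t'\supseteq t$, remains dense below $t'$, hence $\mathsf{nwd}$-positive.

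In both parts the underlying principle is the same: the behaviour of $\mathbb{Q}(\mathcal F)$ with respect to destroying $\mathcal I$ (where $\mathcal F=\mathcal I^\ast$) is governed by whether $\mathcal I^+$ is ``stable'' under the kind of shrinking forced by the Miller tree structure, and the two examples are chosen so that $\mathsf{fin}\times\mathsf{fin}$ resists (the positivity is a genuinely two-dimensional, ``dominating''-flavoured obstruction that the superperfect tree cannot overcome without adding a dominating real) while $\mathsf{nwd}$ yields (somewhere-density passes freely to subcones). I expect Part 1 to be the more delicate of the two, since it is essentially a ``no dominating real'' theorem requiring a careful fusion, whereas Part 2 is a short genericity argument.
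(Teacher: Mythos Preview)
Your Part 1 has the right high-level structure --- reduce to showing that no dominating real is added, which is exactly what the paper does (it simply notes that $\mathbb{Q}(\mathcal F)$ never adds dominating reals, either ``with an easy fusion argument'' or via the Sabok--Zapletal representation of $\mathbb{Q}(\mathcal F)$ as $\mathrm{Borel}(\omega^\omega)/\mathcal J_{\mathcal F}$ for a $\sigma$-ideal generated by closed sets, and then invokes Theorem~4.1.2 of Zapletal's \emph{Forcing Idealized}). However, your fusion sketch overshoots: you claim to read off that $\dot h$ is \emph{dominated} by a ground-model function, i.e., that the forcing is $\omega^\omega$-bounding. Miller-type forcings add unbounded reals and are not bounding, so this cannot be right as stated. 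The difficulty is that in a Miller tree the splitting nodes do not form a finite front at any level, so ``the finitely many values of $\dot h(n)$ forced below the finitely many nodes in $F_n$'' does not control $\dot h(n)$ on the whole condition. The correct fusion yields only that some ground-model $g$ is \emph{not dominated} by $\dot h$, which is all you need.

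Your Part 2 contains a genuine conceptual gap. Destroying $\mathsf{nwd}$ means adding an infinite $X\subseteq 2^{<\omega}$ that is \emph{almost disjoint from every} ground-model nowhere dense set; you instead set out to show that $X=\mathrm{ran}(x_{gen})$ is $\mathsf{nwd}$-positive, i.e., somewhere dense. These are very different: a set dense below some $t$ can still have infinite intersection with many nowhere dense sets (take any infinite antichain below $t$). So even if your genericity argument succeeds, it does not establish the lemma. The paper's route is shorter and different: it observes that $\mathbb{Q}(\mathsf{nwd}^\ast)$ adds a Cohen real (this is immediate from the Sabok--Zapletal characterization quoted just above the lemma, since trivially $\mathsf{nwd}\leq_{\mathsf K}\mathsf{nwd}$), and any Cohen real $c\in 2^\omega$ destroys $\mathsf{nwd}$ because the set of its initial segments $\{c\upharpoonright n:n\in\omega\}$ has finite intersection with every ground-model nowhere dense set.
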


\begin{proof}
With an easy fusion argument, it is possible to prove that $\mathbb{Q}\left(
\mathcal{F}\right)  $ does not add dominating reals (for every $\mathcal{F}$).
Alternatively, this can be proved as follows: In \cite{ZapletalSabok} it was
proved that there is a $\sigma$-ideal $\mathcal{J}_{\mathcal{F}}$ generated by
closed sets such that $\mathbb{Q}\left(  \mathcal{F}\right)  $ is forcing
equivalent to \textsf{Borel}$\left(  \omega^{\omega}\right)  /\mathcal{J}%
_{\mathcal{F}},$ which does not add dominating reals by Theorem 4.1.2 of
\cite{ForcingIdealized}. It is well known that if a forcing destroys
\textsf{fin}$\times$\textsf{fin}$,$ then it must add a dominating real. From
these two facts, it follows that $\mathbb{Q(}$\textsf{fin}$\times$%
\textsf{fin}$^{\ast})$ does not destroy \textsf{fin}$\times$\textsf{fin}$.$ On
the other hand, note that $\mathbb{Q(}$\textsf{nwd}$^{\ast})$ adds a Cohen
real and it is easy to see that any forcing adding a Cohen real must destroy
\textsf{nwd.}
\end{proof}

\qquad\ \ \ \ 

We will now introduce a version of Miller forcing parametrized with a filter
$\mathcal{F}$ that will always diagonalizes $\mathcal{F}.$ Given
$p\in\mathbb{PT\ }$for every $s\in split_{n}\left(  p\right)  $ we define
$spsuc_{p}\left(  s\right)  =\left\{  t\setminus s\mid t\in split_{n+1}\left(
p\right)  \wedge s\subseteq t\right\}  .$

\begin{definition}
Let $\mathcal{F}$ be a filter. We say $p\in\mathbb{PT}\left(  \mathcal{F}%
\right)  $ if the following holds:

\begin{enumerate}
\item $p\in\mathbb{PT}.$

\item If $s\in split\left(  p\right)  $ then $spsuc_{p}\left(  s\right)
\in\left(  \mathcal{F}^{<\omega}\right)  ^{+}.$\qquad\ \ \ 
\end{enumerate}
\end{definition}

\qquad\ \ 

We order $\mathbb{PT}\left(  \mathcal{F}\right)  $ by inclusion. The following
lemma contains some basic results about $\left(  \mathcal{F}^{<\omega}\right)
^{+},$ which will be used implicitly.

\begin{lemma}
Let $\mathcal{F}$ be a filter on $\omega$ and $X\in\left(  \mathcal{F}%
^{<\omega}\right)  ^{+}.$

\begin{enumerate}
\item If $F\in\mathcal{F},$ then $X\cap\left[  F\right]  ^{<\omega}\in\left(
\mathcal{F}^{<\omega}\right)  ^{+}.$

\item If $X=A\cup B,$ then either $A\in\left(  \mathcal{F}^{<\omega}\right)
^{+}$ or $B\in\left(  \mathcal{F}^{<\omega}\right)  ^{+}.$

\item If $X=\left\{  s_{n}\mid n\in\omega\right\}  $ and $Y=\left\{  y_{n}\mid
n\in\omega\right\}  $ is such that every $y_{n}$ is a non-empty subset of
$s_{n}.$ then $Y\in\left(  \mathcal{F}^{<\omega}\right)  ^{+}.$

\item If $r_{gen}$ is an $\left(  \mathbb{M}\left(  \mathcal{F}\right)
,V\right)  $-generic real, then $r_{gen}$ contains an element of $X.$
\end{enumerate}
\end{lemma}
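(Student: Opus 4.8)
The plan is to establish each of the four items about $\left(\mathcal{F}^{<\omega}\right)^{+}$ directly from the definition: recall $X\in\left(\mathcal{F}^{<\omega}\right)^{+}$ means that for every $A\in\mathcal{F}$ there is $s\in X$ with $s\subseteq A$ (equivalently $X\cap[A]^{<\omega}\neq\emptyset$), and that $\emptyset\notin X$ by convention. Item (1) is immediate: if $F\in\mathcal{F}$ and $A\in\mathcal{F}$ is arbitrary, then $F\cap A\in\mathcal{F}$, so there is $s\in X$ with $s\subseteq F\cap A$; this $s$ witnesses both $s\in X\cap[F]^{<\omega}$ and $s\subseteq A$, so $X\cap[F]^{<\omega}$ is still positive.

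For item (2), I would argue by contradiction. If neither $A$ nor $B$ is in $\left(\mathcal{F}^{<\omega}\right)^{+}$, then there are $F_{A},F_{B}\in\mathcal{F}$ such that no element of $A$ is contained in $F_{A}$ and no element of $B$ is contained in $F_{B}$. Then $F_{A}\cap F_{B}\in\mathcal{F}$, but no element of $X=A\cup B$ is a subset of $F_{A}\cap F_{B}$ (such an element would be a subset of both $F_{A}$ and $F_{B}$, contradicting whichever of $A,B$ it lies in), contradicting $X\in\left(\mathcal{F}^{<\omega}\right)^{+}$. Item (3): given $A\in\mathcal{F}$, pick $n$ with $s_{n}\subseteq A$; since $y_{n}$ is a nonempty subset of $s_{n}$, we have $y_{n}\subseteq A$ and $y_{n}\in Y$ with $y_{n}\neq\emptyset$, so $Y$ is positive. (One should note the indexing is used only to pair up each $y_{n}$ with a fixed $s_{n}$ containing it; the enumerations need not be injective, but that is harmless.)

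For item (4), the cleanest route is to invoke density. Recall $r_{gen}=\bigcup\{s : \exists F\,(s,F)\in G\}$ for the generic filter $G$ on $\mathbb{M}(\mathcal{F})$. Fix a condition $(s,F)$; since $F\in\mathcal{F}$ and $X\in\left(\mathcal{F}^{<\omega}\right)^{+}$, there is $t\in X$ with $t\subseteq F$. We may also shrink $t$'s relationship to $s$: taking $t'=t\setminus(\max(s)+1)$ would not obviously lie in $X$, so instead I extend $(s,F)$ to $(s\cup t, F)$ when $t$ already avoids the part of $s$ below; more carefully, since only the values of $r_{gen}$ matter and $s$ is finite, the set of conditions $(u,F')$ with $t\subseteq u$ for some $t\in X\cap[F']^{<\omega}\cap[\omega\setminus\max(s)]^{<\omega}$ is dense below $(s,F)$ — here item (1) guarantees $X\cap[\omega\setminus\max(s)]^{<\omega}$ is still positive (as $\omega\setminus\max(s)\in\mathcal{F}$), so such a $t$ exists inside $F$. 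Hence by genericity some condition in $G$ already contains an element of $X$ in its stem, so $r_{gen}$ contains an element of $X$.

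The routine items (1)--(3) carry essentially no risk; the only point requiring a little care is item (4), where one must handle the interaction between the finite stem $s$ of a condition and the requirement that the element of $X$ found actually become part of $r_{gen}$. The fix is exactly the observation that $\omega\setminus(\max(s)+1)\in\mathcal{F}$, so by item (1) the set $X$ remains positive when restricted to finite subsets of that cofinite set, letting us find an element of $X$ disjoint from $s$ and inside whatever $F\in\mathcal{F}$ the condition carries; appending it to the stem yields a stronger condition, and the usual density argument finishes. I expect no genuine obstacle here — this lemma is bookkeeping, collecting the facts that will be used "implicitly" later.
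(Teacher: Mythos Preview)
Your proof is correct. The paper does not actually supply a proof of this lemma --- it states the result and remarks that it ``will be used implicitly'' --- so there is no paper argument to compare against; your direct verifications from the definition of $(\mathcal{F}^{<\omega})^{+}$ are exactly what is intended, and your handling of item~(4) via the cofinite set $\omega\setminus(\max(s)+1)\in\mathcal{F}$ together with item~(1) is the standard density argument.
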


\qquad\ \ \ \ 

By point 1 above we get the following:

\begin{lemma}
$\mathbb{PT}\left(  \mathcal{F}\right)  $ diagonalizes $\mathcal{F}.$
\end{lemma}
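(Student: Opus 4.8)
The plan is to show that the generic branch added by $\mathbb{PT}\left(\mathcal{F}\right)$ yields an infinite set almost contained in every $F\in\mathcal{F}$. Let $G$ be generic, and let $x_{gen}\in\omega^{\omega}$ be the unique branch in $\bigcap_{p\in G}\left[p\right]$; this is well defined because the conditions have stems and any two conditions in $G$ are compatible, so their stems are comparable, and the stems grow unboundedly by condition 3 in the definition of a Miller tree together with a genericity argument. Set $r_{gen}=\mathrm{ran}\left(x_{gen}\right)\in\left[\omega\right]^{\omega}$ (it is infinite since $x_{gen}$ is increasing). To destroy maximality/diagonalize $\mathcal{F}$, I must show $1\Vdash r_{gen}\subseteq^{\ast}F$ for every $F\in\mathcal{F}$.

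The key step is a density argument. Fix $F\in\mathcal{F}$ and a condition $p\in\mathbb{PT}\left(\mathcal{F}\right)$; I want to find $q\leq p$ and some $N\in\omega$ such that $q\Vdash r_{gen}\setminus N\subseteq F$. The natural candidate is to prune $p$ above some splitting node so that, beyond that node, only values in $F$ are used. Let $s_0=\mathrm{stem}\left(p\right)$ and let $N=\max\left(\mathrm{ran}\left(s_0\right)\right)+1$. I then define $q$ by recursion on splitting levels: start with $s_0$, and at each splitting node $s$ already placed in $q$, keep exactly those elements of $spsuc_p\left(s\right)$ (equivalently, those splitting-successor nodes $t$) all of whose new entries lie in $F$. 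The crucial point is that this restricted set is still in $\left(\mathcal{F}^{<\omega}\right)^{+}$: by point 1 of the ``basic results'' lemma (Lemma on $\left(\mathcal{F}^{<\omega}\right)^{+}$), $spsuc_p\left(s\right)\cap\left[F\right]^{<\omega}\in\left(\mathcal{F}^{<\omega}\right)^{+}$, and $spsuc_p\left(s\right)\cap\left[F\right]^{<\omega}$ is contained in (indeed, after trimming non-splitting padding, corresponds to) the set of allowed successor-blocks. So the resulting tree $q$ satisfies condition 2 of the definition of $\mathbb{PT}\left(\mathcal{F}\right)$, it is still a Miller tree (every node extends to a splitting node, since we only removed whole subtrees hanging off forbidden successor blocks but kept $\left(\mathcal{F}^{<\omega}\right)^{+}$-many at each splitting node, in particular infinitely many), and every entry of every node of $q$ past $s_0$ lies in $F$. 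Hence $q\Vdash r_{gen}\setminus N\subseteq F$, so $q\Vdash r_{gen}\subseteq^{\ast}F$. Since $p$ was arbitrary, $1\Vdash r_{gen}\subseteq^{\ast}F$.

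The main obstacle is making sure that the pruned tree $q$ is genuinely a condition, i.e.\ that at every splitting node the trimmed successor structure still gives a set in $\left(\mathcal{F}^{<\omega}\right)^{+}$ and that $q$ still has the property that every node extends to a splitting node. The first is handled by point 1 of the $\left(\mathcal{F}^{<\omega}\right)^{+}$ lemma as above; the second follows because $\left(\mathcal{F}^{<\omega}\right)^{+}$-many successor blocks in particular means infinitely many, so no splitting node becomes a dead end, and the recursion produces splitting nodes cofinally along every branch. One minor technical care is needed because $\mathbb{PT}\left(\mathcal{F}\right)$ does not require property 4 (every node is splitting or has a unique successor): when I ``keep those $t\in split_{n+1}\left(p\right)$ whose entries past $s$ lie in $F$'', I should interpret this as keeping the whole finite path from $s$ to $t$ and demand that all of its new entries (not just those at $t$ itself) lie in $F$ — since $spsuc_p\left(s\right)$ records blocks $t\setminus s$, this is exactly the condition ``$t\setminus s\in\left[F\right]^{<\omega}$'', and point 1 applies verbatim to $spsuc_p\left(s\right)$. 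Everything else is routine bookkeeping.
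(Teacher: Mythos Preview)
Your proof is correct and follows exactly the route the paper indicates: the paper's proof is the single line ``By point 1 above we get the following,'' referring to the fact that $X\cap[F]^{<\omega}\in(\mathcal{F}^{<\omega})^{+}$ whenever $X\in(\mathcal{F}^{<\omega})^{+}$ and $F\in\mathcal{F}$, which is precisely the key step in your pruning argument. You have simply written out in full the density computation that the paper leaves to the reader.
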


\qquad\ \ \ 

Given $A\subseteq\left[  \omega\right]  ^{<\omega}$ we define $minimal\left(
A\right)  \subseteq A$ as the set of all minimal elements of $A$ with respect
to the initial segment relation. Note that every element of $A$ contains an
element of $minimal\left(  A\right)  .$ We conclude that if $A\in\left(
\mathcal{F}^{<\omega}\right)  ^{+}$ then $minimal\left(  A\right)  \in\left(
\mathcal{F}^{<\omega}\right)  ^{+}.$

\qquad\ \ \qquad\ \ \qquad\ \ 

Given $p\in\mathbb{PT}\left(  \mathcal{F}\right)  ,$ $s\in split\left(
p\right)  $ and $D\subseteq\mathbb{PT}\left(  \mathcal{F}\right)  $ an open
dense subset, we define $E\left(  D,p,s\right)  =minimal\left(  \left\{
t\setminus s\mid\exists q\leq p_{s}\left(  stem\left(  q\right)  =t\wedge q\in
D\right)  \right\}  \right)  .$ We now have the following:

\begin{lemma}
If $p\in\mathbb{PT}\left(  \mathcal{F}\right)  ,$ $s\in split\left(  p\right)
$ and $D\subseteq\mathbb{PT}\left(  \mathcal{F}\right)  $ is an open dense
subset, then $E\left(  D,p,s\right)  \in\left(  \mathcal{F}^{<\omega}\right)
^{+}.$
\end{lemma}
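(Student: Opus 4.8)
The goal is to show that for $p\in\mathbb{PT}(\mathcal{F})$, a splitting node $s\in\mathrm{split}(p)$, and an open dense $D$, the set $E(D,p,s)=\mathrm{minimal}(\{t\setminus s\mid \exists q\le p_s\,(\mathrm{stem}(q)=t\wedge q\in D)\})$ is in $(\mathcal{F}^{<\omega})^+$. Since taking minimal elements preserves positivity (noted just above the lemma), it suffices to prove that $A:=\{t\setminus s\mid \exists q\le p_s\,(\mathrm{stem}(q)=t\wedge q\in D)\}$ is in $(\mathcal{F}^{<\omega})^+$. By definition of $(\mathcal{F}^{<\omega})^+$, I must show: for every $F\in\mathcal{F}$ there is some $u\in A$ with $u\subseteq F$. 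So fix $F\in\mathcal{F}$; I want to find $q\le p_s$ with $q\in D$ and $\mathrm{stem}(q)\setminus s\subseteq F$.

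The idea is to thin out $p_s$ so that the first batch of splitting successors above $s$ all lie inside $F$, and then use density of $D$. Concretely: since $s\in\mathrm{split}(p)$ we have $\mathrm{spsuc}_p(s)\in(\mathcal{F}^{<\omega})^+$, so by point 1 of the basic lemma, $\mathrm{spsuc}_p(s)\cap[F]^{<\omega}\in(\mathcal{F}^{<\omega})^+$; in particular it is nonempty, so pick $t_0\in\mathrm{split}_{n+1}(p)$ with $s\subseteq t_0$ and $t_0\setminus s\subseteq F$ (here $n$ is the splitting level of $s$). Now form the tree $q_0 = (p)_{t_0}$, i.e. restrict $p$ to the cone above $t_0$; I should check $q_0\in\mathbb{PT}(\mathcal{F})$ — this is immediate because passing to a cone above a node only deletes nodes and the splitting-successor condition $\mathrm{spsuc}_{q_0}(r)\in(\mathcal{F}^{<\omega})^+$ holds for every splitting node $r$ of $q_0$ since those nodes (and their relevant successors) are unchanged from $p$. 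Note $q_0\le p_s$ and $\mathrm{stem}(q_0)=t_0$, with $t_0\setminus s\subseteq F$.

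Now apply density: since $D$ is dense, there is $q\le q_0$ with $q\in D$. Then $q\le p_s$, and $\mathrm{stem}(q)\supseteq\mathrm{stem}(q_0)=t_0$, hence $\mathrm{stem}(q)\setminus s \supseteq t_0\setminus s$. But wait — I need $\mathrm{stem}(q)\setminus s\subseteq F$, and extending the stem could add elements outside $F$. The fix is to be slightly more careful: instead of just asking $t_0\setminus s\subseteq F$, I should arrange that \emph{everything above $t_0$ in $q_0$ also stays in $F$}. That is, before invoking density, replace $q_0$ by $q_0'$ obtained by recursively thinning: at $t_0$ and at every subsequent splitting node, keep only the (positively many, by the same point-1 argument applied at each splitting node) splitting-successors whose new part lies in $F$, and between consecutive splitting nodes keep the unique branch — concretely, intersect every $\mathrm{spsuc}$ set with $[F]^{<\omega}$. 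This $q_0'$ is still in $\mathbb{PT}(\mathcal{F})$ (each $\mathrm{spsuc}_{q_0'}(r)=\mathrm{spsuc}_{q_0}(r)\cap[F]^{<\omega}\in(\mathcal{F}^{<\omega})^+$ by basic-lemma point 1), satisfies $q_0'\le p_s$, has $\mathrm{stem}(q_0')=t_0$, and crucially \emph{every} node $r$ of $q_0'$ with $s\subseteq r$ satisfies $r\setminus s\subseteq F$ — this is because $r\setminus t_0$ is built from successor-increments each contained in $F$, together with $t_0\setminus s\subseteq F$. Now apply density to get $q\le q_0'$, $q\in D$; then $t:=\mathrm{stem}(q)$ is a node of $q_0'$ extending $s$, so $t\setminus s\subseteq F$, and $t\setminus s\in A$ by definition (witnessed by $q$). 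Hence $A\cap[F]^{<\omega}\ne\emptyset$, as required.

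The main obstacle is the one flagged above: naively one only controls the stem of $q_0$, but density produces a condition with a longer stem, whose extra part need not lie in $F$. The resolution — pre-thinning the \emph{entire} tree $q_0$ to $q_0'$ so that all nodes above $s$ stay within $F$, using point 1 of the basic lemma at every splitting node to keep positivity — is the real content. Everything else (minimal preserves positivity, cones of $\mathbb{PT}(\mathcal{F})$-trees are $\mathbb{PT}(\mathcal{F})$-trees, unwinding the definition of $(\mathcal{F}^{<\omega})^+$) is routine.
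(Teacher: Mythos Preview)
Your argument is correct and is precisely the ``immediate'' step the paper leaves to the reader: thin $p_s$ to a condition $q_0'\in\mathbb{PT}(\mathcal{F})$ whose nodes above $s$ all have increment contained in $F$ (using point~1 of the basic lemma at each splitting node), then apply density of $D$. The extra care you take in first passing to $t_0\supsetneq s$ even ensures the witnessed element $t\setminus s$ is nonempty, so the resulting set is genuinely in $(\mathcal{F}^{<\omega})^{+}$.
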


\begin{proof}
It would be enough to prove that $\left\{  t\setminus s\mid\exists q\leq
p_{s}\left(  stem\left(  q\right)  =t\wedge q\in D\right)  \right\}  $ is in
$\left(  \mathcal{F}^{<\omega}\right)  ^{+},$ which is immediate.
\end{proof}

\qquad\ \ \ \qquad\ \ 

We can then prove the following:

\begin{proposition}
Let $\mathcal{F}$ be a filter.

\begin{enumerate}
\item Let $M$ be a countable elementary submodel, $p\in\mathbb{PT}\left(
\mathcal{F}\right)  $, $D\subseteq\mathbb{PT}\left(  \mathcal{F}\right)  $ an
open dense subset with $p,D\in M$ and $s\in split\left(  p\right)  .$ There is
$q\leq p$ with $stem\left(  q\right)  =s$ such that $q\Vdash``\dot{G}\cap
D\cap M\neq\emptyset\textquotedblright.$ $\ \ \ \ \ \ $

\item $\mathbb{PT}\left(  \mathcal{F}\right)  $ is proper.
\end{enumerate}
\end{proposition}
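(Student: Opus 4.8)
The plan is to prove (1) first and then obtain (2) as an easy consequence via the standard characterization of properness through countable elementary submodels. For (1), the idea is to build $q$ by a fusion along the splitting-node structure of $p$, where at each stage we pass into the dense set $D$ using the sets $E(D,p,s)$ and the fact, established in the previous lemma, that these sets lie in $(\mathcal{F}^{<\omega})^{+}$. Concretely, starting from $p_{s}$ (whose stem is $s$), I would enumerate the open dense subsets of $\mathbb{PT}(\mathcal{F})$ lying in $M$ as $\{D_{n}\mid n\in\omega\}$ with $D_{0}=D$, and recursively shrink $p$: having reached a condition whose splitting structure is determined up to level $n$, for each $n$-splitting node $t$ we replace $p_{t}$ by a condition $q_{t}\le p_{t}$ with $\mathrm{stem}(q_{t})\in E(D_{n},p,t)$ and $q_{t}\in D_{n}$ — note such $q_{t}$ exists by definition of $E(D_{n},p,t)$, and it can be chosen inside $M$ by elementarity since $p,D_{n}\in M$. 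Then I need that the successor sets one keeps at each splitting node still form a set in $(\mathcal{F}^{<\omega})^{+}$: this is exactly the content of the lemma $E(D,p,s)\in(\mathcal{F}^{<\omega})^{+}$, applied $n$-splitting node by $n$-splitting node, together with point 3 of the basic lemma on $(\mathcal{F}^{<\omega})^{+}$ (passing to non-empty subsets preserves positivity) to handle the fact that one must thin $E(D_{n},p,t)$ to make the different cones compatible.

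The fusion condition $q=\bigcap_{n}q_{n}$ is then a Miller tree (every node extends to a splitting node because at each stage we only ever deleted nodes above level-$n$ splitting nodes in a way that kept cofinally many further splitting nodes) with $\mathrm{stem}(q)=s$, and by construction $spsuc_{q}(t)\in(\mathcal{F}^{<\omega})^{+}$ for every splitting node $t$, so $q\in\mathbb{PT}(\mathcal{F})$ and $q\le p$. For the genericity clause, fix any $r\le q$; its stem, or some extension of its stem, must pass through one of the nodes at which we forced into $D_{0}=D$ via a stem in $E(D,p,s)$ — more precisely, since every splitting node of $q$ of level $1$ has stem in (a thinned copy of) $E(D,p,s)$, and $r$ must contain such a node below some splitting node, one reads off a condition in $D\cap M$ compatible with $r$; hence $q\Vdash``\dot G\cap D\cap M\ne\emptyset"$.

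For (2), let $M$ be a countable elementary submodel of a large enough $H(\theta)$ with $\mathbb{PT}(\mathcal{F})\in M$ and let $p\in\mathbb{PT}(\mathcal{F})\cap M$. Applying (1) with $s=\mathrm{stem}(p)$ and running the construction simultaneously against all open dense $D\in M$ (which is exactly what the enumeration $\{D_{n}\}$ above already did) produces $q\le p$ that is $(M,\mathbb{PT}(\mathcal{F}))$-generic, so $\mathbb{PT}(\mathcal{F})$ is proper. The main obstacle is the bookkeeping in the fusion: one must verify that after shrinking successor sets at all $n$-splitting nodes to make the chosen sub-stems line up into a single subtree, what remains at each splitting node is still in $(\mathcal{F}^{<\omega})^{+}$ — this is where the choice of working with $E(D,p,s)$ (a set of \emph{minimal} such $t\setminus s$) and points 1 and 3 of the basic lemma do the essential work, and where one must be careful that "open dense" (not merely dense) is used so that the shrunk conditions remain in $D_{n}$.
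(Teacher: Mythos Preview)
Your overall strategy is right, but your proof of (1) is both over-engineered and contains a real error in the key step. The paper's argument for (1) uses no fusion at all: given the single open dense $D$, one simply takes $q=\bigcup\{q(t)\mid t\setminus s\in E(D,p,s)\}$, where for each such $t$ one chooses (inside $M$, by elementarity) a condition $q(t)\in D$ with $q(t)\le p_{s}$ and $\mathrm{stem}(q(t))=t$. Minimality of $E(D,p,s)$ makes these stems pairwise incomparable, so $q$ is a Miller tree with $\mathrm{stem}(q)=s$ and $spsuc_{q}(s)=E(D,p,s)\in(\mathcal F^{<\omega})^{+}$; any $r\le q$ has stem extending some such $t$, whence $r\le q(t)\in D\cap M$. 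The fusion only enters in (2), where one applies (1) at each splitting node of the appropriate level to handle the countably many dense sets in $M$.

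The error in your write-up is the line ``for each $n$-splitting node $t$ we replace $p_{t}$ by a condition $q_{t}\le p_{t}$ with $\mathrm{stem}(q_{t})\in E(D_{n},p,t)$.'' Taken literally this picks a \emph{single} extension of $t$, so $t$ ceases to be a splitting node --- you have destroyed exactly the level-$n$ structure you claimed was fixed, and the fusion collapses. What you need (and what your later remark about $E(D,p,s)\in(\mathcal F^{<\omega})^{+}$ suggests you actually intend) is to replace $(p_{n})_{t}$ by the \emph{union} of such $q_{u}$ over all $u$ with $u\setminus t\in E(D_{n},p_{n},t)$; then $t$ remains a splitting node with $spsuc(t)=E(D_{n},p_{n},t)$. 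No ``thinning to make cones compatible'' is required: minimality already makes the stems $u$ pairwise incomparable, so the $q_{u}$'s glue cleanly, and point 3 of the basic lemma plays no role here.
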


\begin{proof}
Let $M$ be a countable elementary submodel, $p\in\mathbb{PT}\left(
\mathcal{F}\right)  $, $D\subseteq\mathbb{PT}\left(  \mathcal{F}\right)  $ an
open dense subset with $p,D\in M$ and $s\in split\left(  p\right)  .$ Since
$p,D,s\in M,$ it follows that $E\left(  D,p,s\right)  \in M.$ For every $t$
such that $t\setminus s\in E\left(  D,p,s\right)  ,$ we choose $q\left(
t\right)  \in M\cap D$ such that $q\left(  t\right)  \leq p_{s}$ and the stem
of $q\left(  t\right)  $ is $t$. Define $q=%
{\textstyle\bigcup}
\left\{  q\left(  t\right)  \mid t\setminus s\in E\left(  D,p,s\right)
\right\}  .$ We will show that $q$ has the desired properties. Define
$L=\left\{  t\mid t\setminus s\in E\left(  D,p,s\right)  \right\}  ,$ we will
first prove that $q\in\mathbb{PT}\left(  \mathcal{F}\right)  .$ Let $z\in q$
be a splitting node. If $z$ extends a $t\in L,$ then $suc_{q}\left(  z\right)
=suc_{q\left(  t\right)  }\left(  z\right)  ,$ so $suc_{q}\left(  z\right)
\in\left(  \mathcal{F}^{<\omega}\right)  ^{+}.$ Now, assume that $z$ does not
extend an element of $L,$ it is enough to prove that $suc_{q}\left(  z\right)
=suc_{p}\left(  z\right)  .$ Let $n$ such that $z\in split_{n}\left(
q\right)  $ and $w\in split_{n+1}\left(  q\right)  $ such that $z\subseteq w.$
Since $z$ does not extend an element of $L,$ we know that $w$ can be extended
to a $t\in L,$ so $w\in q\left(  t\right)  .$ Note that this argument shows
that $stem\left(  q\right)  =s.$


We will now prove that $q\Vdash``\dot{G}\cap D\cap M\neq\emptyset
\textquotedblright.$ Let $q_{1}\leq q$. We may assume that $q_{1}\in D.$ Let
$w=stem\left(  q_{1}\right)  .$ By construction, (recall that we took the
minimal elements) we know there is $t\in L$ such that $t\subseteq w$, in this
way $q_{1}\leq q\left(  t\right)  ,$ so $q_{1}\Vdash``q\left(  t\right)
\in\dot{G}\cap D\cap M\textquotedblright$ and we are done.


The second part of the lemma follows by the first part and a fusion argument.
\end{proof}

\qquad\ \ \ \ \qquad\qquad\qquad\qquad\qquad\ \ 

We will need the following lemma:

\begin{lemma}
Let $\mathcal{F}$ be a Canjar filter, $\left\{  D_{n}\mid n\in\omega\right\}
$ open dense subsets of $\mathbb{PT}\left(  \mathcal{F}\right)  $ and
$p\in\mathbb{PT}\left(  \mathcal{F}\right)  $ with $stem\left(  p\right)  =s.$
There are $q\in\mathbb{PT}\left(  \mathcal{F}\right)  $ and $\left\langle
F_{n}\right\rangle _{n\in\omega}$ such that the following holds:

\begin{enumerate}
\item $q\leq p$ and $stem\left(  q\right)  =stem\left(  p\right)  .$

\item $F_{n}$ is a finite subset of $\left[  \omega\right]  ^{<\omega
}\setminus\left\{  \emptyset\right\}  $ for every $n\in\omega.$

\item For every $n\in\omega,$ if $t_{0}\in F_{n}$ and $t_{1}\in F_{n+1},$ then
$max\left(  t_{0}\right)  <min\left(  t_{1}\right)  .$

\item $spsuc_{q}\left(  s\right)  =%
{\textstyle\bigcup\limits_{n\in\omega}}
F_{n}.$

\item If $t\in F_{n},$ then $q_{s^{\frown}t}\in D_{n}.$
\end{enumerate}
\end{lemma}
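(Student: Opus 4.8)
The plan is to build $q$ and the sequence $\langle F_n\rangle_{n\in\omega}$ simultaneously by recursion on $n$, using the "cutting" lemma for Canjar filters (Lemma~\ref{CanjarParticion}) to ensure that the finite layers $F_n$ are separated as required in clause~3 while keeping the union $\bigcup_n F_n$ positive. First I would set up the right sequence of positive sets to feed into Lemma~\ref{CanjarParticion}. For each potential immediate-split-successor candidate at the stem $s$, I want to know along which extensions $t$ (with $t\setminus s$ a split successor of $s$) one can steer into $D_n$; concretely, consider the set $E(D_n, p_s, s)$, which by the previous lemma lies in $(\mathcal F^{<\omega})^+$. Intersecting appropriately, for each $n$ I obtain a positive set $X_n\subseteq(\mathcal F^{<\omega})^+$ of "good" one-step split-successor extensions $t\setminus s$ that land in $D_n$. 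Since $\mathcal F$ is Canjar, applying Lemma~\ref{CanjarParticion} to $\overline X=\{X_n\mid n\in\omega\}$ yields an increasing $f\in\omega^\omega$ (I only need a function, not an element of some fixed $\mathfrak b$-family, so plain Canjar suffices) such that setting $Y_n=\{s'\in X_n\mid s'\subseteq [f(n-1),f(n))\}$ makes $\bigcup_n Y_n\in(\mathcal F^{<\omega})^+$.

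The catch is that one pass does not give a tree: choosing, for each $t$ with $t\setminus s\in \bigcup_n Y_n$, a condition $q(t)\le p_s$ with $\mathrm{stem}(q(t))=s^\frown(t\setminus s)$ and $q(t)\in D_n$, and gluing them together as in the properness proof, gives a condition $q'\le p$ with $\mathrm{stem}(q')=s$ and $\mathrm{spsuc}_{q'}(s)=\mathrm{minimal}(\bigcup_n Y_n)$, which is positive and layered correctly — so clauses 1–4 and clause 5 are already in hand for this $q'$, taking $F_n=\{t\setminus s\mid t\in \mathrm{minimal}(Y_n)\}$. The only remaining subtlety is cosmetic: we should pass to $\mathrm{minimal}$ of each $Y_n$ (still positive, as noted in the excerpt) so that the $F_n$ are genuinely finite — wait, positivity does not by itself give finiteness. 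So the actual recursion is: at stage $n$, having fixed $F_0,\dots,F_{n-1}$ and an auxiliary "tail" positive set, I choose $F_n$ to be a \emph{finite} subset of $Y_n$ that is still "large enough" to continue — but a finite subset of a positive set need not be positive, so I cannot literally demand $F_n$ positive. This is why the statement only asks that $\bigcup_n F_n$ be equal to $\mathrm{spsuc}_q(s)$ (clause~4), with positivity of that union forced by the construction of $q$, not by any single $F_n$.

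So the correct bookkeeping is: I do \emph{not} prune to finite sets prematurely. Instead I run the recursion to pick, for $t\setminus s$ ranging over $\bigcup_n\mathrm{minimal}(Y_n)$, the conditions $q(t)$, glue to get $q\le p$ with $\mathrm{stem}(q)=s$ as in the properness lemma, and then \emph{define} $F_n$ to be $\{t\setminus s\mid t\setminus s\in\mathrm{minimal}(Y_n)\text{ and }t\setminus s\in\mathrm{spsuc}_q(s)\}$. Clause~4 holds because $\mathrm{spsuc}_q(s)=\bigcup_n F_n$ by construction; clause~3 holds because $Y_n\subseteq[f(n-1),f(n))^{<\omega}$ and the $f$-intervals are disjoint and increasing, so $\max t_0<f(n)\le f(n)\le\min t_1$ — here I need $f(n-1)\le\min t_1$ and $\max t_0<f(n)$, which the interval condition gives; clause~5 holds because each $t\setminus s\in F_n$ came with $q(t)\le p_s$, $\mathrm{stem}(q(t))=s^\frown(t\setminus s)$, $q(t)\in D_n$, and $q_{s^\frown t}=q(t)$. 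The one genuine point requiring care — the main obstacle — is finiteness of each $F_n$: it is \emph{not} automatic, and in fact the statement as written (clause~2) forces us to shrink. The resolution is to shrink $Y_n$ to a finite $F_n$ \emph{before} gluing, accepting that $\bigcup_n F_n$ is what we glue along; then positivity of $\bigcup_n F_n$ is exactly what Lemma~\ref{CanjarParticion} guarantees provided we are allowed to choose, within each $Y_n$, finitely many elements whose union over $n$ stays positive — which is literally the conclusion of Lemma~\ref{CanjarParticion} if we first replace $X_n$ by $Y_n$ and reapply, or more directly by noting that Lemma~\ref{CanjarParticion}'s $Y_n$ are the $s'\in X_n$ inside a fixed finite interval, hence $Y_n\subseteq[\omega]^{<\omega}\cap\wp([f(n-1),f(n)))$ is \emph{finite} already. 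That last observation is the crux: the $Y_n$ produced by Lemma~\ref{CanjarParticion} are automatically finite because they consist of subsets of the bounded interval $[f(n-1),f(n))$, so we may simply take $F_n=\{t\setminus s\mid t\in Y_n'\}$ for a suitable $Y_n'\subseteq Y_n$ guided by the good-extension sets $X_n$, and no extra pruning is needed. With that, clauses~1–5 follow as above, and the proof concludes with the standard glue-and-verify argument from the properness lemma.
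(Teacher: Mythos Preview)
Your proposal is correct and follows essentially the same route as the paper: set $X_n=E(D_n,p,s)\in(\mathcal F^{<\omega})^+$, apply Lemma~\ref{CanjarParticion} to obtain $f$ and $Y_n=\{t\in X_n\mid t\subseteq[f(n-1),f(n))\}$ with $\bigcup_n Y_n\in(\mathcal F^{<\omega})^+$, choose for each $t\in Y_n$ a condition $q(t)\le p_{s^\frown t}$ in $D_n$ with stem $s\cup t$, and glue. The long detour in the middle of your write-up---worrying about whether the $F_n$ can be made finite---is unnecessary: as you yourself note at the end, each $Y_n$ is automatically finite because it consists of nonempty subsets of the bounded interval $[f(n-1),f(n))$, so one may simply take $F_n=Y_n$ and the argument is complete without any further pruning or second application of the lemma.
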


\begin{proof}
For every $n\in\omega,$ define $X_{n}=E\left(  D_{n},p,s\right)  $ (recall
that $E\left(  D,p,s\right)  =minimal\left(  \left\{  t\setminus s\mid\exists
q\leq p_{s}\left(  stem\left(  q\right)  =t\wedge q\in D\right)  \right\}
\right)  $). We know that $X_{n}\in\left(  \mathcal{F}^{<\omega}\right)
^{+}.$ By the previous result, we can find an increasing function $f\in
\omega^{\omega}$ such that if $Y_{n}=\left\{  s\in X_{n}\mid s\subseteq\lbrack
f\left(  n-1\right)  ,f\left(  n\right)  )\right\}  $ (for every $n\in\omega$)
then $%
{\textstyle\bigcup\limits_{n\in\omega}}
Y_{n}\in\left(  \mathcal{F}^{<\omega}\right)  ^{+}.$


For every $t\in Y_{n},$ choose $q\left(  t\right)  \leq p_{s^{\frown}t}$ such
that $q\left(  t\right)  \in D_{n}$ and the stem of $q\left(  t\right)  $ is
$s\cup t.$ Define $q=%
{\textstyle\bigcup\limits_{t\in Y}}
q\left(  t\right)  .$
\end{proof}

\qquad\ \ \ 

By taking all the open dense sets to be the same, we obtain the following:

\begin{corollary}
Let $\mathcal{F}$ be a Canjar filter, $D$ an open dense subset of
$\mathbb{PT}\left(  \mathcal{F}\right)  ,$ $p\in\mathbb{PT}\left(
\mathcal{F}\right)  $ with $stem\left(  p\right)  =s.$ There is $q\in
\mathbb{PT}\left(  \mathcal{F}\right)  $ such that the following holds:

\begin{enumerate}
\item $q\leq p$ and $stem\left(  q\right)  =stem\left(  p\right)  .$

\item If $t\in spsuc_{q}\left(  s\right)  $ then $q_{s^{\frown}t}\in
D.$\bigskip
\end{enumerate}
\end{corollary}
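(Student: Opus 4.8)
The plan is to obtain the corollary as a direct specialization of the preceding lemma. I would simply invoke the lemma with the sequence of open dense sets $\left\{ D_{n}\mid n\in\omega\right\}$ where $D_{n}=D$ for every $n\in\omega$. The lemma then produces a condition $q\leq p$ with $stem\left(q\right)=stem\left(p\right)=s$, together with finite sets $F_{n}$ satisfying conclusions (2)--(5) of the lemma. In particular, conclusion (4) gives $spsuc_{q}\left(s\right)=\bigcup_{n\in\omega}F_{n}$, so that every $t\in spsuc_{q}\left(s\right)$ lies in some $F_{n}$, and conclusion (5) then says $q_{s^{\frown}t}\in D_{n}=D$. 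This yields exactly property (2) of the corollary, and property (1) is inherited verbatim from conclusion (1) of the lemma.

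So the write-up is essentially one line: apply the lemma with $D_{n}=D$ for all $n$. The only thing worth remarking on — and the step that requires the tiniest bit of care — is the bookkeeping that the sets $F_{n}$ in the lemma genuinely partition $spsuc_{q}\left(s\right)$ (by conclusion (4)), so that the hypothesis "$t\in spsuc_{q}\left(s\right)$" in the corollary does unambiguously pin down the $n$ for which $q_{s^{\frown}t}\in D_{n}$ applies; since all the $D_{n}$ coincide with $D$, even this ambiguity is irrelevant. There is no real obstacle here: the content was already done in the lemma, and the corollary is the stated "all open dense sets equal" case, exactly as the connecting sentence ("By taking all the open dense sets to be the same, we obtain the following") announces.

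\begin{proof}
Apply the previous lemma to $p$, $s=stem\left(p\right)$, and the sequence $\left\{ D_{n}\mid n\in\omega\right\}$ given by $D_{n}=D$ for every $n\in\omega$. We obtain $q\in\mathbb{PT}\left(\mathcal{F}\right)$ and finite sets $\left\langle F_{n}\right\rangle_{n\in\omega}$ such that $q\leq p$, $stem\left(q\right)=stem\left(p\right)$, $spsuc_{q}\left(s\right)=\bigcup_{n\in\omega}F_{n}$, and $q_{s^{\frown}t}\in D_{n}$ whenever $t\in F_{n}$. Now if $t\in spsuc_{q}\left(s\right)$, then $t\in F_{n}$ for some $n\in\omega$, hence $q_{s^{\frown}t}\in D_{n}=D$. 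This gives both conclusions of the corollary.
\end{proof}
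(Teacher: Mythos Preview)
Your proof is correct and takes exactly the approach the paper intends: the paper does not even give a separate proof, but simply introduces the corollary with the sentence ``By taking all the open dense sets to be the same, we obtain the following,'' which is precisely your specialization $D_{n}=D$.
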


We can now prove the following result:

\begin{proposition}
Let $\mathcal{B}$ be a $\mathfrak{b}$-family and $\mathcal{F}$ a $\mathcal{B}%
$--Canjar filter.

\begin{enumerate}
\item Let $p\in\mathbb{PT}\left(  \mathcal{F}\right)  ,$ $s\in split_{m}%
\left(  p\right)  $ and $\dot{g}$ such that $p\Vdash``\dot{g}\in\omega
^{\omega}\textquotedblright.$ There are $q\in\mathbb{PT}\left(  \mathcal{F}%
\right)  ,$ $f\in\mathcal{B}$ and $\left\langle F_{n}\right\rangle
_{n\in\omega}$ such that the following holds:

\begin{enumerate}
\item $q\leq p,$ $s\in q$ and if $t\in p$ is incomparable with $s,$ then $t\in
q$ and $q_{t}=p_{t}.$

\item For every $n\in\omega,$ if $t_{0}\in F_{n}$ and $t_{1}\in F_{n+1},$ then
$max\left(  t_{0}\right)  <min\left(  t_{1}\right)  .$

\item $spsuc_{q}\left(  s\right)  =%
{\textstyle\bigcup\limits_{n\in\omega}}
F_{n}.$

\item If $t\in F_{n}$ then $q_{t}\Vdash``\dot{g}\left(  n\right)  <f\left(
n\right)  \textquotedblright.$
\end{enumerate}

\item $\mathbb{PT}\left(  \mathcal{F}\right)  $ preserves $\mathcal{B}$.
\end{enumerate}
\end{proposition}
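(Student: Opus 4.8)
The plan is to prove part (1) first, then derive part (2) as an immediate consequence. For part (1), I would combine the fusion technique already used in the proof of properness with the partition lemma (Lemma \ref{CanjarParticion}) applied to the sequence of sets $E(D, p, s)$ — except that here, instead of the sets $E(D_n, p, s)$ measuring where open dense sets can be entered, I need sets recording where the value $\dot g(n)$ can be bounded by an element of $\mathcal{B}$. The first step is to freeze the part of $p$ incomparable with $s$ (this gives clause (a) for free and reduces us to working below $p_s$). Next, for each $n \in \omega$, I would define
$$
X_n = \left\{ t \setminus s \mid \exists q \leq p_s \ \big( stem(q) = t \ \wedge \ \exists k\, (q \Vdash ``\dot g(n) = k") \big) \right\},
$$
or more precisely its set of minimal elements, so that $X_n \in (\mathcal{F}^{<\omega})^+$ by the same argument as in the lemma preceding the properness proposition. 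The key point is that extending the stem past a splitting node and then further into a condition forcing a value for $\dot g(n)$ is always possible, so these sets are positive.

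The heart of the argument is then to apply Lemma \ref{CanjarParticion} (the $\mathcal{B}$-Canjar partition lemma) to $\overline{X} = \{ X_n \mid n \in \omega \}$: this produces $f \in \mathcal{B}$ such that, setting $Y_n = \{ t \in X_n \mid t \subseteq [f(n-1), f(n)) \}$, we have $\bigcup_n Y_n \in (\mathcal{F}^{<\omega})^+$. For each $t \in Y_n$ I would then pick $q(t) \leq p_{s^\frown t}$ with $stem(q(t)) = s \cup t$ and $q(t)$ deciding $\dot g(n)$, say $q(t) \Vdash ``\dot g(n) = k_t"$; since $t \subseteq [f(n-1), f(n))$ we have $\max(t) < f(n)$, and by shrinking $q(t)$ further (or just choosing it correctly from the start, enlarging $f$ on the relevant coordinate if necessary — but here $f$ is already fixed, so instead I observe that the value $k_t$ forced is some natural number and I only need $k_t < f(n)$). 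This last point is the one to be careful about: I must set up $X_n$ so that the minimal $t$ witnessing entry into a value-deciding condition already forces $\dot g(n) < \max(t) + 1 \leq f(n)$, which I can arrange by demanding in the definition of $X_n$ that the witnessing $q$ forces $\dot g(n) \leq \max(t\setminus s)$ — exactly as in the proof of Proposition 1.7 (the $\mathbb{M}(\mathcal{F})$ case), where the sets $X_m$ were defined to force $\dot g(m) < \max(t)$. With this adjustment, setting $F_n = Y_n$ and $q = \bigcup_{t} q(t)$ over all $t \in \bigcup_n Y_n$, the verification that $q \in \mathbb{PT}(\mathcal{F})$ is the same splitting-node bookkeeping as in the properness proof (nodes below some $t \in \bigcup Y_n$ inherit their successors from $q(t)$; nodes above inherit from $p$), clauses (b), (c), (d) are then read off, and $stem(q) = s$.

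For part (2), suppose toward a contradiction that $\dot g$ is a name for a function and $p \Vdash ``\dot g$ dominates $\mathcal{B}"$. Passing to $p_s$ where $s = stem(p)$, apply part (1) with this $s$ to get $q \leq p$, $f \in \mathcal{B}$ and $\langle F_n \rangle$ with $spsuc_q(s) = \bigcup_n F_n$ and $q_t \Vdash ``\dot g(n) < f(n)"$ for $t \in F_n$. Now any condition $q' \leq q$ has its stem passing through some $s^\frown t$ with $t \in F_n$ for some $n$ (since the immediate splitting-successors of $s$ in $q$ are exactly $\bigcup_n F_n$), hence $q' \leq q_{s^\frown t} \Vdash ``\dot g(n) < f(n)"$, so $q' \not\Vdash ``f \leq^* \dot g"$; but this holds for a dense set of $q'$ below $q$, contradicting $q \Vdash ``\dot g$ dominates $\mathcal{B}" \ni f$. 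Hence $\mathcal{B}$ remains unbounded, i.e. $\mathbb{PT}(\mathcal{F})$ preserves $\mathcal{B}$.

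The main obstacle I anticipate is the precise formulation of the sets $X_n$ so that the partition lemma delivers an $f$ that simultaneously (i) bounds the forced value $\dot g(n)$ at each coordinate and (ii) has $\bigcup Y_n$ positive; the trick, as in Proposition 1.7, is to build the bound "$\dot g(n) \leq \max(t \setminus s)$" directly into membership in $X_n$, so that restricting to $t \subseteq [f(n-1), f(n))$ automatically forces $\dot g(n) < f(n)$. Once that is set up correctly, everything else is the routine fusion/bookkeeping already rehearsed in the properness argument and in Lemma \ref{CanjarParticion}.
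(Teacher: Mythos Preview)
Your argument for part (1) is correct and essentially identical to the paper's: define the sets $X_n$ by requiring the witnessing condition to force $\dot g(n)<\max(t\setminus s)$, apply the partition lemma (Lemma \ref{CanjarParticion}) to get $f\in\mathcal{B}$ and the blocks $Y_n=F_n$, and assemble $q$ as the union of the chosen $q(t)$'s.

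Your argument for part (2), however, has a genuine gap. Applying part (1) once at the stem $s$ only gives you, for each $q'\leq q$, a \emph{single} index $n_0$ (determined by which $F_{n_0}$ the stem of $q'$ passes through) with $q'\Vdash``\dot g(n_0)<f(n_0)\textquotedblright$. From this you cannot conclude $q'\not\Vdash``f\leq^*\dot g\textquotedblright$: the relation $\leq^*$ permits finitely many exceptions, so it is perfectly consistent that $q'\Vdash``\dot g(n_0)<f(n_0)\textquotedblright$ \emph{and} $q'\Vdash``\forall n>n_0\,(f(n)\leq\dot g(n))\textquotedblright$. In particular, once the stem of $q'$ has committed to some $t\in F_{n_0}$, nothing in your construction controls $\dot g(n)$ for $n\neq n_0$ below $q_{s^\frown t}$, so you cannot produce arbitrarily large $n$ with $\dot g(n)<f(n)$ densely below $q$.

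The paper repairs this by applying part (1) at \emph{every} splitting node via a fusion, obtaining $f_s\in\mathcal{B}$ and blocks $\langle F^s_n\rangle_{n\in\omega}$ for each $s\in split(p)$. Then the $\mathfrak{b}$-family property yields a single $f\in\mathcal{B}$ with $f_s\leq^* f$ for all $s$; thinning each $spsuc(s)$ to $\bigcup_{n>m_s}F^s_n$ produces a condition $q$ in which, at every splitting node $s$ and for every $m$, there are splitting successors $t$ with $q_{s^\frown t}\Vdash``\dot g(n)<f(n)\textquotedblright$ for some $n>m$. This is exactly what is needed for the density argument to go through, and it genuinely requires the countable directedness of $\mathcal{B}$, which your single-node approach never invokes.
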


\begin{proof}
We first prove point 1. Let $p\in\mathbb{PT}\left(  \mathcal{F}\right)  ,$
$s\in split_{m}\left(  p\right)  $ and $\dot{g}$ such that $p\Vdash``\dot
{g}\in\omega^{\omega}\textquotedblright.$ Define $D_{n}\subseteq
\mathbb{PT}\left(  \mathcal{F}\right)  $ as the set of all $q$ such that
$q\Vdash``\dot{g}\left(  n\right)  <max\left(  stem\left(  q\right)  \right)
\textquotedblright.$ It is easy to see that each $D_{n}$ is an open dense
subset of $\mathbb{PT}\left(  \mathcal{F}\right)  .$\ We now apply the
previous lemma.


We will now prove that $\mathbb{PT}\left(  \mathcal{F}\right)  $ preserves
$\mathcal{B}$ as an unbounded family. Let $p\in\mathbb{PT}\left(
\mathcal{F}\right)  $ and $\dot{g}$ such that $p\Vdash``\dot{g}\in
\omega^{\omega}\textquotedblright.$\ By the previous point and a fusion
argument, we may assume that for every $s\in split_{m}\left(  p\right)  $
there are $f_{s}\in\mathcal{B}$ and $\left\langle F_{n}^{s}\right\rangle
_{n\in\omega}$ such that the following holds:

\begin{enumerate}
\item Each $F_{n}^{s}$ is a finite subset of $p.$

\item $spsuc_{p}\left(  s\right)  =%
{\textstyle\bigcup\limits_{n\in\omega}}
F_{n}^{s}.$

\item if $t\in F_{n}^{s}$ then $p_{t}\Vdash``\dot{g}\left(  n\right)
<f_{s}\left(  n\right)  \textquotedblright.$
\end{enumerate}

\qquad\ \ 

Since $\mathcal{B}$ is a $\mathfrak{b}$-family, we can find $f\in\mathcal{B}$
such that $f_{s}\leq^{\ast}f$ for every $s\in split\left(  p\right)  .$ For
every $s\in split\left(  p\right)  ,$ let $m_{s}$ such that if $i>m_{s}$ then
$f_{s}\left(  i\right)  \leq f\left(  i\right)  .$ We can recursively build
$q\leq p$ such that $split\left(  q\right)  =split\left(  p\right)  \cap q$
and $suc_{q}\left(  s\right)  =%
{\textstyle\bigcup\limits_{n>m_{s}}}
F_{n}^{s}.$ It is easy to see that $q$ forces that $\dot{g}$ does not dominate
$f.$
\end{proof}

\qquad\ \ 

It is worth mentioning that $\mathbb{PT}\left(  \mathcal{F}\right)  $ may add
dominating reals for certain filters $\mathcal{F}.$ The simplest example is
taking $\mathcal{F}$ to be \textsf{(fin}$\times$\textsf{fin)}$^{\ast}.$

\qquad\ \ \ \ 

The following is a very useful fact about our forcings:

\begin{lemma}
[Pure decision property]Let $\mathcal{F}$ be a Canjar filter, $p\in
\mathbb{PT}\left(  \mathcal{F}\right)  $ and $A$ a finite set. If $\dot{x}$ is
a $\mathbb{PT}\left(  \mathcal{F}\right)  $-name such that $p\Vdash``\dot
{x}\in A\textquotedblright$ then there are $q\leq p$ and $a\in A$ such that
$stem\left(  q\right)  =stem\left(  p\right)  $ and $q\Vdash``\dot
{x}=a\textquotedblright.$
\end{lemma}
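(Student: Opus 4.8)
The plan is to use the corollary to the fusion lemma (the one obtained by taking all open dense sets equal) to reduce the claim to a finite combinatorial choice at the level of the immediate splitting successors of the stem. First I would let $s=\mathrm{stem}(p)$ and consider, for each $a\in A$, the set $D_a$ of all conditions $r\in\mathbb{PT}(\mathcal{F})$ that decide the value of $\dot{x}$, i.e. there is $b\in A$ with $r\Vdash``\dot{x}=b\textquotedblright$. Since $p\Vdash``\dot{x}\in A\textquotedblright$ and $A$ is finite, $D=\bigcup_{a\in A}D_a$ is open dense below $p$; in fact, by a genericity/density argument, below any condition one can first decide $\dot{x}$, so $D$ is open dense. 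Applying the Corollary to $D$, $p$ and $s$, I obtain $q_0\le p$ with $\mathrm{stem}(q_0)=s$ such that for every $t\in spsuc_{q_0}(s)$ the condition $(q_0)_{s^{\frown}t}$ lies in $D$, hence decides $\dot{x}$ to some value $c(t)\in A$.

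Next I would use that $spsuc_{q_0}(s)\in(\mathcal{F}^{<\omega})^{+}$ together with point 2 of the basic lemma on $(\mathcal{F}^{<\omega})^{+}$ (finite additivity: if $X=A\cup B$ is positive then one of $A,B$ is positive), iterated $|A|$ many times, to find a single value $a\in A$ such that the set $T=\{t\in spsuc_{q_0}(s)\mid c(t)=a\}$ is still in $(\mathcal{F}^{<\omega})^{+}$. I then let $q$ be the subtree of $q_0$ obtained by keeping, above the first-splitting successors of $s$, exactly those cones $(q_0)_{s^{\frown}t}$ with $t\in T$, and leaving $q_0$ untouched elsewhere (in particular on the part of $q_0$ incomparable with $s$, and below $s$ where $q_0$ agrees with $p$ on a single branch up to $s$). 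Because $T$ is $(\mathcal{F}^{<\omega})^{+}$ and each retained $(q_0)_{s^{\frown}t}\in\mathbb{PT}(\mathcal{F})$, the resulting $q$ is again in $\mathbb{PT}(\mathcal{F})$ and $\mathrm{stem}(q)=\mathrm{stem}(p)=s$. Finally, since every condition below $q$ extends some retained cone $(q_0)_{s^{\frown}t}$ with $t\in T$, and that cone forces $\dot{x}=a$, we get $q\Vdash``\dot{x}=a\textquotedblright$.

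The main obstacle, and the only point that needs a little care, is verifying that the pruned tree $q$ still belongs to $\mathbb{PT}(\mathcal{F})$ — specifically, that $q$ is a Miller tree (every node extends to a splitting node) and that $spsuc_{q}(s')\in(\mathcal{F}^{<\omega})^{+}$ at \emph{every} splitting node $s'$, not just at $s$. At $s$ this is exactly the fact that $T$ is $(\mathcal{F}^{<\omega})^{+}$; at splitting nodes strictly above $s^{\frown}t$ for $t\in T$ it holds because those nodes and their spsuc-sets are inherited unchanged from $(q_0)_{s^{\frown}t}\in\mathbb{PT}(\mathcal{F})$; at nodes incomparable with $s$ they are inherited from $q_0$ (hence from $p$). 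The Miller-tree property is preserved for the same reason, since we are only discarding whole cones above some of the $n$-splitting nodes sitting immediately above $s$ and never truncating a branch. Once this bookkeeping is in place, the argument is complete.
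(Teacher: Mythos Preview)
Your proposal is correct and follows essentially the same approach as the paper: apply the Corollary (with the single open dense set $D$ of conditions deciding $\dot{x}$) to obtain $q_0\le p$ with the same stem such that each cone above a first-level splitting successor decides $\dot{x}$, then use finiteness of $A$ and the finite-additivity of $(\mathcal{F}^{<\omega})^{+}$ to thin to a single value. One small remark: since $s$ is the \emph{stem} of $q_0$, there are no nodes of $q_0$ incomparable with $s$, so that part of your bookkeeping is vacuous---but harmless.
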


\begin{proof}
Let $D$ be the set of all $q\in\mathbb{PT}\left(  \mathcal{F}\right)  $ for
which there is $a_{q}\in A$ such that $q\Vdash``\dot{x}=a_{q}%
\textquotedblright.$ Since $D$ is an open dense set, by the previous results,
we can find $\overline{p}\leq p$ with the following properties:

\begin{enumerate}
\item $stem\left(  \overline{p}\right)  =stem\left(  p\right)  .$

\item If $t\in split_{1}\left(  \overline{p}\right)  $ then $\overline{p}%
_{t}\in D.$
\end{enumerate}

\qquad\ \ \ 

Finally, since $A$ is finite, we can find $q\leq p$ with $stem\left(
q\right)  =stem\left(  p\right)  $ and $a\in A$ such that $q_{t}\Vdash
``\dot{x}=a\textquotedblright$ for every $t\in split_{1}\left(  q\right)  .$
It follows that $q\Vdash``\dot{x}=a\textquotedblright.$
\end{proof}

\qquad\ \ 

By $\mathcal{K}_{\sigma}$ we denote the ideal generated by all $\sigma
$-compact sets on the Baire space. For every function $L:\omega^{<\omega
}\longrightarrow\omega,$ let $K\left(  L\right)  $ be the set defined as
$\left\{  x\in\omega^{\omega}\mid\forall^{\infty}n\left(  x\left(  n\right)
\leq L\left(  x\upharpoonright n\right)  \right)  \right\}  .$ It is easy to
see that $K\left(  L\right)  \in\mathcal{K}_{\sigma}.$ It is well known that
for every $K\in\mathcal{K}_{\sigma}$ there is $f\in\omega^{\omega}$ such that
$\mathcal{K}\subseteq\left\{  g\in\omega^{\omega}\mid g\leq^{\ast}f\right\}  $
(see \cite{Barty} page 6). The following well known result follows from this remark:

\begin{lemma}
Let $\mathcal{B}$ $\subseteq\omega^{\omega}$ be an unbounded family. If
$\mathbb{P}$ is a forcing that preserves $\mathcal{B},$ then $\mathbb{P\Vdash
}``\mathcal{B}\notin\mathcal{K}_{\sigma}\textquotedblright.$
\end{lemma}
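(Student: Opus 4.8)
The plan is to reduce the statement to the following purely combinatorial fact in ZFC: \emph{no $\leq^{\ast}$-unbounded family of functions belongs to $\mathcal{K}_{\sigma}$}. Granting this, the lemma is immediate: let $G$ be $\mathbb{P}$-generic over $V$. By hypothesis $\mathbb{P}$ preserves $\mathcal{B}$, so $\mathcal{B}$ is still unbounded in $V[G]$, and since the combinatorial fact is a theorem of ZFC it applies inside $V[G]$, giving $\mathcal{B}\notin\mathcal{K}_{\sigma}$ there. As this holds for every generic $G$, we conclude $\mathbb{P}\Vdash``\mathcal{B}\notin\mathcal{K}_{\sigma}"$.

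It remains to prove the combinatorial fact, which I would do by contradiction. Suppose $\mathcal{C}\subseteq\omega^{\omega}$ is unbounded and $\mathcal{C}\in\mathcal{K}_{\sigma}$. Since $\sigma$-compact subsets of the Baire space are closed under countable unions, the ideal generated by them consists exactly of the sets covered by a single $\sigma$-compact set; hence there is a $\sigma$-compact $K\subseteq\omega^{\omega}$ with $\mathcal{C}\subseteq K$. By the fact recalled just above the lemma (that for every $K\in\mathcal{K}_{\sigma}$ there is $f\in\omega^{\omega}$ with $K\subseteq\{g\in\omega^{\omega}\mid g\leq^{\ast}f\}$), fix such an $f$. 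Then every element of $\mathcal{C}$ is $\leq^{\ast}f$, so $f$ is a $\leq^{\ast}$-upper bound for $\mathcal{C}$, contradicting the unboundedness of $\mathcal{C}$.

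There is no real obstacle here; the only points requiring a modicum of care are (i) verifying that membership in the ideal $\mathcal{K}_{\sigma}$ is equivalent to being contained in one $\sigma$-compact set, which follows from closure of the $\sigma$-compact sets under countable unions, and (ii) noting that the domination fact for $\mathcal{K}_{\sigma}$, being a $\Pi$-statement about $\omega^{\omega}$ provable in ZFC, is absolute enough to be invoked directly inside the forcing extension $V[G]$ rather than only in $V$. With these observations the argument is a short chain of definitional unwindings.
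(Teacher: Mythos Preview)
Your proposal is correct and is exactly the argument the paper has in mind: the paper does not spell out a proof but simply says the lemma ``follows from this remark'' (namely that every $K\in\mathcal{K}_{\sigma}$ is contained in some $\{g\mid g\leq^{\ast}f\}$), and your write-up just unwinds that one-line reduction. The only simplification you could make is to note that the paper's remark is already stated for arbitrary $K\in\mathcal{K}_{\sigma}$, not merely for $\sigma$-compact $K$, so your step (i) about passing to a single $\sigma$-compact cover is unnecessary.
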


\qquad\ \ \ 

If $T$ is a finite tree, we will denote the set of its maximal nodes as
$\left[  T\right]  .$ We will need the following notion:\qquad\ \ \ \ \ 

\begin{definition}
Let $p\in\mathbb{PT}\left(  \mathcal{F}\right)  $ and $T\subseteq p$ a finite
tree such that $\left[  T\right]  \subseteq split\left(  p\right)  $. We say
that $q\leq_{T}p$ if the following conditions hold:

\begin{enumerate}
\item $q\leq p.$

\item $T\subseteq q.$

\item $T\cap split\left(  q\right)  =T\cap split\left(  p\right)  .$
\end{enumerate}
\end{definition}

\qquad\ \ \ 

We fix a canonical bijection $d:\omega\longrightarrow\omega^{<\omega}$ such
that if $d\left(  m\right)  \subseteq d\left(  n\right)  $ then $m\leq n.$
Given $p\in\mathbb{PT}\left(  \mathcal{F}\right)  $ and $n\in\omega,$ we
define the set $\widetilde{T}\left(  p,n\right)  =\left\{  s\in split\left(
p\right)  \mid d^{-1}\left(  s\right)  \leq n\right\}  .$ Let $T\left(
p,n\right)  \subseteq\omega^{<\omega}$ be the smallest tree such that
$\widetilde{T}\left(  p,n\right)  \subseteq T\left(  p,n\right)  .$ It is
clear that $T\left(  p,n\right)  $ is a finite subtree of $p$ such that
$\left[  T\left(  p,n\right)  \right]  \subseteq$ $split\left(  p\right)  $.
It is easy to see that if $q\leq_{T\left(  p,n\right)  }p$ and $n\leq m,$ then
$T\left(  p,n\right)  \subseteq T\left(  q,m\right)  .$\qquad\ \ \ 

\qquad\ \ \ \ \qquad\ \ \ \ \ \ 

Let $p\in\mathbb{PT}\left(  \mathcal{F}\right)  $ and $\mathcal{B}$ a
$\mathfrak{b}$-family. We define the game $\mathcal{G}\left(  \mathcal{F}%
,p,\mathcal{B}\right)  $ as follows:

\qquad\ \ \ \qquad\ \ \qquad\qquad\ \ \ \ \ 

\begin{center}%
\begin{tabular}
[c]{|l|l|l|l|l|l|l|l|}\hline
$\mathsf{I}$ & $p_{0}$ &  & $p_{1}$ &  & $p_{2}$ &  & $...$\\\hline
$\mathsf{II}$ &  & $n_{0}$ &  & $n_{1}$ &  & $n_{2}$ & \\\hline
\end{tabular}

\end{center}

\qquad\ \ \ \qquad\ \ 


\begin{enumerate}
\item $p_{i}\in\mathbb{PT}\left(  \mathcal{F}\right)  $ and $n_{i}\in\omega$
for every $i\in\omega.$

\item $p_{0}=p.$

\item $\left\langle n_{i}\right\rangle _{i\in\omega}$ is increasing.

\item $p_{m+1}\leq_{T_{m}}p_{m}$ where $T_{m}=T\left(  p_{m},n_{m}\right)  .$
\end{enumerate}

\qquad\ \ \ 

The player $\mathsf{II}$ will \emph{win the game }$\mathcal{G}\left(
\mathcal{F},p,\mathcal{B}\right)  $ if $%
{\textstyle\bigcup}
T_{m}\in\mathbb{PT}\left(  \mathcal{F}\right)  $ and $f\in\mathcal{B}$ where
$f$ is the function given by $f\left(  i\right)  =n_{i}.$ 

\begin{proposition}
Let $\mathcal{F}$ be a filter, $p\in\mathbb{PT}\left(  \mathcal{F}\right)  $
and $\mathcal{B}$ a $\mathfrak{b}$-family. If $\mathcal{F}$ is $\mathcal{B}%
$-Canjar, then $\mathsf{I}$ does not have a winning strategy in $\mathcal{G}%
\left(  \mathcal{F},p,\mathcal{B}\right)  .$
\end{proposition}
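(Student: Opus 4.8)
The plan is to argue by contradiction: suppose $\sigma$ is a winning strategy for $\mathsf{I}$, and produce a run of $\mathcal{G}(\mathcal{F},p,\mathcal{B})$ consistent with $\sigma$ that $\mathsf{II}$ wins. When $\mathsf{I}$ follows $\sigma$, a run is completely determined by $\mathsf{II}$'s sequence $\langle n_i\rangle_{i\in\omega}$ (the move $p_0=p$ is forced, and $p_{m+1}=\sigma(\langle n_0,\dots,n_m\rangle)$), and conversely every strictly increasing sequence of naturals is a legal play for $\mathsf{II}$. So it suffices to exhibit a strictly increasing $\langle n_i\rangle$ such that, writing $p_0=p,p_1,p_2,\dots$ for $\mathsf{I}$'s $\sigma$-responses and $T_m=T(p_m,n_m)$, we have $q:=\bigcup_m T_m\in\mathbb{PT}(\mathcal{F})$ and $\langle n_i\rangle\in\mathcal{B}$; such a play is won by $\mathsf{II}$, the desired contradiction.

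I would build $\langle n_i\rangle$ recursively, simultaneously constructing $q$ and enumerating the splitting nodes of $q$ in the order they are absorbed into $q$, dedicating infinitely many rounds to each. Two features of the game make this work. First, since $p_{m+1}\le_{T_m}p_m$, once a node is recognised as a splitting node of some $T_m$ it stays a splitting node in every later $p_k$ and in $q$, and each immediate splitting successor already frozen into some $T_m$ stays frozen; so $spsuc_{q}(s)$ is, for every splitting node $s$ of $q$, an increasing union of finite sets accumulated round by round, and it is harmless that other rounds process other nodes. Second, every $p_k$ lies in $\mathbb{PT}(\mathcal{F})$, so $spsuc_{p_k}(s)\in(\mathcal{F}^{<\omega})^{+}$ at each of its splitting nodes: $\mathsf{I}$ can never destroy positivity, only postpone it by pushing the positive part out to large $d$-rank. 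Taking $n_0$ large enough puts $stem(p)$ into $T_0$, so $q$ is automatically a Miller tree with stem $stem(p)$ (every node extends to a maximal node of some $T_m$, a splitting node of the corresponding $p_m$, which becomes genuinely a splitting node of $q$ once positivity of $spsuc_{q}$ there is secured). Hence the sole task is: for each splitting node $s$ of $q$, wait long enough at the rounds devoted to it so that the set frozen into $spsuc_{q}(s)$ lies in $(\mathcal{F}^{<\omega})^{+}$, while keeping the waiting-time function inside $\mathcal{B}$.

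This is what $\mathcal{B}$-Canjarness delivers. For a splitting node $s$ processed along a set of rounds $R_s$, the candidate immediate splitting successors available at those rounds form a sequence of sets in $(\mathcal{F}^{<\omega})^{+}$ (at round $k\in R_s$ this is essentially $spsuc_{p_k}(s)$), to which one applies Lemma \ref{CanjarParticion} (after reindexing so that the game's cut-off ``$d$-rank $\le n$'' matches the cut-off in that lemma) to obtain $f^{s}\in\mathcal{B}$ whose successive intervals select out of those sets a subfamily that is still in $(\mathcal{F}^{<\omega})^{+}$, and which one freezes into $spsuc_{q}(s)$. Amalgamating the countably many $f^{s}$ into one $f\in\mathcal{B}$ with $f^{s}\le^{\ast}f$ for all $s$ — possible since $\mathcal{B}$ is a $\mathfrak{b}$-family — one lets $\mathsf{II}$ play $n_i:=f(i)$, freezing at round $i$ all currently available splitting successors of $d$-rank $\le f(i)$. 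Because $f$ eventually dominates each $f^{s}$, the set frozen into $spsuc_{q}(s)$ contains the positive subfamily $\mathcal{B}$-Canjarness produced for $s$, hence lies in $(\mathcal{F}^{<\omega})^{+}$; so $q\in\mathbb{PT}(\mathcal{F})$, and as $\langle n_i\rangle=f\in\mathcal{B}$, player $\mathsf{II}$ wins this $\sigma$-run.

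The hard part is the circularity hiding in the previous paragraph: the positive sets to which $\mathcal{B}$-Canjarness is applied at $s$ depend on $\mathsf{I}$'s responses, which depend on the earlier $n_i$'s, so one cannot list all the relevant sets in advance and invoke the lemma once; the construction has to be genuinely recursive, threading the $\mathcal{B}$-Canjar selection node by node (at the stage processing $s$, using only the part of $\mathsf{I}$'s play already committed together with the fact that $\mathsf{I}$'s future above $s$ is confined to $\mathbb{PT}(\mathcal{F})$-trees), and calling on the $\mathfrak{b}$-family closure of $\mathcal{B}$ only at the end. A second delicate point is precisely the reindexing that reconciles the game's $T(p,n)$ (a cut-off by the bijection $d$) with the truncations appearing in Lemma \ref{CanjarParticion}; and one must still check that the tree $q$ so produced is honestly a Miller tree and that its splitting-successor sets are exactly the positive ones selected, which is routine but fiddly.
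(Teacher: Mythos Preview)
Your proposal identifies the right difficulty but does not resolve it. The circularity you flag is fatal to the argument as written: to apply $\mathcal{B}$-Canjarness (or Lemma~\ref{CanjarParticion}) at a splitting node $s$ you need the full sequence $\langle spsuc_{p_k}(s)\rangle_{k\in R_s}$ of positive sets, but each $p_k$ is $\sigma$'s response to $\langle n_0,\dots,n_{k-1}\rangle$, and you want $n_i=f(i)$ where $f$ is determined only \emph{after} amalgamating all the $f^s$. Saying the construction should be ``genuinely recursive'' does not break this loop: at any finite stage you have committed only finitely many $n_i$'s, so you cannot yet form the infinite sequence on which $\mathcal{B}$-Canjarness acts, and you cannot produce $f^s$; conversely, once you have all the $f^s$ and amalgamate to $f$, the actual run with moves $f(i)$ may produce entirely different trees $p_k$ than the ones used to compute the $f^s$. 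The $\mathfrak{b}$-family closure of $\mathcal{B}$ is invoked only at the end, but by then it is too late.

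The paper breaks the circularity by an idea absent from your sketch: it passes to an $\mathbb{M}(\mathcal{F})$-generic extension. There, the Mathias real $r_{gen}$ simultaneously witnesses positivity of \emph{every} set $spsuc_{p(s)}(t)$ for \emph{every} partial play $s$ and \emph{every} relevant splitting node $t$; this lets one define a single function $L:\omega^{<\omega}\to\omega$ recording, for each partial play $s$, how large the next move must be to capture one splitting successor (a subset of $r_{gen}$) at each node of the current $T$. Then $K(L)=\{x:\forall^\infty n\,(x(n)\le L(x\!\upharpoonright\! n))\}$ is $\sigma$-compact, and since $\mathbb{M}(\mathcal{F})$ preserves $\mathcal{B}$ (this is where $\mathcal{B}$-Canjarness is used), some condition forces $f\notin K(\dot L)$ for some $f\in\mathcal{B}$. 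Player $\mathsf{II}$ plays $f(i)$; for any $H\in\mathcal{F}$, extending the Mathias condition inside $H$ and using $f(m)>L(f\!\upharpoonright\! m)$ for a large $m$ yields an element of $spsuc_q(t)$ contained in $H$. The forcing detour is exactly what lets one quantify over all partial plays at once and extract a single $f\in\mathcal{B}$ in the ground model, dissolving the dependency you could not untangle.
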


\begin{proof}
Let $\sigma$ be a strategy for player $\mathsf{I},$ we must prove that player
$\mathsf{II}$ can defeat $\sigma.$ Define $\left\{  p\left(  s\right)  \mid
s\in\omega^{<\omega}\right\}  \subseteq\mathbb{PT}\left(  \mathcal{F}\right)
$ as follows:

\begin{enumerate}
\item $p\left(  \emptyset\right)  =p.$

\item If $s=\left\langle n_{0},...,n_{m}\right\rangle $ then $p\left(
s\right)  $ is the tree played by player $\mathsf{I}$ at the $m$-step if he is
playing according to $\sigma$ and $\mathsf{II}$ plays $n_{i}$ at the step $i$
for $i\leq m.$
\end{enumerate}

\qquad\ \ \ \ \ \ \ 

Let $r_{gen}$ be an $\left(  \mathbb{M}\left(  \mathcal{F}\right)  ,V\right)
$-generic real (note that $r_{gen}$ is generic for $\mathbb{M}\left(
\mathcal{F}\right)  ,$ not for $\mathbb{PT}\left(  \mathcal{F}\right)  $). In
$V\left[  r_{gen}\right]  $ we define a function $L:\omega^{<\omega
}\longrightarrow\omega$ as follows: Let $s=\left\langle n_{i}\right\rangle
_{i<m}\in\omega^{<\omega},$ we look at $p\left(  s\right)  ,$ assume that
player $\mathsf{II}$ plays $n_{i}$ at the step $i$ for $i<m$ and player
$\mathsf{I}$ is following $\sigma.$ Let $T_{m}$ be the tree defined so far (as
in the definition in the game). Let $t\in T_{m}\cap split\left(  p\left(
s\right)  \right)  $ and since $spsuc_{p\left(  s\right)  }\left(  t\right)
\in\left(  \mathcal{F}^{<\omega}\right)  ^{+},$ it follows by genericity that
there is $m_{s}\left(  t\right)  \in\omega$ such that there is $u_{s}\left(
t\right)  \in spsuc_{p\left(  s\right)  }\left(  t\right)  $ for which
$u_{s}\left(  t\right)  \subseteq\left(  r_{gen}\setminus m\right)  \cap
m_{s}\left(  t\right)  $. Let $L\left(  s\right)  $ such that $d^{-1}\left(
t\cup u_{s}\left(  t\right)  \right)  <L\left(  s\right)  $ for all $t\in
T_{m}\cap split\left(  p\left(  s\right)  \right)  .$


Since $\mathcal{F}$ is $\mathcal{B}$-Canjar, we can find $f\in\mathcal{B}$ and
$\left(  z,F\right)  \in\mathbb{M}\left(  \mathcal{F}\right)  $ such that
$\left(  z,F\right)  \Vdash``f\notin K(\dot{L})\textquotedblright.$ We claim
that if $\mathsf{II}$ plays $f\left(  n\right)  $ at the $n$-step of the game,
then she will win the match. Let $q=%
{\textstyle\bigcup}
T_{m},$ we must show that $q\in\mathbb{PT}\left(  \mathcal{F}\right)  .$


Let $t\in q$ be a splitting node, find $n\in\omega$ such that $t\in T_{n}$
(where $T_{n}$ is defined as in the game). We must prove that $spsuc_{q}%
\left(  t\right)  \in\left(  \mathcal{F}^{<\omega}\right)  ^{+}.$ Let
$H\in\mathcal{F}$ and note that $\left(  z,F\cap H\right)  \Vdash``f\notin
K(\dot{L})\textquotedblright.$ We know we can find $\left(  z\cup
z_{0},G\right)  \in\mathbb{M}\left(  \mathcal{F}\right)  $ and $m\in\omega$
such that the following conditions hold:

\begin{enumerate}
\item $m>n,max\left(  z\right)  .$

\item $max\left(  z\right)  <min\left(  z_{0}\right)  .$

\item $\left(  z\cup z_{0},G\right)  \leq\left(  z,F\cap H\right)  $ (in
particular, $z_{0}\subseteq H$).

\item $\left(  z\cup z_{0},G\right)  \Vdash``\dot{L}\left(  f\upharpoonright
m\right)  <f\left(  m\right)  \textquotedblright.$
\end{enumerate}

\qquad\ \ 

It follows by all the definitions that $T_{m+1}$ will contain an element of
the set $spsuc_{p\left(  f\upharpoonright m\right)  }\left(  t\right)  .$ Note
that such element must be a subset of $z_{0},$ so in particular is a subset of
$H.$ This finishes the proof.
\end{proof}

\qquad\ \qquad\ \ \qquad\ \qquad\ \qquad\ \ 

The previous argument was motivated by the fact that Canjar is equivalent to
its \textquotedblleft game version\textquotedblright. This interesting result
is a corollary of the theorems proved by Chodounsk\'{y}, Repov\v{s} and
Zdomskyy in \cite{MathiasForcingandCombinatorialCoveringPropertiesofFilters},
we will comment more about this in the next section.

\begin{definition}
We say that $D\subseteq\mathbb{PT}\left(  \mathcal{F}\right)  $ is
\emph{purely dense} if the following conditions hold:

\begin{enumerate}
\item If $p\in D$ and $q\leq p$ then $q\in D$ ($D$ is open).

\item For every $p\in\mathbb{PT}\left(  \mathcal{F}\right)  $ and for every
finite tree $T\subseteq p$ such that $\left[  T\right]  \subseteq split\left(
p\right)  ,$ there is $q\leq_{T}p$ such that $q\in D.$
\end{enumerate}
\end{definition}

\qquad\qquad\ \ \ \ \ \qquad\qquad\qquad\ \ \ \ \ \ \qquad\ \ \qquad\ \ \ 

Intuitively, the purely dense sets are the open sets we can get in by only
using the pure decision property.

\qquad\ \ \ \ 

We will now prove that if $\mathcal{F}$ is a Canjar filter, then
$\mathbb{PT}\left(  \mathcal{F}\right)  $ does not add Cohen or random reals.
Recall the following notion:

\begin{definition}
We say that $c\in\omega^{\omega}$ is a \emph{half-Cohen real over }$V$ if for
every $f\in\omega^{\omega}\cap V$ the set $\left\{  n\mid c\left(  n\right)
=f\left(  n\right)  \right\}  $ is infinite.
\end{definition}

\qquad\qquad\ \ 

Obviously every Cohen real over $V$ is half-Cohen over $V.$ It can be proved
that if one adds an unbounded real and then a half-Cohen, a Cohen real is
added (see \cite{HandbookBlass}). It was a long standing question of Fremlin
if it was possible to add a half-Cohen real without adding a Cohen real. This
problem was finally solved positively by Zapletal in \cite{DimensionalForcing}%
. We will prove that if $\mathcal{F}$ is Canjar, then $\mathbb{PT}\left(
\mathcal{F}\right)  $ does not add a half-Cohen real. We start with the
following lemma:

\begin{lemma}
Let $\mathcal{F}\ $be a Canjar filter and$\ \dot{a}$ a $\mathbb{PT}\left(
\mathcal{F}\right)  $-name for a natural number. The set $D=\left\{
p\in\mathbb{PT}\left(  \mathcal{F}\right)  \mid\exists k\in\omega\left(
p\Vdash``\dot{a}\neq k\textquotedblright\right)  \right\}  $ is purely dense.
\end{lemma}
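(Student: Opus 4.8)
The statement asks us to show that for a Canjar filter $\mathcal{F}$ and a $\mathbb{PT}\left(\mathcal{F}\right)$-name $\dot{a}$ for a natural number, the set $D$ of conditions that force $\dot{a}$ to avoid some value is purely dense. Purely dense means $D$ is open (clear, since forcing $\dot{a}\neq k$ is preserved under strengthening) and that for every $p\in\mathbb{PT}\left(\mathcal{F}\right)$ and every finite tree $T\subseteq p$ with $\left[T\right]\subseteq split\left(p\right)$, there is $q\leq_{T}p$ with $q\in D$. So the real content is the second clause.

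First I would handle the base case $T=\left\{stem\left(p\right)\right\}$ (or more generally when $\left[T\right]$ is a single node $s$): by the Pure decision property (the lemma just before this one), since $\mathcal{F}$ is Canjar, we can find $q\leq p_{s}$ with $stem\left(q\right)=s$ deciding whether $\dot{a}$ takes a bounded value on each $n$-splitting successor—more precisely, apply the corollary on open dense sets, or directly argue as follows. For each immediate splitting successor direction we get a condition deciding $\dot{a}$; since there are infinitely many such directions but the name is for a single natural number, we cannot in general force a single value $k$ on all of $q$, but we can force $\dot{a}\neq k$ for some $k$: pick any splitting successor $t\in spsuc_{q}\left(s\right)$ and any condition $q'\leq q_{s^\frown t}$ with $stem\left(q'\right)=s^\frown t$ deciding $\dot{a}=k_{q'}$; then $q'\Vdash``\dot{a}\neq k_{q'}+1$''—but this only gives us a condition below $p_s$ with a longer stem, not one of the form $q\leq_{T}p$. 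The right move is: build $q\leq_{\{s\}} p$ so that $q_{s^\frown t}$ decides $\dot{a}$ for every $t\in spsuc_{q}\left(s\right)$ (this is the Corollary applied with $D$ the set of conditions deciding $\dot a$). The decided values $\{k_{t}: t\in spsuc_q(s)\}$ form a set of naturals; pick $k^*$ not among at least... no, it may be all of $\omega$. Instead: since $spsuc_q(s)\in(\mathcal F^{<\omega})^+$, by point 2 of the basic lemma on $(\mathcal F^{<\omega})^+$ we can split it so that on a positive subset the decided value is constant, say equals $k$, OR split it infinitely often; either way thin $q$ to $q'$ whose $spsuc$ at $s$ is a positive set on which all $q'_{s^\frown t}$ force $\dot a = k$ for a single $k$. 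Then $q'\Vdash``\dot a = k$'' so in particular $q'\Vdash``\dot a\neq k+1$'', giving $q'\in D$ with $q'\leq_{\{s\}}p$.

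For general $T$ with $\left[T\right]=\left\{s_{0},\dots,s_{r}\right\}$, I would process the leaves one at a time. By the argument above applied at $s_{0}$ (working inside $p_{s_{0}}$ and leaving everything incomparable with $s_{0}$ untouched, as the $\leq_T$ relation allows), get $q^{0}\leq_{T}p$ and a value $k_0$ such that $q^{0}_{s_{0}}\Vdash``\dot a = k_0$''; then repeat inside $q^{0}_{s_{1}}$ to get $q^{1}\leq_T q^0$ and $k_1$, and so on. After $r+1$ steps we have $q^{r}\leq_{T}p$ such that below each leaf $s_{i}$ the value of $\dot a$ is forced to be the constant $k_{i}$. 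Now the set $\{k_0,\dots,k_r\}$ is finite, so choosing any $k^*\notin\{k_0,\dots,k_r\}$ we get $q^{r}\Vdash``\dot a\neq k^*$'', hence $q^{r}\in D$ and $q^r\leq_T p$. This proves purely dense. A cleaner alternative: just take $k^* = \max\{k_0,\dots,k_r\}+1$.

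**Main obstacle.** The one subtlety is making sure the thinning at each leaf $s_{i}$ respects $\leq_{T}$—that is, it must not remove any node of $T$ and must keep $T\cap split\left(q\right)=T\cap split\left(p\right)$, while we are allowed to (and do) change splitting structure strictly above the leaves. The Corollary on open dense sets already delivers a $q\leq p$ with $stem\left(q\right)=s$, but for $\leq_T$ we need to perform the construction inside $p_{s_i}$ and graft the result back onto $p$ above $s_i$ while leaving the rest of $p$ (including the finite tree $T$) completely alone; this is exactly the kind of local surgery used in the proof of properness earlier, so it is routine but should be stated carefully. The second minor point is the use of point 2 of the basic lemma (positivity is preserved under one of the two halves of a partition) to pass from "each successor decides $\dot a$" to "a positive set of successors decides $\dot a$ to the same value"—this needs that $(\mathcal F^{<\omega})^+$ is closed under taking the positive half of a finite partition, which follows by induction from the stated two-piece case.
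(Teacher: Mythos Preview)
Your proposal has a genuine gap at the step where you thin $spsuc_q(s)$ to a positive set on which the decided value of $\dot{a}$ is constant. Point~2 of the basic lemma gives the two-piece partition property for $(\mathcal{F}^{<\omega})^+$, and by induction the finite partition property, but \emph{not} the countable one: partitioning $spsuc_q(s)$ according to the decided value of $\dot{a}$ yields countably many pieces, and there is no reason any one of them is positive. Concretely, take $\mathcal{F}$ to be the Fr\'echet filter (which is Canjar), $X=[\omega]^{<\omega}\setminus\{\emptyset\}$, and $X_k=\{s:\min(s)=k\}$; then $X\in(\mathcal{F}^{<\omega})^+$ but no $X_k$ is. So you cannot in general pin $\dot{a}$ down to a single value below a given stem while keeping that stem fixed, and hence your values $k_i$ at the leaves are not available.

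The paper sidesteps this by never trying to decide $\dot{a}$ exactly. With $n=|T\cap split(p)|$, at each $s\in T\cap split(p)$ it first uses pure decision on a two-element set to decide whether $\dot{a}\leq n+1$ or $\dot{a}>n+1$; in the first case a second application of pure decision on the finite set $\{0,\dots,n+1\}$ produces $k_s\leq n+1$ with $q(s)\Vdash``\dot a=k_s\textquotedblright$. There are $n$ nodes and $n+2$ candidates $\{0,\dots,n+1\}$, so some $k\leq n+1$ is missed by all the $k_s$, and the glued condition forces $\dot{a}\neq k$ since below each $q(s)$ either $\dot a>n+1\geq k$ or $\dot a=k_s\neq k$. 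This pigeonhole-after-truncation trick is the missing idea.

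There is a second, smaller issue: you process only the leaves $[T]$, but branches of $p$ can exit $T$ at an \emph{internal} node $s'\in T\cap split(p)$, by taking one of the infinitely many directions at $s'$ that do not lead to any $T$-leaf. Those branches are untouched by your surgery, so the resulting $q^{r}$ forces nothing about $\dot{a}$ along them and hence does not force $\dot a\neq k^*$. The paper handles this by processing every $s\in T\cap split(p)$: for each such $s$ it works inside $p(s)$, the largest subtree of $p_s$ meeting $T$ only in restrictions of $s$, so that the pieces $q(s)$ cover all branches when glued.
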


\begin{proof}
Let $p\in\mathbb{PT}\left(  \mathcal{F}\right)  $ and $T\subseteq p$ a finite
tree such that every maximal node of $T$ is an element of $split\left(
p\right)  .$ We need to find $q\leq_{T}p$ such that $q\in D.$ Let
$n=\left\vert T\cap split\left(  p\right)  \right\vert $ and for every $s\in
T\cap split\left(  p\right)  $ let $p\left(  s\right)  \leq p_{s}$ be the
biggest subtree of $p_{s}$ such that every $t\in T\cap p\left(  s\right)  $ is
a restriction of $s.$ Using the pure decision property, for every $s\in T\cap
split\left(  p\right)  $ we can find $q\left(  s\right)  \in\mathbb{PT}\left(
\mathcal{F}\right)  $ and $k_{s}\in\omega$ with the following properties:

\begin{enumerate}
\item $q\left(  s\right)  \leq p\left(  s\right)  $ and the stem of $q\left(
s\right)  $ is $s.$

\item $q\left(  s\right)  \Vdash``\dot{a}\leq n+1\textquotedblright$ or
$q\left(  s\right)  \Vdash``\dot{a}>n+1\textquotedblright$ .

\item If $q\left(  s\right)  \Vdash``\dot{a}\leq n+1\textquotedblright$ then
$q\left(  s\right)  \Vdash``\dot{a}=k_{s}\textquotedblright.$
\end{enumerate}

\qquad\ \ \ \qquad\ \ 

Since $n=\left\vert T\cap split\left(  p\right)  \right\vert ,$ we can find
$k\leq n+1$ such that $k\neq k_{s}$ for every $s\in T\cap split\left(
p\right)  .$ Let $q=%
{\textstyle\bigcup}
\left\{  q\left(  s\right)  \mid s\in T\cap split\left(  p\right)  \right\}
.$ Note that $q\leq_{T}p$ and $q\Vdash``\dot{a}\neq k\textquotedblright,$ this
finishes the proof.
\end{proof}

\qquad\ \ \ 

We can now prove the following:

\begin{proposition}
If $\mathcal{F}$ is a Canjar filter, then $\mathbb{PT}\left(  \mathcal{F}%
\right)  $ does not add half-Cohen reals.
\end{proposition}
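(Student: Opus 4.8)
The plan is to obtain this immediately from the previous two lemmas together with a fusion, using the elementary observation that a real fails to be half-Cohen over $V$ as soon as it \emph{avoids} some ground-model function on a cofinite set --- and we will in fact make it avoid one everywhere.

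So fix $p\in\mathbb{PT}\left(  \mathcal{F}\right)  $ and a name $\dot{c}$ with $p\Vdash``\dot{c}\in\omega^{\omega}\textquotedblright$. It suffices to produce $q\leq p$ and $f\in\omega^{\omega}\cap V$ with $q\Vdash``\forall n\ \left(  \dot{c}\left(  n\right)  \neq f\left(  n\right)  \right)  \textquotedblright$: for then $q$ forces $\left\{  n\mid\dot{c}\left(  n\right)  =f\left(  n\right)  \right\}  =\emptyset$, hence forces that $\dot{c}$ is not half-Cohen over $V$, and since $p$ is arbitrary this shows $\mathbb{PT}\left(  \mathcal{F}\right)  $ forces $\dot{c}$ not to be half-Cohen; as $\dot{c}$ was an arbitrary name for an element of $\omega^{\omega}$, no half-Cohen real is added. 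For each $n\in\omega$ the name $\dot{c}\left(  n\right)  $ is (below $p$) a name for a natural number, so by the previous lemma the set $D_{n}=\left\{  r\in\mathbb{PT}\left(  \mathcal{F}\right)  \mid\exists k\in\omega\ \left(  r\Vdash``\dot{c}\left(  n\right)  \neq k\textquotedblright\right)  \right\}  $ is purely dense.

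Now run a fusion. Put $q_{0}=p$, and given $q_{m}$ let $T_{m}=T\left(  q_{m},m\right)  $, so that $\left[  T_{m}\right]  \subseteq split\left(  q_{m}\right)  $; using that $D_{m}$ is purely dense, choose $q_{m+1}\leq_{T_{m}}q_{m}$ with $q_{m+1}\in D_{m}$, and fix $f\left(  m\right)  \in\omega$ such that $q_{m+1}\Vdash``\dot{c}\left(  m\right)  \neq f\left(  m\right)  \textquotedblright$. Since $q_{m+1}\leq_{T_{m}}q_{m}$ and $T\left(  q_{m},m\right)  \subseteq T\left(  q_{m+1},m+1\right)  $, the finite trees $T_{m}$ are increasing, and by the same kind of fusion argument already used for the properness of $\mathbb{PT}\left(  \mathcal{F}\right)  $ and in the analysis of the game $\mathcal{G}\left(  \mathcal{F},p,\mathcal{B}\right)  $, the tree $q=\bigcup_{m\in\omega}T_{m}$ belongs to $\mathbb{PT}\left(  \mathcal{F}\right)  $ and satisfies $q\subseteq q_{m}$ for every $m$. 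In particular $q\leq p$, and from $q\leq q_{m+1}$ we get $q\Vdash``\dot{c}\left(  m\right)  \neq f\left(  m\right)  \textquotedblright$ for every $m\in\omega$; that is, $q\Vdash``\forall n\ \left(  \dot{c}\left(  n\right)  \neq f\left(  n\right)  \right)  \textquotedblright$, as required.

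The content is entirely in the two preceding lemmas, and Canjar-ness of $\mathcal{F}$ enters only through the pure decision property. The one step that needs care in a full write-up is the verification that the fusion limit $q=\bigcup_{m}T_{m}$ really lies in $\mathbb{PT}\left(  \mathcal{F}\right)  $ --- i.e. that every splitting node $t$ of $q$ still has $spsuc_{q}\left(  t\right)  \in\left(  \mathcal{F}^{<\omega}\right)  ^{+}$ --- but this is precisely the bookkeeping with $\leq_{T}$ and $T\left(  p,n\right)  $ already carried out above, so it introduces nothing new.
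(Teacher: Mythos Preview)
Your argument has a genuine gap at exactly the point you flag as ``routine bookkeeping'': the claim that the fusion limit $q=\bigcup_{m}T_{m}$ lies in $\mathbb{PT}(\mathcal{F})$. With your choice $T_{m}=T(q_{m},m)$ (that is, fixing $n_{m}=m$), nothing prevents the following: when you pass from $q_{m}$ to some $q_{m+1}\leq_{T_{m}}q_{m}$ in $D_{m}$, the condition $q_{m+1}$ may have every splitting node outside $T_{m}$ with $d$-code larger than $m+1$, so that $T_{m+1}=T(q_{m+1},m+1)=T_{m}$. Iterating, the sequence $(T_{m})$ can stabilize and $q$ is then a finite tree, not a condition. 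Even when the $T_{m}$ do grow, you have no control over which finitely many split successors of a given splitting node $s$ are captured at each stage, and there is no reason for $spsuc_{q}(s)=\bigcup_{m}\bigl(spsuc_{q_{m}}(s)\cap\{t:d^{-1}(s^{\frown}t)\leq m\}\bigr)$ to lie in $(\mathcal{F}^{<\omega})^{+}$.

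This is precisely why the paper does \emph{not} run a naive fusion but instead invokes the game $\mathcal{G}(\mathcal{F},p,\omega^{\omega})$: the moves $q_{m+1}$ you describe are packaged as a strategy $\sigma$ for player $\mathsf{I}$, and the earlier proposition (that $\mathsf{I}$ has no winning strategy when $\mathcal{F}$ is Canjar) guarantees the existence of responses $n_{0},n_{1},\ldots$ for player $\mathsf{II}$ making $\bigcup_{m}T(p_{m},n_{m})$ a genuine condition. So Canjar-ness enters twice, not once: via the pure decision property to get each $D_{n}$ purely dense, \emph{and} via the game to make the fusion converge in $\mathbb{PT}(\mathcal{F})$. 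Your final sentence (``Canjar-ness of $\mathcal{F}$ enters only through the pure decision property'') is therefore incorrect, and the fix is to replace the direct fusion by the game argument as in the paper.
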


\begin{proof}
Let $p\in\mathbb{PT}\left(  \mathcal{F}\right)  $ and $\dot{f}$ a
$\mathbb{PT}\left(  \mathcal{F}\right)  $-name for an element of
$\omega^{\omega}.$ We must prove that there is $q\leq p$ and $g\in
\omega^{\omega}$ such that $q\Vdash``\dot{f}\cap g=\emptyset\textquotedblright%
.$ For every $n\in\omega,$ define $D_{n}=\{q\in\mathbb{PT}\left(
\mathcal{F}\right)  \mid\exists k\in\omega(q\Vdash``\dot{f}\left(  n\right)
\neq k\textquotedblright)\},$ note that each $D_{n}$ is purely dense by the
previous lemma.


Let $\preceq$ be any well order of $\mathbb{PT}\left(  \mathcal{F}\right)  .$
We will recursively define a strategy $\sigma$ for player $\mathsf{I}$ on the
game $\mathcal{G}\left(  \mathcal{F},p,\omega^{\omega}\right)  $ as follows:

\begin{enumerate}
\item $\mathsf{I}$ starts by playing $p.$

\item Assume we are at round $m+1$ and $\mathsf{I}$ has played $p_{0},...,$
$p_{m}$ while player $\mathsf{II}$ has played $n_{0},...,n_{m-1}.$ If player
$\mathsf{II}$ plays $n_{m},$ then player $\mathsf{I}$ plays $p_{m+1}$ where
$p_{m+1}$ is the $\preceq$-least element of $\mathbb{PT}\left(  \mathcal{F}%
\right)  $ such that $p_{m+1}\in D_{m}$ and $p_{m+1}\leq_{T_{m}}p_{m}$ (where
$T_{m}=T\left(  p_{m},n_{m}\right)  $).
\end{enumerate}

\qquad\ \ \ 

Since $\mathcal{F}$ is Canjar, we know that $\sigma$ is not a winning strategy
for player $\mathsf{I,}$ which means that there is a function $d\in
\omega^{\omega}$ such that if player $\mathsf{II}$ plays $d\left(  n\right)  $
at the $n$-step then she will win. Let $q$ be the condition constructed at the
end of the game. Note that $q\in%
{\textstyle\bigcap\limits_{\in\omega}}
D_{n}$ in this way, we can define a function $g:\omega\longrightarrow\omega$
such that $q\Vdash``\dot{f}\left(  n\right)  \neq g\left(  n\right)
\textquotedblright$ for every $n\in\omega.$ This finishes the proof.
\end{proof}

\qquad\ \ \ 

We will now prove that if $\mathcal{F}$ is Canjar, then $\mathbb{PT}\left(
\mathcal{F}\right)  $ does not add bounded eventually different reals. By
$Fn\left(  \omega\right)  $ we will denote the set of all functions
$z:a\longrightarrow$ $\omega$ such that $a\in\left[  \omega\right]  ^{<\omega
}.$ We will need the following lemma:

\begin{lemma}
Let $m\in\omega,g\in\omega^{\omega}$ and $\dot{f}$ be a $\mathbb{PT}\left(
\mathcal{F}\right)  $-name for a function bounded by $g$. The set
$D=\{p\in\mathbb{PT}\left(  \mathcal{F}\right)  \mid\exists z\in Fn\left(
\omega\right)  (m\cap dom\left(  z\right)  =\emptyset\wedge p\Vdash``\dot
{f}\cap z\neq\emptyset\textquotedblright)\}$ is purely dense.
\end{lemma}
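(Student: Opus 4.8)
The plan is to mimic, almost verbatim, the proof of the earlier lemma asserting that the analogous set attached to a name for a single natural number is purely dense; the one new feature is that, since $\dot{f}$ is bounded by the ground-model function $g$, each value $\dot{f}(n)$ ranges over the finite set $g(n)+1$, so the pure decision property applies coordinate by coordinate.

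First, $D$ is open: if $z\in Fn(\omega)$ witnesses $p\in D$ and $q\le p$, then $q\Vdash\dot{f}\cap z\neq\emptyset$ as well, so the same $z$ witnesses $q\in D$. Now for the density clause. Fix $p\in\mathbb{PT}(\mathcal{F})$ and a finite tree $T\subseteq p$ with $\left[T\right]\subseteq split(p)$. Enumerate $T\cap split(p)=\{s_{1},\dots,s_{k}\}$ and choose natural numbers $m\le n_{1}<n_{2}<\dots<n_{k}$; it matters that the $n_{i}$ be pairwise distinct (and at least $m$), since different $s_{i}$ will in general be forced into different values of $\dot{f}$ at their associated coordinate, so a single coordinate would not work. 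For each $i\le k$ let $p(s_{i})\le p_{s_{i}}$ be the largest subtree of $p_{s_{i}}$ all of whose nodes belonging to $T$ are initial segments of $s_{i}$, exactly as in the earlier lemma; then $p(s_{i})\in\mathbb{PT}(\mathcal{F})$, $stem(p(s_{i}))=s_{i}$, and $p(s_{i})\le p$. Since $\dot{f}$ is bounded by $g$ we have $p(s_{i})\Vdash\dot{f}(n_{i})\in g(n_{i})+1$, a finite set, so the pure decision property (this is where Canjarness of $\mathcal{F}$ is used) yields $q(s_{i})\le p(s_{i})$ with $stem(q(s_{i}))=s_{i}$ and some $a_{i}\le g(n_{i})$ such that $q(s_{i})\Vdash\dot{f}(n_{i})=a_{i}$.

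Now let $z\in Fn(\omega)$ have $dom(z)=\{n_{1},\dots,n_{k}\}$ and $z(n_{i})=a_{i}$; then $m\cap dom(z)=\emptyset$. Set $q=\bigcup_{i\le k}q(s_{i})$. Exactly as in the earlier lemma one verifies that $q\in\mathbb{PT}(\mathcal{F})$ and $q\le_{T}p$. It remains to check that $q\Vdash\dot{f}\cap z\neq\emptyset$. Given $q'\le q$, the amalgamation bookkeeping of the earlier lemma produces a node $u\in q'$ lying above some $s_{i}$ with $q'_{u}\le q(s_{i})$; then $q'_{u}\Vdash\dot{f}(n_{i})=z(n_{i})$, hence $q'_{u}\Vdash\dot{f}\cap z\neq\emptyset$. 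As no extension of $q$ can force the negation, $q\Vdash\dot{f}\cap z\neq\emptyset$, so $q\in D$ and $q\le_{T}p$, which is what the density clause requires.

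The only step that is not purely formal is the amalgamation: checking that $q=\bigcup_{i}q(s_{i})$ is again a condition in $\mathbb{PT}(\mathcal{F})$, that $q\le_{T}p$, and that every $q'\le q$ meets one of the $q(s_{i})$ below a suitable node. This is carried out in the proof of the earlier ``name for a natural number'' lemma in an entirely analogous situation, so I would simply invoke that argument rather than repeat it.
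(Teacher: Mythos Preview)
Your proof is correct and follows essentially the same approach as the paper: decompose $p$ into the subtrees $p(s_i)$, apply pure decision at each $s_i$ to pin down one value of $\dot f$, and amalgamate. The only cosmetic difference is that the paper fixes the coordinates to be $m,m+1,\dots,m+n-1$ (taking $z:[m,m+n)\to\omega$ with $z(m+i)=k_i$) rather than allowing arbitrary distinct $n_i\ge m$, and the paper asserts $q\le_T p$ and $q\Vdash\dot f\cap z\neq\emptyset$ without spelling out the amalgamation any more than you do.
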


\begin{proof}
Let $p\in\mathbb{PT}\left(  \mathcal{F}\right)  $ and $T\subseteq p$ a finite
tree such that every maximal node of $T$ is an element of $split\left(
p\right)  .$ We need to find $q\leq_{T}p$ such that $q\in D.$ Let
$n=\left\vert T\cap split\left(  p\right)  \right\vert $ and for every $s\in
T\cap split\left(  p\right)  $ let $p\left(  s\right)  \leq p_{s}$ be the
biggest subtree of $p_{s}$ such that every $t\in T\cap p\left(  s\right)  $ is
a restriction of $s.$ Let $T\cap split\left(  p\right)  =\left\{  s_{i}\mid
i<n\right\}  .$ Using the pure decision property, for every $s_{i}\in T\cap
split\left(  p\right)  $ we can find $q\left(  s_{i}\right)  \in
\mathbb{PT}\left(  \mathcal{F}\right)  $ and $k_{i}\in\omega$ with the
following properties:

\begin{enumerate}
\item $q\left(  s_{i}\right)  \leq p\left(  s_{i}\right)  $ and the stem of
$q\left(  s_{i}\right)  $ is $s_{i}.$

\item $q\left(  s_{i}\right)  \Vdash``\dot{f}\left(  m+i\right)
=k_{i}\textquotedblright$.
\end{enumerate}

\qquad\ \ \ \qquad\ \ \qquad\ 

Note that $q\left(  s_{i}\right)  $ can be found (with the pure decision
property) since there are only finitely many possibilities for $\dot{f}\left(
m+i\right)  .$ We now define a function $z:[m,m+n)\longrightarrow\omega$ given
by $z\left(  m+i\right)  =k_{i}.$ Let $q=%
{\textstyle\bigcup}
\left\{  q\left(  s\right)  \mid s\in T\cap split\left(  p\right)  \right\}
.$ Note that $q\leq_{T}p$ and $q\Vdash``\dot{f}\cap z\neq\emptyset
\textquotedblright,$ this finishes the proof.
\end{proof}

\qquad\ \ \ 

We now can prove the following:

\begin{proposition}
If $\mathcal{F}$ is a Canjar filter, then $\mathbb{PT}\left(  \mathcal{F}%
\right)  $ does not add bounded eventually different reals.
\end{proposition}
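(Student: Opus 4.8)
The plan is to follow exactly the same scheme as the previous proposition (no half-Cohen reals), but now working with a name $\dot f$ for a \emph{bounded} function rather than an arbitrary one, so that we can diagonalize against it to produce a genuine eventually different real $z \in \omega^\omega$ (not merely a finite partial function). First I would fix $p \in \mathbb{PT}\left(\mathcal{F}\right)$, $g \in \omega^\omega \cap V$, and a $\mathbb{PT}\left(\mathcal{F}\right)$-name $\dot f$ with $p \Vdash ``\dot f \in \prod_{n} g(n)"$; the goal is to find $q \leq p$ and $z \in \omega^\omega$ with $q \Vdash ``\forall^\infty n\ \dot f(n) \neq z(n)"$. For each $n \in \omega$ let $D_n$ be the set from the previous lemma with $m = n$, i.e. the set of all conditions forcing $\dot f \cap z' \neq \emptyset$ for some $z' \in Fn(\omega)$ with $\mathrm{dom}(z') \cap n = \emptyset$; by that lemma each $D_n$ is purely dense.

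Next I would run the game $\mathcal{G}\left(\mathcal{F}, p, \omega^\omega\right)$ with a strategy $\sigma$ for player $\mathsf{I}$ defined by: at round $m+1$, after $\mathsf{II}$ has played $n_m$, player $\mathsf{I}$ plays the $\preceq$-least $p_{m+1} \leq_{T_m} p_m$ lying in $D_{h(m)}$, where $T_m = T(p_m, n_m)$ and $h(m)$ is chosen large enough that the finite tree $T_m$ together with all the partial functions committed so far live below $h(m)$ — concretely, take $h(m)$ to exceed $\max$ of all the natural numbers appearing as maximal levels of $T_m$ and as elements of the domains of the $z'$'s promised by membership in the earlier $D$'s. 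Since $\mathcal{F}$ is Canjar, by the Proposition on the game, $\sigma$ is not winning, so player $\mathsf{II}$ has a response $d \in \omega^\omega$ beating it; let $q = \bigcup_m T_m \in \mathbb{PT}\left(\mathcal{F}\right)$ be the resulting condition. Then $q$ lies in every $D_{h(m)}$, so along the fusion we have extracted, for infinitely many (indeed cofinally many) values, partial functions $z_m \in Fn(\omega)$ with $\mathrm{dom}(z_m) \subseteq [h(m), \cdot)$, pairwise disjoint domains by the choice of $h$, and $q \Vdash ``\dot f \cap z_m \neq \emptyset"$. However $q$ cannot force $\dot f$ to \emph{agree} with $z_m$ at a specified point unless the $z_m$ together cover a cofinite set — this is the subtle point.

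To fix this, rather than only demanding $\dot f \cap z_m \neq \emptyset$, I would instead, at each stage, use the pure decision property directly (as in the proof of the purely-dense lemma) to decide the value $\dot f(k)$ for \emph{every} $k$ in a whole block $[h(m), h(m+1))$ of integers simultaneously below each relevant splitting node, paying the price that the decided value may depend on the splitting node. The cleanest packaging: arrange the fusion so that $q = \bigcup_m T_m$ and, for each block index $m$ and each $k \in [h(m), h(m+1))$, each $t \in \left[T_{m+1}\right]$ forces $\dot f(k)$ to equal some value $k_{t,k} < g(k)$; since the number of maximal nodes of $T_{m+1}$ is finite, for each $k$ the set $\{k_{t,k} : t \in \left[T_{m+1}\right]\}$ has size at most $\left|\left[T_{m+1}\right]\right|$, so if we had also arranged $g(k) > \left|\left[T_{m+1}\right]\right|$ for $k$ in that block (which we may, by thinning blocks — replace $g$ by a pointwise-larger function is not allowed, but we may instead require that a block $[h(m), h(m+1))$ only receives indices $k$ with $g(k)$ large relative to the tree built so far, absorbing small-$g$ coordinates harmlessly since eventual difference is all that is needed) we can pick $z(k) \notin \{k_{t,k} : t\}$. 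Then $q \Vdash ``\dot f(k) \neq z(k)"$ for all $k$ in the union of the blocks, which is cofinite, giving the claim. The main obstacle is precisely this bookkeeping — making sure the blocks of integers whose $\dot f$-values we decide are large enough (so there is room to pick $z(k)$ avoiding finitely many forbidden values given the bound $g$) while still being handled by a single application of the game/pure-decision machinery; this is routine but needs care, and it is exactly why the hypothesis "$\dot f$ bounded" is used.
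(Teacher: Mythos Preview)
You have the target backwards. ``$\mathbb{PT}(\mathcal{F})$ does not add bounded eventually different reals'' means: for every name $\dot f$ bounded by a ground-model $g$, some ground-model $h$ agrees with $\dot f$ infinitely often. So the goal is to produce $q\le p$ and $h\in\omega^\omega\cap V$ with $q\Vdash``|\dot f\cap h|=\omega\textquotedblright$, not $q\Vdash``\forall^\infty n\ \dot f(n)\neq z(n)\textquotedblright$ as you wrote. Your ``fix'' in the last paragraph --- choosing $z(k)\notin\{k_{t,k}:t\}$ so that $q\Vdash``\dot f(k)\neq z(k)\textquotedblright$ --- therefore establishes exactly the wrong thing: it shows $z$ is eventually different from $\dot f$, which is trivial and irrelevant. (This is also why your worry about needing $g(k)>|[T_{m+1}]|$ is a red herring: you are trying to \emph{avoid} finitely many values, whereas the proposition asks you to \emph{hit} one.)

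The good news is that your \emph{first} plan is correct and is precisely what the paper does; the ``subtle point'' you flagged is not a problem. Once $q\in\bigcap_n D_n$, for every $n$ there is $z_n\in Fn(\omega)$ with $\operatorname{dom}(z_n)\cap n=\emptyset$ and $q\Vdash``\dot f\cap z_n\neq\emptyset\textquotedblright$. Now recursively pick $n_0=0$, then $n_{i+1}>\max\operatorname{dom}(z_{n_i})$; the resulting $z_{n_i}$ have pairwise disjoint domains, so $h=\bigcup_i z_{n_i}$ (extended arbitrarily) is a total function, and for each $i$ the condition $q$ forces $\dot f$ to agree with $h$ at some point of $\operatorname{dom}(z_{n_i})$. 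These points are distinct for different $i$, so $q\Vdash``|\dot f\cap h|=\omega\textquotedblright$. No coverage of a cofinite set is needed; agreement at \emph{some} point of each $z_{n_i}$ already gives infinitely many coincidences. Drop the second ``fix'' paragraph entirely and finish the first plan as above.
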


\begin{proof}
Let $p\in\mathbb{PT}\left(  \mathcal{F}\right)  ,$ $g\in\omega^{\omega}$ and
$\dot{f}$ an $\mathbb{PT}\left(  \mathcal{F}\right)  $-name for an element of
$\omega^{\omega}$ bounded by $g.$ We must prove that there is $q\leq p$ and
$h\in\omega^{\omega}$ such that $q\Vdash``\left\vert \dot{f}\cap h\right\vert
=\omega\textquotedblright.$ For every $m\in\omega,$ define $D_{m}%
=\{p\in\mathbb{PT}\left(  \mathcal{F}\right)  \mid\exists z\in Fn\left(
\omega\right)  (m\cap dom\left(  z\right)  =\emptyset\wedge p\Vdash``\dot
{f}\cap z\neq\emptyset\textquotedblright)\},$ which we already know is a
purely dense set.


Let $\preceq$ be any well order of $\mathbb{PT}\left(  \mathcal{F}\right)  .$
We will recursively define a strategy $\sigma$ for player $\mathsf{I}$ on the
game $\mathcal{G}\left(  \mathcal{F},p,\omega^{\omega}\right)  $ as follows:

\begin{enumerate}
\item $\mathsf{I}$ starts by playing $p.$

\item Assume we are at round $m+1$ and $\mathsf{I}$ has played $p_{0},...,$
$p_{m}$ while player $\mathsf{II}$ has played $n_{0},...,n_{m-1}.$ If player
$\mathsf{II}$ plays $n_{m},$ then player $\mathsf{I}$ plays $p_{m+1}$ where
$p_{m+1}$ is the $\preceq$-least element of $\mathbb{PT}\left(  \mathcal{F}%
\right)  $ such that $p_{m+1}\in D_{m}$ and $p_{m+1}\leq_{T_{m}}p_{m}$ (where
$T_{m}=T\left(  p_{m},n_{m}\right)  $).
\end{enumerate}

\qquad\ \ \ 

Since $\mathcal{F}$ is Canjar, we know that $\sigma$ is not a winning strategy
for player $\mathsf{I,}$ hence there is a function $d\in\omega^{\omega}$ such
that if player $\mathsf{II}$ plays $d\left(  n\right)  $ at the $n$-step then,
she will win. Let $q$ be the condition constructed at the end of the game.
Note that $q\in%
{\textstyle\bigcap\limits_{\in\omega}}
D_{n}$, this means that for every $n\in\omega,$ we can find a finite function
$z$ such that $dom\left(  z\right)  =\emptyset$ and $q\Vdash``z\cap\dot{f}%
\neq\emptyset\textquotedblright.$ We can now easily find a function
$h:\omega\longrightarrow\omega$ such that $q\Vdash``\left\vert \dot{f}\cap
h\right\vert =\omega\textquotedblright.$
\end{proof}

\qquad\ \ \qquad\ \ \ \ 

It is well known that adding a random real adds a bounded eventually different
real. In this way we conclude the following:

\begin{corollary}
If $\mathcal{F}$ is a Canjar filter, then $\mathbb{PT}\left(  \mathcal{F}%
\right)  $ does not add random reals. \ 
\end{corollary}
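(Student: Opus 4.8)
The plan is to deduce the corollary directly from the preceding proposition together with the standard fact relating random reals and bounded eventually different reals. Specifically, it is classical (see, e.g., the Bartoszy\'{n}ski--Judah characterization of $\mathsf{cov}(\mathcal{N})$ via slaloms, or the discussion in \cite{HandbookBlass}) that if a forcing $\mathbb{P}$ adds a random real, then in the extension there is a function $h\in\omega^{\omega}$ bounded by some ground-model function which is eventually different from every ground-model real; equivalently, $\mathbb{P}$ adds a bounded eventually different real. Thus, by contraposition, a forcing that does not add bounded eventually different reals does not add random reals either.

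Concretely, I would argue as follows. Suppose toward a contradiction that $\mathbb{PT}\left(\mathcal{F}\right)$ adds a random real $r$ over $V$. By the cited classical fact, $V[r]$ (hence any extension by $\mathbb{PT}\left(\mathcal{F}\right)$) contains a bounded eventually different real: there are $g\in\omega^{\omega}\cap V$ and $f\in\omega^{\omega}\cap V[r]$ with $f\le g$ and $f$ eventually different from every element of $\omega^{\omega}\cap V$. But by the preceding Proposition, since $\mathcal{F}$ is Canjar, $\mathbb{PT}\left(\mathcal{F}\right)$ does not add bounded eventually different reals, i.e.\ for every name $\dot f$ for a bounded function and every condition $p$ there are $q\le p$ and $h\in\omega^{\omega}\cap V$ with $q\Vdash``|\dot f\cap h|=\omega"$. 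This contradicts the existence of a bounded eventually different real in the extension, and the corollary follows.

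The only genuine content beyond invoking the previous proposition is the implication ``adds a random real $\Rightarrow$ adds a bounded eventually different real,'' which I would simply cite rather than reprove, as the paper already signals this is well known (``It is well known that adding a random real adds a bounded eventually different real''). So in the write-up I expect the proof to be a single short paragraph: state the well-known implication with a reference, then apply the preceding proposition. There is essentially no obstacle here; the work was all done in establishing that Canjar filters yield the no-bounded-eventually-different-reals property, and this corollary is a one-line consequence.
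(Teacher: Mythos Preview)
Your proposal is correct and follows exactly the paper's approach: the paper simply records the well-known implication ``adding a random real adds a bounded eventually different real'' immediately before the corollary and then deduces the result from the preceding proposition with no further argument. Your write-up is if anything more detailed than what the paper provides.
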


\qquad\ \ \ \ \ 

We also have the following lemma:

\begin{lemma}
Let $T\subseteq\omega^{<\omega}$ be a finite tree, $p\in\mathbb{PT}\left(
\mathcal{F}\right)  $ such that $\left[  T\right]  \subseteq split\left(
p\right)  ,$ let $n=\left\vert split\left(  p\right)  \cap T\right\vert $ and
let $A$ be a finite set. If $\dot{x}$ is a $\mathbb{PT}\left(  \mathcal{F}%
\right)  $-name such that $p\Vdash``\dot{x}\in A\textquotedblright,$ there is
$q\leq_{T}p$ and $B\in\left[  A\right]  ^{n}$ such that $q\Vdash``\dot{x}\in
B\textquotedblright.$
\end{lemma}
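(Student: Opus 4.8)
The plan is to argue by induction on $n=\left\vert split\left(p\right)\cap T\right\vert $, with the pure decision property as the base case (so I am tacitly assuming $\mathcal{F}$ is Canjar, which is what that property requires). First note that $stem\left(p\right)$ is a restriction of every leaf of $T$ and $T$ is closed under restrictions, so $stem\left(p\right)\in T$; since $stem\left(p\right)\in split\left(p\right)$, we get $n\geq 1$ (we may assume $T\neq\emptyset$, since otherwise no suitable $B$ exists). When $n=1$, $stem\left(p\right)$ is the only splitting node of $p$ inside $T$, hence the only leaf of $T$, so $T$ is exactly the set of restrictions of $stem\left(p\right)$; one checks that $q\leq_{T}p$ then just means $q\leq p$ and $stem\left(q\right)=stem\left(p\right)$, so the statement is literally the pure decision property with $B=\left\{a\right\}$.

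For the inductive step let $n\geq 2$, put $s=stem\left(p\right)$, and partition $suc_{p}\left(s\right)$ into the finitely many \emph{$T$-directions} $m$ (those with $s^{\frown}m\in T$) and the remaining \emph{free directions}. For each free direction $m$, apply the pure decision property to $p_{s^{\frown}m}$ to obtain $r_{m}\leq p_{s^{\frown}m}$ with $stem\left(r_{m}\right)=stem\left(p_{s^{\frown}m}\right)=:v_{m}$ (the first splitting node of $p$ above $s^{\frown}m$) and $r_{m}\Vdash\dot{x}=c_{m}$ for some $c_{m}\in A$. Since $spsuc_{p}\left(s\right)=\left\{v_{m}\setminus s\mid m\in suc_{p}\left(s\right)\right\}\in\left(\mathcal{F}^{<\omega}\right)^{+}$ and only finitely many directions are $T$-directions, the subfamily $\left\{v_{m}\setminus s\mid m\text{ free}\right\}$ is still in $\left(\mathcal{F}^{<\omega}\right)^{+}$; splitting it according to the value $c_{m}\in A$ and using that a finite union in $\left(\mathcal{F}^{<\omega}\right)^{+}$ has a member in $\left(\mathcal{F}^{<\omega}\right)^{+}$, we obtain $b\in A$ such that $M:=\left\{m\text{ free}\mid c_{m}=b\right\}$ has $\left\{v_{m}\setminus s\mid m\in M\right\}\in\left(\mathcal{F}^{<\omega}\right)^{+}$; in particular $M$ is infinite. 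For each $T$-direction $m$, apply the induction hypothesis to $p_{s^{\frown}m}$, to the finite subtree $\widehat{T}_{m}=\left\{t\in T\mid s^{\frown}m\subseteq t\right\}\cup\left\{u\mid u\subseteq s^{\frown}m\right\}$ (it satisfies $\left[\widehat{T}_{m}\right]\subseteq split\left(p_{s^{\frown}m}\right)$, and its number $n_{m}:=\left\vert \widehat{T}_{m}\cap split\left(p_{s^{\frown}m}\right)\right\vert $ of splitting nodes obeys $n_{m}\geq 1$ and $\sum_{m}n_{m}=n-1$, hence $n_{m}<n$), and to $\dot{x}$, obtaining $q_{m}\leq_{\widehat{T}_{m}}p_{s^{\frown}m}$ and $B_{m}$ with $\left\vert B_{m}\right\vert \leq n_{m}$ and $q_{m}\Vdash\dot{x}\in B_{m}$. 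Finally let $q$ be the tree which, above each $s^{\frown}m$, coincides with $r_{m}$ if $m\in M$, with $q_{m}$ if $m$ is a $T$-direction, and which has no node extending $s^{\frown}m$ for any other $m\in suc_{p}\left(s\right)$; and put $B=\left\{b\right\}\cup\bigcup\left\{B_{m}\mid m\text{ a }T\text{-direction}\right\}$, so that $\left\vert B\right\vert \leq 1+\left(n-1\right)=n$.

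Three points then have to be checked. (i) $q\in\mathbb{PT}\left(\mathcal{F}\right)$: every defining clause is inherited node-by-node from the pieces $r_{m},q_{m}\in\mathbb{PT}\left(\mathcal{F}\right)$, except for positivity of $spsuc_{q}\left(s\right)$, which holds because $spsuc_{q}\left(s\right)\supseteq\left\{v_{m}\setminus s\mid m\in M\right\}\in\left(\mathcal{F}^{<\omega}\right)^{+}$ (for $m\in M$ the first splitting node of $q$ above $s^{\frown}m$ is $v_{m}=stem\left(r_{m}\right)$). (ii) $q\leq_{T}p$: clearly $q\leq p$ and $T\subseteq q$, and $T\cap split\left(q\right)=T\cap split\left(p\right)$ because $s$ stays a splitting node of $q$ (since $M$ is infinite and every $T$-direction is kept), while above each $s^{\frown}m$ the splitting behaviour inside $T$ is precisely that prescribed by $q_{m}\leq_{\widehat{T}_{m}}p_{s^{\frown}m}$. (iii) $q\Vdash\dot{x}\in B$: given $r\leq q$, choose $m\in suc_{q}\left(s\right)$ with $s^{\frown}m$ comparable to $stem\left(r\right)$; then $r_{s^{\frown}m}\leq r$ lies below $r_{m}$ (which forces $\dot{x}=b\in B$) if $m\in M$, and below $q_{m}$ (which forces $\dot{x}\in B_{m}\subseteq B$) if $m$ is a $T$-direction.

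The step I expect to be the real obstacle is (i): ensuring that $spsuc_{q}\left(s\right)$ is still $\left(\mathcal{F}^{<\omega}\right)$-positive after all cones over $suc_{p}\left(s\right)$ other than those over $M$ and over the finitely many $T$-directions are thrown away. This is exactly why the pigeonhole is performed at the level of $spsuc_{p}\left(s\right)$ using the finite-union property of $\left(\mathcal{F}^{<\omega}\right)^{+}$, rather than by any direct thinning of $suc_{p}\left(s\right)$; and it quietly uses that deleting finitely many members from an $\left(\mathcal{F}^{<\omega}\right)^{+}$ set keeps it positive (true because $\mathcal{F}$ is free). Everything else --- the bookkeeping about cones, stems, and splitting nodes of the glued tree --- is routine.
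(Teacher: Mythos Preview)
Your inductive argument is correct in outline, but it is considerably more elaborate than what the paper does. The paper's proof is a single sentence --- ``apply the pure decision property $n$ times'' --- and the intended reading is the direct cone decomposition already used twice earlier (in the half-Cohen and bounded-eventually-different lemmas): enumerate $T\cap split(p)=\{s_0,\dots,s_{n-1}\}$; for each $s_i$ let $p(s_i)\le p_{s_i}$ be the largest subtree of $p_{s_i}$ whose intersection with $T$ lies below $s_i$; apply pure decision once to each $p(s_i)$ to get $q(s_i)\Vdash``\dot x=a_i\textquotedblright$ with $stem(q(s_i))=s_i$; and set $q=\bigcup_i q(s_i)$, $B=\{a_i:i<n\}$. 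No induction, no pigeonhole over infinitely many free directions --- just $n$ applications of pure decision followed by a glue. Your recursion ultimately performs the same $n$ decisive applications, but wrapped in an induction that, at each stem, also invokes pure decision on \emph{infinitely many} free cones only to discard all but one colour class afterwards.

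One technical wrinkle in your write-up: applying pure decision to $p_{s^\frown m}$ presupposes that $p_{s^\frown m}$ has a stem, i.e.\ that there is a \emph{unique} first splitting node above $s^\frown m$; equivalently, your claimed identity $spsuc_p(s)=\{v_m\setminus s: m\in suc_p(s)\}$ assumes that $m\mapsto v_m$ is a bijection onto $spsuc_p(s)$. The paper explicitly does \emph{not} impose property~4 (that non-splitting nodes have exactly one successor), so $p_{s^\frown m}$ can fail to be a condition and $spsuc_p(s)$ can contain several elements with the same first coordinate $m$. For free $m$ this is harmless (first prune below the free directions so that each $s^\frown m$ has a unique path up to $split_1(p)$), but the clean fix is to index over $w\in split_1(p)$ rather than over $m\in suc_p(s)$ --- or simply to adopt the paper's cone decomposition, which avoids the issue entirely because each $p(s_i)$ has stem $s_i$ by construction.
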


\begin{proof}
The lemma easily follows by applying the pure decision property $n$-many times.
\end{proof}

\section{Preservation of $P$-points}

Let $\mathcal{U}$ be an ultrafilter and $\mathbb{P}$ a partial order. We say
that $\mathbb{P}$ \emph{preserves }$\mathcal{U}$ if $\mathcal{U}$ is the base
of an ultrafilter after forcing with $\mathbb{P}.$ It is well known that no
ultrafilter is preserved by Cohen, random, Silver or forcings adding a
dominating real. Also there is an ultrafilter that is destroyed by any forcing
that adds a new real (see \cite{Barty}). On the other hand, certain forcings
may preserve some ultrafilters, this is the case for Sacks and Miller
forcings. The preservation of $P$-points is particularly interesting in light
of the following theorem of Shelah (see \cite{ProperandImproper} or
\cite{Barty}):

\begin{proposition}
[Shelah]Let $\delta$ be a limit ordinal, $\langle\mathbb{P}_{\alpha
},\mathbb{\dot{Q}}_{\alpha}\mid\alpha<\delta\rangle$ a countable support
iteration of proper forcings and let $\mathcal{U}$ be a $P$-point. If
$\mathbb{P}_{\alpha}\Vdash``\mathbb{\dot{Q}}_{\alpha}$ preserves
$\mathcal{U}\textquotedblright$ for every $\alpha<\delta,$ then $\mathbb{P}%
_{\delta}$ preserves $\mathcal{U}.$
\end{proposition}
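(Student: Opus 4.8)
The plan is to prove by induction on $\alpha\le\delta$ a formally stronger ``submodel'' version of preservation and to read off the theorem at $\alpha=\delta$. Say that a proper forcing $\mathbb{P}$ \emph{strongly preserves} $\mathcal{U}$ if for every countable $M\prec H_{\theta}$ with $\mathbb{P},\mathcal{U}\in M$, every $p\in\mathbb{P}\cap M$, and every sequence $\langle\dot{A}_{n}\mid n\in\omega\rangle\in M$ of $\mathbb{P}$-names for infinite subsets of $\omega$ such that $p$ forces each $\dot{A}_{n}$ to have infinite intersection with every element of $\mathcal{U}$ (``$\dot{A}_{n}$ is $\mathcal{U}$-positive''), there are an $(M,\mathbb{P})$-generic condition $q\le p$ and a single $B\in\mathcal{U}$ with $q\Vdash$ ``$B\subseteq^{\ast}\dot{A}_{n}$ for every $n$''. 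The first point to record is that, for proper $\mathbb{P}$, strong preservation is equivalent to ordinary preservation of the $P$-point $\mathcal{U}$, and moreover implies that $\mathcal{U}$ still generates a $P$-point after forcing: the forward implication and the ``still a $P$-point'' claim are immediate (apply strong preservation to a single name witnessing a potential failure of ultraness, resp.\ to a name enumerating an arbitrary countable subfamily of the generated filter); for the converse, given bare preservation and $M,p,\langle\dot{A}_{n}\rangle$ as above, positivity forces each $\dot{A}_{n}$ to almost contain some element of $\mathcal{U}$, whose value must lie in the countable set $\mathcal{U}\cap M$ whenever the generic is $(M,\mathbb{P})$-generic (because $M[\dot{G}]\cap V=M$), so choosing $B\in\mathcal{U}$ with $B\subseteq^{\ast}C$ for all $C\in\mathcal{U}\cap M$ (possible since $\mathcal{U}$ is a $P$-point) and any $(M,\mathbb{P})$-generic $q\le p$ (possible by properness) gives $q\Vdash$ ``$B\subseteq^{\ast}\dot{A}_{n}$ for all $n$''. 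Granting this, the theorem reduces to showing that strong preservation is inherited by two-step iterations and by countable support limits, since the hypothesis ``$\mathbb{P}_{\alpha}\Vdash\dot{\mathbb{Q}}_{\alpha}$ preserves $\mathcal{U}$'' then upgrades, inside $V^{\mathbb{P}_{\alpha}}$, to strong preservation of $\dot{\mathbb{Q}}_{\alpha}$ (using inductively that $\mathcal{U}$ is still a $P$-point there), and the induction on $\alpha$ closes.

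For the two-step step, assume $\mathbb{P}_{0}$ strongly preserves $\mathcal{U}$ and $\mathbb{P}_{0}\Vdash$ ``$\dot{\mathbb{Q}}_{1}$ strongly preserves $\mathcal{U}$''. Given $M$, $p=(p_{0},\dot{p}_{1})\in M\cap(\mathbb{P}_{0}\ast\dot{\mathbb{Q}}_{1})$, and $\langle\dot{A}_{n}\rangle\in M$ forced $\mathcal{U}$-positive, work first in $V^{\mathbb{P}_{0}}$: inside $M[\dot{G}_{0}]$, strong preservation of $\dot{\mathbb{Q}}_{1}$ yields a $\mathbb{P}_{0}$-name $\dot{q}_{1}$ for an $(M[\dot{G}_{0}],\dot{\mathbb{Q}}_{1})$-generic extension of $\dot{p}_{1}$ together with a $\mathbb{P}_{0}$-name $\dot{B}_{1}$ for an element of $\mathcal{U}$ such that $\dot{q}_{1}$ forces $\dot{B}_{1}\subseteq^{\ast}\dot{A}_{n}$ for all $n$. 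As $\mathcal{U}\in V$, $p_{0}$ forces $\dot{B}_{1}\in\check{\mathcal{U}}$, in particular $\mathcal{U}$-positive, so strong preservation of $\mathbb{P}_{0}$ applied to the single name $\dot{B}_{1}$ gives an $(M,\mathbb{P}_{0})$-generic $q_{0}\le p_{0}$ and $B\in\mathcal{U}$ with $q_{0}\Vdash B\subseteq^{\ast}\dot{B}_{1}$. Then $(q_{0},\dot{q}_{1})$ is $(M,\mathbb{P}_{0}\ast\dot{\mathbb{Q}}_{1})$-generic, extends $p$, and forces $B\subseteq^{\ast}\dot{A}_{n}$ for every $n$. (The case of a single proper forcing preserving $\mathcal{U}$ is already contained in the equivalence above.)

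The limit step is the crux. If $\mathrm{cf}(\delta)>\omega$ it is soft: for countable $M$, the support of $p$ and all the names $\dot{A}_{n}$ are contained below some $\delta'<\delta$, and strong preservation of $\mathbb{P}_{\delta'}$ applies while the tail beyond $\delta'$ contributes nothing relevant. The real content is $\mathrm{cf}(\delta)=\omega$; fix an increasing $\langle\delta_{n}\mid n\in\omega\rangle$ cofinal in $\delta$ with $\delta_{0}=0$. Given $M$, $p\in M\cap\mathbb{P}_{\delta}$, and $\langle\dot{A}_{k}\rangle\in M$ forced $\mathcal{U}$-positive, one builds inside $M$, by recursion on $n$, a fusion sequence $p=r_{0}\ge r_{1}\ge\cdots$ in $\mathbb{P}_{\delta}$ and sets $B_{n}\in\mathcal{U}$ with $r_{n+1}\upharpoonright\delta_{n}=r_{n}\upharpoonright\delta_{n}$, so that the limit $r$ (with $r\upharpoonright\delta_{n}=r_{n+1}\upharpoonright\delta_{n}$) is a genuine condition of $\mathbb{P}_{\delta}$ by countable support. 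At stage $n$ one invokes the inductive hypothesis, strong preservation of the proper initial segment $\mathbb{P}_{\delta_{n}}$, applied inside $M$ to $r_{n}$ written as $r_{n}\upharpoonright\delta_{n}$ followed by a $\mathbb{P}_{\delta_{n}}$-name for its tail, so as to both commit the ``$B_{n}\subseteq^{\ast}\dot{A}_{k}$'' information --- read through the suitably shrunk tail --- and meet the $n$-th dense subset of $M$, arranging in the end that $r$ is $(M,\mathbb{P}_{\delta})$-generic. Finally, since $\mathcal{U}$ is a \emph{$P$-point}, choose $B\in\mathcal{U}$ with $B\subseteq^{\ast}B_{n}$ for all $n$; one then verifies $r\Vdash$ ``$B\subseteq^{\ast}\dot{A}_{k}$ for every $k$''. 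The main obstacle, and the technical heart of Shelah's theorem, is precisely the design of this recursion: the bookkeeping of dense sets, of the partial commitments about each $\dot{A}_{k}$, and of the tail-shrinking must be arranged so that each $\dot{A}_{k}$ is controlled at cofinally many stages --- accurately enough that the single $P$-point pseudo-intersection $B$ survives into the final extension --- while the fusion limit $r$ remains simultaneously a legitimate condition and $(M,\mathbb{P}_{\delta})$-generic. This is exactly where both the properness of every $\dot{\mathbb{Q}}_{\alpha}$ and the $P$-point property of $\mathcal{U}$ are indispensable, and it is why one must carry the ``strong'' form through the induction: the proper initial segments $\mathbb{P}_{\delta_{n}}$ are shorter and covered by the induction, whereas the tail forcings are not shorter and cannot be reduced to the two-step lemma.
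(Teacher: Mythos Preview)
The paper does not actually prove this proposition; it is quoted as a result of Shelah with references to \cite{ProperandImproper} and \cite{Barty}, so there is no in-paper proof to compare against. Your outline is the standard argument from those references: strengthen ``preserves $\mathcal{U}$'' to a submodel-localized property that is stable under the induction, check the equivalence with ordinary preservation using the $P$-point hypothesis, and then carry the strong form through successor and limit stages.

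The successor (two-step) argument and the equivalence claim are essentially correct; one small point worth making explicit is that in the converse direction you need elementarity to produce a \emph{name} $\dot{C}_{n}\in M$ for the element of $\mathcal{U}$ almost contained in $\dot{A}_{n}$, and then $(M,\mathbb{P})$-genericity to conclude its value lies in $\mathcal{U}\cap M$.

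Where your write-up stops short of a proof is exactly where you say it does: the countable-cofinality limit. You describe the desiderata for the fusion --- meet dense sets in $M$, freeze initial segments, shrink tails so that at stage $n$ some $B_{n}\in\mathcal{U}$ is forced almost into the $\dot{A}_{k}$'s --- but you do not actually construct the recursion or verify that a single $P$-point pseudointersection $B$ of the $B_{n}$'s is forced by the fusion limit to be almost contained in every $\dot{A}_{k}$. In the standard treatments this is done by, at stage $n$, passing to $V^{\mathbb{P}_{\delta_{n}}}$, applying the inductive strong-preservation of the tail $\mathbb{P}_{[\delta_{n},\delta)}$ (which is a shorter iteration there) to the images of the $\dot{A}_{k}$, and then pulling the resulting generic condition and $B_{n}$ back via strong preservation of $\mathbb{P}_{\delta_{n}}$; the bookkeeping ensures each $\dot{A}_{k}$ is handled at all but finitely many stages. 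As written, your limit paragraph is a correct summary of what must happen rather than a proof that it can be made to happen, so the argument has a genuine gap at precisely the point you flag as ``the technical heart.''
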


\qquad\ \ \ \ 

In particular, forcings that preserve $P$-points do not add Cohen or random
reals, even in the iteration. It is well known that Sacks forcing preserves
$P$-points and Miller forcing preserves an ultrafilter $\mathcal{U}$ if and
only if $\mathcal{U}$ is a $P$-point (see \cite{Rat}). Note that if
$\mathbb{P}$ diagonalizes $\mathcal{U},$ then $\mathbb{P}$ does not preserve
$\mathcal{U},$ however, it is possible to not preserve an ultrafilter without
diagonalizing it. For more on preservation of ultrafilters, the reader may
consult \cite{ProductsonInfinitelymanyTrees}, \cite{Rat}, \cite{Barty},
\cite{DiagonalizingHeike}, \cite{ForcingIdealized} and
\cite{PreservingPpointsinDefinableForcings}.

\qquad\ \ \ 

In light of the results of the previous section, it might be tempting to
conjecture that if $\mathcal{F}$ is a Canjar filter, then $\mathbb{PT}\left(
\mathcal{F}\right)  $ preserves $P$-points. However, this is not the case: the
simplest example is to take $\mathcal{U}$ a Canjar $P$-point, since
$\mathbb{PT}\left(  \mathcal{U}\right)  $ diagonalizes $\mathcal{U},$ it
follows that $\mathbb{PT}\left(  \mathcal{U}\right)  $ does not preserve
$\mathcal{U}.$ In this section, we will find a condition on a filter
$\mathcal{F}$ that guarantees preserving a certain $P$-point.

\qquad\ \ \ 

We will need the following result, which is a particular case of lemma
\ref{CanjarParticion}:

\begin{lemma}
Let $\mathcal{F}$ be a Canjar filter and $X\in\left(  \mathcal{F}^{<\omega
}\right)  ^{+}.$ There is $Y\subseteq X$ such that $Y\in\left(  \mathcal{F}%
^{<\omega}\right)  ^{+}$ and for every $n\in\omega,$ the set $\left\{  s\in
Y\mid s\cap n\neq\emptyset\right\}  $ is finite.
\end{lemma}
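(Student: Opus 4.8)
The plan is to obtain $Y$ directly from Lemma~\ref{CanjarParticion}, applied to a well-chosen $\mathfrak{b}$-family and to the constant sequence $\langle X,X,X,\dots\rangle$.

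First I would let $\mathcal{B}$ be the family of all strictly increasing functions $f\in\omega^{\omega}$. This is a $\mathfrak{b}$-family: each member is increasing, $\mathcal{B}$ is unbounded (indeed dominating, since for any $g$ one can take a strictly increasing $f\geq g+1$), and given $\{f_n\mid n\in\omega\}\subseteq\mathcal{B}$ a routine diagonal construction yields a strictly increasing $g$ with $f_n\leq^{\ast}g$ for every $n$. Next I would check that a Canjar filter is automatically $\mathcal{B}$-Canjar for this particular $\mathcal{B}$: given $\overline{Z}=\{Z_n\mid n\in\omega\}\subseteq(\mathcal{F}^{<\omega})^{+}$, Canjarness provides $h\in\omega^{\omega}$ with $\overline{Z}_h\in(\mathcal{F}^{<\omega})^{+}$; choosing any strictly increasing $f$ with $h\leq f$ we have $Z_n\cap\wp(h(n))\subseteq Z_n\cap\wp(f(n))$ for every $n$, hence $\overline{Z}_h\subseteq\overline{Z}_f$, and since $(\mathcal{F}^{<\omega})^{+}$ is upward closed (recall $W\in(\mathcal{F}^{<\omega})^{+}$ iff every $A\in\mathcal{F}$ contains some $s\in W$) we conclude $\overline{Z}_f\in(\mathcal{F}^{<\omega})^{+}$ with $f\in\mathcal{B}$.

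Then I would apply Lemma~\ref{CanjarParticion} to this $\mathcal{B}$, to $\mathcal{F}$, and to the sequence $\overline{X}=\{X_n\mid n\in\omega\}$ with $X_n=X$ for all $n$. This yields $f\in\mathcal{B}$ such that, putting $Y_n=\{s\in X\mid s\subseteq[f(n-1),f(n))\}$ with $f(-1)=0$, the set $Y=\bigcup_{n\in\omega}Y_n$ belongs to $(\mathcal{F}^{<\omega})^{+}$. Clearly $Y\subseteq X$, so it only remains to verify that $\{s\in Y\mid s\cap n\neq\emptyset\}$ is finite for each $n$. Each $Y_k$ is finite, being contained in $\wp([f(k-1),f(k)))$, a finite set. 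Moreover, if $s\in Y_k$ and $s\cap n\neq\emptyset$, then since $s\neq\emptyset$ and $s\subseteq[f(k-1),f(k))$ we get $f(k-1)\leq\min(s)<n$; as $f$ is strictly increasing there are only finitely many such $k$. Hence $\{s\in Y\mid s\cap n\neq\emptyset\}\subseteq\bigcup\{Y_k\mid f(k-1)<n\}$ is a finite union of finite sets, and therefore finite.

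I do not expect a genuine obstacle here. The only point requiring a moment's care is the passage from ``Canjar'' to ``$\mathcal{B}$-Canjar'', which rests on the upward closure of $(\mathcal{F}^{<\omega})^{+}$ together with the monotonicity $\overline{Z}_h\subseteq\overline{Z}_f$ for $h\leq f$; alternatively one could bypass $\mathcal{B}$-Canjarness and simply rerun the short Mathias-generic argument of Lemma~\ref{CanjarParticion} in this special case, but invoking the lemma is cleaner.
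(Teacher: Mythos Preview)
Your proof is correct and follows essentially the same approach as the paper, which simply remarks that the lemma is a particular case of Lemma~\ref{CanjarParticion}. You spell out the details the paper leaves implicit: the choice of $\mathcal{B}$, the (easy) passage from Canjar to $\mathcal{B}$-Canjar via upward closure of $(\mathcal{F}^{<\omega})^{+}$, and the verification of the finiteness condition using the interval structure of the $Y_n$.
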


\qquad\ \ \ \ 

We can now prove the following:

\begin{lemma}
Let $\mathcal{F}$ be a Canjar filter, $p\in\mathbb{PT}\left(  \mathcal{F}%
\right)  $ and $c:split\left(  p\right)  \longrightarrow2.$ There is $q\leq p$
such that $split\left(  q\right)  $ is $c$-monochromatic. \label{RamseyMiller}
\end{lemma}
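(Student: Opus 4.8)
The plan is to build $q$ by a fusion along the canonical enumeration $d$, at each splitting node of the surviving tree making a local choice of colour via the pigeonhole principle on $(\mathcal F^{<\omega})^+$. The key point is clause 2 of the basic lemma on $(\mathcal F^{<\omega})^+$: if a positive set is split into two pieces, one of the pieces is positive. Concretely, suppose $s\in\mathrm{split}(p)$; then $\mathrm{spsuc}_p(s)\in(\mathcal F^{<\omega})^+$, and I colour each $t\in\mathrm{spsuc}_p(s)$ by the value $c(s^{\frown}t)$ (this makes sense once $s^{\frown}t$ is again a splitting node of the subtree we are building). By clause 2 there is a colour $\varepsilon(s)\in 2$ for which $\{t\in\mathrm{spsuc}_p(s)\mid c(s^{\frown}t)=\varepsilon(s)\}$ is still in $(\mathcal F^{<\omega})^+$, so we may shrink the successor-structure at $s$ to keep only those $t$. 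Doing this at every splitting node simultaneously would not obviously converge; instead I would do it recursively level by level, using the tree $T(p,n)$ and the relation $\leq_{T(p,n)}$ to control what has been decided so far, exactly as in the fusion arguments already used for properness and for the Canjar preservation lemmas.

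The recursion produces a $\leq$-decreasing-in-the-appropriate-sense sequence $p=p_0\geq_{T_0}p_1\geq_{T_1}p_2\geq\cdots$ with $T_m=T(p_m,n_m)$, where at stage $m+1$ we refine $p_m$ so that every splitting node appearing in $T_{m+1}$ has had its colour fixed by the choice $\varepsilon(\cdot)$ above, and—crucially—we never delete a splitting node whose successor set we have already trimmed, only pass to the chosen positive subset. One must check that the trimming is legitimate inside $\mathbb{PT}(\mathcal F)$: after replacing $\mathrm{spsuc}_{p_m}(s)$ by its positive $\varepsilon(s)$-monochromatic subset we still have a condition, because a subset of $\mathrm{spsuc}$ that is in $(\mathcal F^{<\omega})^+$ yields, via the minimal-elements observation and the fact that $[p]$ is defined by extendibility, a legitimate Miller tree with the required positivity at each splitting node. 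Then $q=\bigcup_m T_m$ (equivalently, the intersection/limit of the $p_m$) is a condition $q\leq p$: this is the standard verification that the fusion limit lies in $\mathbb{PT}(\mathcal F)$, using clause 1 of the basic $(\mathcal F^{<\omega})^+$ lemma ($X\cap[F]^{<\omega}$ stays positive) to see that $\mathrm{spsuc}_q(t)\in(\mathcal F^{<\omega})^+$ for every $t\in\mathrm{split}(q)$, and the function $d$ to ensure every node of $q$ has a splitting extension.

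Finally, by construction every splitting node $t\in\mathrm{split}(q)$ other than the stem arises as $s^{\frown}t'$ for a splitting node $s$ of $q$ with $t'\in\mathrm{spsuc}_q(s)$ lying in the $\varepsilon(s)$-monochromatic set, so $c(t)=\varepsilon(s)$; but the $\varepsilon$-values themselves can be forced to be constant: when we first trim the stem we fix $\varepsilon(\mathrm{stem}(p))$, and at each subsequent splitting node we are free to choose whichever of the two positive pieces has colour equal to that first value—if neither does, then both pieces have the other colour, contradicting that their union is that colour's piece and hence forcing a recolouring of the whole scheme—so a bookkeeping argument (or simply: pass to a subsequence of colours, $2$ being finite, invoking clause 2 once more at the meta-level) lets us assume all $\varepsilon(s)$ equal a single $i\in 2$. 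Thus $\mathrm{split}(q)$ is $c$-monochromatic, and we are done. The main obstacle I anticipate is not the colouring idea itself but the careful fusion bookkeeping—ensuring the trimming at level $m$ does not destroy positivity of $\mathrm{spsuc}$ at nodes fixed at earlier levels, and that the limit tree still has a stem and satisfies condition 3 of being a Miller tree—though all of this is by now routine given the $\leq_T$ machinery and the game $\mathcal G(\mathcal F,p,\mathcal B)$ already set up in the previous section.
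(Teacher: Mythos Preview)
Your local-pigeonhole approach has a genuine gap in the final paragraph. At a splitting node $s$, splitting $spsuc_p(s)$ into the two colour pieces and invoking clause~2 tells you that \emph{some} colour $\varepsilon(s)$ has a positive piece, but it does not let you choose which one. The sentence ``we are free to choose whichever of the two positive pieces has colour equal to that first value---if neither does, then both pieces have the other colour, contradicting that their union is that colour's piece'' is incoherent: exactly one piece is the first-value piece, and nothing prevents it from being in the ideal while the other-value piece is positive. Concretely, if $c$ alternates by splitting level (stem has colour $0$, all its splitting successors colour $1$, all of theirs colour $0$, \ldots), then your trimming removes nothing, every $\varepsilon(s)$ is forced to be the opposite of $c(s)$, and no choice of a single $i$ aligns them. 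Your meta-level fallback (``apply clause~2 once more'') amounts to applying the lemma you are trying to prove to the new colouring $\varepsilon$, which is circular.

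The paper avoids this by a global dichotomy rather than local pigeonhole. It assumes outright that no $q\leq p$ has $split(q)$ $0$-monochromatic, and from this strong hypothesis deduces that for \emph{every} $s\in split(p)$ the set $X(s)=\{t\setminus s : t\in split(p),\ s\subseteq t,\ c(t)=1\}$ is in $(\mathcal F^{<\omega})^+$---note this is the set of \emph{all} colour-$1$ splitting extensions of $s$, not just immediate splitting successors. The positivity of $X(s)$ is proved by contradiction: if some $A\in\mathcal F$ missed every element of $X(s)$, then the subtree of $p_s$ whose splitting nodes $t$ satisfy $t\setminus s\subseteq A$ would be $0$-monochromatic, contrary to the global assumption. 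One then thins each $X(s)$ using the Canjar property (the preceding lemma) to get finite initial segments, and a routine fusion gives a $1$-monochromatic $q$. The missing idea in your argument is precisely this passage from a global failure of one colour to a uniform positivity statement for the other colour at every node.
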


\begin{proof}
Assume there is no $q\leq p$ such that $split\left(  q\right)  $ is
$0$-monochromatic. We will prove that there is $q\leq p$ such that
$split\left(  q\right)  $ is $1$-monochromatic. Given $s\in split\left(
p\right)  ,$ we define $X\left(  s\right)  $ as the set of all $t\setminus s$
for which $t\in split\left(  p\right)  ,$ $s\subseteq t$ and $c\left(
t\right)  =1.$ We claim that $X\left(  s\right)  \in\left(  \mathcal{F}%
^{<\omega}\right)  ^{+}.$ Assume this is not the case, so there is
$A\in\mathcal{F}$ such that $A$ does not contain any element of $X\left(
s\right)  .$ Let $q\leq p_{s}$ such that if $t\in split\left(  q\right)  ,$
then $t\setminus s\subseteq A.$ It follows that if $t\in split\left(
q\right)  ,$ then $c\left(  t\right)  =0,$ so $split\left(  q\right)  $ is
$0$-monochromatic, which is a contradiction. We conclude that $X\left(
s\right)  \in\left(  \mathcal{F}^{<\omega}\right)  ^{+}$ for every $s\in
split\left(  p\right)  .$ By the previous result, we can find $Y\left(
s\right)  \subseteq X\left(  s\right)  $ such that $Y\left(  s\right)
\in\left(  \mathcal{F}^{<\omega}\right)  ^{+}$ and for every $n\in\omega,$ the
set $\left\{  z\in Y\mid z\cap n\neq\emptyset\right\}  $ is finite. The proof
follows by a simple fusion argument.
\end{proof}

\qquad\ \ \ \ \ \ \ \ \ \qquad\ \ \ \qquad\ \ \qquad\qquad\qquad\ \ \ 

Let $\mathcal{F}$ be a filter. The \emph{Canjar game} $\mathcal{G}%
_{Canjar}\left(  \mathcal{F}\right)  $ is defined as follows:

\qquad\ \ \ \qquad\ \ \qquad\qquad\ \ \ \ \ 

\begin{center}%
\begin{tabular}
[c]{|l|l|l|l|l|l|l|l|}\hline
$\mathsf{I}$ & $X_{0}$ &  & $X_{1}$ &  & $X_{2}$ &  & $...$\\\hline
$\mathsf{II}$ &  & $Y_{0}$ &  & $Y_{1}$ &  & $Y_{2}$ & \\\hline
\end{tabular}

\qquad\ \ \ \ \qquad\ \ \ \ \ \ \ 
\end{center}

Where $X_{i}\in\left(  \mathcal{F}^{<\omega}\right)  ^{+}$ and $Y_{i}%
\in\left[  X_{i}\right]  ^{<\omega}$ for every $i\in\omega.$ The player
$\mathsf{II}$ \emph{wins the game }$\mathcal{G}_{Canjar}\left(  \mathcal{F}%
\right)  $ if $%
{\textstyle\bigcup\limits_{n\in\omega}}
Y_{n}\in\left(  \mathcal{F}^{<\omega}\right)  ^{+}.$ In
\cite{MathiasForcingandCombinatorialCoveringPropertiesofFilters},
Chodounsk\'{y}, Repov\v{s} and Zdomskyy showed that sets in $\left(
\mathcal{F}^{<\omega}\right)  ^{+}$ naturally correspond to open covers of
$\mathcal{F}$ (viewed as a subspace of $\wp\left(  \omega\right)  $).
Moreover, they proved that a filter $\mathcal{F}$ is Canjar if and only if
$\mathcal{F}$ has the Menger property (see \cite{sel} for the definition of
Menger property). In this way, the Canjar game is just a particular case of
the Menger game that has been extensively studied in topology.

\begin{proposition}
[\cite{MathiasForcingandCombinatorialCoveringPropertiesofFilters}]Let
$\mathcal{F}$ be a filter. The following are equivalent:

\begin{enumerate}
\item $\mathcal{F}$ is Canjar.

\item $\mathcal{F}$ is Menger.

\item Player $\mathsf{I}$ does not have a winning strategy in $\mathcal{G}%
_{Canjar}\left(  \mathcal{F}\right)  .$
\end{enumerate}
\end{proposition}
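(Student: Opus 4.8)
The plan is to run the cycle $(2)\Rightarrow(1)$, $(3)\Rightarrow(1)$, $(1)\Rightarrow(2)$, $(2)\Rightarrow(3)$, dispatching the two implications into $(1)$ first as the easy ones. For $(3)\Rightarrow(1)$: if $\mathcal{F}$ is not Canjar, witnessed by a sequence $\langle X_{n}\mid n\in\omega\rangle\subseteq(\mathcal{F}^{<\omega})^{+}$ for which no choice of $Y_{n}\in[X_{n}]^{<\omega}$ has $\bigcup_{n}Y_{n}\in(\mathcal{F}^{<\omega})^{+}$, then the strategy for $\mathsf{I}$ that plays $X_{n}$ at round $n$ and ignores $\mathsf{II}$'s moves is winning in $\mathcal{G}_{Canjar}(\mathcal{F})$, so $\mathsf{I}$ has a winning strategy. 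For $(2)\Rightarrow(1)$: with the same witness, the clopen covers $\mathcal{O}_{n}=\{[s]\cap\mathcal{F}\mid s\in X_{n}\}$ of $\mathcal{F}$, where $[s]=\{B\subseteq\omega\mid s\subseteq B\}$, show that $\mathcal{F}$ is not Menger, since any finite subfamily of $\mathcal{O}_{n}$ has the form $\{[s]\cap\mathcal{F}\mid s\in Y_{n}\}$ for some $Y_{n}\in[X_{n}]^{<\omega}$, and $\bigcup_{n}\{[s]\cap\mathcal{F}\mid s\in Y_{n}\}$ covers $\mathcal{F}$ precisely when $\bigcup_{n}Y_{n}\in(\mathcal{F}^{<\omega})^{+}$.

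The substantive implication is $(1)\Rightarrow(2)$, the theorem of Chodounsk\'{y}, Repov\v{s} and Zdomskyy, which I would prove through the dictionary between $(\mathcal{F}^{<\omega})^{+}$ and the open covers of $\mathcal{F}\subseteq\wp(\omega)$. One half of the dictionary sends $X\in(\mathcal{F}^{<\omega})^{+}$ to the clopen cover $\mathcal{O}_{X}=\{[s]\cap\mathcal{F}\mid s\in X\}$ used above. For the other half, given an open cover $\mathcal{O}$ of $\mathcal{F}$ I may assume, by the Lindel\"{o}f property and zero-dimensionality of $\mathcal{F}$, that $\mathcal{O}=\{W_{k}\mid k\in\omega\}$ is a countable clopen cover; then for each $A\in\mathcal{F}$ there are $k$ and $m$ with $\{B\mid B\cap m=A\cap m\}\subseteq W_{k}$. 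The delicate point --- and the only place where the hypothesis that $\mathcal{F}$ is a \emph{filter} (upward closed) is genuinely needed --- is that this is a two-sided restriction on the first $m$ coordinates of $B$, while membership in $(\mathcal{F}^{<\omega})^{+}$ only records which coordinates of $B$ meet a fixed finite set; this must be leveraged to attach to $\mathcal{O}$ a positive set $X_{\mathcal{O}}$ together with, for each $s\in X_{\mathcal{O}}$, a finite $\mathcal{G}(s)\subseteq\mathcal{O}$, in such a way that across a sequence $\langle\mathcal{O}_{n}\rangle$ of such covers, positivity of $\bigcup_{n}Y_{n}$ for finite $Y_{n}\subseteq X_{\mathcal{O}_{n}}$ forces $\bigcup_{n}\bigcup_{s\in Y_{n}}\mathcal{G}_{n}(s)$ to cover $\mathcal{F}$. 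Granting this, $(1)\Rightarrow(2)$ follows: given open covers $\mathcal{O}_{n}$, put $X_{n}=X_{\mathcal{O}_{n}}$, use Lemma~\ref{CanjarParticion} (or simply Canjarness) to find finite $Y_{n}\subseteq X_{n}$ with $\bigcup_{n}Y_{n}\in(\mathcal{F}^{<\omega})^{+}$, and take $\mathcal{V}_{n}=\bigcup_{s\in Y_{n}}\mathcal{G}_{n}(s)\in[\mathcal{O}_{n}]^{<\omega}$.

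For $(2)\Rightarrow(3)$ I would invoke Hurewicz's classical theorem that a space is Menger if and only if player $\mathsf{I}$ has no winning strategy in the Menger game on it, noting that under the dictionary a winning strategy for $\mathsf{I}$ in $\mathcal{G}_{Canjar}(\mathcal{F})$ (play positive sets $X_{n}$ and read $\mathsf{II}$'s finite answers as members of $[X_{n}]^{<\omega}$) converts into a winning strategy for $\mathsf{I}$ in the Menger game on $\mathcal{F}$ (play $\mathcal{O}_{X_{n}}$, translate $\mathsf{II}$'s finite subfamilies back into finite subsets of $X_{n}$); hence if $\mathcal{F}$ is Menger then $\mathsf{I}$ has no winning strategy in $\mathcal{G}_{Canjar}(\mathcal{F})$. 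Alternatively, and essentially as in the proof above that $\mathsf{I}$ has no winning strategy in $\mathcal{G}(\mathcal{F},p,\mathcal{B})$, one may prove $(1)\Rightarrow(3)$ directly: each positive set $\mathsf{I}$ plays is countable, so (replacing $\mathsf{II}$'s moves by initial segments of fixed enumerations) the tree of plays following a strategy $\sigma$ for $\mathsf{I}$ is a countably branching tree of height $\omega$; running $\sigma$ against an $(\mathbb{M}(\mathcal{F}),V)$-generic real, using that $r_{gen}$ meets every positive set to define in $V[r_{gen}]$ a function recording how far to search, and then using that $\mathcal{F}$ is Canjar to extract in $V$ a single increasing $f$ and an $\mathbb{M}(\mathcal{F})$-condition whose forcing power certifies that the run in which $\mathsf{II}$ follows $f$ is won by $\mathsf{II}$.

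I expect the main obstacle to be precisely the open-cover-to-positive-set half of the dictionary: passing from an arbitrary clopen cover of $\mathcal{F}$, whose members constrain initial segments of sets two-sidedly, to data expressible purely through $(\mathcal{F}^{<\omega})^{+}$, equipped with the finite tags $\mathcal{G}(s)$ that let a finite selection be translated back to a genuine cover --- this is exactly where the filter axiom must do its work. Once it is in place everything else is routine: $(3)\Rightarrow(1)$ and $(2)\Rightarrow(1)$ are immediate, $(1)\Rightarrow(2)$ is the block selection via Lemma~\ref{CanjarParticion}, and $(2)\Rightarrow(3)$ is Hurewicz's theorem together with the strategy translation.
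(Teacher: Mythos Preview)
The paper does not give its own proof of this proposition: it is stated as a cited result of Chodounsk\'{y}, Repov\v{s} and Zdomskyy, accompanied only by the remark that sets in $(\mathcal{F}^{<\omega})^{+}$ correspond to open covers of $\mathcal{F}$ and that therefore the Canjar game is a particular case of the Menger game. Implicitly, then, the paper's ``argument'' is exactly the one you propose: $(1)\Leftrightarrow(2)$ is the cited theorem via the positive-set/open-cover dictionary, and $(2)\Leftrightarrow(3)$ is Hurewicz's classical characterization of the Menger property transported through that same dictionary.

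Your outline is correct and in fact supplies considerably more detail than the paper does. Your identification of the genuine content --- that an arbitrary basic clopen neighbourhood in $\wp(\omega)$ constrains an initial segment two-sidedly, whereas the sets $[s]=\{B:s\subseteq B\}$ constrain only one-sidedly, and that upward closure of $\mathcal{F}$ is what lets one pass from the former to the latter as a basis for the subspace $\mathcal{F}$ --- is precisely the point on which the Chodounsk\'{y}--Repov\v{s}--Zdomskyy argument turns. Once one checks that $\{[s]\cap\mathcal{F}:s\in[\omega]^{<\omega}\}$ is a basis for $\mathcal{F}$, every open cover refines to one of the form $\mathcal{O}_{X}$ for some $X\in(\mathcal{F}^{<\omega})^{+}$, and both $(1)\Rightarrow(2)$ and the strategy translation for $(2)\Rightarrow(3)$ go through as you describe. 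Your alternative direct route $(1)\Rightarrow(3)$ via an $\mathbb{M}(\mathcal{F})$-generic is also valid and mirrors the paper's proof that player $\mathsf{I}$ has no winning strategy in $\mathcal{G}(\mathcal{F},p,\mathcal{B})$.
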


\qquad\ \ \ \qquad\ \ \ 

We will now prove the following lemma:

\begin{lemma}
Let $\mathcal{F}$ be a Canjar filter, $p\in\mathbb{PT}\left(  \mathcal{F}%
\right)  ,$ $\dot{B}$ a $\mathbb{PT}\left(  \mathcal{F}\right)  $-name such
that $p\Vdash``\dot{B}\in\left[  \omega\right]  ^{\omega}\textquotedblright$
and $s\in split\left(  p\right)  .$ There are $q\in\mathbb{PT}\left(
\mathcal{F}\right)  ,$ $B_{s}\subseteq\omega$ and $\left\langle F_{n}%
\right\rangle _{n\in\omega}$ such that the following holds:
\label{nombrecompacto}

\begin{enumerate}
\item $q\leq p$ and $stem\left(  q\right)  =s.$

\item $F_{n}$ is a finite subset of $\left[  \omega\right]  ^{<\omega
}\setminus\left\{  \emptyset\right\}  $ for every $n\in\omega.$

\item For every $n\in\omega,$ if $t_{0}\in F_{n}$ and $t_{1}\in F_{n+1},$ then
$max\left(  t_{0}\right)  <min\left(  t_{1}\right)  .$

\item $spsuc_{q}\left(  s\right)  =%
{\textstyle\bigcup\limits_{n\in\omega}}
F_{n}.$

\item If $t\in F_{n},$ then $q_{s^{\frown}t}\Vdash``\dot{B}\cap\left(
n+1\right)  =B_{s}\cap\left(  n+1\right)  \textquotedblright.$
\end{enumerate}
\end{lemma}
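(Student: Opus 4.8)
The plan is to run essentially the same fusion-type construction used in the earlier lemma about $E(D,p,s)$ and in Lemma \ref{CanjarParticion}, but with the dense sets now coming from deciding longer and longer initial segments of $\dot{B}$. First I would define, for each $n\in\omega$, the set $D_n\subseteq\mathbb{PT}(\mathcal{F})$ of all conditions $r$ that decide $\dot{B}\cap(n+1)$, meaning there is some $b\subseteq n+1$ with $r\Vdash``\dot{B}\cap(n+1)=b\textquotedblright$. Since $p\Vdash``\dot{B}\in[\omega]^{\omega}\textquotedblright$ and there are only finitely many candidates for $\dot{B}\cap(n+1)$, an easy density argument (or the pure decision property) shows each $D_n$ is open dense below $p$. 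The point is that this is exactly the setup of the lemma preceding the corollary: given $\mathcal{F}$ Canjar, $\{D_n\mid n\in\omega\}$ open dense, and $p\in\mathbb{PT}(\mathcal{F})$ with $stem(p)=s$, we get $q\leq p$, $stem(q)=s$, and a block decomposition $spsuc_q(s)=\bigcup_{n}F_n$ with the $F_n$ finite, separated ($\max(t_0)<\min(t_1)$ for $t_0\in F_n$, $t_1\in F_{n+1}$), and with $q_{s^{\frown}t}\in D_n$ whenever $t\in F_n$.

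Once that machinery is invoked, almost everything in the statement is immediate: items (1)--(4) are literally the conclusion of that earlier lemma, and item (5) says that for $t\in F_n$ the condition $q_{s^{\frown}t}$, being in $D_n$, decides $\dot{B}\cap(n+1)$. The only thing left is to produce the single set $B_s\subseteq\omega$ that all of these decisions cohere to. Here I would note that the decisions are automatically coherent: if $t\in F_n$ and $t'\in F_m$ with, say, $t\subseteq t'$ (so $q_{s^{\frown}t'}\leq q_{s^{\frown}t}$), then $q_{s^{\frown}t'}$ forces both $\dot{B}\cap(n+1)=b$ and $\dot{B}\cap(m+1)=b'$, so $b=b'\cap(n+1)$. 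Since below $s$ the tree is ``above $s$'' in a single cone and any two nodes $s^{\frown}t$, $s^{\frown}t'$ with $t\in F_n$, $t'\in F_m$ have a common extension in $q$ (they all lie in $q_s$ and $q_s$ is a Miller tree whose branches all pass through $s$), the finite approximations $b$ are the restrictions of one well-defined function on $\omega$; call its domain-set $B_s$. Concretely, one can just fix a single branch $x\in[q_s]$ through $s$ and for each $n$ let $B_s\cap(n+1)$ be the value forced by the relevant $q_{s^{\frown}t}$ with $t=x\upharpoonright k$ for appropriate $k$; coherence guarantees independence of the choices. Then for any $t\in F_n$, $q_{s^{\frown}t}\Vdash``\dot{B}\cap(n+1)=B_s\cap(n+1)\textquotedblright$, which is (5).

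The only genuine subtlety — the step I'd flag as the main obstacle, though it is minor — is making sure the coherence argument is airtight despite the $F_n$'s being \emph{antichains} of incomparable nodes rather than a linearly ordered sequence: a priori $q_{s^{\frown}t}$ for $t\in F_n$ and $q_{s^{\frown}t'}$ for a \emph{different} $t'\in F_n$ need not be compatible, so they could in principle force different values of $\dot{B}\cap(n+1)$. This is not actually a problem for the statement as written, since (5) is quantified per-$t$ and $B_s$ need only record the values along one chosen thread through the splitting structure; but if one wanted $B_s$ to be forced by $q$ itself (which the statement does \emph{not} demand) one would additionally shrink each $F_n$, using item 2 of the basic lemma about $(\mathcal{F}^{<\omega})^+$ (if $X=A\cup B$ then one of $A,B$ is positive), to the sub-block on which a fixed value $b_n$ is forced, and check coherence $b_{n}=b_{m}\cap(n+1)$ across blocks using that consecutive blocks have a common extension. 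I would carry out only the weaker version needed here, so the proof reduces to: (i) define $D_n$, check open density; (ii) apply the earlier fusion lemma verbatim to get $q$ and $\langle F_n\rangle$; (iii) extract $B_s$ by coherence along a branch. The whole argument is short.
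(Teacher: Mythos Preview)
Your plan has a genuine gap at the ``coherence'' step, and it stems from a misreading of item (5). The statement fixes a \emph{single} set $B_s$ and then demands that for \emph{every} $t\in F_n$ the condition $q_{s^\frown t}$ force $\dot{B}\cap(n+1)=B_s\cap(n+1)$. So all $t\in F_n$ must force the \emph{same} value of $\dot{B}\cap(n+1)$. Applying the earlier lemma to the dense sets $D_n=\{r:r\text{ decides }\dot{B}\cap(n+1)\}$ only gives you $q_{s^\frown t}\in D_n$, i.e.\ each such $q_{s^\frown t}$ decides \emph{some} value; nothing forces these values to agree across different $t\in F_n$. Your coherence argument claims that any two nodes $s^\frown t$, $s^\frown t'$ with $t,t'\in spsuc_q(s)$ have a common extension in $q$, but this is simply false: distinct elements of $spsuc_q(s)$ lead to incomparable splitting nodes, so $q_{s^\frown t}$ and $q_{s^\frown t'}$ are incompatible. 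The branch argument is likewise broken, since a single branch of $q_s$ passes through exactly one element of $spsuc_q(s)$, not one per $n$. The proposed fix of shrinking each (finite) $F_n$ via the $(\mathcal{F}^{<\omega})^+$ partition lemma does not work either: that lemma applies to positive sets, and you would need infinitely many dependent shrinkings of the positive set $\bigcup_n F_n$ while preserving both positivity and the block structure.

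The paper avoids all of this by running the Canjar game $\mathcal{G}_{Canjar}(\mathcal{F})$ directly rather than quoting the earlier lemma. Player $\mathsf{I}$ builds a \emph{decreasing} sequence $q^0\geq q^1\geq\cdots$, each with stem $s$, where at stage $n$ the pure decision property is applied to the \emph{whole} condition to get a single $w_n$ with $q^n\Vdash``\dot{B}\cap(n+1)=w_n\textquotedblright$; player $\mathsf{I}$ then plays $X_n=spsuc_{q^n}(s)$. Because the $q^n$ are decreasing, the $w_n$'s cohere automatically, giving $B_s=\bigcup_n w_n$. Player $\mathsf{II}$'s winning run supplies the finite blocks $F_n=Y_n\subseteq X_n$ with $\bigcup_n Y_n\in(\mathcal{F}^{<\omega})^+$, and for $t\in Y_n$ one has $q_{s^\frown t}=(q^n)_{s^\frown t}\leq q^n$, so it forces the \emph{common} value $w_n$. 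The point is that pure decision must be applied at the level of the whole stem-$s$ condition \emph{before} the block $F_n$ is carved out, and the Canjar game is exactly what lets you iterate this infinitely often.
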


\begin{proof}
We define $\sigma$ a strategy for player $\mathsf{I}$ in $\mathcal{G}%
_{Canjar}\left(  \mathcal{F}\right)  $ as follows:

\begin{enumerate}
\item Using the pure decision property, player $\mathsf{I}$ finds $q_{0}\leq
p$ with $stem\left(  q\right)  =s$ and $w_{0}$ such that $q^{0}\Vdash``\dot
{B}\cap1=w_{0}\textquotedblright$ and plays $X_{0}=spsuc_{q^{0}}\left(
s\right)  .$

\item Assume that player $\mathsf{II}$ plays $Y_{0}\in\left[  X_{0}\right]
^{<\omega}.$ Let $l_{0}\in\omega$ be the least such that $%
{\textstyle\bigcup}
Y_{0}\subseteq l_{0}$ and $Z_{0}=\left\{  t\in X_{0}\mid t\cap l_{0}%
=\emptyset\right\}  \in\left(  \mathcal{F}^{<\omega}\right)  ^{+}.$ Let
$\overline{q}^{1}\leq q^{0}$ be the condition defined as $\overline{q}^{1}=%
{\textstyle\bigcup\limits_{t\in Z_{0}}}
\left(  q^{0}\right)  _{s^{\frown}t}.$ Using the pure decision property,
player $\mathsf{I}$ finds $q^{1}\leq\overline{q}^{1}$ with $stem\left(
q\right)  =s$ and $w_{1}$ such that $q^{1}\Vdash``\dot{B}\cap2=w_{1}%
\textquotedblright$ and plays $X_{1}=spsuc_{q^{1}}\left(  s\right)  .$

\item In general, at step $n$, the player $\mathsf{I}$ has constructed a
decreasing sequence $\left\langle q^{i}\right\rangle _{i\leq n}$ where
$stem\left(  q^{n}\right)  =s$, an increasing sequence $\left\langle
w_{i}\right\rangle _{i\leq n}$ such that $q^{n}\Vdash``\dot{B}\cap\left(
n+1\right)  =w_{n}\textquotedblright$ has played the sequence $\left\langle
X_{i}\right\rangle _{i\leq n}$ where $X_{i}=spsuc_{q^{i}}\left(  s\right)  ,$
he has also constructed an increasing sequence $\left\langle l_{i}%
\right\rangle _{i\leq n}$ such that $Y_{i}\subseteq\lbrack l_{i-1},l_{i})$
(where $l_{-1}=0$ and $Y_{i}$ is the response of player $\mathsf{II}$ at round
$i$). Assume that player $\mathsf{II}$ plays $Y_{n}\in\left[  X_{n}\right]
^{<\omega}.$ Let $l_{n}\in\omega$ be the least such that $%
{\textstyle\bigcup}
Y_{n}\subseteq l_{n}$ and $Z_{n}=\left\{  t\in X_{n}\mid t\cap l_{n}%
=\emptyset\right\}  \in\left(  \mathcal{F}^{<\omega}\right)  ^{+}.$ Let
$\overline{q}^{n+1}\leq q^{n}$ be the condition defined as $\overline{q}%
^{n+1}=%
{\textstyle\bigcup\limits_{t\in Z_{n}}}
\left(  q^{n}\right)  _{s^{\frown}t}.$ Using the pure decision property,
player $\mathsf{I}$ finds $q^{n+1}\leq\overline{q}^{n}$ with $stem\left(
q\right)  =s$ and $w_{n+1}$ such that $q^{n+1}\Vdash``\dot{B}\cap\left(
n+2\right)  =w_{n+1}\textquotedblright$ and plays $X_{n+1}=spsuc_{q^{n+1}%
}\left(  s\right)  .$
\end{enumerate}


Since $\mathcal{F}$ is Canjar, we know that $\sigma$ is not a winning strategy
for the Canjar game. Consider a run in which player $\mathsf{II}$ defeated the
strategy. Let $q=%
{\textstyle\bigcup\limits_{t\setminus s\in Y_{i}}}
\left(  q^{i}\right)  _{s^{\frown}t}$ and note that $q\in\mathbb{PT}\left(
\mathcal{F}\right)  $ since player $\mathsf{II}$ won the game. Define
$F_{i}=Y_{i}$ and $B_{s}=%
{\textstyle\bigcup\limits_{i\in\omega}}
w_{i},$ it is clear that this are the items we were looking for.
\end{proof}

\qquad\ \ \ 

Let $\mathcal{U}$ be an ultrafilter. Recall that the $P$\emph{-point game
}$\mathcal{G}_{P\text{-}point}\left(  \mathcal{U}\right)  $ is defined as follows:

\qquad\qquad\ \qquad\ \ \ \ \ \ \qquad

\begin{center}%
\begin{tabular}
[c]{|l|l|l|l|l|l|}\hline
$\mathsf{I}$ & $W_{0}$ &  & $W_{1}$ &  & $...$\\\hline
$\mathsf{II}$ &  & $z_{0}$ &  & $z_{1}$ & \\\hline
\end{tabular}

\qquad\ \ \ 
\end{center}

Where $W_{i}\in\mathcal{U}$ and $z_{i}\in\left[  W_{i}\right]  ^{<\omega}.$
The player $\mathsf{II}$ will \emph{win the game }$\mathcal{G}_{P\text{-}%
point}\left(  \mathcal{U}\right)  $ if $%
{\textstyle\bigcup\limits_{m\in\omega}}
z_{m}\in\mathcal{U}.$ It is well known that player $\mathsf{II}$ can not have
a winning strategy for this game. The following is a well known result of
Galvin and Shelah (see \cite{Barty} for a proof):

\begin{proposition}
[Galvin, Shelah]Let $\mathcal{U}$ be an ultrafilter. The following are equivalent:

\begin{enumerate}
\item $\mathcal{U}$ is a $P$-point.

\item Player $\mathsf{I}$ does not have a winning strategy in $\mathcal{G}%
_{P\text{-}point}\left(  \mathcal{U}\right)  .$
\end{enumerate}
\end{proposition}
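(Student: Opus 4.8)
The plan is to prove the two implications of the equivalence in turn: the forward one by exhibiting a winning strategy for $\mathsf{I}$, and the backward one by diagonalising against an arbitrary strategy of $\mathsf{I}$. Throughout we may assume $\mathcal{U}$ is non-principal, the principal case being trivial (in it $\mathsf{II}$ simply plays $\{n_{0}\}$ and then the empty set forever, where $\{n_{0}\}$ generates $\mathcal{U}$).

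\textbf{$\mathcal{U}$ not a $P$-point $\Rightarrow$ $\mathsf{I}$ has a winning strategy.} Here I would use the partition characterisation of $P$-points: if $\mathcal{U}$ is not a $P$-point then there is a partition $\omega=\bigcup_{k\in\omega}E_{k}$ with every $E_{k}\notin\mathcal{U}$ and with no $A\in\mathcal{U}$ meeting each $E_{k}$ in a finite set. Let $\mathsf{I}$ play, at stage $n$, the set $W_{n}:=\omega\setminus\bigcup_{k\le n}E_{k}$; this lies in $\mathcal{U}$ because a finite union of non-members of an ultrafilter is a non-member. If $\mathsf{II}$ replies with $z_{n}\in[W_{n}]^{<\omega}$, then $z_{n}\cap E_{k}=\emptyset$ for all $k\le n$, hence for each fixed $k$ we get $\left(\bigcup_{n}z_{n}\right)\cap E_{k}\subseteq z_{0}\cup\dots\cup z_{k-1}$, which is finite. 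Thus $\bigcup_{n}z_{n}$ meets every $E_{k}$ finitely, so $\bigcup_{n}z_{n}\notin\mathcal{U}$, and $\mathsf{I}$ wins.

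\textbf{$\mathcal{U}$ a $P$-point $\Rightarrow$ $\mathsf{I}$ has no winning strategy.} Fix a strategy $\sigma$ for $\mathsf{I}$; we must produce a run defeating it. (Combined with the remark preceding the statement that $\mathsf{II}$ never has a winning strategy, this shows neither player wins $\mathcal{G}_{P\text{-}point}(\mathcal{U})$.) We may assume $\sigma$ is monotone, i.e. each move of $\mathsf{I}$ refines its earlier moves, since monotonising $\sigma$ cannot weaken $\mathsf{I}$. The game tree has only countably many positions, so $\{\sigma(p):p\text{ a position}\}$ is a countable subfamily of $\mathcal{U}$. The $P$-point hypothesis is then used twice. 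First, by recursion on $n$, taking at each step a pseudo-intersection in $\mathcal{U}$ of countably many members of $\mathcal{U}$, I would build a decreasing sequence $\langle D_{n}:n\in\omega\rangle$ in $\mathcal{U}$ that is \emph{closed under $\sigma$}, meaning $D_{n}\subseteq^{\ast}\sigma(F_{0},\dots,F_{n-1})$ whenever $F_{i}\in[D_{i}]^{<\omega}$ for $i<n$. Second, I would fix $C\in\mathcal{U}$ with $C\subseteq^{\ast}D_{n}$ for all $n$. Player $\mathsf{II}$ now builds a branch by playing, at stage $n$ with position $p_{n}$ and $\mathsf{I}$-move $W_{p_{n}}$, a finite set $z_{n}=C\cap W_{p_{n}}\cap[\ell_{n},\ell_{n+1})$ for a suitable increasing sequence $\langle\ell_{n}\rangle$, chosen so that these blocks together exhaust $C$ beyond a fixed initial segment; the closure property keeps each $z_{n}\in[D_{n}]^{<\omega}$, so $\mathsf{I}$'s reply is again almost above $D_{n+1}$ and the recursion is self-sustaining. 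Then $\bigcup_{n}z_{n}\supseteq C\setminus(\text{a finite set})$, hence $\bigcup_{n}z_{n}\in\mathcal{U}$ and $\mathsf{II}$ beats $\sigma$.

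The step I expect to be the main obstacle is the coordination inside this last construction. For each position $p$ the set $C\setminus\sigma(p)$ is finite, but its largest element is not bounded in advance: it depends, through $\sigma$, on the very blocks $\mathsf{II}$ has already played. So the demand ``the block $[\ell_{n},\ell_{n+1})$ must begin above everything $\mathsf{I}$ has so far deleted from $C$'' and the fact ``$\mathsf{I}$'s set at stage $n$ is $\sigma$ applied to $z_{0},\dots,z_{n-1}$, which already involves $\ell_{n}$'' form a potential vicious circle. Breaking it is the crux of the Galvin--Shelah argument: one must either interleave the choice of the $\ell_{n}$ with the construction of $C$ instead of fixing $C$ first, or strengthen the closure property of the $D_{n}$'s so that the pseudo-intersection constants needed at stage $n$ are available before $\mathsf{II}$ must move. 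Once this coordination is arranged correctly, the verifications above are routine.
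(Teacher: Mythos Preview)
The paper does not prove this proposition; it only cites \cite{Barty} for a proof. So there is no in-paper argument to compare against, and I will evaluate your attempt on its own.

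Your direction ``not a $P$-point $\Rightarrow$ $\mathsf{I}$ wins'' is correct and is the standard argument.

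In the other direction you have set things up correctly (the sequence $\langle D_n\rangle$ and the pseudo\-intersection $C$ are exactly the right objects), and you are right that the whole difficulty lies in the circularity you describe: $\ell_n$ must exceed the almost-inclusion constant of $C$ into $W_n$, but $W_n$ depends on $z_{n-1}$ which already uses $\ell_n$. However, neither of your proposed fixes is specific enough to be a proof, and the second one (``strengthen the closure of the $D_n$'') cannot be made to work literally, since one cannot bound the almost-inclusion constant of $D_n$ into $\sigma(F_0,\dots,F_{n-1})$ uniformly over infinitely many tuples.

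The missing idea is a one-step lag combined with a parity split. Concretely: after fixing $C$ and the $D_n$, choose $\ell_0<\ell_1<\cdots$ so that $\ell_k\ge c_k$ (where $C\setminus c_k\subseteq D_k$) and
\[
C\setminus\ell_{k+1}\subseteq\sigma(F_0,\dots,F_{j-1})\quad\text{for all }j\le k\text{ and all }F_i\in[D_i\cap\ell_k]^{<\omega}.
\]
This is a finite set of requirements, so such $\ell_{k+1}$ exists. Now split $C=E_0\cup E_1$ where $E_\varepsilon=\bigcup_k C\cap[\ell_{2k+\varepsilon},\ell_{2k+\varepsilon+1})$; since $\mathcal{U}$ is an ultrafilter, one of these, say $E_0$, lies in $\mathcal{U}$. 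Player $\mathsf{II}$ then plays $z_k=C\cap[\ell_{2k},\ell_{2k+1})$. The point is that $z_0,\dots,z_{k-1}$ are all contained in $\ell_{2k-1}$ and lie in the appropriate $D_i$'s, so the choice of $\ell_{2k}$ (made from $\ell_{2k-1}$) already guarantees $C\setminus\ell_{2k}\subseteq W_k$, hence $z_k\subseteq W_k$. The ``wasted'' odd intervals are precisely what absorbs the unknown almost-inclusion constants, breaking the circularity. Then $\bigcup_k z_k=E_0\in\mathcal{U}$ and $\sigma$ is defeated.

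So your outline is on the right track, but the parity-split device (or an equivalent mechanism) is a genuine additional idea that your write-up does not supply.
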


\ \ \ \ \qquad\qquad\ \ \ \ \qquad

Let $\mathcal{U}$ be an ultrafilter and $\mathcal{F}$ a filter. We will now
define the game $\mathcal{H}\left(  \mathcal{U},\mathcal{F}\right)  ,$ which
is a fusion between the $P$-point game and the game for $\mathbb{PT}\left(
\mathcal{F}\right)  .$ The game is defined as follows:

\qquad\ \ \ \qquad\ \ \qquad\qquad\ \ \ \ \ 

\begin{center}%
\begin{tabular}
[c]{|l|l|l|l|l|l|l|l|l|l|}\hline
$\mathsf{I}$ & $W_{0}$ &  & $p_{0}$ &  & $W_{1}$ &  & $p_{1}$ &  &
$...$\\\hline
$\mathsf{II}$ &  & $z_{0}$ &  & $n_{0}$ &  & $z_{1}$ &  & $n_{1}$ & \\\hline
\end{tabular}

\qquad\ \ 
\end{center}

Where the following conditions hold for every $i\in\omega:$

\begin{enumerate}
\item $W_{i}\in\mathcal{U}.$

\item $z_{i}\in\left[  W_{i}\right]  ^{<\omega}.$

\item $p_{i}\in\mathbb{PT}\left(  \mathcal{F}\right)  .$

\item $\left\langle n_{i}\right\rangle _{i\in\omega}$ is an increasing
sequence of natural numbers.

\item $p_{m+1}\leq_{T_{m}}p_{m}$ where $T_{m}=T\left(  p_{m},n_{m}\right)  .$
\end{enumerate}

\qquad\ \ \ 

The player $\mathsf{II}$ will \emph{win the game }$\mathcal{H}\left(
\mathcal{U},\mathcal{F}\right)  $ if $%
{\textstyle\bigcup\limits_{m\in\omega}}
T_{m}\in\mathbb{PT}\left(  \mathcal{F}\right)  $ and $%
{\textstyle\bigcup\limits_{m\in\omega}}
z_{m}\in\mathcal{U}.$

\begin{definition}
Let $\mathcal{F}$ be a filter and $\mathcal{U}$ an ultrafilter. We will say
that $\mathcal{F}$ is an $\mathcal{U}$\emph{-Canjar filter }if player
$\mathsf{I}$ has no winning strategy in $\mathcal{H}\left(  \mathcal{U}%
,\mathcal{F}\right)  .$
\end{definition}

Note that the previous notion is only of interest when
$\mathcal{F}$ is Canjar and $\mathcal{U}$ is a $P$-point. It is easy to see
that if $\mathcal{U}$ is an ultrafilter, then $\mathcal{U}$ is not
$\mathcal{U}$-Canjar (and will also follow by the next result). Our interest
in $\mathcal{U}$-Canjar filters is that (as we are about to prove), its Miller
forcing preserves $\mathcal{U}.$ Our proof is based on the argument of Miller
that the superperfect forcing preserves $P$-points (see \cite{Rat}).\qquad\ \ 

\begin{proposition}
If $\mathcal{U}$ is a $P$-point and $\mathcal{F}$ is an $\mathcal{U}$-Canjar
filter, then $\mathbb{PT}\left(  \mathcal{F}\right)  $ preserves $\mathcal{U}$.
\end{proposition}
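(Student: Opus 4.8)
The plan is to show that $\mathbb{PT}(\mathcal{F})$ does not destroy $\mathcal{U}$ by proving that for every condition $p$ and every $\mathbb{PT}(\mathcal{F})$-name $\dot B$ for an infinite subset of $\omega$, there is $q\leq p$ and $U\in\mathcal{U}$ with $q\Vdash``U\subseteq\dot B$ or $U\cap\dot B=\emptyset"$; equivalently (since $\mathcal{U}$ is an ultrafilter in $V$), it suffices to find, for each such name $\dot B$ and each $p$, some $q\leq p$ and some $U\in\mathcal{U}$ such that $q\Vdash``U\setminus\dot B$ is finite" or $q\Vdash``U\cap\dot B$ is finite". The standard reformulation is: it is enough to show that whenever $p\Vdash``\dot B\in[\omega]^\omega"$, there are $q\leq p$, $U\in\mathcal{U}$, and a partition-type decomposition so that along a carefully thinned copy of $p$ the name $\dot B$ gets decided on longer and longer initial segments, the decided bits $B_s\cap(n+1)$ cohere into a single ground-model set, and $U$ (chosen from $\mathcal{U}$ via the $P$-point game) is contained in or disjoint from that set. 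Lemma \ref{nombrecompacto} is exactly the tool that decides $\dot B$ on initial segments along the splitting successors of a single splitting node; the game $\mathcal{H}(\mathcal{U},\mathcal{F})$ is designed to iterate this decision across all splitting nodes simultaneously while keeping a ground-model set $U\in\mathcal{U}$ alive.

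Concretely, first I would fix $p\in\mathbb{PT}(\mathcal{F})$ and a name $\dot B$ for an infinite subset of $\omega$, and define a strategy $\sigma$ for player $\mathsf{I}$ in the game $\mathcal{H}(\mathcal{U},\mathcal{F})$. Player $\mathsf{I}$'s moves interleave two bookkeeping tasks: on the ``$W$-side'' he plays the sets $W_i\in\mathcal{U}$ that record which natural numbers are still viable candidates for membership in the final set $U$ — these are chosen so that, after player $\mathsf{II}$ responds with finite sets $z_i$, the portions of $\dot B$ that have been decided so far are consistent with either $\bigcup z_i\subseteq\dot B$ or $\bigcup z_i\cap\dot B=\emptyset$; on the ``$p$-side'' he plays conditions $p_i$ shrinking $p$ using Lemma \ref{nombrecompacto} (or rather the pure-decision refinements behind it) so that along $\mathrm{spsuc}$ of each splitting node that enters the tree $T_m$, the name $\dot B$ is decided up to the level $n_m$ demanded by player $\mathsf{II}$. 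Since $\mathcal{F}$ is $\mathcal{U}$-Canjar, $\sigma$ is not winning, so there is a run defeating it: $\bigcup_m T_m\in\mathbb{PT}(\mathcal{F})$ and $U:=\bigcup_m z_m\in\mathcal{U}$. Let $q=\bigcup_m T_m$ (suitably completed to a condition, as in the proof of the game proposition in Section~3). Then $q\leq p$, $q\in\mathbb{PT}(\mathcal{F})$, and by construction $q$ forces that $\dot B$ agrees on every initial segment with a single ground-model set $B^\ast$ (the coherent union of the decided bits $w_i$ along the generic branch — here one must check the decided bits genuinely cohere regardless of which branch of $q$ the generic picks, which is where the $\leq_{T_m}$-refinement and the level bookkeeping pay off), and that $U$ was chosen so that $U\subseteq B^\ast$ or $U\cap B^\ast=\emptyset$. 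Hence $q\Vdash``U\subseteq\dot B"$ or $q\Vdash``U\cap\dot B=\emptyset"$, which is what we need.

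The main obstacle, I expect, is making the two games genuinely compatible, i.e. arranging player $\mathsf{I}$'s strategy in $\mathcal{H}(\mathcal{U},\mathcal{F})$ so that the $\mathcal{U}$-side moves ($W_i$) actually constrain which numbers can appear in the final $U$ in a way that is coherent with the $\dot B$-decisions being made on the $\mathbb{PT}(\mathcal{F})$-side. The subtlety is that $\dot B$ is a name, so ``$n\in\dot B$'' is not decided outright; what Lemma \ref{nombrecompacto} gives is that below $q_{s^\frown t}$ the value $\dot B\cap(n+1)$ equals a fixed $B_s\cap(n+1)$, but different splitting nodes $s$ give different $B_s$, and different $t\in F_n$ lie below the same $s$ so share $B_s\cap(n+1)$ — the bookkeeping must ensure that as we fuse across all of $\mathrm{split}(q)$ these partial decisions stitch into one set and that player $\mathsf{I}$ can, at stage $i$, offer as $W_i$ the set of $k$ that are ``decided into $\dot B$'' (or ``decided out'') so far, so that player $\mathsf{II}$'s finite choice $z_i\subseteq W_i$ automatically lands inside (resp.\ outside) $B^\ast$. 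Once this bookkeeping is set up correctly, the rest is the routine fusion already carried out for the game $\mathcal{G}(\mathcal{F},p,\mathcal{B})$ in Section~3, together with the Galvin–Shelah characterization of $P$-points (player $\mathsf{I}$ has no winning strategy in $\mathcal{G}_{P\text{-}point}(\mathcal{U})$) packaged into the hypothesis that $\mathcal{F}$ is $\mathcal{U}$-Canjar.
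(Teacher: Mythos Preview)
Your overall architecture is right: define a strategy for player $\mathsf{I}$ in $\mathcal{H}(\mathcal{U},\mathcal{F})$, use that it is not winning, and read off $q=\bigcup_m T_m\in\mathbb{PT}(\mathcal{F})$ and $U=\bigcup_m z_m\in\mathcal{U}$ from a run where $\mathsf{II}$ wins. But there is a genuine gap in how you propose to choose the sets $W_i\in\mathcal{U}$, and a related misconception about the target.

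You write that the bookkeeping should make the decided bits ``stitch into one set $B^\ast$'' so that $q$ forces $\dot B$ to agree with a single ground-model set. This is neither achievable nor needed. Different splitting nodes $s$ yield genuinely different $B_s$, and no amount of $\leq_{T_m}$-refinement will make them cohere into one real: below $q$ the name $\dot B$ is still typically a new real. What you actually need is only $q\Vdash``U\subseteq\dot B"$ (or the dual), which is much weaker. The missing idea is a \emph{Ramsey step before the game}: first apply Lemma~\ref{nombrecompacto} together with a fusion to arrange that every $s\in\mathrm{split}(p)$ carries its $B_s$ and $\langle F_n^s\rangle$, and then apply Lemma~\ref{RamseyMiller} to the coloring $s\mapsto[B_s\in\mathcal{U}]$ to pass to a subtree on which all $B_s$ lie on the same side of $\mathcal{U}$. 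Say all $B_s\in\mathcal{U}$. Now the $W$-moves are forced on you: at stage $m+1$, take $W_{m+1}=\bigcap\{B_s\setminus l_m: s\in\mathrm{split}(p_m)\cap T_m\}$, a finite intersection of members of $\mathcal{U}$, hence in $\mathcal{U}$. After $\mathsf{II}$ returns $z_{m+1}\subseteq W_{m+1}$, player $\mathsf{I}$ trims above each $s\in T_m$ by discarding the blocks $F_i^s$ with $i\leq\max(z_{m+1})$, which forces $z_{m+1}\subseteq\dot B$; this is the move $p_{m+1}\leq_{T_m}p_m$. At the end $q\Vdash``\bigcup_m z_m\subseteq\dot B"$.

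Without the Ramsey homogenization you have no reason for your proposed $W_i$ (``the set of $k$ decided into $\dot B$ so far'') to lie in $\mathcal{U}$: some $B_s$ may be in $\mathcal{U}$ and others not, so their intersection over the finitely many $s\in T_m$ need not be in $\mathcal{U}$, and the game cannot legally continue. This is exactly the obstacle you flagged but did not resolve; Lemma~\ref{RamseyMiller} is the tool that resolves it.
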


\begin{proof}
Let $p\in\mathbb{PT}\left(  \mathcal{F}\right)  $ and $\dot{B}$ a
$\mathbb{PT}\left(  \mathcal{F}\right)  $-name such that $p\Vdash``\dot{B}%
\in\left[  \omega\right]  ^{\omega}\textquotedblright.$ By lemma
\ref{nombrecompacto}, we may assume that for every $s\in split\left(
p\right)  ,$ there is $B_{s}\subseteq\omega$ and $\left\langle F_{n}%
^{s}\right\rangle _{n\in\omega}$ with the following properties:

\begin{enumerate}
\item $F_{n}^{s}$ is a finite subset of $\left[  \omega\right]  ^{<\omega
}\setminus\left\{  \emptyset\right\}  $ for every $n\in\omega.$

\item For every $n\in\omega,$ if $t_{0}\in F_{n}^{s}$ and $t_{1}\in
F_{n+1}^{s},$ then $max\left(  t_{0}\right)  <min\left(  t_{1}\right)  .$

\item $spsuc_{p}\left(  s\right)  =%
{\textstyle\bigcup\limits_{n\in\omega}}
F_{n}^{s}.$

\item If $t\in F_{n}^{s},$ then $p_{s^{\frown}t}\Vdash``\dot{B}\cap\left(
n+1\right)  =B_{s}\cap\left(  n+1\right)  \textquotedblright.$
\end{enumerate}

\qquad\ \ \ \ 

Furthermore, by the lemma \ref{RamseyMiller} we may assume that either
$B_{s}\in\mathcal{U}$ for all $s\in split\left(  p\right)  $ or $B_{s}%
\in\mathcal{U}^{\ast}$ for all $s\in split\left(  p\right)  .$ We will assume
that $B_{s}\in\mathcal{U}$ for all $s\in split\left(  p\right)  $ (in the
other case we work with $\omega\setminus\dot{B})$). Let $s_{0}$ be the stem of
$p.$ We will define a strategy $\sigma$ for player $\mathsf{I}$ in
$\mathcal{H}\left(  \mathcal{U},\mathcal{F}\right)  $ as follows:

\begin{enumerate}
\item $\mathsf{I}$ starts by playing $W_{0}=B_{s_{0}}.$

\item Assume that player $\mathsf{II}$ plays $z_{0}\in\left[  W_{0}\right]
^{<\omega}.$ Let $l_{0}=max\left(  z_{0}\right)  ,$ player $\mathsf{I}$ will
play $p_{0}=%
{\textstyle\bigcup}
\left\{  p_{s_{0}^{\frown}t}\mid t\in F_{i}^{s_{0}}\wedge i>l_{0}\right\}  .$
Note that $p_{0}\Vdash``z_{0}\subseteq\dot{B}\textquotedblright.$

\item Assume that player $\mathsf{II}$ plays $n_{0}\in\omega.$ Now, the player
$\mathsf{I}$ will play the set $W_{1}=%
{\textstyle\bigcap}
\left\{  B_{s}\setminus l_{0}\mid s\in split\left(  p_{0}\right)  \cap
T_{0}\right\}  $ (where $T_{0}=T\left(  p_{0},n_{0}\right)  $).

\item In general, lets assume in the game it has already been played the
sequence $\left\langle W_{0},z_{0},p_{0},n_{0},W_{1}...W_{m},z_{m}%
,p_{m}\right\rangle .$ At the same time, player $\mathsf{I}$ has been making
sure that the sequence $\left\langle p_{i}\right\rangle _{i\leq m}$ has the
following properties:

\begin{enumerate}
\item $p_{i+1}\leq_{T_{i}}p_{i}$ (where $T_{i}=T\left(  p_{i},n_{i}\right)  $)
for all $i<m.$

\item $p_{i}\Vdash``z_{i}\subseteq\dot{B}\textquotedblright$ for all $i\leq
m.$

\end{enumerate}
\end{enumerate}

Now, assume that player $\mathsf{II}$ plays $n_{m}\in\omega.$ Player
$\mathsf{I}$ proceeds to play $W_{m+1}=%
{\textstyle\bigcap}
\left\{  B_{s}\setminus l_{m}\mid s\in split\left(  p_{m}\right)  \cap
T_{m}\right\}  $ where $l_{m}=max\left(  z_{m}\right)  .$ Assume player
$\mathsf{II}$ responds with $z_{m+1}\in\left[  W_{m+1}\right]  ^{<\omega}.$
For every $t\in T_{m}\cap split\left(  p_{m}\right)  ,$ let $p_{m}^{t}\leq
p_{m}$ be the biggest subtree such that $stem\left(  p_{m}^{t}\right)  =t$ and
every node in $p_{m}^{t}\cap T_{m}$ is contained in $t.$ Let $q^{t}=%
{\textstyle\bigcup}
\left\{  (p_{m}^{t})_{x}\mid x\in F_{i}^{t}\wedge i>l_{m+1}\right\}  $ (where
$l_{m+1}=max\left(  z_{m+1}\right)  $) and now player $\mathsf{I}$
counterattacks with $p_{m+1}=%
{\textstyle\bigcup}
\left\{  q^{t}\mid t\in T_{m}\cap split\left(  p_{m}\right)  \right\}  .$ It
is easy to see that $p_{m+1}\leq_{T_{m}}p_{m}$ and $p_{m+1}\Vdash
``z_{m+1}\subseteq\dot{B}\textquotedblright.$


Since $\mathcal{F}$ is $\mathcal{U}$-Canjar, we know that $\sigma$ is not a
winning strategy. Consider a run of the game where player $\mathsf{I}$
followed the strategy $\sigma$, but player $\mathsf{II}$ was the winner. In
this way, we know that $U=%
{\textstyle\bigcup\limits_{i\in\omega}}
z_{i}\in\mathcal{U}$ and $q=%
{\textstyle\bigcup\limits_{i\in\omega}}
T_{i}$ is a condition of $\mathbb{PT}\left(  \mathcal{F}\right)  .$ By
construction, it follows that $q\Vdash``U\subseteq\dot{B}\textquotedblright$
and we are done.
\end{proof}

\section{A model of $\omega_{1}=\mathfrak{u<s}$}

\qquad\qquad\ \ \ 

In order to increase the splitting number, it is enough to diagonalize an
ultrafilter, and to preserve the ultrafilter number, it is enough preserve a
$P$-point. In this way, in order to construct a model of $\mathfrak{u<s}$ it
is enough to find a $P$-point $\mathcal{W}$ and an ultrafilter $\mathcal{U}$
that is $\mathcal{W}$-Canjar. In this situation, we will have that
$\mathbb{PT}\left(  \mathcal{W}\right)  $ adds an unsplit real while
preserving $\mathcal{W}.$ In this section, we will use our results to build a
model of $\mathfrak{u<s}.$ This result is not new, as it already holds in the
Blass-Shelah model (see \cite{BlassShelah} or \cite{Barty}). At least in the
opinion of the authors, the combinatorics involved in our forcing are simpler
than the ones from the Blass-Shelah forcing.

\qquad\ \ \ 

We will first focus on constructing a $\mathcal{B}$-Canjar ultrafilter for
some $\mathfrak{b}$-family $\mathcal{B}.$ Such ultrafilters can either be
constructed under the Continuum Hypothesis or forced with a $\sigma$-closed
forcing (see \cite{MobandMAD}, \cite{BrendleRaghavan}, \cite{BrendleTaylor} or
\cite{CanjarFiltersII}). This follows by the result
of Shelah and the decomposition representation of Brendle and Raghavan. In
\cite{CanjarFiltersII} the first author, Michael Hru\v{s}\'{a}k and Arturo
Antonio Martinez Celis-Rodriguez published a proof of the consistency of
$\mathfrak{b<s}$ and $\mathfrak{b<a}$ using directly the representation of
Brendle and Raghavan. This section and the following borrows some of the
arguments from \cite{CanjarFiltersII}.

\qquad\ \qquad\ \ \ \ \ \qquad\ \ \ \ \ \qquad\ 

Given $X$ a collection of finite non-empty subsets of $\omega$, we define
$\mathcal{C}\left(  X\right)  =\left\{  A\subseteq\omega\mid\forall s\in
X\left(  s\cap A\neq\emptyset\right)  \right\}  .$ The following lemma
contains some of the combinatorial properties of compact sets that we will
need:\qquad\ \qquad\ \qquad\qquad\ 

\begin{lemma}
Let $\mathcal{F}$ be a filter, $\mathcal{D}\subseteq\mathcal{F}$ be a compact
set and $X\in\left(  \mathcal{F}^{<\omega}\right)  ^{+}$.
\label{Lema compactos}

\begin{enumerate}
\item $\mathcal{C}\left(  X\right)  $ is a compact set.

\item There is $Y\in\left[  X\right]  ^{<\omega}$ such that for every
$A\in\mathcal{D}$ there is $s\in Y$ such that $s\subseteq A.$

\item If $\mathcal{C}\left(  X\right)  \subseteq\mathcal{F}$ then for every
$n\in\omega$ there is $S\in\left[  X\right]  ^{<\omega}$ such that if
$A_{0},...,A_{n}\in\mathcal{C}\left(  S\right)  $ and $F\in\mathcal{D}$ then
$A_{0}\cap...\cap A_{n}\cap F\neq\emptyset.$

\item If $\mathcal{U}$ is an ultrafilter and $Y\subseteq\left[  \omega\right]
^{<\omega}$\textsf{\ }then $Y\in\left(  \mathcal{U}^{<\omega}\right)  ^{+}$ if
and only if $\mathcal{C}\left(  Y\right)  \subseteq\mathcal{U}.$
\end{enumerate}
\end{lemma}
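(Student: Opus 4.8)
The plan is to treat the four clauses separately; clauses~1, 2 and 4 are routine, and the only one requiring a real idea is clause~3.

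For clause~1, note that for each nonempty $s\in\left[\omega\right]^{<\omega}$ the set $\left\{A\subseteq\omega\mid s\cap A\neq\emptyset\right\}$ is clopen in $2^{\omega}$, since its complement $\left\{A\mid s\cap A=\emptyset\right\}$ is a basic clopen set depending only on the coordinates in $s$; hence $\mathcal{C}\left(X\right)=\bigcap_{s\in X}\left\{A\mid s\cap A\neq\emptyset\right\}$ is closed, i.e.\ compact. For clause~2, put $U_{s}=\left\{A\subseteq\omega\mid s\subseteq A\right\}$ for $s\in X$, a basic clopen set; since $X\in\left(\mathcal{F}^{<\omega}\right)^{+}$ and $\mathcal{D}\subseteq\mathcal{F}$, every $A\in\mathcal{D}$ contains some $s\in X$, so $\left\{U_{s}\mid s\in X\right\}$ is an open cover of the compact set $\mathcal{D}$, and a finite subcover gives the required $Y\in\left[X\right]^{<\omega}$. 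For clause~4, one uses that $\mathcal{U}$ is an ultrafilter: if $\mathcal{C}\left(Y\right)\subseteq\mathcal{U}$ and $A\in\mathcal{U}$ then $\omega\setminus A\notin\mathcal{U}$, so $\omega\setminus A\notin\mathcal{C}\left(Y\right)$, i.e.\ some $s\in Y$ is disjoint from $\omega\setminus A$, equivalently $s\subseteq A$; conversely, if $Y\in\left(\mathcal{U}^{<\omega}\right)^{+}$ and $B\in\mathcal{C}\left(Y\right)$ but $B\notin\mathcal{U}$, then $\omega\setminus B\in\mathcal{U}$, so some $s\in Y$ satisfies $s\subseteq\omega\setminus B$, contradicting $s\cap B\neq\emptyset$.

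For clause~3 I would argue by a nested sequence of compact sets. Fix $n\in\omega$; the case of finite $X$ is trivial (take $S=X$ and note $\mathcal{C}\left(X\right)\cup\mathcal{D}\subseteq\mathcal{F}$), so enumerate $X=\left\{x_{k}\mid k\in\omega\right\}$ and set $S_{k}=\left\{x_{0},\dots,x_{k-1}\right\}$; then $\mathcal{C}\left(S_{0}\right)\supseteq\mathcal{C}\left(S_{1}\right)\supseteq\cdots$ and $\bigcap_{k}\mathcal{C}\left(S_{k}\right)=\mathcal{C}\left(X\right)$. Suppose toward a contradiction that for every finite $S\subseteq X$ there are $A_{0},\dots,A_{n}\in\mathcal{C}\left(S\right)$ and $F\in\mathcal{D}$ with $A_{0}\cap\cdots\cap A_{n}\cap F=\emptyset$. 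For each $k$, let $K_{k}$ be the set of $\left(A_{0},\dots,A_{n},F\right)\in\mathcal{C}\left(S_{k}\right)^{n+1}\times\mathcal{D}$ with $A_{0}\cap\cdots\cap A_{n}\cap F=\emptyset$; this is a closed subset of $\left(2^{\omega}\right)^{n+2}$, because $\mathcal{C}\left(S_{k}\right)$ and $\mathcal{D}$ are closed (clause~1) and $\left\{\left(A_{0},\dots,A_{n},F\right)\mid A_{0}\cap\cdots\cap A_{n}\cap F=\emptyset\right\}$ is closed, its complement being the open set $\bigcup_{m\in\omega}\left\{\left(A_{0},\dots,A_{n},F\right)\mid m\in A_{0}\cap\cdots\cap A_{n}\cap F\right\}$. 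By assumption $K_{k}\neq\emptyset$ for each $k$ (apply it to $S=S_{k}$), and the $K_{k}$ are decreasing, so $\bigcap_{k}K_{k}\neq\emptyset$; a point $\left(A_{0},\dots,A_{n},F\right)$ there has $A_{i}\in\bigcap_{k}\mathcal{C}\left(S_{k}\right)=\mathcal{C}\left(X\right)\subseteq\mathcal{F}$ and $F\in\mathcal{D}\subseteq\mathcal{F}$, so $A_{0}\cap\cdots\cap A_{n}\cap F\in\mathcal{F}$ is nonempty, a contradiction.

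The main obstacle is precisely the bookkeeping in clause~3: for a finite $S\subsetneq X$ the set $\mathcal{C}\left(S\right)$ is strictly larger than $\mathcal{C}\left(X\right)$ and need not sit inside $\mathcal{F}$, so the hypothesis $\mathcal{C}\left(X\right)\subseteq\mathcal{F}$ cannot be used against the finite witnesses directly; the point of the compactness argument is to extract, in the limit over all finite $S$, a single tuple lying in $\mathcal{C}\left(X\right)$ (hence in $\mathcal{F}$) that still has empty common intersection with some member of $\mathcal{D}\subseteq\mathcal{F}$, which is impossible since $\mathcal{F}$ is closed under finite intersection. The remaining work is just recognizing the relevant subsets of $2^{\omega}$, and of finite powers of $2^{\omega}$, as clopen or closed.
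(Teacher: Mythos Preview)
Your proof is correct and follows essentially the same approach as the paper. The only cosmetic difference is in clause~3: the paper indexes its compact sets $K(s)$ by single elements $s\in X$ and projects out the $F$-coordinate (using that projection along the compact factor $\mathcal{D}$ is closed), then appeals to the finite intersection property, whereas you keep $F$ as an explicit coordinate and run a decreasing chain $K_{k}$ indexed by finite initial segments of an enumeration of $X$; both are the same compactness argument and yield the same finite $S$.
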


\begin{proof}
For item 1, it is easy to see that $\mathcal{C}\left(  X\right)  $ is a closed
subset of $\wp\left(  \omega\right)  .$ We now prove item 2, let $X\in\left(
\mathcal{F}^{<\omega}\right)  ^{+}$ and $\mathcal{D}\subseteq\mathcal{F}$ a
compact set. For every $s\in X$, we define $U\left(  s\right)  =\left\{  A\mid
s\subseteq A\right\}  .$ It is easy to see that $U\left(  s\right)  $ is an
open set and $\left\{  U\left(  s\right)  \mid s\in X\right\}  $ is an open
cover for $\mathcal{D}$ (because $X\in\left(  \mathcal{F}^{<\omega}\right)
^{+}$). Since $\mathcal{D}$ is compact, there is $Y\in\left[  X\right]
^{<\omega}$ such that $\left\{  U\left(  s\right)  \mid s\in Y\right\}  $ is
an open cover for $\mathcal{D}.$ Clearly $Y$ is the set we were looking for.


We now prove 3, let $\mathcal{C}\left(  X\right)  \subseteq\mathcal{F}$ and
$n\in\omega.$ Given $s\in X$ define $K\left(  s\right)  $ as the set of all
$\left(  A_{0},\ldots,A_{n}\right)  \in\mathcal{C}\left(  s\right)  ^{n+1}$
with the property that there is $F\in\mathcal{D}$ such that $A_{0}\cap
\ldots\cap A_{n}\cap F=\emptyset.$ It is easy to see that $K\left(  s\right)
$ is a compact. Note that if $\left(  A_{0},\ldots,A_{n}\right)  \in
\bigcap\limits_{s\in X}K\left(  s\right)  $ then $A_{0},\ldots,A_{n}%
\in\mathcal{C}\left(  X\right)  \subseteq\mathcal{F}$ and there would be
$F\in\mathcal{D\subseteq F}$ such that $A_{0}\cap\ldots\cap A_{n}\cap
F=\emptyset$ which is clearly a contradiction. Since the $K\left(  s\right)  $
are compact, then there must be $S\in\left[  F\right]  ^{<\omega}$ such that
$\bigcap\limits_{s\in S}K\left(  s\right)  =\emptyset.$ It is easy to see that
this is the $S$ we are looking for.


We now prove 4. Let $\mathcal{U}$ be an ultrafilter and $Y\subseteq\left[
\omega\right]  ^{<\omega}.$ We will prove that $Y\notin\left(  \mathcal{U}%
^{<\omega}\right)  ^{+}$ if and only if $\mathcal{C}\left(  Y\right)
\nsubseteq\mathcal{U}.$ First assume that $Y\notin\left(  \mathcal{U}%
^{<\omega}\right)  ^{+},$ this means that there is $A\in\mathcal{U}$ that does
not contain any element of $Y,$ so $B=\omega\setminus A$ intersects every
element of $Y$, hence $B\in\mathcal{C}\left(  Y\right)  $ which implies that
$\mathcal{C}\left(  Y\right)  \nsubseteq\mathcal{U}.$ Now assume that
$\mathcal{C}\left(  Y\right)  \nsubseteq\mathcal{U},$ so there is
$B\in\mathcal{C}\left(  Y\right)  $ such that $B\notin\mathcal{U},$ hence
$A=\omega\setminus B\in\mathcal{U}.$ Since $B\in\mathcal{C}\left(  Y\right)
,$ this implies that $A$ does not contain any element of $Y,$ so
$Y\notin\left(  \mathcal{U}^{<\omega}\right)  ^{+}.$ \ \ 
\end{proof}

\qquad\ \ 

We will need the following notion:

\begin{definition}
Let $\mathcal{I}$ be an ideal on $\omega.$ We define $\mathbb{F}_{\sigma
}\left(  \mathcal{I}\right)  $ as the collection of all $F_{\sigma}$-filters
$\mathcal{F}$ such that $\mathcal{F}\cap\mathcal{I}=\emptyset.$ We order
$\mathbb{F}_{\sigma}\left(  \mathcal{I}\right)  $ by inclusion.
\end{definition}

\qquad\ \ \ 

Note that an $F_{\sigma}$-filter $\mathcal{F}$ is in $\mathbb{F}_{\sigma
}\left(  \mathcal{I}\right)  $ if and only if $\mathcal{F}\cup\mathcal{I}%
^{\ast}$ generates a filter. The following are some properties of this types
of forcings:

\begin{lemma}
Let $\mathcal{I}$ be an ideal on $\omega.$

\begin{enumerate}
\item $\mathbb{F}_{\sigma}\left(  \mathcal{I}\right)  $ is a $\sigma$-closed forcing.

\item $\mathbb{F}_{\sigma}\left(  \mathcal{I}\right)  $ adds an ultrafilter
(which we will denote by $\mathcal{U}_{gen}\left(  \mathcal{I}\right)  $)
disjoint from $\mathcal{I}.$

\item $\mathbb{F}_{\sigma}\left(  \mathcal{I}\right)  \ast\mathbb{PT}%
\mathcal{(\dot{U}}_{gen}(\mathcal{I}))$and $\mathbb{F}_{\sigma}\left(
\mathcal{I}\right)  \ast\mathbb{M}\mathcal{(\dot{U}}_{gen}(\mathcal{I}))$ are
proper forcings that destroy $\mathcal{I}.$
\end{enumerate}
\end{lemma}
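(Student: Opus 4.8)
The plan is to treat the three items in turn, with item (3) reduced to item (2) together with the facts about $\mathbb{PT}(\mathcal{F})$ and $\mathbb{M}(\mathcal{F})$ already established earlier. For item (1), I would take a decreasing sequence of conditions $\langle\mathcal{F}_{n}\mid n\in\omega\rangle$; since a stronger condition is a larger filter, this is an increasing chain $\mathcal{F}_{0}\subseteq\mathcal{F}_{1}\subseteq\cdots$ of $F_{\sigma}$-filters, each disjoint from $\mathcal{I}$. The natural lower bound is $\mathcal{F}_{\omega}=\bigcup_{n\in\omega}\mathcal{F}_{n}$: an increasing union of filters is a filter, it is disjoint from $\mathcal{I}$ (hence proper, as $\emptyset\in\mathcal{I}$), and it is $F_{\sigma}$ because a countable union of $F_{\sigma}$ sets is $F_{\sigma}$. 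So $\mathcal{F}_{\omega}\in\mathbb{F}_{\sigma}(\mathcal{I})$ lies below every $\mathcal{F}_{n}$, and $\mathbb{F}_{\sigma}(\mathcal{I})$ is $\sigma$-closed.

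For item (2), I would set $\mathcal{U}_{gen}(\mathcal{I})=\bigcup G$, which is a filter disjoint from $\mathcal{I}$ because $G$ is directed. Since $\mathbb{F}_{\sigma}(\mathcal{I})$ is $\sigma$-closed it adds no new subsets of $\omega$, so it is enough to show that for every ground-model $A\subseteq\omega$ the set $D_{A}$ of conditions $\mathcal{F}$ with $A\in\mathcal{F}$ or $\omega\setminus A\in\mathcal{F}$ is dense. Given $\mathcal{F}$: if $F\cap A\notin\mathcal{I}$ for all $F\in\mathcal{F}$, then the filter generated by $\mathcal{F}\cup\{A\}$ is still disjoint from $\mathcal{I}$ (using that $\mathcal{I}$ is downward closed) and extends $\mathcal{F}$; symmetrically for $\omega\setminus A$. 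These two alternatives cannot both fail, for if $F_{0}\cap A\in\mathcal{I}$ and $F_{1}\setminus A\in\mathcal{I}$ then $F_{0}\cap F_{1}=(F_{0}\cap F_{1}\cap A)\cup(F_{0}\cap F_{1}\setminus A)$ would be a member of $\mathcal{F}$ lying in $\mathcal{I}$, contradicting $\mathcal{F}\cap\mathcal{I}=\emptyset$. The point that needs care is that the filter generated by adjoining a single set $A$ to an $F_{\sigma}$-filter is again $F_{\sigma}$: if $\mathcal{F}=\bigcup_{k}C_{k}$ with each $C_{k}$ closed, hence compact, then $\{F\cap A\mid F\in C_{k}\}$ is compact and its upward closure is closed (projection along a compact coordinate is a closed map), so the new filter is a countable union of closed sets. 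Density of $D_{A}$ then follows, and genericity makes $\mathcal{U}_{gen}(\mathcal{I})$ an ultrafilter.

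For item (3), $\mathbb{F}_{\sigma}(\mathcal{I})$ is proper since it is $\sigma$-closed; in its extension $\dot{\mathcal{U}}_{gen}(\mathcal{I})$ is an ultrafilter by (2), so $\mathbb{PT}(\dot{\mathcal{U}}_{gen}(\mathcal{I}))$ is proper by the proposition already proved for arbitrary filters, while $\mathbb{M}(\dot{\mathcal{U}}_{gen}(\mathcal{I}))$ is $\sigma$-centered (conditions with a common stem are compatible) hence ccc hence proper; a two-step iteration of proper forcings is proper, so both iterations are proper. For destruction, since $\mathcal{U}_{gen}(\mathcal{I})$ is an ultrafilter disjoint from $\mathcal{I}$ we get $\omega\setminus A\in\mathcal{U}_{gen}(\mathcal{I})$ for every $A\in\mathcal{I}$, i.e.\ $\mathcal{I}^{\ast}\subseteq\mathcal{U}_{gen}(\mathcal{I})$. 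The lemma that $\mathbb{PT}(\mathcal{F})$ diagonalizes $\mathcal{F}$ shows that $\mathbb{PT}(\dot{\mathcal{U}}_{gen}(\mathcal{I}))$ adds an infinite set almost contained in every member of $\mathcal{U}_{gen}(\mathcal{I})$, and the Mathias generic real does the same for $\mathbb{M}(\dot{\mathcal{U}}_{gen}(\mathcal{I}))$; such a set is almost contained in every member of $\mathcal{I}^{\ast}$, hence almost disjoint from every member of $\mathcal{I}$, which is exactly what it means to destroy $\mathcal{I}$ (and $\mathbb{F}_{\sigma}(\mathcal{I})$ adds no reals, so the generators of $\mathcal{I}$ are untouched). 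The only genuinely non-routine ingredient here is the verification that adjoining a single set to an $F_{\sigma}$-filter preserves being $F_{\sigma}$; the remainder is bookkeeping on top of results already in the paper.
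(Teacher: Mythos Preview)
Your proof is correct. The paper states this lemma without proof, treating all three items as routine; your argument supplies the details faithfully, and the one point you flag as needing care---that the filter generated by adjoining a single set to an $F_{\sigma}$-filter is again $F_{\sigma}$, via the closedness of the upward closure of a compact set---is indeed the only step that is not entirely mechanical.
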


\qquad\ \ \ 

If $\mathcal{A}$ is a \textsf{MAD} family, we will denote $\mathbb{F}_{\sigma
}\left(  \mathcal{A}\right)  $ instead of $\mathbb{F}_{\sigma}\left(
\mathcal{I}\left(  \mathcal{A}\right)  \right)  $ and $\mathcal{U}%
_{gen}\left(  \mathcal{A}\right)  $ instead of $\mathcal{U}_{gen}\left(
\mathcal{I}\left(  \mathcal{A}\right)  \right)  .$ Note that $\mathbb{F}%
_{\sigma}\left(  \left[  \omega\right]  ^{<\omega}\right)  $ is the collection
of all $F_{\sigma}$-filters. In this case, we will only denote it by
$\mathbb{F}_{\sigma}$ and by $\mathcal{U}_{gen}$ we will denote the generic
ultrafilter added by $\mathbb{F}_{\sigma}.$ The following lemma is easy and
left to the reader:

\begin{lemma}
If $\mathcal{I}$ is an ideal, $X\subseteq\left[  \omega\right]  ^{<\omega}$
and $\mathcal{F}\in\mathbb{F}_{\sigma}\left(  \mathcal{I}\right)  ,$ then
$\mathcal{F\Vdash}``X\in(\mathcal{\dot{U}}_{gen}\left(  \mathcal{I}\right)
^{<\omega}\mathcal{)}^{+}\textquotedblright$ if and only if $\mathcal{C}%
\left(  X\right)  \subseteq\left\langle \mathcal{F\cup I}^{\ast}\right\rangle
$ (where $\left\langle \mathcal{F\cup I}^{\ast}\right\rangle $ is the filter
generated by $\mathcal{F\cup I}^{\ast}$). \qquad\ \ 
\end{lemma}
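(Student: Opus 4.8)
The plan is to reduce the forcing statement to a combinatorial statement about the ground model, using that $\mathbb{F}_{\sigma}\left(\mathcal{I}\right)$ is $\sigma$-closed. First I would record two facts. Since $\mathbb{F}_{\sigma}\left(\mathcal{I}\right)$ adds no new subsets of $\omega$, both $\mathcal{C}\left(X\right)$ and $\left\langle \mathcal{F}\cup\mathcal{I}^{\ast}\right\rangle$ are absolute between the ground model and any generic extension. And whenever $\mathcal{F}$ belongs to the generic filter, $\mathcal{\dot{U}}_{gen}\left(\mathcal{I}\right)$ contains $\mathcal{F}$ and, being an ultrafilter disjoint from $\mathcal{I}$, contains $\mathcal{I}^{\ast}$; hence $\mathcal{F}\Vdash``\left\langle \mathcal{F}\cup\mathcal{I}^{\ast}\right\rangle \subseteq\mathcal{\dot{U}}_{gen}\left(\mathcal{I}\right)\textquotedblright$.

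The combinatorial core is the observation that, for $A\subseteq\omega$, one has $A\in\left\langle \mathcal{F}\cup\mathcal{I}^{\ast}\right\rangle$ if and only if $F\setminus A\in\mathcal{I}$ for some $F\in\mathcal{F}$: if $I:=F\setminus A\in\mathcal{I}$ then $F\cap\left(\omega\setminus I\right)=F\cap A\subseteq A$ witnesses $A\in\left\langle \mathcal{F}\cup\mathcal{I}^{\ast}\right\rangle$, and conversely if $A\supseteq F\cap\left(\omega\setminus I\right)$ with $F\in\mathcal{F}$ and $I\in\mathcal{I}$ then $F\setminus A\subseteq F\cap I\in\mathcal{I}$. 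Consequently, if $A\notin\left\langle \mathcal{F}\cup\mathcal{I}^{\ast}\right\rangle$ then the filter $\mathcal{G}_{A}:=\left\langle \mathcal{F}\cup\left\{\omega\setminus A\right\}\right\rangle =\left\{B\subseteq\omega\mid\exists F\in\mathcal{F}\left(F\setminus A\subseteq B\right)\right\}$ is disjoint from $\mathcal{I}$ (any element of $\mathcal{G}_{A}\cap\mathcal{I}$ contains some $F\setminus A$, forcing $F\setminus A\in\mathcal{I}$), and since extending an $F_{\sigma}$-filter by a single generator again produces an $F_{\sigma}$-filter, $\mathcal{G}_{A}\in\mathbb{F}_{\sigma}\left(\mathcal{I}\right)$; moreover $\mathcal{G}_{A}\leq\mathcal{F}$ and $\omega\setminus A\in\mathcal{G}_{A}$, so $\mathcal{G}_{A}\Vdash``\omega\setminus A\in\mathcal{\dot{U}}_{gen}\left(\mathcal{I}\right)\textquotedblright$.

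With this in hand I would prove both implications directly. For $\left(\Leftarrow\right)$, assume $\mathcal{C}\left(X\right)\subseteq\left\langle \mathcal{F}\cup\mathcal{I}^{\ast}\right\rangle$, let $G$ be generic with $\mathcal{F}\in G$, and put $\mathcal{U}=\mathcal{\dot{U}}_{gen}\left(\mathcal{I}\right)^{G}$. Given $B\in\mathcal{U}$ (so $B$ lies in the ground model), if no $s\in X$ satisfied $s\subseteq B$ then $\omega\setminus B$ would meet every member of $X$, i.e. $\omega\setminus B\in\mathcal{C}\left(X\right)\subseteq\left\langle \mathcal{F}\cup\mathcal{I}^{\ast}\right\rangle \subseteq\mathcal{U}$, contradicting $B\in\mathcal{U}$; hence some $s\in X$ satisfies $s\subseteq B$, and therefore $X\in\left(\mathcal{U}^{<\omega}\right)^{+}$. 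As $G$ was arbitrary, $\mathcal{F}\Vdash``X\in\left(\mathcal{\dot{U}}_{gen}\left(\mathcal{I}\right)^{<\omega}\right)^{+}\textquotedblright$. For $\left(\Rightarrow\right)$ I would argue the contrapositive: if $\mathcal{C}\left(X\right)\not\subseteq\left\langle \mathcal{F}\cup\mathcal{I}^{\ast}\right\rangle$, pick $A\in\mathcal{C}\left(X\right)\setminus\left\langle \mathcal{F}\cup\mathcal{I}^{\ast}\right\rangle$ and form $\mathcal{G}_{A}$. Since $A$ meets every member of $X$, no $s\in X$ is contained in $\omega\setminus A$; as $\mathcal{G}_{A}\Vdash``\omega\setminus A\in\mathcal{\dot{U}}_{gen}\left(\mathcal{I}\right)\textquotedblright$, this gives $\mathcal{G}_{A}\Vdash``X\notin\left(\mathcal{\dot{U}}_{gen}\left(\mathcal{I}\right)^{<\omega}\right)^{+}\textquotedblright$, and since $\mathcal{G}_{A}\leq\mathcal{F}$ this contradicts $\mathcal{F}\Vdash``X\in\left(\mathcal{\dot{U}}_{gen}\left(\mathcal{I}\right)^{<\omega}\right)^{+}\textquotedblright$.

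The one step I would expect to require a sentence beyond the routine is the claim that $\mathcal{G}_{A}$ is again an $F_{\sigma}$-filter; this is the (mild) main point. Writing $\mathcal{F}=\bigcup_{n\in\omega}K_{n}$ with each $K_{n}$ compact, one has $\mathcal{G}_{A}=\bigcup_{n\in\omega}\left\{B\mid\exists F\in K_{n}\left(F\setminus A\subseteq B\right)\right\}$, and each set in this union is the upward closure (under $\subseteq$) of the compact set $\left\{F\setminus A\mid F\in K_{n}\right\}$, which is closed in $\wp\left(\omega\right)$ by a standard compactness argument. Everything else is a routine genericity/density argument, so I do not anticipate any real difficulty.
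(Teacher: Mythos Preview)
Your proof is correct; the paper actually omits the proof entirely, stating that the lemma is ``easy and left to the reader,'' and your argument is exactly the routine verification one would expect. The only nontrivial point---that $\mathcal{G}_{A}=\langle\mathcal{F}\cup\{\omega\setminus A\}\rangle$ is again an $F_{\sigma}$-filter disjoint from $\mathcal{I}$---is handled correctly via the compactness argument you give.
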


\qquad\ \ \ 

In particular, if $\mathcal{F}\in\mathbb{F}_{\sigma}$ and $X\subseteq\left[
\omega\right]  ^{<\omega},$ then $\mathcal{F\Vdash}``X\in(\mathcal{\dot{U}%
}_{gen}{}^{<\omega}\mathcal{)}^{+}\textquotedblright$ if and only if
$\mathcal{C}\left(  X\right)  \subseteq\mathcal{F}.$ We will now prove the following:

\begin{proposition}
If $\mathcal{B}\in V$ is a $\mathfrak{b}$-family, then $\mathbb{F}_{\sigma}$
forces that $\mathcal{\dot{U}}_{gen}$ is $\mathcal{B}$-Canjar.
\label{UgenesCanajr}
\end{proposition}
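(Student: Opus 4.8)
The plan is to show that $\mathbb{F}_{\sigma}$ forces: for every sequence $\overline{X}=\langle \dot X_n \mid n\in\omega\rangle$ of names for sets in $(\mathcal{\dot U}_{gen}^{<\omega})^+$, there is $f\in\mathcal{B}$ with $\overline{X}_f\in(\mathcal{\dot U}_{gen}^{<\omega})^+$. Since $\mathbb{F}_{\sigma}$ is $\sigma$-closed, it adds no new reals, so each $\dot X_n$ can be taken to name a ground-model set $X_n\subseteq[\omega]^{<\omega}$; moreover, given a condition $\mathcal{F}$, by the lemma just before the statement, $\mathcal{F}\Vdash``X_n\in(\mathcal{\dot U}_{gen}^{<\omega})^+$'' is equivalent to $\mathcal{C}(X_n)\subseteq\mathcal{F}$. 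So fix $\mathcal{F}\in\mathbb{F}_{\sigma}$ forcing all the $X_n$ positive; the task reduces to finding $\mathcal{G}\leq\mathcal{F}$ and $f\in\mathcal{B}$ with $\mathcal{C}(\overline{X}_f)\subseteq\mathcal{G}$. Since $\mathbb{F}_{\sigma}$ is $\sigma$-closed it suffices to find such a $\mathcal{G}$ in $V$, i.e. an $F_{\sigma}$-filter extending $\mathcal{F}$ (still intersecting $[\omega]^{<\omega}$, i.e. proper).

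Write $\mathcal{F}=\bigcup_{k}\mathcal{D}_k$ as an increasing union of compact sets. For each $k$ we have $\mathcal{D}_k\subseteq\mathcal{F}$ compact and $X_n\in(\mathcal{F}^{<\omega})^+$. The key is to apply Lemma \ref{Lema compactos}(3): since $\mathcal{C}(X_n)\subseteq\mathcal{F}$, for each $k,n$ and each $j\in\omega$ there is a finite $S^k_n(j)\in[X_n]^{<\omega}$ such that any $j{+}1$ members of $\mathcal{C}(S^k_n(j))$ meet every $F\in\mathcal{D}_k$ simultaneously. Combining this with Lemma \ref{Lema compactos}(2) — which gives a finite $Y^k_n\in[X_n]^{<\omega}$ such that every $A\in\mathcal{D}_k$ contains some $s\in Y^k_n$ — I can, after passing to a cofinal subfamily of $\mathcal{B}$ using Lemma on $\mathfrak{b}$-families, read off for each $n$ a natural number $g(n)$ bounding all elements appearing in the finitely many relevant pieces $S^k_n(\cdot),Y^k_n$ for $k\le n$. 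Since $\mathcal{B}$ is a $\mathfrak{b}$-family and hence unbounded, pick $f\in\mathcal{B}$ with $f\not\leq^{*}g$; more carefully, using that $\mathcal{B}$ is a $\mathfrak{b}$-family I want $f\in\mathcal{B}$ with $f(n)\ge g(n)$ for infinitely many $n$ in a way compatible with the bookkeeping. For such an $f$, the truncation $\overline{X}_f=\bigcup_n (X_n\cap\wp(f(n)))$ captures, for infinitely many $n$, the whole witnessing finite set $Y^k_n$ (for any fixed $k$), so $\mathcal{C}(\overline{X}_f)$ is contained in the filter generated by $\mathcal{F}$ together with — thanks to part (3) — sets whose finite intersections with any $F\in\mathcal{D}_k$ are nonempty. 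This shows $\mathcal{F}\cup\{$ complements are excluded $\}$, i.e. that $\mathcal{C}(\overline{X}_f)\cup\mathcal{F}$ has the finite intersection property and generates an $F_{\sigma}$-filter $\mathcal{G}$ not containing any finite set.

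The last point to verify is that $\mathcal{G}$ really is an $F_{\sigma}$-filter extending $\mathcal{F}$: $\mathcal{C}(\overline{X}_f)$ is a single compact set by Lemma \ref{Lema compactos}(1) (it equals $\mathcal{C}$ of a set of finite sets), so $\langle\mathcal{F}\cup\mathcal{C}(\overline{X}_f)\rangle$ is generated by a countable union of compact sets once we close under finite intersections, hence $F_{\sigma}$; and it is proper because part (3) guarantees every finite intersection drawn from $\mathcal{C}(\overline{X}_f)$ meets each $\mathcal{D}_k$, so in particular is infinite. Then $\mathcal{G}\leq\mathcal{F}$ in $\mathbb{F}_{\sigma}$ and $\mathcal{G}\Vdash``\mathcal{C}(\overline{X}_f)\subseteq\mathcal{\dot U}_{gen}$'', i.e. $\mathcal{G}\Vdash``\overline{X}_f\in(\mathcal{\dot U}_{gen}^{<\omega})^+$'', with $f\in\mathcal{B}$ — exactly a pseudointersection according to $\mathcal{B}$. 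Genericity then gives the statement in the extension.

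The main obstacle is the bookkeeping that synchronizes the two compactness lemmas across all levels $k$ and $n$ while keeping the choice of $f$ inside $\mathcal{B}$: one must arrange the bound $g$ so that a single $f\in\mathcal{B}$ with $f\not\le^{*}g$ (equivalently, $f$ exceeding $g$ infinitely often, which follows since $\mathcal{B}$ is unbounded) simultaneously (i) recovers, for each fixed $k$, a full witnessing set $Y^k_n$ inside $\wp(f(n))$ for infinitely many $n$ — enough to cover $\mathcal{D}_k$ — and (ii) recovers the sets $S^k_n(j)$ certifying the finite-intersection property against $\mathcal{D}_k$. This is exactly where being a $\mathfrak{b}$-family (closed under countable $\le^{*}$-upper bounds) is used, essentially as in Proposition 1 of \cite{CanjarFiltersII}; the argument here is a mild strengthening to the $\mathcal{B}$-Canjar setting over the generic ultrafilter, and the rest is routine verification that the objects produced are genuinely $F_{\sigma}$ and proper.
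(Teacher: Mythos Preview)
Your approach is essentially the paper's, but you have over-complicated the bookkeeping and introduced unnecessary ingredients. The paper uses only part (3) of Lemma~\ref{Lema compactos}, and only along a single diagonal: for each $n$ it chooses $g(n)$ so that any $n{+}1$ members of $\mathcal{C}(X_n\cap\wp(g(n)))$ meet every element of $\mathcal{C}_n$; then for $f\in\mathcal{B}$ with $f\nleq^{\ast}g$, the finite intersection property of $\mathcal{F}\cup\mathcal{C}(\overline{X}_f)$ is verified directly by picking, given $F\in\mathcal{C}_n$ and $A_0,\ldots,A_m\in\mathcal{C}(\overline{X}_f)$, an index $r>n,m$ with $f(r)>g(r)$ and observing that $A_0,\ldots,A_m\in\mathcal{C}(X_r\cap\wp(g(r)))$. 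Your sets $Y^k_n$ from part (2) play no role, the three-index family $S^k_n(j)$ collapses to the single diagonal $S^n_n(n)$, and no passage to a cofinal subfamily of $\mathcal{B}$ is required. One small correction: your claim that ``being a $\mathfrak{b}$-family (closed under countable $\leq^{\ast}$-upper bounds) is used'' here is inaccurate --- only the unboundedness of $\mathcal{B}$ is invoked, to produce $f\nleq^{\ast}g$; the $\sigma$-directedness of $\mathcal{B}$ is not needed in this argument.
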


\begin{proof}
By the previous observation and since $\mathbb{F}_{\sigma}$ is $\sigma
$-closed, it is enough to show that if $\mathcal{F}\Vdash``\overline
{X}=\left\langle X_{n}\right\rangle _{n\in\omega}\subseteq(\mathcal{\dot{U}%
}_{gen}^{<\omega})^{+}\textquotedblright$ then there is $\mathcal{G\leq F}$
and $f\in\mathcal{B}$ such that $\mathcal{C}\left(  \overline{X}_{f}\right)
\subseteq\mathcal{G}.$


Let $\mathcal{F=}\bigcup\mathcal{C}_{n}$ where each $\mathcal{C}_{n}$ is
compact and they form an increasing chain. By a previous lemma, there is
$g:\omega\longrightarrow\omega$ such that if $n\in\omega,$ $F\in
\mathcal{C}_{n}$ and $A_{0},\ldots,A_{n}\in\mathcal{C}\left(  X_{n}\cap
\wp\left(  g\left(  n\right)  \right)  \right)  $ then $A_{0}\cap\ldots.\cap
A_{n}\cap F\neq\emptyset.$ Since $\mathcal{B}$ is unbounded, then there is
$f\in\mathcal{B}$ such that $f\nleq^{\ast}g.$ We claim that $\mathcal{F\cup
C}\left(  \overline{X}_{f}\right)  $ generates a filter. Let $F\in
\mathcal{C}_{n}$ and $A_{0},\ldots,A_{m}\in\mathcal{C}\left(  \overline{X}%
_{f}\right)  .$ We must show that $A_{0}\cap\ldots.\cap A_{m}\cap
F\neq\emptyset.$ Since $f$ is not bounded by $g,$ we may find $r>n,m$ such
that $f\left(  r\right)  >g\left(  r\right)  .$ In this way, $A_{0}%
,\ldots,A_{n}\in\mathcal{C}\left(  X_{n}\cap\wp\left(  g\left(  n\right)
\right)  \right)  $ and then $A_{0}\cap\ldots.\cap A_{m}\cap F\neq\emptyset.$
Finally, we can define $\mathcal{G}$ as the filter generated by
$\mathcal{F\cup C}\left(  \overline{X}_{f}\right)  .$
\end{proof}

\qquad\ \ \ 

In this way, we conclude the following:

\begin{corollary}
The forcing $\mathbb{F}_{\sigma}\ast\mathbb{PT(}\mathcal{\dot{U}}_{gen})$ is
proper, adds an unsplit real, preserves all $\mathfrak{b}$-scales from the
ground model and does not add Cohen reals.
\end{corollary}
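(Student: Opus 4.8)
The plan is to derive each of the four assertions by assembling results from the previous sections, using only one elementary fact about the base step: $\mathbb{F}_{\sigma}$ is $\sigma$-closed, so it adds no new reals, no new countable sequences of reals, and is in particular proper. For properness of the whole iteration I would invoke that $\mathbb{PT}\left(\mathcal{F}\right)$ is proper for every filter $\mathcal{F}$ (the properness proposition above), so that $\mathbb{F}_{\sigma}$ forces $\mathbb{PT}\left(\mathcal{\dot{U}}_{gen}\right)$ to be proper, and then use that a two-step iteration of proper forcings is proper.

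For the unsplit real I would use the lemma that $\mathbb{PT}\left(\mathcal{F}\right)$ diagonalizes $\mathcal{F}$: in $V^{\mathbb{F}_{\sigma}}$ the forcing $\mathbb{PT}\left(\mathcal{\dot{U}}_{gen}\right)$ adds an infinite $d\subseteq\omega$ with $d\subseteq^{\ast}U$ for every $U\in\mathcal{U}_{gen}$. Since $\mathbb{F}_{\sigma}$ adds no reals, every $S\in[\omega]^{\omega}\cap V$ already lies in $V^{\mathbb{F}_{\sigma}}$, and as $\mathcal{U}_{gen}$ is an ultrafilter one of $S,\omega\setminus S$ belongs to $\mathcal{U}_{gen}$; either way $d$ is almost contained in one of them, so $S$ does not split $d$, i.e.\ $d$ is unsplit over $V$. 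For the absence of Cohen reals, I would first note that the collection of all increasing functions of $V$ is a $\mathfrak{b}$-family, so by Proposition \ref{UgenesCanajr} $\mathbb{F}_{\sigma}$ forces $\mathcal{\dot{U}}_{gen}$ to be Canjar; then the proposition that $\mathbb{PT}\left(\mathcal{F}\right)$ adds no half-Cohen reals when $\mathcal{F}$ is Canjar (together with the remark that every Cohen real is half-Cohen) gives that $\mathbb{PT}\left(\mathcal{\dot{U}}_{gen}\right)$ adds no Cohen real over $V^{\mathbb{F}_{\sigma}}$, and since $\mathbb{F}_{\sigma}$ adds no reals, $\mathbb{F}_{\sigma}\ast\mathbb{PT}\left(\mathcal{\dot{U}}_{gen}\right)$ adds no Cohen real over $V$.

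For the preservation of $\mathfrak{b}$-scales, let $\mathcal{B}\in V$ be a $\mathfrak{b}$-scale; it is a $\leq^{\ast}$-well-ordered unbounded family of increasing functions, hence a $\mathfrak{b}$-family of $V$, and because $\mathbb{F}_{\sigma}$ adds no new reals and no new $\omega$-sequences it remains an unbounded $\mathfrak{b}$-family in $V^{\mathbb{F}_{\sigma}}$. By Proposition \ref{UgenesCanajr}, $\mathcal{\dot{U}}_{gen}$ is $\mathcal{B}$-Canjar there, so by the proposition that $\mathbb{PT}\left(\mathcal{F}\right)$ preserves $\mathcal{B}$ whenever $\mathcal{F}$ is $\mathcal{B}$-Canjar, $\mathbb{PT}\left(\mathcal{\dot{U}}_{gen}\right)$ keeps $\mathcal{B}$ unbounded, and hence $\mathcal{B}$ is still unbounded after the two-step iteration. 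I do not expect a serious obstacle here: everything is bookkeeping on top of the earlier propositions. The one point requiring care is the transfer across $\mathbb{F}_{\sigma}$ --- that a $\mathfrak{b}$-scale of $V$ survives as an unbounded $\mathfrak{b}$-family of $V^{\mathbb{F}_{\sigma}}$, and that ``Cohen/unsplit over $V$'' agrees with ``Cohen/unsplit over $V^{\mathbb{F}_{\sigma}}$'' --- but this is immediate from $\sigma$-closure, which yields $(\omega^{\omega})^{V}=(\omega^{\omega})^{V^{\mathbb{F}_{\sigma}}}$, the same Borel codes in both models, and no new countable sequences of ground-model objects.
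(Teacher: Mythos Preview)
Your proposal is correct and matches the paper's intended argument: the corollary is stated without proof, introduced simply by ``In this way, we conclude the following,'' so it is meant to be assembled exactly as you describe from Proposition~\ref{UgenesCanajr} together with the earlier results that $\mathbb{PT}(\mathcal{F})$ is proper, diagonalizes $\mathcal{F}$, preserves $\mathcal{B}$ when $\mathcal{F}$ is $\mathcal{B}$-Canjar, and adds no half-Cohen reals when $\mathcal{F}$ is Canjar. Your care about the transfer across the $\sigma$-closed first step is appropriate and the justifications you give are the right ones.
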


\qquad\ \ \ \qquad\ \ \ \qquad\ \ 

A forcing notion is called \emph{weakly }$\omega^{\omega}$\emph{-bounding }if
it does not add dominating reals. Unlike the $\omega^{\omega}$-bounding
property, the weakly $\omega^{\omega}$-bounding property is not preserved
under two step iteration (see \cite{AbrahamHandbookProper}). However, Shelah proved
the following preservation result:

\begin{proposition}
[Shelah, see \cite{AbrahamHandbookProper}]If $\gamma\leq\omega_{2}\ $is limit
and$\ \langle\mathbb{P}_{\alpha},\mathbb{\dot{Q}}_{\alpha}\mid\alpha\leq
\gamma\rangle$ is a countable support iteration of proper forcings and each
$\mathbb{P}_{\alpha}$ is weakly $\omega^{\omega}$-bounding (over $V$) then
$\mathbb{P}_{\gamma}$ is weakly $\omega^{\omega}$-bounding.
\end{proposition}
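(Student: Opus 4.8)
The plan is to prove the statement by splitting into cases according to the cofinality of $\gamma$, using throughout that a countable support iteration of proper forcings is proper (so each $\mathbb{P}_{\alpha}$, $\alpha\leq\gamma$, is proper) and the reformulation: ``$\mathbb{P}$ is weakly $\omega^{\omega}$-bounding over $V$'' is equivalent to ``for every $\mathbb{P}$-name $\dot{g}$ for an element of $\omega^{\omega}$ and every $p\in\mathbb{P}$ there are $q\leq p$ and $f\in\omega^{\omega}\cap V$ with $q\Vdash``f\nleq^{\ast}\dot{g}\textquotedblright$''. Since the hypothesis already hands us that each proper initial segment $\mathbb{P}_{\alpha}$ ($\alpha<\gamma$) is weakly $\omega^{\omega}$-bounding over $V$, no separate treatment of successor stages is needed.

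The case $\mathrm{cf}(\gamma)>\omega$ (so, as $\gamma\leq\omega_{2}$, $\mathrm{cf}(\gamma)\in\{\omega_{1},\omega_{2}\}$) is the easy one. Since the iteration is proper and has countable supports, a standard argument --- pass to an $(N,\mathbb{P}_{\gamma})$-generic $q\leq p$ for a countable $N\prec H(\chi)$ containing the relevant objects, and use that $\sup(N\cap\gamma)<\gamma$ because $\mathrm{cf}(\gamma)>\omega$ --- shows that every real of $V^{\mathbb{P}_{\gamma}}$ already lies in $V^{\mathbb{P}_{\beta}}$ for some $\beta<\gamma$. Thus, given a $\mathbb{P}_{\gamma}$-name $\dot{g}$ for a real and $p\in\mathbb{P}_{\gamma}$, there are $\beta<\gamma$, a condition $q\leq p$, and a $\mathbb{P}_{\beta}$-name $\dot{g}^{\prime}$ with $q\Vdash``\dot{g}=\dot{g}^{\prime}\textquotedblright$; since $\mathbb{P}_{\beta}$ is weakly $\omega^{\omega}$-bounding over $V$ by hypothesis, we obtain $q^{\prime}\leq q$ and $f\in\omega^{\omega}\cap V$ with $q^{\prime}\Vdash``f\nleq^{\ast}\dot{g}\textquotedblright$. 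As such $q^{\prime}$ are dense below $p$, $\mathbb{P}_{\gamma}$ adds no dominating real over $V$.

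The substantive case is $\mathrm{cf}(\gamma)=\omega$; this is Shelah's ``Case B'' preservation theorem, which I would prove by a fusion. Fix $\gamma_{n}\nearrow\gamma$. Given $\dot{g}$ and $p$, build an increasing $\in$-chain of countable $N_{n}\prec H(\chi)$ (with $p,\dot{g},\langle\gamma_{k}\rangle,\mathbb{P}_{\gamma}\in N_{0}$) arranged so that $\sup(N_{n}\cap\gamma)=\gamma_{n}$, together with a fusion sequence $p=p_{0}\geq p_{1}\geq\cdots$ in which $p_{n+1}\upharpoonright\gamma_{n}$ is $(N_{n},\mathbb{P}_{\gamma_{n}})$-generic and the restrictions stabilise appropriately, so that the sequence admits a limit condition $q\in\mathbb{P}_{\gamma}$. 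The crucial ingredient is that at round $n$ one works in $V^{\mathbb{P}_{\gamma_{n}}}$, where $\mathbb{P}_{\gamma_{n}}$ is weakly $\omega^{\omega}$-bounding over $V$ by hypothesis, and applies that hypothesis to a suitable $\mathbb{P}_{\gamma_{n}}$-name derived from $\dot{g}$ and $p_{n+1}$ by means of the $N_{n}$-genericity of $p_{n+1}\upharpoonright\gamma_{n}$ (a name that captures, roughly, how large $\dot{g}$ is forced to be on the coordinates decided after stage $\gamma_{n}$); this produces a ground-model function $f_{n}$ and, after recording finitely much progress per round, a single $f\in\omega^{\omega}\cap V$ with $q\Vdash``f\nleq^{\ast}\dot{g}\textquotedblright$.

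The main obstacle --- and the reason the hypothesis is imposed on every initial segment $\mathbb{P}_{\alpha}$ rather than merely on the iterands $\mathbb{\dot{Q}}_{\alpha}$ over the intermediate models (the latter would be false, as Mathias-type examples show) --- is exactly that ``weakly $\omega^{\omega}$-bounding'' is not two-step iterable, so the fusion cannot literally propagate it. One must instead carry through the fusion a stronger, genuinely iterable invariant: a local, ``with conditions'' version of weak boundedness in the style of Shelah's preservation theory (equivalently, the $\sigma$-compact-covered instance of Goldstern's preservation-of-$\sqsubseteq$-covering theorem), then check that the stated hypothesis supplies this invariant at each stage and that it is inherited by the limit condition $q$. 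Setting up this invariant and the bookkeeping that reconciles the per-round finite decisions into one ground-model $f$ is the technical heart; for the details I would follow \cite{AbrahamHandbookProper}.
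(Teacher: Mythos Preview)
The paper does not supply a proof of this proposition: it is quoted as a known result of Shelah and attributed to Abraham's handbook chapter, with no argument given. So there is nothing in the paper to compare your proposal against.

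That said, your outline is a faithful sketch of the standard proof as found in the cited reference: the split by $\mathrm{cf}(\gamma)$, the easy uncountable-cofinality case via the fact that new reals appear at bounded stages, and the fusion argument in the countable-cofinality case carrying a strengthened (iterable) invariant rather than bare weak $\omega^{\omega}$-bounding. You also correctly flag the key obstacle, namely that weak $\omega^{\omega}$-bounding is not two-step iterable, which is precisely why the hypothesis is placed on the $\mathbb{P}_{\alpha}$ rather than on the $\mathbb{\dot{Q}}_{\alpha}$. Since your proposal ultimately defers the technical core to the same source the paper cites, your treatment and the paper's are effectively equivalent: both amount to ``see \cite{AbrahamHandbookProper}''.
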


\qquad\ \ \ \ \ \ \ \ \ \ \qquad\ \ \ 

Note that $\mathbb{P}$ is weakly $\omega^{\omega}$-bounding if and only if it
preserves the unboundedness of all (one) dominating families. By applying the
result of Shelah we can easily conclude the following result.

\begin{corollary}
If $V$ satisfies \textsf{CH} and $\langle\mathbb{P}_{\alpha},\mathbb{\dot{Q}%
}_{\alpha}\mid\alpha\leq\omega_{2}\rangle$ is a countable support iteration of
proper forcings such that $\mathbb{P}_{\alpha}$ forces that $\mathbb{\dot{Q}%
}_{\alpha}$ preserves the unboundedness of all well-ordered unbounded
families, then $\mathbb{P}_{\omega_{2}}$ is weakly $\omega^{\omega}$-bounding.
\end{corollary}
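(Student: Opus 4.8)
The plan is to obtain the corollary by a routine induction on $\alpha\le\omega_{2}$, using Shelah's preservation theorem (the proposition above) at limit stages and the hypothesis on the iterands at successor stages, with a single fixed ground-model dominating scale serving as the ``test'' family throughout.

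First I would fix, using \textsf{CH}, a scale $\mathcal{D}=\{f_{\xi}\mid\xi<\omega_{1}\}$: a $\leq^{\ast}$-increasing family of increasing functions that is dominating in $V$. Such a $\mathcal{D}$ exists under \textsf{CH} by the standard diagonalization, enumerating $\omega^{\omega}$ in order type $\omega_{1}$ and choosing $f_{\xi}$ at stage $\xi$ so that $f_{\xi}$ dominates $\{f_{\eta}\mid\eta<\xi\}$ together with the $\xi$-th function of the enumeration (only countably many functions, so this is possible). In particular $\mathcal{D}$ is a well-ordered unbounded family (hence a $\mathfrak{b}$-family) in $V$, and since $\mathcal{D}$ is moreover dominating, a forcing $\mathbb{P}$ is weakly $\omega^{\omega}$-bounding over $V$ if and only if $\mathbb{P}$ preserves the unboundedness of $\mathcal{D}$; this is the remark preceding the corollary applied to the particular dominating family $\mathcal{D}$. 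It is this translation that lets the successor and limit arguments feed back into one induction.

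Next I would prove by induction on $\alpha\le\omega_{2}$ that $\mathbb{P}_{\alpha}$ is weakly $\omega^{\omega}$-bounding over $V$, equivalently that $\mathcal{D}$ is still unbounded in $V^{\mathbb{P}_{\alpha}}$. The base case $\alpha=0$ is trivial. For the successor step, write $\mathbb{P}_{\alpha+1}\cong\mathbb{P}_{\alpha}\ast\mathbb{\dot{Q}}_{\alpha}$. By the induction hypothesis $\mathcal{D}$ is unbounded in $V^{\mathbb{P}_{\alpha}}$; it is still $\leq^{\ast}$-increasing and indexed by the ordinal $\omega_{1}$ (these are facts about a ground-model set, and $\omega_{1}$ is preserved since $\mathbb{P}_{\alpha}$ is proper, the iteration being a countable support iteration of proper forcings), so $\mathcal{D}$ is a well-ordered unbounded family in $V^{\mathbb{P}_{\alpha}}$. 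The hypothesis on the iteration then tells us that $\mathbb{P}_{\alpha}$ forces $\mathbb{\dot{Q}}_{\alpha}$ to preserve the unboundedness of $\mathcal{D}$, whence $\mathcal{D}$ remains unbounded in $V^{\mathbb{P}_{\alpha+1}}$, i.e. $\mathbb{P}_{\alpha+1}$ is weakly $\omega^{\omega}$-bounding over $V$. For a limit $\gamma\le\omega_{2}$, the induction hypothesis gives that every $\mathbb{P}_{\alpha}$ with $\alpha<\gamma$ is weakly $\omega^{\omega}$-bounding over $V$; since the iteration is a countable support iteration of proper forcings and $\gamma\le\omega_{2}$, Shelah's preservation theorem applies and yields that $\mathbb{P}_{\gamma}$ is weakly $\omega^{\omega}$-bounding over $V$. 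Applying this at the limit $\gamma=\omega_{2}$ completes the proof.

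I do not expect a genuinely hard step here: the real content is Shelah's preservation theorem, which is quoted, and the scale construction, which is classical under \textsf{CH}. The one point deserving care is the bookkeeping at successor stages, namely verifying that the fixed scale $\mathcal{D}$ still counts as a well-ordered unbounded family in $V^{\mathbb{P}_{\alpha}}$, so that the hypothesis on $\mathbb{\dot{Q}}_{\alpha}$ may legitimately be invoked; this uses properness of $\mathbb{P}_{\alpha}$ (to keep $\omega_{1}$, hence the index set and the $\leq^{\ast}$-increasing structure of $\mathcal{D}$, intact) together with the induction hypothesis (for unboundedness). Stating the equivalence ``weakly $\omega^{\omega}$-bounding over $V$ $\iff$ preserves the unboundedness of the dominating family $\mathcal{D}$'' explicitly, as done above, is what keeps the induction closed.
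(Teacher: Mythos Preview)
Your proposal is correct and matches the approach the paper has in mind; indeed, the paper does not write out a proof at all but simply says ``by applying the result of Shelah we can easily conclude the following result,'' and your argument is precisely the natural unpacking of that sentence: fix a dominating scale under \textsf{CH}, use the remark that weakly $\omega^{\omega}$-bounding is equivalent to preserving the unboundedness of one dominating family, and run the obvious induction with the hypothesis at successors and Shelah's preservation theorem at limits. Your care in checking that $\mathcal{D}$ remains a well-ordered unbounded family in $V^{\mathbb{P}_{\alpha}}$ (using properness to preserve $\omega_{1}$) is exactly the bookkeeping the paper is suppressing.
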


\qquad\ \ \ \ \ \ \ \ \ \ \qquad\ \ \ \qquad\ \ \ 

With these results we can conclude the following result of Shelah:

\begin{proposition}
[Shelah]There is a model of $\omega_{1}=\mathfrak{b<s=}$ \textsf{cov}$\left(
\mathcal{M}\right)  =\mathfrak{c}=\omega_{2}.$
\end{proposition}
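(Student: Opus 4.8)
The plan is to build the model by a countable support iteration of the two-step forcing $\mathbb{F}_{\sigma}\ast\mathbb{M}(\dot{\mathcal{U}}_{gen})$ (the Mathias, rather than the Miller, version, since here we only need to avoid dominating reals, not to preserve a $P$-point). Start with a ground model $V$ of \textsf{GCH} and form $\langle\mathbb{P}_{\alpha},\dot{\mathbb{Q}}_{\alpha}\mid\alpha\le\omega_{2}\rangle$, where $\dot{\mathbb{Q}}_{\alpha}$ is a name for $\mathbb{F}_{\sigma}\ast\mathbb{M}(\dot{\mathcal{U}}_{gen})$ as computed in $V_{\alpha}=V^{\mathbb{P}_{\alpha}}$. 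Each iterand is proper (a composition of a $\sigma$-closed forcing with a proper one), so $\mathbb{P}_{\omega_{2}}$ is proper; a routine $\Delta$-system argument using \textsf{CH} at intermediate stages gives that $\mathbb{P}_{\omega_{2}}$ is $\omega_{2}$-c.c., and since $\lvert\mathbb{P}_{\omega_{2}}\rvert=\omega_{2}$ while cofinally many reals are added, $\mathfrak{c}=\omega_{2}$ in $V_{\omega_{2}}$. Properness and the $\omega_{2}$-c.c. also give the reflection fact I use repeatedly: every subset of $\omega$ in $V_{\omega_{2}}$, and more generally every family of $\le\omega_{1}$ of them, already belongs to some $V_{\alpha}$ with $\alpha<\omega_{2}$.

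For $\mathfrak{b}=\omega_{1}$ I would fix a $\mathfrak{b}$-scale $\mathcal{B}\in V$ (a $\le^{\ast}$-increasing unbounded $\omega_{1}$-sequence, available under \textsf{CH}) and show it stays unbounded in $V_{\omega_{2}}$. This reduces to checking that each iterand is forced to preserve the unboundedness of every well-ordered unbounded family, so that Shelah's preservation theorem (the Corollary above) applies and $\mathbb{P}_{\omega_{2}}$ is weakly $\omega^{\omega}$-bounding. The $\sigma$-closed factor $\mathbb{F}_{\sigma}$ adds no reals and preserves this trivially; for the Mathias factor we combine Proposition \ref{UgenesCanajr}, which says that $\mathbb{F}_{\sigma}$ forces $\dot{\mathcal{U}}_{gen}$ to be $\mathcal{B}'$-Canjar for every $\mathfrak{b}$-family $\mathcal{B}'$ of the intermediate model, with the characterization of $\mathcal{B}'$-Canjar filters, which then gives that $\mathbb{M}(\dot{\mathcal{U}}_{gen})$ preserves every $\mathfrak{b}$-family, in particular every $\mathfrak{b}$-scale. (The hypothesis on $\mathcal{B}'$ in Proposition \ref{UgenesCanajr} is only that it be a $\mathfrak{b}$-family of the $\mathbb{F}_{\sigma}$-ground model, which at stage $\alpha$ is $V_{\alpha}$, and $\mathbb{F}_{\sigma}$ changes neither the reals nor the question of unboundedness.) Inductively $\mathcal{B}$ remains a $\mathfrak{b}$-scale in every $V_{\alpha}$, so it stays unbounded in $V_{\omega_{2}}$ and $\mathfrak{b}=\omega_{1}$.

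To get $\mathfrak{s}=\textsf{cov}(\mathcal{M})=\omega_{2}$ I would use the two by-products of Mathias forcing. Its generic real $r_{\alpha}$ is almost contained in every member of $\mathcal{U}_{gen}$, so for every $B\subseteq\omega$ in $V_{\alpha}$ either $B$ or $\omega\setminus B$ lies in the ultrafilter, whence $B$ does not split $r_{\alpha}$; thus $r_{\alpha}$ is unsplit over $V_{\alpha}$. Moreover $\dot{\mathcal{U}}_{gen}$ is Canjar (a special case of Proposition \ref{UgenesCanajr}, applied to any $\mathfrak{b}$-family), and a Ramsey ultrafilter is never Canjar since its Mathias forcing adds a dominating real; hence $\dot{\mathcal{U}}_{gen}$ is not a Ramsey ultrafilter, so by the dichotomy for $\mathbb{M}(\mathcal{F})$ recorded above, $\mathbb{M}(\dot{\mathcal{U}}_{gen})$ adds a Cohen real over $V_{\alpha}$ (equivalently over $V_{\alpha}$ itself, since $\mathbb{F}_{\sigma}$ adds no reals). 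So $V_{\alpha+1}$ contains both an unsplit and a Cohen real over $V_{\alpha}$, for every $\alpha<\omega_{2}$. Given a potential splitting family $\mathcal{S}\in V_{\omega_{2}}$ of size $\omega_{1}$, it lies in some $V_{\alpha}$ and fails to be splitting there because $r_{\alpha}\in V_{\alpha+1}$ is unsplit over it; hence $\mathfrak{s}=\omega_{2}$. Given $\omega_{1}$ meager sets in $V_{\omega_{2}}$, their Borel codes lie in some $V_{\alpha}$, and the Cohen real added at stage $\alpha$ avoids all of them, so $\omega_{1}$ meager sets never cover and $\textsf{cov}(\mathcal{M})=\omega_{2}$. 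Since all three invariants are bounded by $\mathfrak{c}=\omega_{2}$, this gives $\omega_{1}=\mathfrak{b}<\mathfrak{s}=\textsf{cov}(\mathcal{M})=\mathfrak{c}=\omega_{2}$.

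The main obstacle is the preservation of $\mathfrak{b}=\omega_{1}$: bare weak $\omega^{\omega}$-boundedness does not survive a two-step iteration, so one must route the argument through the finer preservation theorem for well-ordered unbounded families and verify its hypothesis at every stage — which is exactly what Proposition \ref{UgenesCanajr} together with the $\mathcal{B}$-Canjar characterization supplies. All the substantial work (the combinatorics of $(\mathcal{F}^{<\omega})^{+}$, the forcing properties of $\mathbb{M}(\mathcal{F})$ for Canjar $\mathcal{F}$, and the $\mathcal{B}$-Canjarness of $\dot{\mathcal{U}}_{gen}$) has already been carried out in the earlier sections, so the present proof is an assembly of those facts inside a standard countable support iteration.
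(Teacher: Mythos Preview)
Your proposal is correct and follows exactly the same approach as the paper: a countable support iteration of $\mathbb{F}_{\sigma}\ast\mathbb{M}(\dot{\mathcal{U}}_{gen})$ over a model of \textsf{CH}, with $\mathfrak{b}=\omega_{1}$ secured via Proposition~\ref{UgenesCanajr} and the preservation corollary for well-ordered unbounded families. The paper's proof is a one-line reference to ``the previous results,'' whereas you spell out the details (including the pleasant observation that $\dot{\mathcal{U}}_{gen}$, being Canjar, cannot be Ramsey, so $\mathbb{M}(\dot{\mathcal{U}}_{gen})$ must add a Cohen real and hence $\textsf{cov}(\mathcal{M})=\omega_{2}$).
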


\begin{proof}
We perform a countable support iteration $\langle\mathbb{P}_{\alpha
},\mathbb{\dot{Q}}_{\alpha}\mid\alpha\leq\omega_{2}\rangle$ where
$\mathbb{P}_{\alpha}\Vdash``\mathbb{\dot{Q}}_{\alpha}=\mathbb{F}_{\sigma}%
\ast\mathbb{M}\mathcal{(\dot{U}}_{gen}\mathcal{)}\textquotedblright.$ The
result follows by the previous results.
\end{proof}

\qquad\ \ \ 
\ \ \qquad\ \ 

Recall that a forcing $\mathbb{P}$ is \emph{almost }$\omega^{\omega}%
$\emph{-bounding }if for every $\mathbb{P}$-name $\dot{f}$ for an element of
$\omega^{\omega}$ and $q\in\mathbb{P},$ there is $g\in\omega^{\omega}$ such
that for every $A\in\left[  \omega\right]  ^{\omega},$ there is $q_{A}\leq q$
such that $q_{A}\Vdash$\textquotedblleft$g\upharpoonright A\nleq^{\ast}\dot
{f}\upharpoonright A$\textquotedblright\ (the reader may consult
\mbox{
\cite{AbrahamHandbook} }\hspace{0pt}
to learn more about this property). It follows by the
results of Shelah and Brendle and Raghavan that the iterands in the previous
model are almost $\omega^{\omega}$-bounding, but it is not clear that this
follows from our approach. In
\mbox{
\cite{MathiasForcingandCombinatorialCoveringPropertiesofFilters} }\hspace{0pt}
it was proved
that a forcing $\mathbb{P}$ is almost $\omega^{\omega}$-bounding if and only
if it preserves every unbounded family of the ground model. The referee asked
us if preserving the unboundedness of all well-ordered unbounded families
imply the almost $\omega^{\omega}$-bounding property, we do not know the
answer to this question.

\qquad\ \ \ \ \qquad\ \ \

As was mentioned before, by the result of Brendle and Raghavan, $\mathbb{F}%
_{\sigma}\ast\mathbb{M}\mathcal{(\dot{U}}_{gen}\mathcal{)}$ is forcing
equivalent to the original creature forcing of Shelah for $\mathfrak{b<s}.$ We
will now prove that if we iterate $\mathbb{F}_{\sigma}\ast\mathbb{PT}%
\mathcal{(\dot{U}}_{gen}\mathcal{)},$ we will get a model of $\mathfrak{u<s}.$
Although we will not need the following result, it is illustrative to prove
first the following:

\begin{proposition}
If $\mathcal{U}$ is a $P$-point and $\mathcal{F}$ is an $F_{\sigma}$-filter,
then $\mathcal{F}$ is $\mathcal{U}$-Canjar. \label{FsigmaUCanjar}
\end{proposition}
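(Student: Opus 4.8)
The plan is to prove the statement directly: given an arbitrary strategy $\sigma$ for player $\mathsf{I}$ in the game $\mathcal{H}\left(\mathcal{U},\mathcal{F}\right)$, I will exhibit a run of the game in which $\mathsf{II}$ wins, which by definition shows that $\mathcal{F}$ is $\mathcal{U}$-Canjar. First, using that $\mathcal{F}$ is $F_{\sigma}$, I fix an increasing sequence $\left\langle\mathcal{C}_{n}\mid n\in\omega\right\rangle$ of compact subsets of $\mathcal{F}$ with $\bigcup_{n}\mathcal{C}_{n}=\mathcal{F}$ and $\omega\in\mathcal{C}_{0}$. The guiding idea is that the combinatorial half of $\mathcal{H}$ — the conditions $p_{i}\in\mathbb{PT}\left(\mathcal{F}\right)$ played by $\mathsf{I}$ and the natural numbers $n_{i}$ played by $\mathsf{II}$ — can be handled deterministically using these compact pieces via item 2 of Lemma \ref{Lema compactos}, so that only the ultrafilter half — the sets $W_{i}\in\mathcal{U}$ and the finite sets $z_{i}$ — genuinely needs the hypothesis on $\mathcal{U}$; and that half is exactly the $P$-point game $\mathcal{G}_{P\text{-}point}\left(\mathcal{U}\right)$.

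Concretely, I would define a strategy $\tau$ for $\mathsf{I}$ in $\mathcal{G}_{P\text{-}point}\left(\mathcal{U}\right)$ by simulation. Given finite sets $z_{0},\dots,z_{m-1}$ played so far, rebuild a partial run of $\mathcal{H}$ in which $\mathsf{I}$ follows $\sigma$, $\mathsf{II}$ plays these $z_{i}$, and $\mathsf{II}$ answers each $p_{i}$ with the least $n_{i}>n_{i-1}$ such that $T_{i}=T\left(p_{i},n_{i}\right)$ contains the stem of $p_{i}$ and, for every splitting node $s$ of $p_{i}$ already belonging to the finite tree built before stage $i$ (where before stage $0$ this tree is just the stem), $T_{i}$ contains every one-step prolongation $s^{\frown}t$ with $t$ ranging over a finite set $Y_{s}^{i}\subseteq spsuc_{p_{i}}\left(s\right)$ chosen — by item 2 of Lemma \ref{Lema compactos}, since $spsuc_{p_{i}}\left(s\right)\in\left(\mathcal{F}^{<\omega}\right)^{+}$ and $\mathcal{C}_{i}$ is compact — so that every $F\in\mathcal{C}_{i}$ contains a member of $Y_{s}^{i}$. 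Since each $n_{i}$, hence each $p_{i}$ and each $W_{i}$ in the simulation, is then a function of $z_{0},\dots,z_{i-1}$, the rule $\tau\left(z_{0},\dots,z_{m-1}\right):=W_{m}$ is a well-defined strategy for $\mathsf{I}$ in the $P$-point game. Because $\mathcal{U}$ is a $P$-point, the Galvin--Shelah proposition tells us $\tau$ is not winning, so there is a play $\left\langle z_{m}\mid m\in\omega\right\rangle$ against $\tau$ with $\bigcup_{m}z_{m}\in\mathcal{U}$; together with the canonically determined $\left\langle n_{m}\mid m\in\omega\right\rangle$ this is a legal run of $\mathcal{H}\left(\mathcal{U},\mathcal{F}\right)$ in which $\mathsf{I}$ follows $\sigma$.

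What remains — and where I expect the real work to be — is to check that $q:=\bigcup_{m}T_{m}$ is a condition of $\mathbb{PT}\left(\mathcal{F}\right)$, so that $\mathsf{II}$ wins this run. That $q$ is a Miller tree is the routine part: the $T\left(p_{m},n_{m}\right)$-bookkeeping, together with $p_{m+1}\leq_{T_{m}}p_{m}$ freezing the portion of the tree already inside $T_{m}$, guarantees a stem and guarantees that every node of $q$ eventually sits below a maximal node of some $T_{m}$, which is a splitting node of $p_{m}$ and hence acquires successors at the next stage. The crux is the positivity of $spsuc_{q}\left(s\right)$ for each splitting node $s$ of $q$. Such an $s$ is a maximal node of some $T_{m_{0}}$, remains a splitting node of $p_{m}$ for all $m\geq m_{0}$, and at each later stage $i$ the set $Y_{s}^{i}\subseteq spsuc_{p_{i}}\left(s\right)$ was revealed into $T_{i}$ and then frozen; the delicate point to verify is that this freezing forces each such $s^{\frown}t$ to be a \emph{next} splitting node of $s$ in $q$ — no splitting node of $q$ can appear strictly between $s$ and $s^{\frown}t$, since that whole region lies inside the frozen tree $T_{i}$ — so that $Y_{s}^{i}\subseteq spsuc_{q}\left(s\right)$. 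Granting this, any $F\in\mathcal{F}$ lies in some $\mathcal{C}_{i}$ with $i>m_{0}$ (using that the $\mathcal{C}_{n}$ increase), hence contains a member of $Y_{s}^{i}\subseteq spsuc_{q}\left(s\right)$, so $spsuc_{q}\left(s\right)\in\left(\mathcal{F}^{<\omega}\right)^{+}$ and $q\in\mathbb{PT}\left(\mathcal{F}\right)$. This last verification is a Miller-fusion argument of the same flavor as those already carried out earlier in Section 3, and it is the only genuinely technical step.
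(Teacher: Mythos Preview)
Your proposal is correct and follows essentially the same approach as the paper: simulate the $\mathcal{H}$-game inside the $P$-point game by letting $\mathsf{II}$'s $n_i$-moves be determined canonically via the compact pieces $\mathcal{C}_i$ and item 2 of Lemma \ref{Lema compactos}, then invoke the Galvin--Shelah characterization. The only cosmetic difference is that the paper phrases the argument by contradiction (assume $\sigma$ is winning for $\mathsf{I}$ in $\mathcal{H}$, deduce a winning strategy for $\mathsf{I}$ in $\mathcal{G}_{P\text{-}point}(\mathcal{U})$), whereas you argue directly; your write-up is also more explicit about why $q\in\mathbb{PT}(\mathcal{F})$, which the paper dismisses as ``easy to see.''
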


\begin{proof}
Let $\mathcal{U}$ be a $P$-point and $\mathcal{F=}%
{\textstyle\bigcup\limits_{n\in\omega}}
\mathcal{C}_{n}$ be an $F_{\sigma}$-filter, where $\left\langle \mathcal{C}%
_{n}\right\rangle _{n\in\omega}$ is an increasing sequence of compact sets. We
will argue by contradiction, so assume that $\mathcal{F}$ is not
$\mathcal{U\,}$-Canjar, i.e. player $\mathsf{I}$ has a winning strategy for
the game $\mathcal{H}\left(  \mathcal{U},\mathcal{F}\right)  ,$ call $\sigma$
such strategy. We will use $\sigma$ to construct a winning strategy for
$\mathsf{I}$ in the $P$-point game, which will obviously entail a contradiction.


Given $X\in\left(  \mathcal{F}^{<\omega}\right)  ^{+}$ and $n\in\omega,$
choose $Y\left(  X,n\right)  \in\left[  X\right]  ^{<\omega}$ such that every
element of $\mathcal{C}_{n}$ contains an element of $Y\left(  X,n\right)  $
(which is possible by lemma \ref{Lema compactos}). We now define $\pi$ a
strategy for player $\mathsf{I}$ in $\mathcal{G}_{P\text{-}point}\left(
\mathcal{U}\right)  $ as follows:

\begin{enumerate}
\item Player $\mathsf{I}$ starts by playing $W_{0}=\sigma\left(
\emptyset\right)  $ (i.e. $W_{0}$ is the first play in the game $\mathcal{H}%
\left(  \mathcal{U},\mathcal{F}\right)  $).

\item Assume player $\mathsf{II}$ plays $z_{0}\in\left[  W_{0}\right]
^{<\omega}$ as her response in $\mathcal{H}\left(  \mathcal{U},\mathcal{F}%
\right)  .$ Let $p_{0}=\sigma\left(  W_{0},z_{0}\right)  $ \ and $s_{0}$ be
the stem of $p_{0}.$ Define $n_{0}>d^{-1}\left(  s_{0}\right)  $ to be the
least integer such that$\ d^{-1}(s_{0}{}^{\frown}t)<n_{0}$ for all $t\in
Y\left(  spsuc_{p_{0}}\left(  s_{0}\right)  ,0\right)  .$ Player $\mathsf{I}$
will play (in $\mathcal{G}_{P\text{-}point}\left(  \mathcal{U}\right)  $)
$W_{1}=\sigma\left(  W_{0},z_{0},p_{0},n_{0}\right)  $ (i.e. his response in
$\mathcal{H}\left(  \mathcal{U},\mathcal{F}\right)  $ if player $\mathsf{II}$
had played $n_{0}$).

\item In general assume that it has been played the sequence $\left\langle
W_{0},z_{0},...,W_{m}\right\rangle .$ At the same time, in secret the player
$\mathsf{I}$ has been constructed a partial play $\left\langle W_{0}%
,z_{0},p_{0},n_{0},W_{1},z_{1},p_{1},n_{1}...,W_{m}\right\rangle $ in the game
$\mathcal{H}\left(  \mathcal{U},\mathcal{F}\right)  $ following $\sigma$ such
that for every $i<m,$ the integer $n_{i}$ has the following property: for
every $u\in T\left(  p_{i},n_{i-1}\right)  $ (where $n_{-1}=d^{-1}\left(
s_{0}\right)  $) and for every $t\in Y\left(  spsuc_{p_{i}}\left(  u\right)
,i\right)  ,$ we have that $d^{-1}(u^{\frown}t)<n_{i}.$ Assume that player
$\mathsf{II}$ plays $z_{m}$ as her next response in $\mathcal{G}%
_{P\text{-}point}\left(  \mathcal{U}\right)  .$ Let $p_{m}$ be the tree
defined as $\sigma\left(  W_{0},z_{0},n_{0},W_{1},...,W_{m},z_{m}\right)  $
and let $n_{m}>n_{m-1}$ the least integer with the following property: for
every $u\in T\left(  p_{m},n_{m-1}\right)  $ and for every $t\in Y\left(
spsuc_{p_{m}}\left(  u\right)  ,m\right)  ,$ we have that $d^{-1}(u^{\frown
}t)<n_{m}.$ Player $\mathsf{I}$ will play the set $W_{m+1}$ that is defined as
$\sigma\left(  W_{0},z_{0},n_{0},W_{1},...,W_{m},z_{m},p_{m},n_{m}\right)  .$
\end{enumerate}

\qquad\ \ \ \qquad\ \ \ \ \ \ \ \ \ \ \ \ \ \ \ \qquad
\ \ \ \ \ \ \ \ \ \ \ \ \ \ \ \ \ \ \qquad\ \ 

The game $\mathcal{G}_{P\text{-}point}\left(  \mathcal{U}\right)  :$

\qquad\ \ %

\begin{tabular}
[c]{|l|l|l|l|l|l|}\hline
$\mathsf{I}$ & $W_{0}$ &  & $W_{1}$ &  & $...$\\\hline
$\mathsf{II}$ &  & $z_{0}$ &  & $z_{1}$ & \\\hline
\end{tabular}

\qquad\ \ \ \ \qquad\ \ \ \qquad\ \ \ \newline

\ The game $\mathcal{H}\left(  \mathcal{U},\mathcal{F}\right)  :$

\qquad\ \ \ \qquad\ \ \ %

\begin{tabular}
[c]{|l|l|l|l|l|l|l|l|l|l|}\hline
$\mathsf{I}$ & $W_{0}$ &  & $p_{0}$ &  & $W_{1}$ &  & $p_{1}$ &  &
$...$\\\hline
$\mathsf{II}$ &  & $z_{0}$ &  & $n_{0}$ &  & $z_{1}$ &  & $n_{1}$ & \\\hline
\end{tabular}

\qquad\ \ \ \ \ \qquad\ \ \ \ \qquad\ \ \ \bigskip

We claim that $\pi$ is a winning strategy for player $\mathsf{I}$ in
$\mathcal{G}_{P\text{-}point}\left(  \mathcal{U}\right)  .$ Consider a run of
the game in which player $\mathsf{I}$ played according to $\pi.$ Let $Z=%
{\textstyle\bigcup\limits_{n\in\omega}}
z_{n},$ we will prove that $Z\notin\mathcal{U}.$ Let $q=%
{\textstyle\bigcup\limits_{i\in\omega}}
T\left(  p_{i},n_{i}\right)  $ be the tree that was constructed during the
play. It is easy to see that $q\in\mathbb{PT}\left(  \mathcal{F}\right)  ,$
but since player $\mathsf{I}$ was following his winning strategy $\sigma$ in
the side game, we know that he won, so it must be the case that $Z\notin
\mathcal{U}.$ This shows that $\pi$ is a winning strategy for player
$\mathsf{I}$ in $\mathcal{G}_{P\text{-}point}\left(  \mathcal{U}\right)  .$
Since player $\mathsf{I}$ can not have a winning strategy in the $P$-point
game, we get a contradiction.
\end{proof}

\qquad\ \ \ 

We will now prove that the forcing $\mathbb{F}_{\sigma}\ast\mathbb{PT(}%
\mathcal{\dot{U}}_{gen}\mathcal{)}$ preserves all ground model $P$-points.
First we will need the following lemma, which is a slight generalization of
part of lemma \ref{Lema compactos}:\qquad\ 

\begin{lemma}
Let $\mathcal{F}$ be a filter, $\mathcal{D\subseteq F}$ a compact set and
$X_{1},...,X_{n}\subseteq\left[  \omega\right]  ^{<\omega}$ such that
$\mathcal{C}\left(  X_{1}\right)  ,...,\mathcal{C}\left(  X_{n}\right)
\subseteq\mathcal{F}.$ There are $Y_{1}\in\left[  X_{1}\right]  ^{<\omega
},...,Y_{n}\in\left[  X_{n}\right]  ^{<\omega}$ such that for every
$F\in\mathcal{D}$ and for every $A_{i}^{1},...,A_{i}^{n}\in\mathcal{C}\left(
Y_{i}\right)  $ (with $i\leq n$), we have that $F\cap%
{\textstyle\bigcap\limits_{i,j\leq n}}
A_{i}^{j}\neq\emptyset.$\label{generalizacioncompacto}
\end{lemma}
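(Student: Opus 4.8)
The plan is to adapt the proof of Lemma~\ref{Lema compactos}(3), replacing a single finite set $s\in X$ by a tuple $\bar s=(s_1,\dots,s_n)$ with $s_k\in X_k$. The naive attempt — intersect everything in sight and invoke that $\mathcal{F}$ is a filter — fails here because the $A_i^j$ are only required to lie in $\mathcal{C}(Y_i)$, which is \emph{larger} than $\mathcal{C}(X_i)\subseteq\mathcal{F}$, so the $A_i^j$ need not themselves belong to $\mathcal{F}$; the finitely many sets forming $Y_i$ must be chosen carefully by a compactness argument. Observe first that each $X_k$ is nonempty — otherwise $\mathcal{C}(X_k)=\wp(\omega)\not\subseteq\mathcal{F}$ — so $X_1\times\cdots\times X_n\neq\emptyset$.

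For $\bar s=(s_1,\dots,s_n)\in X_1\times\cdots\times X_n$, let $K(\bar s)$ be the set of all families $\bar A=(A_i^j)_{i,j\le n}$ of subsets of $\omega$ such that $A_i^j\in\mathcal{C}(s_i)$ for all $i,j\le n$, and such that $F\cap\bigcap_{i,j\le n}A_i^j=\emptyset$ for some $F\in\mathcal{D}$. First I would check that each $K(\bar s)$ is compact. It is a subset of a finite product of copies of $\wp(\omega)$, which is compact; the condition $A_i^j\in\mathcal{C}(s_i)$ is clopen since $s_i$ is finite, and the set of $\bar A$ satisfying the second condition is the image, under the projection forgetting the $F$-coordinate, of $\{(\bar A,F):F\in\mathcal{D},\ F\cap\bigcap_{i,j\le n}A_i^j=\emptyset\}$; the latter set is closed, since $(\bar A,F)\mapsto F\cap\bigcap_{i,j}A_i^j$ is continuous and $\{\emptyset\}$ is closed, and because the fibre $\mathcal{D}$ is compact its projection is again closed. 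Hence each $K(\bar s)$ is closed, thus compact.

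Next I would show $\bigcap_{\bar s}K(\bar s)=\emptyset$: if some $\bar A$ lay in every $K(\bar s)$, then for fixed $i,j$, letting the $i$-th coordinate of $\bar s$ range over all of $X_i$ (the remaining coordinates being filled arbitrarily, possible since each $X_k\neq\emptyset$) gives $A_i^j\in\mathcal{C}(s)$ for every $s\in X_i$, hence $A_i^j\in\mathcal{C}(X_i)\subseteq\mathcal{F}$; and any single $\bar s$ yields some $F\in\mathcal{D}\subseteq\mathcal{F}$ with $F\cap\bigcap_{i,j}A_i^j=\emptyset$, contradicting that $\mathcal{F}$ is a filter. Since the $K(\bar s)$ are closed subsets of a compact space, finitely many already have empty intersection: choose $\bar s^{1},\dots,\bar s^{N}$, say $\bar s^{l}=(s^{l}_1,\dots,s^{l}_n)$, with $\bigcap_{l\le N}K(\bar s^{l})=\emptyset$, and put $Y_k=\{s^{l}_k:l\le N\}\in[X_k]^{<\omega}$.

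Finally I would verify these $Y_k$ work. Let $F\in\mathcal{D}$ and $A_i^j\in\mathcal{C}(Y_i)$ for all $i,j\le n$, and suppose toward a contradiction that $F\cap\bigcap_{i,j}A_i^j=\emptyset$. For each $l\le N$ and all $i,j$ we have $A_i^j\in\mathcal{C}(Y_i)\subseteq\mathcal{C}(s^{l}_i)$ since $s^{l}_i\in Y_i$, and $F$ witnesses the second clause, so $\bar A\in K(\bar s^{l})$; hence $\bar A\in\bigcap_{l\le N}K(\bar s^{l})=\emptyset$, a contradiction. I expect the main obstacle to be precisely the closedness of each $K(\bar s)$ — that deleting the existential quantifier over $F\in\mathcal{D}$ preserves closedness, which is where compactness of $\mathcal{D}$ is used; everything else mirrors the finite-intersection-property argument of Lemma~\ref{Lema compactos}(3).
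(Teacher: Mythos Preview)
Your proof is correct and follows essentially the same compactness argument as the paper. The only cosmetic differences are that the paper indexes its closed sets by a single cut-off parameter $l\in\omega$ (taking $Y_i=X_i\cap\wp(l)$) rather than by tuples $\bar s\in X_1\times\cdots\times X_n$, and it keeps $F$ as an explicit coordinate in the compact product space $(\prod_i\wp(\omega)^n)\times\mathcal{D}$ rather than projecting it out; this lets the paper skip your closed-projection step, but the underlying idea is identical.
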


\begin{proof}
Consider the space $Z=(%
{\textstyle\prod\limits_{i=1}^{n}}
\wp\left(  \omega\right)  ^{n})\times\mathcal{D},$ which we know is compact.
Given $l\in\omega,$ let $K\left(  l\right)  $ be the set of all $(\left\langle
A_{i}^{1},...,A_{i}^{n}\right\rangle _{i\leq n},F)\in Z$ such that $A_{i}%
^{1},...,A_{i}^{n}\in\mathcal{C(}X_{i}\cap\wp\left(  l\right)  )$ (for every
$i\leq n$) and $F\cap%
{\textstyle\bigcap\limits_{i,j\leq n}}
A_{i}^{j}=\emptyset.$ Clearly $K\left(  l\right)  $ is a closed subspace.
Since $\mathcal{C}\left(  X_{1}\right)  ,...,\mathcal{C}\left(  X_{n}\right)
,\mathcal{D}\subseteq\mathcal{F}$, we conclude that $%
{\textstyle\bigcap\limits_{l\in\omega}}
K\left(  l\right)  =\emptyset,$ hence by the compactness of $Z,$ we conclude
that there is $l\in\omega$ such that $K\left(  l\right)  =\emptyset.$ Let
$Y_{i}=X_{i}\cap\wp\left(  l\right)  .$ It is clear that these are the sets we
were looking for.
\end{proof}

\qquad\ \ \ \qquad\ \ \ 

We can now prove the following result, which is a fusion of the proofs of
proposition \ref{FsigmaUCanjar} and proposition \ref{UgenesCanajr}%
:\qquad\ \ \ \qquad\ \ \ \ 

\begin{proposition}
If $\mathcal{W}$ is a $P$-point, then $\mathbb{F}_{\sigma}$ forces that
$\mathcal{\dot{U}}_{gen}$ is $\mathcal{W}$-Canjar. \label{UgenesWCanjar}
\end{proposition}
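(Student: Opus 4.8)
The plan is to imitate the proof of Proposition~\ref{FsigmaUCanjar} while descending through $\mathbb{F}_{\sigma}$ in the manner of the proof of Proposition~\ref{UgenesCanajr}, using that $\mathbb{F}_{\sigma}$ is $\sigma$-closed and hence adds no reals, so that $\mathcal{W}$ remains an ultrafilter and every tree consulted along the way is a ground-model object. Suppose towards a contradiction that there are $\mathcal{F}_{0}\in\mathbb{F}_{\sigma}$ and a name $\dot{\sigma}$ with $\mathcal{F}_{0}\Vdash$ ``$\dot{\sigma}$ is a winning strategy for player $\mathsf{I}$ in $\mathcal{H}(\mathcal{W},\mathcal{\dot{U}}_{gen})$''. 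From this data I will build, \emph{working in $V$}, a winning strategy $\pi$ for player $\mathsf{I}$ in the $P$-point game $\mathcal{G}_{P\text{-}point}(\mathcal{W})$, contradicting the Galvin--Shelah characterization of $P$-points. Since $(\mathcal{F}_{0},\dot{\sigma})$ is arbitrary, this shows that no condition forces ``$\mathsf{I}$ has a winning strategy in $\mathcal{H}(\mathcal{W},\mathcal{\dot{U}}_{gen})$'', i.e.\ $\mathbb{F}_{\sigma}\Vdash$ ``$\mathcal{\dot{U}}_{gen}$ is $\mathcal{W}$-Canjar''.

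While player $\mathsf{I}$ plays $\mathcal{G}_{P\text{-}point}(\mathcal{W})$ against an arbitrary opponent, the strategy $\pi$ will secretly maintain (i) a run $W_{0},z_{0},p_{0},n_{0},W_{1},z_{1},p_{1},n_{1},\ldots$ of $\mathcal{H}(\mathcal{W},\mathcal{\dot{U}}_{gen})$ in which the simulated $\mathsf{I}$ follows $\dot{\sigma}$, and (ii) a decreasing chain of conditions $\mathcal{F}_{0}\geq\mathcal{F}_{1}\geq\cdots$ in $\mathbb{F}_{\sigma}$. Whenever the value of $\dot{\sigma}$ is needed, the current condition is strengthened to the next $\mathcal{F}_{k}$ so that $\mathcal{F}_{k}$ decides that value to be a concrete ground-model set $W_{i}\in\mathcal{W}$, respectively a concrete ground-model tree $p_{i}$ with $\mathcal{F}_{k}\Vdash$ ``$p_{i}\in\mathbb{PT}(\mathcal{\dot{U}}_{gen})$'' (this is possible because $W_{i}$ is forced into $\mathcal{W}\in V$ and $p_{i}$, being a tree, is forced to be in $V$). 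The set $W_{i}$ is what $\mathsf{I}$ plays in $\mathcal{G}_{P\text{-}point}(\mathcal{W})$; the opponent's response $z_{i}\in[W_{i}]^{<\omega}$ is fed back as $\mathsf{II}$'s move $z_{i}$ in $\mathcal{H}(\mathcal{W},\mathcal{\dot{U}}_{gen})$; and the $n_{i}$'s are chosen by $\pi$ as explained below. When the $P$-point game is over, let $\mathcal{F}_{\omega}:=\bigcup_{k}\mathcal{F}_{k}$, an $F_{\sigma}$-filter and a lower bound of the chain.

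The choice of the $n_{m}$'s is where the proof of Proposition~\ref{UgenesCanajr} is used. Fix increasing compact exhaustions $\mathcal{F}_{k}=\bigcup_{m}\mathcal{C}^{k}_{m}$ and set $\mathcal{E}_{m}:=\bigcup_{k\leq m}\mathcal{C}^{k}_{m}$; then $\mathcal{F}_{\omega}=\bigcup_{m}\mathcal{E}_{m}$, each $\mathcal{E}_{m}$ is compact, and (strengthening at least once per round) $\mathcal{E}_{m}$ is contained in the condition current at round $m$. At round $m$, with $p_{m}$ already fixed, for each maximal node $u$ of $T(p_{m-1},n_{m-1})$ (with the evident convention at $m=0$) we have $\mathcal{C}(spsuc_{p_{m}}(u))$ contained in the current condition, hence $spsuc_{p_{m}}(u)$ is positive there; applying Lemma~\ref{Lema compactos} and Lemma~\ref{generalizacioncompacto} to these finitely many sets against the compact set $\mathcal{E}_{m}$ (with a budget of intersection-factors chosen large enough, as in Proposition~\ref{UgenesCanajr}) we get finite subsets $Y_{m,u}\subseteq spsuc_{p_{m}}(u)$ that ``capture'' $\mathcal{E}_{m}$. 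Then pick $n_{m}>n_{m-1}$ so large that $d^{-1}(u^{\frown}t)<n_{m}$ for all such $u$ and all $t\in Y_{m,u}$, so that each $u^{\frown}t$ enters $T(p_{m},n_{m})$; since the reply $p_{m+1}\leq_{T_{m}}p_{m}$ keeps these nodes splitting, the final tree $q:=\bigcup_{m}T(p_{m},n_{m})$ satisfies $spsuc_{q}(u)\supseteq Y_{m,u}$ for every splitting node $u$. Repeating the compactness computation of Proposition~\ref{UgenesCanajr} (with $\mathcal{E}_{m}$ in place of the $\mathcal{C}_{n}$ there, the ``spread-out'' structure of the $Y_{m,u}$'s being ensured by the choice of the $n_{m}$'s, cf.\ Lemma~\ref{nombrecompacto}) shows that $\mathcal{F}_{\omega}\cup\{\mathcal{C}(spsuc_{q}(u))\mid u\in split(q)\}$ has the finite intersection property; letting $\mathcal{G}$ be the $F_{\sigma}$-filter it generates, we obtain $\mathcal{G}\leq\mathcal{F}_{\omega}\leq\mathcal{F}_{0}$ and $\mathcal{G}\Vdash$ ``$q\in\mathbb{PT}(\mathcal{\dot{U}}_{gen})$''.

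Finally, fix a generic $G$ with $\mathcal{G}\in G$. In $V[G]$ the run $W_{0},z_{0},p_{0},n_{0},\ldots$ is a legal play of $\mathcal{H}(\mathcal{W},\mathcal{U}_{gen})$ in which player $\mathsf{I}$ followed the winning strategy $\dot{\sigma}^{G}$, so $\mathsf{I}$ won; since $\mathcal{G}$ forces $\bigcup_{m}T(p_{m},n_{m})=q$ to be a condition of $\mathbb{PT}(\mathcal{\dot{U}}_{gen})$, the victory of $\mathsf{I}$ forces $\bigcup_{m}z_{m}\notin\mathcal{W}$, which, $\bigcup_{m}z_{m}$ being a ground-model set, already holds in $V$. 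Thus every play of $\mathsf{II}$ against $\pi$ ends with $\bigcup_{m}z_{m}\notin\mathcal{W}$, so $\pi$ is a winning strategy for $\mathsf{I}$ in $\mathcal{G}_{P\text{-}point}(\mathcal{W})$ --- a contradiction. The main obstacle is the coordination in the third paragraph: the $n_{m}$'s must be chosen large relative to compact pieces of the lower bound $\mathcal{F}_{\omega}$, which is not yet available when $n_{m}$ is picked (it is forced on us by the need to decide $\dot{\sigma}$), which is what forces the diagonal re-indexing into the $\mathcal{E}_{m}$'s; and one must then check, via the compactness argument of Lemma~\ref{generalizacioncompacto}, that adjoining the closed sets $\mathcal{C}(spsuc_{q}(u))$ to $\mathcal{F}_{\omega}$ still yields a proper $F_{\sigma}$-filter.
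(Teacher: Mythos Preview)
Your overall architecture is correct and matches the paper's: assume a condition forces a winning strategy for player $\mathsf{I}$ in $\mathcal{H}(\mathcal{W},\mathcal{\dot{U}}_{gen})$, use it to manufacture a winning strategy for $\mathsf{I}$ in $\mathcal{G}_{P\text{-}point}(\mathcal{W})$, contradiction. The paper, however, begins with a simplification you do not use: a strategy for $\mathsf{I}$ in $\mathcal{H}(\mathcal{W},\mathcal{\dot{U}}_{gen})$ is a function on a countable domain (finite sequences of $z_i$'s and $n_i$'s) whose values are subsets of $\omega$ and Miller trees, i.e.\ reals; since $\mathbb{F}_{\sigma}$ is $\sigma$-closed, any such strategy is already a ground-model object. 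So the paper fixes a single $\mathcal{F}\in\mathbb{F}_{\sigma}$ and a ground-model $\sigma$ with $\mathcal{F}\Vdash$ ``$\sigma$ is winning''. This eliminates your descending chain $\mathcal{F}_0\geq\mathcal{F}_1\geq\cdots$ and the diagonal bookkeeping $\mathcal{E}_m$ entirely, and it gives for free that $\mathcal{C}(spsuc_{p}(s))\subseteq\mathcal{F}$ for \emph{every} tree $p$ in the range of $\sigma$ and every $s\in split(p)$, so Lemma~\ref{generalizacioncompacto} can be applied inside the fixed filter $\mathcal{F}$ at every stage.

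There is also a gap in your FIP verification. You process only the \emph{maximal} nodes of $T(p_{m-1},n_{m-1})$ at round $m$, producing $Y_{m,u}$ for those $u$ alone; hence each splitting node $u$ of $q$ receives a $Y_{m,u}$ at exactly one round $m=m(u)$. When you must check $F\cap A_1\cap\cdots\cap A_l\neq\emptyset$ with $A_j\in\mathcal{C}(spsuc_q(u_j))$ for several distinct $u_j$'s, the relevant sets $Y_{m(u_j),u_j}$ come from different rounds with different intersection budgets and different compact targets $\mathcal{E}_{m(u_j)}$, and a single application of Lemma~\ref{generalizacioncompacto} does not control that intersection; the analogy with Proposition~\ref{UgenesCanajr} breaks because there you had a single $X_n$ per level rather than a branching family. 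The paper avoids this by taking, at round $m$, \emph{all} splitting nodes $u$ in $T(p_m,n_{m-1})$ (not only the maximal ones) into $\mathcal{X}_m$; then for $m$ large every $u_j$ lies in $\mathcal{X}_m$, and one application of Lemma~\ref{generalizacioncompacto} at that round (with $n=|\mathcal{X}_m|$ and $\mathcal{D}=\mathcal{C}_m$) handles the whole intersection. Your argument is repaired by the same change --- or equivalently by enlarging the compact set at round $m$ to include all previously produced $\mathcal{C}(Y_{m',u'})$. With that fix your route is correct, just longer than the paper's.
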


\begin{proof}
We will prove the proposition by contradiction. Assume there is $\mathcal{F}$
an $F_{\sigma}$-filter and $\sigma$ such that $\mathcal{F}$ forces that
$\sigma$ is a winning strategy for player $\mathsf{I}$ in
$\mathcal{H\mathbb{(}W},\mathcal{\dot{U}}_{gen})$. Note that strategies for
$\mathsf{I}$ are countable objects and since $\mathbb{F}_{\sigma}$ is $\sigma
$-closed, it is enough to consider ground model strategies. Let $\mathcal{F}=%
{\textstyle\bigcup\limits_{n\in\omega}}
\mathcal{C}_{n}$ where $\left\langle \mathcal{C}_{n}\right\rangle _{n\in
\omega}$ is an increasing sequence of compact sets. We will use $\sigma$ to
construct a winning strategy for $\mathsf{I}$ in the game $\mathcal{G}%
_{P\text{-}point}\left(  \mathcal{W}\right)  $, which will be a contradiction.


Note that if $p$ is a Miller tree such that $p$ is a possible response of
player $\mathsf{I}$ according to $\sigma$ and $s\in split\left(  p\right)  ,$
then $\mathcal{F}\Vdash``spsuc_{p}\left(  s\right)  \in(\mathcal{\dot{U}%
}_{gen}^{<\omega}\mathcal{)}^{+}\textquotedblright$ (this is because
$\mathcal{F}$ is forcing that $\sigma$ is a strategy for player $\mathsf{I,}$
which implies that $p$ must be a legal move), in particular $\mathcal{C}%
\left(  spsuc_{p}\left(  s\right)  \right)  \subseteq\mathcal{F}.$


For every $\mathcal{X}=\left\{  X_{1},...,X_{n}\right\}  $ such that
$X_{i}\subseteq$ $\left[  \omega\right]  ^{<\omega}\setminus\left\{
\emptyset\right\}  $ and $\mathcal{C}\left(  X_{i}\right)  \subseteq
\mathcal{F}$ for every $i\leq n$ and for every $k\in\omega,$ fix a function
$F_{\left(  \mathcal{X},k\right)  }:\mathcal{X\longrightarrow}$ $\left[
\left[  \omega^{<\omega}\right]  \right]  ^{<\omega}$ with the following properties:

\begin{enumerate}
\item $Y_{i}=F_{\left(  \mathcal{X},k\right)  }\left(  X_{i}\right)
\in\left[  X_{i}\right]  ^{<\omega}$ for every $i\leq n.$

\item For every $B\in\mathcal{C}_{k}$ and for every $A_{i}^{1},...,A_{i}%
^{n}\in\mathcal{C}\left(  Y_{i}\right)  $ (with $i\leq n$), we have that
$B\cap%
{\textstyle\bigcap\limits_{i,j\leq n}}
A_{i}^{j}\neq\emptyset.$
\end{enumerate}

\qquad\ \ 

We know such $F_{\left(  \mathcal{X},k\right)  }$ exists by lemma
\ref{generalizacioncompacto}. The proof now proceeds in a very similar way as
the proof of proposition \ref{FsigmaUCanjar}. We define $\pi$ a strategy for
player $\mathsf{I}$ in $\mathcal{G}_{P\text{-}point}\left(  \mathcal{W}%
\right)  $ as follows:

\begin{enumerate}
\item Player $\mathsf{I}$ starts by playing $W_{0}=\sigma\left(
\emptyset\right)  $.

\item Assume player $\mathsf{II}$ plays $z_{0}\in\left[  W_{0}\right]
^{<\omega}$. Let $p_{0}=\sigma\left(  W_{0},z_{0}\right)  $, $s_{0}$ be the
stem of $p_{0}$ and $\mathcal{X}_{0}=\left\{  spsuc_{p_{0}}\left(
s_{0}\right)  \right\}  .$ Define $n_{0}>d^{-1}\left(  s_{0}\right)  $ to be
the least integer such that$\ d^{-1}(s_{0}{}^{\frown}t)<n_{0}$ for all $t\in
F_{\left(  \mathcal{X}_{0},0\right)  }\left(  spsuc_{p_{0}}\left(
s_{0}\right)  \right)  $. Player $\mathsf{I}$ will play (in $\mathcal{G}%
_{P\text{-}point}\left(  \mathcal{W}\right)  $) $W_{1}=\sigma\left(
W_{0},z_{0},p_{0},n_{0}\right)  $.

\item In general assume that it has been played the sequence $\left\langle
W_{0},z_{0},...,W_{m}\right\rangle .$ At the same time, secretly the player
$\mathsf{I}$ has been constructing a sequence $\left\langle W_{0},z_{0}%
,p_{0},n_{0},W_{1},z_{1},p_{1},n_{1}...,W_{m}\right\rangle $ that is being
forced to be a partial play of the game $\mathcal{H(W},\mathcal{\dot{U}}%
_{gen}\mathcal{)}$ following $\sigma,$ such that for every $i<m,$ the integer
$n_{i}$ has the following property: letting $\mathcal{X}_{i}$ to be the set
defined as $\left\{  spsuc_{p_{i}}\left(  u\right)  \mid u\in T\left(
p_{i},n_{i-1}\right)  \right\}  $ (where $n_{i-1}=d^{-1}\left(  s_{0}\right)
$), for every $t\in F_{\left(  \mathcal{X}_{i},i\right)  }\left(
spsuc_{p_{i}}\left(  u\right)  \right)  ,$ we have that $d^{-1}(u^{\frown
}t)<n_{i}.$ Assume that player $\mathsf{II}$ plays $z_{m}$ as her next
response in $\mathcal{H}\left(  \mathcal{W},\mathcal{\dot{U}}_{gen}\right)  .$
Let $p_{m}$ be the tree given by $\sigma\left(  W_{0},z_{0},n_{0}%
,W_{1},...,W_{m},z_{m}\right)  $ and let $n_{m}>n_{m-1}$ be the least integer
with the following property: letting $\mathcal{X}_{m}=\left\{  spsuc_{p_{m}%
}\left(  u\right)  \mid u\in T\left(  p_{m},n_{m-1}\right)  \right\}  $, for
every $t\in F_{\left(  \mathcal{X}_{m},m\right)  }\left(  spsuc_{p_{m}}\left(
u\right)  \right)  ,$ we have that $d^{-1}(u^{\frown}t)<n_{m}.$ Player
$\mathsf{I}$ will play $W_{m+1}=\sigma\left(  W_{0},z_{0},n_{0},W_{1}%
,...,W_{m},z_{m},p_{m},n_{m}\right)  .$
\end{enumerate}

\qquad\qquad\ \ \ \qquad\ \ \ \ \ \qquad\ \ \ \ \ \ 

The game $\mathcal{G}_{P\text{-}point}\left(  \mathcal{W}\right)  :$

\qquad\ \ %

\begin{tabular}
[c]{|l|l|l|l|l|l|}\hline
$\mathsf{I}$ & $W_{0}$ &  & $W_{1}$ &  & $...$\\\hline
$\mathsf{II}$ &  & $z_{0}$ &  & $z_{1}$ & \\\hline
\end{tabular}

\qquad\ \ \ \qquad\ \ \ \ \qquad\ \ \ \ \qquad
\ \ \ \ \ \ \ \ \ \ \ \ \ \ \newline

The game $\mathcal{H(W},\mathcal{\dot{U}}_{gen}\mathcal{)}:$

\qquad\ \ \ \qquad\ \ \ %

\begin{tabular}
[c]{|l|l|l|l|l|l|l|l|l|l|}\hline
$\mathsf{I}$ & $W_{0}$ &  & $p_{0}$ &  & $W_{1}$ &  & $p_{1}$ &  &
$...$\\\hline
$\mathsf{II}$ &  & $z_{0}$ &  & $n_{0}$ &  & $z_{1}$ &  & $n_{1}$ & \\\hline
\end{tabular}

\qquad\ \ \ \ \ \qquad\ \ \ \qquad\ \ \ \qquad\ \qquad\ \ \ \qquad
\ \qquad\ \ \bigskip

We claim that $\pi$ is a winning strategy for player $\mathsf{I}$ in
$\mathcal{G}_{P\text{-}point}\left(  \mathcal{W}\right)  .$ Consider a run of
the game in which player $\mathsf{I}$ played according to $\pi.$ Let $Z=%
{\textstyle\bigcup\limits_{n\in\omega}}
z_{n},$ we will prove that $Z\notin\mathcal{U}.$ Let $q=%
{\textstyle\bigcup\limits_{i\in\omega}}
T\left(  p_{i},n_{i}\right)  $ be te tree that was constructed by player
$\mathsf{I}$ during the play. It is easy to see that $\mathcal{F\cup}\left\{
\mathcal{C}\left(  spsuc_{q}\left(  s\right)  \right)  \mid s\in split\left(
q\right)  \right\}  $ generates an $F_{\sigma}$-filter, call it $\mathcal{K}.$
Note that $\mathcal{K\leq F}$ hence $\mathcal{K}$ forces that $\sigma$ is a
winning strategy for player $\mathsf{I}$ in $\mathcal{H\mathbb{(}%
W},\mathcal{\dot{U}}_{gen})$. Moreover, $\mathcal{K}$ forces that
$q\in\mathbb{PT(}\mathcal{\dot{U}}_{gen}).$ Since player $\mathsf{I}$ is
forced to win in $\mathcal{H\mathbb{(}W},\mathcal{\dot{U}}_{gen})$, it must be
the case that $Z\notin\mathcal{W}.$ This shows that $\pi$ is a winning
strategy for player $\mathsf{I}$ in $\mathcal{G}_{P\text{-}point}\left(
\mathcal{W}\right)  .$ Since player $\mathsf{I}$ can not have a winning
strategy in the $P$-point game, we get a contradiction.
\end{proof}

\qquad\ \ \ \ 

In this way, we conclude that $\mathbb{F}_{\sigma}\ast\mathbb{PT(}%
\mathcal{\dot{U}}_{gen}\mathcal{)}$ preserves all ground model $P$-points.
Note that after forcing with $\mathbb{F}_{\sigma}\ast\mathbb{PT(}%
\mathcal{\dot{U}}_{gen}\mathcal{)},$ there are intermediate extensions with
$P$-points that are not preserved ($\mathcal{U}_{gen}$ for example), however,
all ground model $P$-points are preserved. By iterating $\mathbb{F}_{\sigma
}\ast\mathbb{PT(}\mathcal{\dot{U}}_{gen}\mathcal{)}$ with countable support,
we get the following result \cite{BlassShelah}:

\begin{corollary}
[Blass-Shelah]The inequality $\mathfrak{u<s}$ is consistent with \textsf{ZFC.}
\end{corollary}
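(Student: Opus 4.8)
The plan is to fix a ground model $V\models\textsf{GCH}$, fix in $V$ a $P$-point $\mathcal{W}$ with a base of cardinality $\aleph_{1}$ (available under \textsf{CH}), and perform a countable support iteration $\langle\mathbb{P}_{\alpha},\dot{\mathbb{Q}}_{\alpha}\mid\alpha\leq\omega_{2}\rangle$ with $\mathbb{P}_{\alpha}\Vdash$ ``$\dot{\mathbb{Q}}_{\alpha}=\mathbb{F}_{\sigma}\ast\mathbb{PT}(\dot{\mathcal{U}}_{gen})$''. Since $\mathbb{F}_{\sigma}$ is $\sigma$-closed and $\mathbb{PT}(\mathcal{F})$ is proper for every filter $\mathcal{F}$, each iterand is proper, so $\mathbb{P}_{\omega_{2}}$ is proper; as \textsf{CH} is preserved at every intermediate stage and each $\dot{\mathbb{Q}}_{\alpha}$ is forced to have a dense set of size $\leq\aleph_{1}$, the iteration is $\aleph_{2}$-c.c. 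Hence cardinals are preserved, $\mathfrak{c}=\aleph_{2}$ in $V^{\mathbb{P}_{\omega_{2}}}$, and, by the usual reflection property of such iterations, every subset of $\omega$ in $V^{\mathbb{P}_{\omega_{2}}}$ already lies in $V^{\mathbb{P}_{\alpha}}$ for some $\alpha<\omega_{2}$; since $\mathrm{cf}(\omega_{2})=\omega_{2}>\omega_{1}$, the same holds for any $\aleph_{1}$-sized family of such sets.

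First I would check $\mathfrak{s}=\aleph_{2}$, for which it suffices to show $\mathfrak{s}\geq\aleph_{2}$. Given $\mathcal{S}=\{S_{\xi}\mid\xi<\omega_{1}\}\subseteq[\omega]^{\omega}$ in $V^{\mathbb{P}_{\omega_{2}}}$, fix $\alpha<\omega_{2}$ with $\mathcal{S}\in V^{\mathbb{P}_{\alpha}}$. Because $\mathbb{F}_{\sigma}$ adds no reals and $\mathbb{PT}(\dot{\mathcal{U}}_{gen})$ diagonalizes the ultrafilter $\dot{\mathcal{U}}_{gen}$ of $V^{\mathbb{P}_{\alpha}\ast\mathbb{F}_{\sigma}}$, the real added by $\dot{\mathbb{Q}}_{\alpha}$ is, for every $S\in\wp(\omega)\cap V^{\mathbb{P}_{\alpha}}$, almost contained in $S$ or almost disjoint from $S$; in particular it is split by no member of $\mathcal{S}$. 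Thus no $\aleph_{1}$-sized family is splitting, so $\mathfrak{s}=\mathfrak{c}=\aleph_{2}$.

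Next I would show that $\mathbb{P}_{\omega_{2}}$ preserves $\mathcal{W}$, which gives $\mathfrak{u}\leq\aleph_{1}$. This is proved by induction on $\delta\leq\omega_{2}$, the assertion being ``$\mathbb{P}_{\delta}$ preserves $\mathcal{W}$''. The auxiliary fact that makes the induction go through is that a proper forcing which preserves $\mathcal{W}$ as an ultrafilter also preserves it as a $P$-point: given $A_{n}$ ($n\in\omega$) in the generated ultrafilter, choose $W_{n}\in\mathcal{W}\cap V$ with $W_{n}\subseteq A_{n}$; by properness $\{W_{n}\mid n\in\omega\}$ is contained in a countable subfamily of $\mathcal{W}$ lying in $V$, which, $\mathcal{W}$ being a $P$-point in $V$, has a pseudointersection in $\mathcal{W}$, and this works for the $A_{n}$. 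Granting the inductive hypothesis for all $\alpha<\delta$, it follows that each $V^{\mathbb{P}_{\alpha}}$ has $\mathcal{W}$ as a $P$-point, so by the results of Section 4 (namely that $\mathbb{F}_{\sigma}$ forces $\dot{\mathcal{U}}_{gen}$ to be $\mathcal{W}$-Canjar, Proposition~\ref{UgenesWCanjar}, and that the Miller forcing of a $\mathcal{W}$-Canjar filter preserves the $P$-point $\mathcal{W}$, whence $\mathbb{F}_{\sigma}\ast\mathbb{PT}(\dot{\mathcal{U}}_{gen})$ preserves all ground model $P$-points) we obtain $\mathbb{P}_{\alpha}\Vdash$ ``$\dot{\mathbb{Q}}_{\alpha}$ preserves $\mathcal{W}$''. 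Shelah's iteration theorem (the proposition of Section 4) for limit $\delta$, together with the trivial two-step composition at successors, then yields that $\mathbb{P}_{\delta}$ preserves $\mathcal{W}$. Putting the two computations together, $V^{\mathbb{P}_{\omega_{2}}}\models\aleph_{1}=\mathfrak{u}<\mathfrak{s}=\aleph_{2}=\mathfrak{c}$.

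I expect the main obstacle to be keeping this induction honest: the Section 4 machinery applies only when $\mathcal{W}$ is a genuine $P$-point in the intermediate model $V^{\mathbb{P}_{\alpha}}$, not merely an ultrafilter, so the elementary ``proper $+$ preserves ultrafilter $\Rightarrow$ preserves $P$-point'' observation, Shelah's preservation theorem for countable support iterations, and the $\mathcal{W}$-Canjarness of $\dot{\mathcal{U}}_{gen}$ must be interlocked at every stage. The remaining ingredients — the computation of $\mathfrak{s}$, the cardinal arithmetic, and the reflection property of the iteration — are routine.
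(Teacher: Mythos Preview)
Your proposal is correct and follows essentially the same approach as the paper: perform a countable support iteration of $\mathbb{F}_{\sigma}\ast\mathbb{PT}(\dot{\mathcal{U}}_{gen})$ of length $\omega_{2}$ over a model of \textsf{CH}, use that each iterand preserves $P$-points (via Proposition~\ref{UgenesWCanjar}) together with Shelah's preservation theorem to keep $\mathfrak{u}=\omega_{1}$, and use the unsplit real added at each stage to push $\mathfrak{s}$ to $\omega_{2}$. The paper leaves the iteration details implicit, whereas you spell out the cardinal arithmetic and, usefully, the auxiliary observation that a proper forcing preserving $\mathcal{W}$ as an ultrafilter automatically preserves it as a $P$-point, which is exactly what is needed to feed Proposition~\ref{UgenesWCanjar} back into the induction at every stage.
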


\ \ \qquad\ \ \ \ \ \ \qquad\ \ \ \ \ 

In \cite{DiagonalizingHeike} Mildenberger proved the following interesting result:

\begin{proposition}
[Mildenberger]It is consistent that there is a proper forcing that
diagonalizes an ultrafilter and preserves a $P$-point.
\end{proposition}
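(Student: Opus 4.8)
The plan is to realize the statement with one of the forcings $\mathbb{PT}\left( \mathcal{F}\right)$ studied in the previous sections, applied to a suitable \emph{ultrafilter} $\mathcal{F}$. Recall three facts proved above: for any filter $\mathcal{F}$ the poset $\mathbb{PT}\left( \mathcal{F}\right)$ is proper and diagonalizes $\mathcal{F}$; and if $\mathcal{U}$ is a $P$-point and $\mathcal{F}$ is $\mathcal{U}$-Canjar, then $\mathbb{PT}\left( \mathcal{F}\right)$ preserves $\mathcal{U}$. Hence it is enough to produce, in some model of \textsf{ZFC}, a $P$-point $\mathcal{W}$ together with an ultrafilter $\mathcal{F}$ that is $\mathcal{W}$-Canjar: the poset $\mathbb{PT}\left( \mathcal{F}\right)$ is then a proper forcing that diagonalizes the ultrafilter $\mathcal{F}$ and preserves the $P$-point $\mathcal{W}$, which is exactly the assertion. (Note that $\mathcal{F}$ is automatically different from $\mathcal{W}$, since no ultrafilter is Canjar relative to itself, but this is not needed for anything.)

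To manufacture $\mathcal{W}$ and $\mathcal{F}$, I would start in a ground model $V$ satisfying \textsf{CH}, fix a $P$-point $\mathcal{W} \in V$, and force with $\mathbb{F}_{\sigma}$. Being $\sigma$-closed, $\mathbb{F}_{\sigma}$ adds no new reals, so $\mathcal{W}$ is still a $P$-point in the extension $V\left[ \mathcal{U}_{gen}\right]$, where $\mathcal{U}_{gen}$ is the generic ultrafilter. By Proposition \ref{UgenesWCanjar}, $\mathbb{F}_{\sigma}$ forces that $\mathcal{U}_{gen}$ is $\mathcal{W}$-Canjar. Now work in $V\left[ \mathcal{U}_{gen}\right]$ and set $\mathbb{F} := \mathbb{PT}\left( \mathcal{U}_{gen}\right)$. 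This is a nontrivial forcing (for instance the tree of all increasing finite sequences belongs to it, since $\mathcal{C}$ of each of its $spsuc$-sets is contained in $\mathcal{U}_{gen}$, so those $spsuc$-sets lie in $\left( \mathcal{U}_{gen}^{<\omega}\right) ^{+}$ by Lemma \ref{Lema compactos}); it is proper; it diagonalizes the ultrafilter $\mathcal{U}_{gen}$; and it preserves the $P$-point $\mathcal{W}$, by the preservation proposition. Since \textsf{CON}(\textsf{ZFC}) implies \textsf{CON}(\textsf{ZFC} + \textsf{CH}), this yields the consistency with no extra hypotheses.

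The only step carrying real content is Proposition \ref{UgenesWCanjar} — that the generic $F_{\sigma}$-ultrafilter $\mathcal{U}_{gen}$ is $\mathcal{W}$-Canjar for every ground-model $P$-point $\mathcal{W}$ — and this has already been established, by fusing the reduction-to-the-$P$-point-game argument of Proposition \ref{FsigmaUCanjar} with the compactness bookkeeping of Proposition \ref{UgenesCanajr} and Lemma \ref{Lema compactos}. Everything else is routine: that a $\sigma$-closed forcing preserves $P$-points (it adds no new reals, so every ground-model ultrafilter remains an ultrafilter and the $P$-point property, being about countable subfamilies lying in $V$, is untouched), that $\mathbb{PT}\left( \mathcal{U}_{gen}\right)$ is a legitimate nontrivial forcing, and that ``preserving $\mathcal{W}$'' in the sense of this paper is the same as ``preserving a $P$-point'' in the sense of the quoted proposition. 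The one place where a reader might want to pause is in checking that the absence of new reals over $V$ suffices both to keep $\mathcal{W}$ a $P$-point and to reduce the analysis of player $\mathsf{I}$'s strategies to ground-model ones, as was done in the proof of Proposition \ref{UgenesWCanjar}.
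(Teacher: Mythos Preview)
Your proposal is correct and matches the paper's approach exactly: the paper does not give a separate proof but simply remarks that ``our work provides an alternative proof of the theorem of Mildenberger,'' and you have spelled out precisely what that means --- force with $\mathbb{F}_{\sigma}$ over a ground model with a $P$-point $\mathcal{W}$, then $\mathbb{PT}(\mathcal{U}_{gen})$ is proper, diagonalizes $\mathcal{U}_{gen}$, and preserves $\mathcal{W}$ by Proposition~\ref{UgenesWCanjar}. The assumption of \textsf{CH} (to guarantee a $P$-point) is a harmless and natural addition the paper leaves implicit.
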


\qquad\ \ \ \ 

Note that our work provides an alternative proof of the theorem of Mildenberger.

\qquad\ \ \ 

We would like to mention that in original model of Shelah of $\mathfrak{b<s},$
in the Blass-Shelah model, and in our model, the almost disjointness number is
equal to $\omega_{1}.$ In \cite{ProperandImproper} (using also the results
from \cite{BrendleRaghavan}) it is proved that $\mathfrak{a}=\omega_{1}$ after
iterating (with countable support) the forcing $\mathbb{F}_{\sigma}%
\ast\mathbb{M(}\mathcal{\dot{U}}_{gen}).$ A similar approach works when
iterating $\mathbb{F}_{\sigma}\ast\mathbb{PT(}\mathcal{\dot{U}}_{gen}).$ It is
also possible to use the technique of theorem 6.6 in
\cite{ParametrizedDiamonds} to show that $\Diamond\left(  \mathfrak{b}\right)
$ holds in that model, hence $\mathfrak{a}=\omega_{1}$ (see
\cite{ParametrizedDiamonds} for the definition of $\Diamond\left(
\mathfrak{b}\right)  $ and the proof that $\Diamond\left(  \mathfrak{b}%
\right)  $ implies $\mathfrak{a}=\omega_{1}$). Since this result will not be
used in the paper, we omit the details.

\qquad\ \ \ 

Regarding the \emph{groupwise-density number }$\mathfrak{g}$, it can be proved
that $\mathfrak{g}=\omega_{2}$ holds in our model. In particular, this is a
model of $\mathfrak{u<g,}$ so the \emph{Near Coherence of Filters }principle
holds in our model. This was proved by David Chodounsk\'{y}, Jonathan Verner
and the first author and will be published in a consequent paper. The reader
may consult \mbox{
\cite{HandbookBlass} }\hspace{0pt}
to lear nore about $\mathfrak{g}$ and the
Near Coherence of filters principle.

\qquad\ \ \

\section{A model of $\omega_{1}=\mathfrak{u<a}$}

In this section, we will prove that every \textsf{MAD} family can be destroyed
with a proper forcing that preserves $P$-points, answering the questions of
Brendle and Shelah. First, we will prove that if $\mathcal{A}$ is a
\textsf{MAD} family, then $\mathcal{\dot{U}}_{gen}\left(  \mathcal{A}\right)
$ is forced to be $\mathcal{B}$-Canjar for every $\mathfrak{b}$-family
$\mathcal{B}$ in the ground model. In \cite{BrendleRaghavan} and
\cite{CanjarFiltersII} it is proved that after adding $\omega_{1}$-Cohen
reals, $\mathbb{F}_{\sigma}\left(  \mathcal{A}\right)  $ forces that
$\mathcal{U}_{gen}\left(  \mathcal{A}\right)  $ has these properties.
Obviously, we can not use these results since we do not want to add Cohen
reals. In this section, we will prove that the Cohen reals were really not
needed in the first place. This proof takes inspiration in the proof of
$\mathfrak{b<a}$ by Brendle in \cite{MobandMAD}. When the authors were
preparing the paper, they learned from Zdomskyy that he has found a different
proof that the preliminary Cohen reals are not needed.

\begin{definition}
We say a \textsf{MAD} family $\mathcal{A}$ is a \emph{Laflamme family }if
$\mathcal{I}\left(  \mathcal{A}\right)  $ can not be extended to an
$F_{\sigma}$ ideal (or equivalently, $\mathcal{I}\left(  \mathcal{A}\right)
^{\ast}$ can not be extended to an $F_{\sigma}$-filter).
\end{definition}

\qquad\ \ \ 

Laflamme proved that the Continuum Hypothesis implies that there is a Laflamme
\textsf{MAD }family. Minami and Sakai constructed a Laflamme \textsf{MAD
}family assuming $\mathfrak{p=c.}$ It is a major open problem if \textsf{ZFC
}implies the existence of Laflamme \textsf{MAD }families.  The reader may
consult \cite{ZappingSmallFilters} and
\cite{KatetovandKatetovBlassOrdersFsigmaIdeals} for more information on
Laflamme \textsf{MAD} families. 

\qquad\ \qquad\ \qquad\ \qquad\ \ \ 

Note that if $\mathcal{A}$ is not Laflamme
(i.e. $\mathcal{A}$ can be extended to an $F_{\sigma}$-ideal), then
$\mathbb{F}_{\sigma}\ast\mathbb{PT(}\mathcal{\dot{U}}_{gen})$ destroys
$\mathcal{A}$ below some condition, in this way, we only need to take care of
Laflamme families. The following is a simple lemma that will be needed later:

\begin{lemma}
Let $\mathcal{A}$ be a \textsf{MAD} family and $\mathcal{F}\in\mathbb{F}%
_{\sigma}\left(  \mathcal{A}\right)  .$ If there is a proper forcing
$\mathbb{P}$ such that $\mathbb{P}$ forces the following statement:
\textquotedblleft There is $\mathcal{D\in}\left[  \mathcal{A}\right]
^{\omega}$ such that $\mathcal{I}\left(  \mathcal{A}\right)  ^{\ast}%
\subseteq\left\langle \mathcal{F\cup}\left\{  \omega\setminus A\mid
A\in\mathcal{D}\right\}  \right\rangle $\textquotedblright\ then $\mathcal{A}$
is not Laflamme.
\end{lemma}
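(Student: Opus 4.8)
The plan is to use the given forcing merely as a scaffold: in a generic extension one can read off an $F_{\sigma}$ ideal extending $\mathcal{I}\left(\mathcal{A}\right)$, and then a short absoluteness argument combined with properness lets one produce such an ideal already in $V$. Throughout I may assume that $\mathcal{F}$ extends the Fr\'{e}chet filter, since replacing $\mathcal{F}$ by the filter it generates together with the cofinite sets keeps it an $F_{\sigma}$ filter disjoint from $\mathcal{I}\left(\mathcal{A}\right)$.

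First I fix a $(V,\mathbb{P})$-generic $G$ and argue in $V[G]$. By hypothesis there is $\mathcal{D}=\{A_{n}\mid n\in\omega\}\in[\mathcal{A}]^{\omega}$ with $\mathcal{I}\left(\mathcal{A}\right)^{\ast}\subseteq\mathcal{K}$, where $\mathcal{K}=\langle\mathcal{F}\cup\{\omega\setminus A_{n}\mid n\in\omega\}\rangle$. Passing to the dual ideal $\mathcal{K}^{\ast}$, a direct computation gives that $X\in\mathcal{K}^{\ast}$ iff $X\setminus\left(A_{n_{0}}\cup\dots\cup A_{n_{k}}\right)\in\mathcal{F}^{\ast}$ for some finite tuple from $\mathcal{D}$. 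Since $\omega\setminus A\in\mathcal{I}\left(\mathcal{A}\right)^{\ast}\subseteq\mathcal{K}$ for every $A\in\mathcal{A}$, every $A\in\mathcal{A}$ lies in $\mathcal{K}^{\ast}$; and if $A\notin\mathcal{D}$, then almost disjointness forces $A\setminus\left(A_{n_{0}}\cup\dots\cup A_{n_{k}}\right)$ to differ from $A$ by a finite set, so $A\in\mathcal{F}^{\ast}$. Hence, with $\mathcal{A}_{0}:=\{A\in\mathcal{A}\mid A\notin\mathcal{F}^{\ast}\}$, we get $\mathcal{A}_{0}\subseteq\mathcal{D}$, so $\mathcal{A}_{0}$ is countable in $V[G]$.

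The point I would stress next is that $\mathcal{A}_{0}$ is not a new object. It is defined from $\mathcal{A}$ and an $F_{\sigma}$-code for $\mathcal{F}$, both of which lie in $V$, and for a fixed $A\in V$ the relation ``$\omega\setminus A\in\mathcal{F}$'' is arithmetic in that code and in $A$, hence absolute between $V$ and $V[G]$; so $\mathcal{A}_{0}$ computed in $V$ equals $\mathcal{A}_{0}$ computed in $V[G]$. Because $\mathbb{P}$ is proper it preserves $\omega_{1}$, and therefore it does not make any uncountable set of $V$ countable; since $\mathcal{A}_{0}$ is countable in $V[G]$, it is already countable in $V$.

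It then remains to finish in $V$. Enumerating $\mathcal{A}_{0}=\{B_{n}\mid n\in\omega\}$, I let $\mathcal{J}$ be the ideal generated by $\mathcal{F}^{\ast}\cup\mathcal{I}\left(\mathcal{A}_{0}\right)$. Then $\mathcal{A}\setminus\mathcal{A}_{0}\subseteq\mathcal{F}^{\ast}$ and $\mathcal{A}_{0}\subseteq\mathcal{I}\left(\mathcal{A}_{0}\right)$, so $\mathcal{I}\left(\mathcal{A}\right)\subseteq\mathcal{J}$. The ideal $\mathcal{J}$ is $F_{\sigma}$: $\mathcal{F}^{\ast}$ is $F_{\sigma}$ (it is the image of the $F_{\sigma}$ set $\mathcal{F}$ under the homeomorphism $X\mapsto\omega\setminus X$), $\mathcal{I}\left(\mathcal{A}_{0}\right)$ is $F_{\sigma}$ (being countably generated), and the ideal generated by two $F_{\sigma}$ ideals is again $F_{\sigma}$ — writing each as an increasing union of downward-closed compact sets $K_{n}^{1},K_{n}^{2}$, the generated ideal is $\bigcup_{n}\{X\cup Y\mid X\in K_{n}^{1},\,Y\in K_{n}^{2}\}$, a countable union of compact sets. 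Finally $\mathcal{J}$ is proper: if $\omega\in\mathcal{J}$ then $\omega\setminus B\subseteq A_{n_{0}}\cup\dots\cup A_{n_{k}}$ for some $B\in\mathcal{F}^{\ast}$ and $A_{n_{i}}\in\mathcal{A}_{0}$, whence $A_{n_{0}}\cup\dots\cup A_{n_{k}}\in\mathcal{F}\cap\mathcal{I}\left(\mathcal{A}\right)$, contradicting $\mathcal{F}\in\mathbb{F}_{\sigma}\left(\mathcal{A}\right)$. Thus $\mathcal{J}$ is an $F_{\sigma}$ ideal extending $\mathcal{I}\left(\mathcal{A}\right)$, i.e. $\mathcal{A}$ is not Laflamme. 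I expect the only genuinely non-bookkeeping step to be the third paragraph: recognizing $\mathcal{A}_{0}$ as an absolutely defined set and invoking preservation of $\omega_{1}$ to pull its countability down to $V$; the $F_{\sigma}$ manipulations around it are routine.
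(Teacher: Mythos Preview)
Your proof is correct, but it takes a longer route than the paper's two-line argument. The paper simply invokes the countable-covering property of proper forcing: there is a condition $p\in\mathbb{P}$ and a ground-model countable $\mathcal{D}_{1}\in[\mathcal{A}]^{\omega}$ with $p\Vdash``\dot{\mathcal{D}}\subseteq\mathcal{D}_{1}\textquotedblright$. Since for each fixed $B\in\mathcal{I}(\mathcal{A})^{\ast}$ the statement ``$B\in\langle\mathcal{F}\cup\{\omega\setminus A\mid A\in\mathcal{D}_{1}\}\rangle$'' is arithmetic in the codes and hence absolute, the inclusion $\mathcal{I}(\mathcal{A})^{\ast}\subseteq\langle\mathcal{F}\cup\{\omega\setminus A\mid A\in\mathcal{D}_{1}\}\rangle$ holds already in $V$, and the right-hand side is an $F_{\sigma}$ filter, so $\mathcal{A}$ is not Laflamme.

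Your approach instead isolates the absolutely defined set $\mathcal{A}_{0}=\{A\in\mathcal{A}\mid\omega\setminus A\notin\mathcal{F}\}$, shows $\mathcal{A}_{0}\subseteq\mathcal{D}$ in the extension, and pulls its countability down to $V$ via preservation of $\omega_{1}$. This is more work, but it has one small advantage worth noting: your argument uses only that $\mathbb{P}$ preserves $\omega_{1}$, not the full countable-covering property of properness, so you have actually proved a slightly stronger statement than the one in the paper. (A minor cosmetic point: in your last paragraph you enumerate $\mathcal{A}_{0}=\{B_{n}\}$ but then write $A_{n_{i}}$; the intent is clear.)
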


\begin{proof}
Since $\mathbb{P}$ is a proper forcing, we can find a condition $p\in
\mathbb{P}$ and $\mathcal{D}_{1}\in\left[  \mathcal{A}\right]  ^{\omega}$ in
$V$ such that $p\Vdash``\mathcal{\dot{D}\subseteq D}_{1}\textquotedblright.$
It is then easy to see that $\mathcal{I}\left(  \mathcal{A}\right)  ^{\ast
}\subseteq\left\langle \mathcal{F\cup}\left\{  \omega\setminus A\mid
A\in\mathcal{D}_{1}\right\}  \right\rangle $ so $\mathcal{A}$ is not Laflamme.
\end{proof}

\qquad\ \qquad\ \ 

Given $X\subseteq\left[  \omega\right]  ^{<\omega}$ and $A\in\left[
\omega\right]  ^{\omega},$ we define $Catch\left(  X,A\right)  =\left\{  s\in
X\mid s\subseteq A\right\}  .$ We will need the following definition:

\begin{definition}
Let $\mathcal{F}$ be an $F_{\sigma}$-filter, $X\subseteq\left[  \omega\right]
^{<\omega}$ and $A\in\left[  \omega\right]  ^{\omega}.$ We will say that
$\bigstar\left(  \mathcal{F},X,A\right)  $ holds, if the following conditions
are satisfied:

\begin{enumerate}
\item $A\in\mathcal{F}^{+}.$

\item If $B\in\left[  A\right]  ^{\omega}\cap\mathcal{F}^{+}$ then
$Catch\left(  X,B\right)  \in\left(  \mathcal{F}^{<\omega}\right)  ^{+}$ (i.e.
for every $F\in\mathcal{F}$ there is $s\in X$ such that $s\subseteq F\cap B$).
\end{enumerate}
\end{definition}

\qquad\ \ \qquad\ \ \qquad\ \qquad\ 

Let $\mathcal{A}$ be a \textsf{MAD} family, $\mathcal{F}\in\mathbb{F}_{\sigma
}\left(  \mathcal{A}\right)  $ and $X\subseteq\left[  \omega\right]
^{<\omega}$ such that $\mathcal{C}\left(  X\right)  \subseteq\left\langle
\mathcal{F\cup I}\left(  \mathcal{A}\right)  ^{\ast}\right\rangle .$ Fix
$\left\langle \mathcal{C}_{n}\right\rangle _{n\in\omega}$ an increasing family
of compact sets such that $\mathcal{F}=%
{\textstyle\bigcup}
\mathcal{C}_{n}.$ The\emph{\ Brendle game}\footnote{This game was based on the
rank arguments used by Brendle in \cite{MobandMAD}. A similar (but different)
approach using games was used by Brendle and Taylor in \cite{BrendleTaylor}.}
$\mathcal{BR}\left(  \mathcal{A},\mathcal{F},X\right)  $ is defined as follows,

\qquad\ \ \ \ \ \ \ \ \ \ \qquad\ \ \ \qquad\qquad\qquad\ \ \ \ \ \ \qquad
\ \qquad\ \qquad\ \ \ \ \ \ \ \ \ 

\begin{center}%
\begin{tabular}
[c]{|l|l|l|l|l|l|l|l}\hline
$\mathsf{I}$ & $Y_{0}$ &  & $Y_{1}$ &  & $Y_{2}$ &  & $\cdots$\\\hline
$\mathsf{II}$ &  & $s_{0}$ &  & $s_{1}$ &  & $s_{2}$ & $\cdots$\\\hline
\end{tabular}

\qquad\ \ \ \ \ \ \ \ \ \qquad
\end{center}

\qquad\ \ \ \ \ \ \ \ \ \ \qquad\ \ \ 

Where $Y_{m}\in\mathcal{I}\left(  \mathcal{A}\right)  ^{\ast},$ $s_{m}%
\in\left[  Y_{m}\right]  ^{<\omega}$ intersects all the elements of
$\mathcal{C}_{m}$ and $\max\left(  s_{m}\right)  $ $<\min\left(
s_{m+1}\right)  .$\footnote{Note that the game $\mathcal{BR}\left(
\mathcal{A},\mathcal{F},X\right)  $ does not only depend on $\mathcal{F},$ but
on its representation as an increasing union of compact sets. A more formal
notation would be $\mathcal{BR(A},\left\langle \mathcal{C}_{n}\right\rangle
_{n\in\omega},X).$} Player $\mathsf{I}$ wins the game if $\bigcup
\limits_{n\in\omega}s_{n}$ contains an element of $X.$ Note that this is an
open game for $\mathsf{I,}$ i.e., if she wins, then she wins already in a
finite number of steps. By the Gale-Stewart theorem (see \cite{Kechris}), the
Brendle game is determined. We will now prove the following:

\begin{proposition}
Let $\mathcal{A}$ be a Laflamme \textsf{MAD} family and $\mathcal{F}%
\in\mathbb{F}_{\sigma}\left(  \mathcal{A}\right)  .$ For every family
$\left\{  X_{n}\mid n\in\omega\right\}  $ such that $\mathcal{C}\left(
X_{n}\right)  \subseteq\left\langle \mathcal{F\cup I}\left(  \mathcal{A}%
\right)  ^{\ast}\right\rangle ,$ there is a countable family $\mathcal{D}%
\in\left[  \mathcal{A}\right]  ^{\omega}$ such that $\bigstar\left(
\mathcal{F},A,X_{n}\right)  $ holds for every $n\in\omega$ and $A\in
\mathcal{D}.$ \label{ObtenerEstrellita}
\end{proposition}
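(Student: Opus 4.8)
\emph{Proof proposal.} The plan is to split $\bigstar$ into its two conditions, settle condition~1 directly from the Laflamme hypothesis, and obtain condition~2 by analyzing the Brendle games $\mathcal{BR}(\mathcal{A},\mathcal{F},X_{n})$ through open determinacy. First replace each $X_{n}$ by $minimal(X_{n})$; this changes neither $\mathcal{C}(X_{n})$ nor the truth of $\bigstar(\mathcal{F},X_{n},\cdot)$. For condition~1, I claim $\mathcal{A}\cap\mathcal{F}^{+}$ is infinite: otherwise, letting $\mathcal{A}_{1}=\mathcal{A}\cap\mathcal{F}^{+}$ be finite and $\omega'=\omega\setminus\bigcup\mathcal{A}_{1}$, every $F\in\mathcal{F}$ meets $\omega'$ infinitely (since $\mathcal{F}\cap\mathcal{I}(\mathcal{A})=\emptyset$), so $\mathcal{F}\upharpoonright\omega'$ is a free $F_{\sigma}$-filter on $\omega'$ containing $\mathcal{I}(\mathcal{A}\setminus\mathcal{A}_{1})^{\ast}\upharpoonright\omega'$ (because $\omega\setminus A\in\mathcal{F}$ for every $A\in\mathcal{A}\setminus\mathcal{A}_{1}$); as deleting finitely many members of a \textsf{MAD} family and restricting to the complement preserves being Laflamme, this contradicts the hypothesis. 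Hence condition~1 of $\bigstar(\mathcal{F},X_{n},A)$ holds for every $A\in\mathcal{A}\cap\mathcal{F}^{+}$, and it remains to find a countably infinite $\mathcal{D}\subseteq\mathcal{A}\cap\mathcal{F}^{+}$ satisfying condition~2 for all $n$.

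Now fix $n$. Since $\mathcal{BR}(\mathcal{A},\mathcal{F},X_{n})$ is open for $\mathsf{I}$ it is determined, and I claim $\mathsf{II}$ has no winning strategy. Against any strategy $\tau$ for $\mathsf{II}$, let $\mathsf{I}$ fix an enumeration $\mathcal{A}=\{A_{j}\mid j\in\omega\}$ and always play $Y_{m}=\omega\setminus(A_{0}\cup\dots\cup A_{m-1})\in\mathcal{I}(\mathcal{A})^{\ast}$. If $\mathsf{II}$ won, then $U:=\bigcup_{m}s_{m}$ satisfies $\omega\setminus U\in\mathcal{C}(X_{n})\subseteq\langle\mathcal{F}\cup\mathcal{I}(\mathcal{A})^{\ast}\rangle$, so $U\cap F\subseteq A_{i_{1}}\cup\dots\cup A_{i_{l}}$ modulo a finite set for some $F\in\mathcal{F}$ and $i_{1},\dots,i_{l}$. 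Taking $m>\max\{i_{1},\dots,i_{l}\}$ large enough that $F\in\mathcal{C}_{m}$ and that every element of $s_{m}$ exceeds the finite exceptional set, $s_{m}$ meets $F$ in a point of $U\cap F$ lying in $A_{i_{1}}\cup\dots\cup A_{i_{l}}\subseteq A_{0}\cup\dots\cup A_{m-1}$; but $s_{m}\subseteq Y_{m}$ is disjoint from $A_{0}\cup\dots\cup A_{m-1}$, a contradiction. So $\mathsf{I}$ has a winning strategy $\sigma_{n}$.

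Because only countably many moves are legal for $\mathsf{II}$ at each turn, the tree of plays following $\sigma_{n}$ is countably branching, hence only countably many members of $\mathcal{A}$ occur in the support of the complement of an $\mathsf{I}$-move along it; call that countable set $\mathcal{K}_{n}$. If $A\in(\mathcal{A}\cap\mathcal{F}^{+})\setminus\mathcal{K}_{n}$ then every $\mathsf{I}$-move $Y$ of $\sigma_{n}$ has $Y\cap A$ cofinite in $A$, so given $B\in[A]^{\omega}\cap\mathcal{F}^{+}$ and $F\in\mathcal{F}$ we may run $\sigma_{n}$ with $\mathsf{II}$ always choosing finite subsets of $B\cap F$ (legal, since $Y\cap B\cap F\in\mathcal{F}^{+}$ still meets every member of each $\mathcal{C}_{m}$, by compactness), and the victory of $\sigma_{n}$ yields $s\in X_{n}$ with $s\subseteq\bigcup_{m}s_{m}\subseteq B\cap F$; that is, condition~2 of $\bigstar(\mathcal{F},X_{n},A)$ holds. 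It therefore suffices to arrange that $\bigcup_{n}\mathcal{K}_{n}$ is disjoint from an infinite subset of $\mathcal{A}\cap\mathcal{F}^{+}$, and for this I want each $\sigma_{n}$ chosen so that all of $\mathsf{I}$'s moves have complement in the ideal generated by the $\mathcal{F}$-\emph{null} members of $\mathcal{A}$: then $\mathcal{K}_{n}\subseteq\{A\in\mathcal{A}\mid\omega\setminus A\in\mathcal{F}\}$ is disjoint from $\mathcal{A}\cap\mathcal{F}^{+}$, and any countably infinite $\mathcal{D}\subseteq\mathcal{A}\cap\mathcal{F}^{+}$ works for every $n$ simultaneously.

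The main obstacle is precisely this last point: that $\mathsf{I}$ still has a winning strategy in the game restricted as above. The argument of the second paragraph breaks under the restriction, since an escape set $A_{i_{1}}\cup\dots\cup A_{i_{l}}$ for $\mathsf{II}$ may now involve $\mathcal{F}$-positive members of $\mathcal{A}$ that $\mathsf{I}$ is no longer permitted to spend; so one must redo the determinacy analysis for the restricted game and rule out a winning strategy $\tau$ for $\mathsf{II}$ by extracting from $\tau$ an $F_{\sigma}$ ideal properly extending $\mathcal{I}(\mathcal{A})$. Here one uses that, along every run of $\tau$, the $\mathcal{F}$-part of $\mathsf{II}$'s union is absorbed by finitely many members of $\mathcal{A}$; combined with the $F_{\sigma}$-ness of $\mathcal{F}$ this produces the forbidden $F_{\sigma}$ extension and contradicts the Laflamme hypothesis, which is exactly where that hypothesis is consumed. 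The remaining bookkeeping — compactness of the sets $\mathcal{C}(X_{n})$ and $\mathcal{C}_{m}$, legality of $\mathsf{II}$'s moves, and the uniformity over $n$ — is routine, along the lines of the arguments already given and those in the earlier sections.
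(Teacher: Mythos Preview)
Your first three paragraphs are correct, and in fact your second paragraph improves on the paper: you show directly in $V$ that player $\mathsf{II}$ cannot win $\mathcal{BR}(\mathcal{A},\mathcal{F},X_{n})$ by having $\mathsf{I}$ play the fixed sequence $Y_{m}=\omega\setminus(A_{0}\cup\dots\cup A_{m-1})$, whereas the paper passes to a Cohen extension $V[C_{\omega_{1}}]$, builds a tree of partial plays following a putative $\mathsf{II}$-strategy, and uses a generic branch to reach the contradiction (then transfers $\bigstar$ back to $V$ by absoluteness). Your argument that $\bigstar(\mathcal{F},X_{n},A)$ holds whenever $A\in(\mathcal{A}\cap\mathcal{F}^{+})\setminus\mathcal{K}_{n}$ is also fine; the legality of $\mathsf{II}$'s responses inside $B\cap F\cap Y_{m}$ follows since this set is in $\mathcal{F}^{+}$ and $\mathcal{C}_{m}$ is compact.

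The genuine gap is the last step, and your proposed route through a \emph{restricted} Brendle game is both unfinished and unnecessary. You yourself note that the defeat-of-$\mathsf{II}$ argument from the second paragraph no longer applies once $\mathsf{I}$ is forbidden to spend $\mathcal{F}$-positive members of $\mathcal{A}$, and the suggestion to ``extract from $\tau$ an $F_{\sigma}$ ideal properly extending $\mathcal{I}(\mathcal{A})$'' is not carried out; it is not clear this can be made to work. The correct fix is much simpler and is essentially what the paper does: do not restrict the game at all. The set $\mathcal{K}=\bigcup_{n}\mathcal{K}_{n}$ is countable, and
\[
\mathcal{G}=\left\langle \mathcal{F}\cup\{\omega\setminus B:B\in\mathcal{K}\}\right\rangle
\]
is a proper $F_{\sigma}$-filter (properness because $\mathcal{F}\cap\mathcal{I}(\mathcal{A})=\emptyset$). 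Since $\mathcal{A}$ is Laflamme, $\mathcal{I}(\mathcal{A})^{\ast}\not\subseteq\mathcal{G}$, hence some $\omega\setminus A_{0}$ with $A_{0}\in\mathcal{A}$ lies outside $\mathcal{G}$. Unpacking this gives $A_{0}\in\mathcal{F}^{+}$ (take no $B$'s) and $A_{0}\notin\mathcal{K}$ (take $B=A_{0}$), so by your third paragraph $\bigstar(\mathcal{F},X_{n},A_{0})$ holds for every $n$. Iterating with $\mathcal{K}\cup\{A_{0},\dots,A_{m-1}\}$ in place of $\mathcal{K}$ yields the required infinite $\mathcal{D}$. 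This replaces your last two paragraphs entirely and completes the proof in $V$, with no need for the Cohen reals or the absoluteness transfer that the paper uses.
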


\begin{proof}
By $V\left[  C_{\alpha}\right]  $ we denote an extension of $V$ by adding
$\alpha$-Cohen reals (the reader should not be worried by the use of Cohen
reals in the proof, see the paragraph after this result for more information).
We first claim the following:

\begin{claim}
If $X\subseteq\left[  \omega\right]  ^{<\omega}$ is such that $\mathcal{C}%
\left(  X\right)  \subseteq\left\langle \mathcal{F\cup I}\left(
\mathcal{A}\right)  ^{\ast}\right\rangle ,$ then in $V\left[  C_{\omega_{1}%
}\right]  $ the player $\mathsf{I}$ has a winning strategy for $\mathcal{BR}%
\left(  \mathcal{A},\mathcal{F},X\right)  .$
\end{claim}

\qquad\ \ 

We will prove the claim by contradiction, since $\mathcal{BR}\left(
\mathcal{A},\mathcal{F},X\right)  $ is determined, we assume that
$\mathsf{II}$ has a winning strategy, call it $\pi.$ We will choose a tree
$T\subseteq\left(  \left[  \omega\right]  ^{<\omega}\right)  ^{<\omega}$ and a
family $\left\{  B_{t}\mid t\in T\right\}  \subseteq\mathcal{I}\left(
\mathcal{A}\right)  ^{\ast}$ with the following properties:

\begin{enumerate}
\item $\emptyset\in T$ and $B_{\emptyset}=\omega$ (this is just a technical step).

\item If $t=\left\langle s_{0},s_{1},...,s_{n}\right\rangle \in T$ then
$\left\langle B_{\left\langle s_{0}\right\rangle },s_{0},B_{\left\langle
s_{0},s_{1}\right\rangle },s_{1},...,B_{\left\langle s_{0},s_{1}%
,...,s_{n}\right\rangle },s_{n}\right\rangle $ is a legal partial play of
$\mathcal{BR}\left(  \mathcal{A},\mathcal{F},X\right)  $ in which Player
$\mathsf{II}$ is using her strategy $\pi.$
\end{enumerate}

\qquad\ \ 

An important remark is in order here: Note that for example, for every
$s\in\left[  \omega\right]  ^{<\omega}$ there may be infinitely many
$B\in\mathcal{I}\left(  \mathcal{A}\right)  ^{\ast}$ such that $\left\langle
B,s\right\rangle $ is a legal partial play of $\mathcal{BR}\left(
\mathcal{A},\mathcal{F},X\right)  $ in which Player $\mathsf{II}$ is using her
strategy $\pi.$ For $B_{\left\langle s\right\rangle }$ we just choose and fix
one of them. The tree $T$ and $\left\{  B_{t}\mid t\in T\right\}  $ are
recursively constructed as follows:

\begin{enumerate}
\item $\emptyset\in T$ and $B_{\emptyset}=\omega.$

\item $T_{1}$ is the set of all $\left\langle s\right\rangle $ such that
$s\in\left[  \omega\right]  ^{<\omega}$ and there is $B\in\mathcal{I}\left(
\mathcal{A}\right)  ^{\ast}$ for which $\left\langle B,s\right\rangle $ is a
legal partial play of $\mathcal{BR}\left(  \mathcal{A},\mathcal{F},X\right)  $
in which Player $\mathsf{II}$ is using her strategy $\pi.$

\item For every $s$ such that $\left\langle s\right\rangle \in T_{1},$ we
choose $B_{s}\in\mathcal{I}\left(  \mathcal{A}\right)  ^{\ast}$ for which
$\left\langle B_{s},s\right\rangle $ is a legal partial play.

\item Given a node $t=\left\langle s_{0},s_{1},...,s_{n}\right\rangle \in T$
(and we know that the sequence $\left\langle B_{\left\langle s_{0}%
\right\rangle },s_{0},B_{\left\langle s_{0},s_{1}\right\rangle }%
,s_{1},...,B_{\left\langle s_{0},s_{1},...,s_{n}\right\rangle },s_{n}%
\right\rangle $ is a legal partial play) let $suc_{T}\left(  t\right)  $ be
the set of all $z\in\left[  \omega\right]  ^{<\omega}$ for which there is
$B\in\mathcal{I}\left(  \mathcal{A}\right)  ^{\ast}$ such that $\left\langle
B_{\left\langle s_{0}\right\rangle },s_{0},B_{\left\langle s_{0}%
,s_{1}\right\rangle },s_{1},...,B_{\left\langle s_{0},s_{1},...,s_{n}%
\right\rangle },s_{n},B,z\right\rangle $ is a legal partial play (in which
Player $\mathsf{II}$ is using her strategy $\pi$). We fix $B_{t^{\frown
}\left\langle z\right\rangle }\in\mathcal{I}\left(  \mathcal{A}\right)
^{\ast}$ with this property.
\end{enumerate}

\qquad\ \ \ \qquad\ \qquad\ \qquad\ \qquad\ \qquad\ 

Note that if $t=\left\langle s_{0},s_{1},...,s_{n}\right\rangle \in T,$ then $%
{\textstyle\bigcup\limits_{i\leq n}}
s_{i}$ does not contain an element of $X,$ this is because $\pi$ is a winning
strategy for player $\mathsf{II.}$ Clearly $T$ is a countable tree with no
isolated branches, so it is equivalent to Cohen forcing when viewed as a
forcing notion. Since $T$ is countable, it appears in an intermediate
extension of $V\left[  C_{\omega_{1}}\right]  .$ Let $\beta<\omega_{1}$ such
that $T\in V\left[  C_{\beta}\right]  .$


Given $Y\in\mathcal{I}\left(  \mathcal{A}\right)  ^{\ast}$ define the set
$D_{Y}$ of all $t=\left\langle s_{0},s_{1},...,s_{n}\right\rangle \in T$ such
that there is $i\leq n$ for which $s_{i}\subseteq Y.$ It is easy to see that
each $D_{Y}$ is an open dense subset of $T$. Let $G\in V\left[  C_{\omega_{1}%
}\right]  $ be a $\left(  T,V\left[  C_{\beta}\right]  \right)  $-generic
branch through $T$. It is easy to see that $G$ induces a legal play of the
game in which $\mathsf{II}$ followed her strategy. Let $D=\bigcup G$, and
since $\pi$ is a winning strategy for $\mathsf{II,}$ we conclude that $D$ does
not contain an element of $X.$ By genericity $D\in\left\langle \mathcal{I}%
\left(  \mathcal{A}\right)  ^{\ast}\cup\mathcal{F}\right\rangle ^{+}$ however,
$\omega\setminus D\in\mathcal{C}\left(  X\right)  \subseteq\left\langle
\mathcal{I}\left(  \mathcal{A}\right)  ^{\ast}\cup\mathcal{F}\right\rangle $
which is obviously a contradiction. This finishes the proof of the claim.


We work in $V\left[  C_{\omega_{1}}\right]  ,$ where player $\mathsf{I}$ has
winning strategies for all of the games $\mathcal{BR}\left(  \mathcal{A}%
,\mathcal{F},X_{n}\right)  $ with $n\in\omega.$ Let $\pi_{n}$ be the winning
strategy for the game $\mathcal{BR}\left(  \mathcal{A},\mathcal{F}%
,X_{n}\right)  .$ Let $\mathcal{W}$ be set of elements of $\mathcal{I}\left(
\mathcal{A}\right)  ^{\ast}$ that may be played by $\mathsf{I}$ following her
winning strategy$\,\ $in any of these games$.$ It is not hard to see that
$\mathcal{W}$ is countable. Note that if $W\in\mathcal{W}$ then $W$ almost
contains every element of $\mathcal{A}$ except for finitely many (this is
because $W\in\mathcal{I}\left(  \mathcal{A}\right)  ^{\ast}$). Let
$\mathcal{A}^{\prime}\subseteq\mathcal{A}$ be the set of all $A\in\mathcal{A}$
for which there is $W\in\mathcal{W}$ such that $A\nsubseteq^{\ast}W.$ Note
that $\mathcal{A}^{\prime}$ is countable. Since $\mathcal{A}$ is Laflamme in
$V,$ by a previous lemma, $\mathcal{I}\left(  \mathcal{A}\right)  ^{\ast}$ it
is not contained in $\left\langle \mathcal{F\cup}\left\{  \omega\setminus
B\mid B\in\mathcal{A}^{\prime}\right\}  \right\rangle ,$ so there is $A_{0}%
\in\mathcal{A}$ such that \ $\omega\setminus A_{0}\notin\left\langle
\mathcal{F\cup}\left\{  \omega\setminus B\mid B\in\mathcal{A}^{\prime
}\right\}  \right\rangle .$ This implies that $A_{0}\in\mathcal{F}^{+}$ and
$A_{0}$ is almost contained in every member of $\mathcal{W}.$ We claim that
$\bigstar\left(  \mathcal{F},A_{0},X_{n}\right)  $ holds for each $n\in
\omega.$ Let $B\in\wp\left(  A_{0}\right)  \cap\mathcal{F}^{+}$ we will now
show that $Catch\left(  X_{n},B\right)  $ is positive for each $n\in\omega.$
Let $F\in\mathcal{F}$ and consider the following play in $\mathcal{BR}\left(
\mathcal{A},\mathcal{F},X_{n}\right)  $,

\qquad\ \ \ \ \ \ \ \ \ \ \ \qquad\ \qquad\ \qquad\ \qquad\ \qquad
\ \qquad\ \ \ \ \ \bigskip%

\begin{tabular}
[c]{|l|l|l|l|l|l|l|l}\hline
$\mathsf{I}$ & $W_{0}$ &  & $W_{1}$ &  & $W_{2}$ &  & $\cdots$\\\hline
$\mathsf{II}$ &  & $s_{0}$ &  & $s_{1}$ &  & $s_{2}$ & $\cdots$\\\hline
\end{tabular}

\bigskip\qquad\ \qquad\ \qquad\ \qquad\ \qquad\ \qquad\ \qquad\ 

\noindent where the $W_{i}$ are played by $\mathsf{I}$ according to $\pi_{n},$ $s_{i}%
\in\left[  B\cap F\right]  ^{<\omega}$ and intersects every element of
$\mathcal{C}_{i}.$ This is possible since $B\cap F$ is positive and is almost
contained in every $W_{n}.$ Since $\pi_{n}$ is a winning strategy, this means
that $\mathsf{I}$ wins the game,which entails that $\bigcup s_{n}\subseteq
B\cap F$ contains an element of $X_{n}.$ We can then obtain each $A_{n+1}$ by
repeating the same argument and using that $\mathcal{I}\left(  \mathcal{A}%
\right)  ^{\ast}$ it is not contained in $\left\langle \mathcal{F\cup}\left\{
\omega\setminus B\mid B\in\mathcal{A}^{\prime}\right\}  \cup\left\{
\omega\setminus A_{0},..,\omega\setminus A_{n}\right\}  \right\rangle .$ Let
$\mathcal{D}_{1}=\left\{  A_{n}\mid n\in\omega\right\}  .$


We know that $V\left[  C_{\omega_{1}}\right]  \models\bigstar\left(
\mathcal{F},A_{n},X_{m}\right)  $ for every $n,m\in\omega.$ However, it is
easy to see that the statement $\bigstar\left(  \mathcal{F},A_{n}%
,X_{m}\right)  $ is absolute between models of \textsf{ZFC} (in fact, we only
need that it is downwards absolute, which is easy). So $V\models
\bigstar\left(  \mathcal{F},A_{n},X_{m}\right)  $ for every $n,m\in\omega.$
Since $\mathbb{C}_{\omega_{1}}$ has the countable chain condition, there is
$\mathcal{D}\in\left[  \mathcal{A}\right]  ^{\omega}$ such that $\mathbb{C}%
_{\omega_{1}}\Vdash``\mathcal{D}_{1}\subseteq\mathcal{D}\textquotedblright.$
By the previous remark, we may assume that that $\bigstar\left(
\mathcal{F},A,X_{n}\right)  $ holds for every $n\in\omega$ and $A\in
\mathcal{D}.$
\end{proof}

\qquad\ \ \ 

The reader might feel that we cheated in the previous proof by adding the
Cohen reals, and sincerely, we have, but it was a \textquotedblleft legal
cheating\textquotedblright. We only used the Cohen reals to find ground model
objects, and after finding them, we came back to the ground model as if
nothing happened.

\qquad\ \ 

Given $A\in\left[  \omega\right]  ^{\omega}$ and $l\in\omega$ define
$Part_{l}\left(  A\right)  $ as the set of all sequences $\left\langle
B_{1},...,B_{l}\right\rangle $ such that $A=\bigcup\limits_{i\leq l}B_{i}$ and
$B_{i}\cap B_{j}=\emptyset$ whenever $i\neq j.$ Note that $Part_{l}\left(
A\right)  $ is a compact space with the natural topology. Also it is clear
that if $A\in\mathcal{F}^{+}$ and $\left\langle B_{1},...,B_{l}\right\rangle
\in Part_{l}\left(  A\right)  $ then there is $j\leq l$ such that $B_{j}%
\in\mathcal{F}^{+}.$

\begin{lemma}
Let $\mathcal{F}$ be a filter, $\mathcal{C}\subseteq\mathcal{F}$ a compact set
and $X\in\left(  \mathcal{F}^{<\omega}\right)  ^{+}.$ Let $A$ such that
$\bigstar\left(  A,\mathcal{F},X\right)  $ holds and let $l\in\omega.$ There
is $n\in\omega$ with the property that for all $\left\langle B_{1}%
,...,B_{l}\right\rangle \in Part_{l}\left(  A\right)  $ there is $i\leq l$
such that if $F\in\mathcal{C}$ then $X\cap\wp\left(  B_{i}\cap n\right)  $
contains a subset of $F.$ \label{Partirestrellita}
\end{lemma}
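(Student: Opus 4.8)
The plan is to argue by contradiction, combining the compactness of $Part_l(A)$ with that of $\mathcal{C}$. Suppose the conclusion fails. Unpacking the negation, for every $n\in\omega$ there is a partition $\langle B_1^n,\dots,B_l^n\rangle\in Part_l(A)$ such that for each $i\le l$ there is a set $F_i^n\in\mathcal{C}$ with the property that $X\cap\wp(B_i^n\cap n)$ contains no subset of $F_i^n$. Since $Part_l(A)\times\mathcal{C}^l$ is a compact metrizable space (it is a closed subspace of $(2^\omega)^{l}\times(2^\omega)^l$), it is sequentially compact, so I would pass to a subsequence along which $\langle B_1^n,\dots,B_l^n\rangle$ converges to some $\langle B_1,\dots,B_l\rangle\in Part_l(A)$ and, for each $i\le l$, $F_i^n$ converges to some $F_i\in\mathcal{C}$; after relabelling, assume the whole sequence $\langle n_k\rangle_{k\in\omega}$ already has this property.

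The next step is to find a good piece of the limit partition and use $\bigstar$. Since $A=B_1\cup\dots\cup B_l$ and $A\in\mathcal{F}^+$, and the complement of $\mathcal{F}^+$ in $\wp(\omega)$ is an ideal, not all of the $B_i$ can lie in that ideal, so some $B_i\in\mathcal{F}^+$; as our filters extend the Fr\'echet filter, $\mathcal{F}^+$-sets are infinite, so $B_i\in[A]^\omega\cap\mathcal{F}^+$. Applying condition 2 of $\bigstar(\mathcal{F},X,A)$ to this $B_i$ with $F:=F_i\in\mathcal{C}\subseteq\mathcal{F}$, I get some $s\in X$ with $s\subseteq F_i\cap B_i$. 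Put $m=\max(s)+1$.

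Finally I would transport $s$ down to a large finite stage. Because $B_i^{n_k}\to B_i$ and $F_i^{n_k}\to F_i$ pointwise in $2^\omega$, for all sufficiently large $k$ one has $n_k\ge m$, $B_i^{n_k}\cap m=B_i\cap m$ and $F_i^{n_k}\cap m=F_i\cap m$. Fix such a $k$. Then $s\subseteq B_i\cap m=B_i^{n_k}\cap m\subseteq B_i^{n_k}\cap n_k$, so $s\in X\cap\wp(B_i^{n_k}\cap n_k)$, while $s\subseteq F_i\cap m=F_i^{n_k}\cap m\subseteq F_i^{n_k}$. Thus $X\cap\wp(B_i^{n_k}\cap n_k)$ does contain a subset of $F_i^{n_k}$, contradicting the choice of the partition $\langle B_1^{n_k},\dots,B_l^{n_k}\rangle$ with witnesses $F_1^{n_k},\dots,F_l^{n_k}$. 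This contradiction establishes the lemma. The only point that needs a little care is this last step: the finite set $s$ certifying positivity at the limit must be made to certify the relevant property simultaneously inside $B_i^{n_k}\cap n_k$ and inside $F_i^{n_k}$ at one finite stage, which is exactly why one passes to convergent subsequences of \emph{both} the partitions and the members of $\mathcal{C}$; everything else is routine.
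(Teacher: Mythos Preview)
Your argument is correct and is essentially the contrapositive of the paper's proof: the paper directly shows that the sets $U_n=\{\langle B_1,\dots,B_l\rangle\in Part_l(A):\exists i\le l\ \forall F\in\mathcal{C}\ \exists s\in X\cap\wp(B_i\cap n)\ s\subseteq F\}$ form an open cover of the compact space $Part_l(A)$ and takes a finite subcover, whereas you reach the same conclusion by sequential compactness of $Part_l(A)\times\mathcal{C}^l$ and a limit-point contradiction. The only cosmetic difference is that the paper absorbs the compactness of $\mathcal{C}$ into the earlier Lemma~\ref{Lema compactos}(2) when checking that the $U_n$ cover, while you handle it by also passing to convergent subsequences of the witnesses $F_i^n$.
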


\begin{proof}
Let $U_{n}$ be the set of all $\left\langle B_{1},...,B_{l}\right\rangle \in
Part_{l}\left(  A\right)  $ such that there is $i\leq l$ with the property
that if $F\in\mathcal{C}$ then $X\cap\wp\left(  B_{i}\cap n\right)  $ contains
a subset of $F.$ Note that $\left\{  U_{n}\mid n\in\omega\right\}  $ is an
open cover (recall that $\bigstar\left(  A,\mathcal{F},X\right)  $ holds and
if we split $A$ into finitely many pieces, then one of the pieces must be in
$\mathcal{F}^{+}$) and the result follows since $Part_{l}\left(  A\right)  $
is compact.
\end{proof}

\qquad\ \ \ \ \ \qquad\ \ \ 

We will now prove the following:

\begin{proposition}
Let $\mathcal{F}$ be a filter, $\mathcal{C}\subseteq\mathcal{F}$ a compact
set, $X\in\left(  \mathcal{F}^{<\omega}\right)  ^{+},$ $A\in\left[
\omega\right]  ^{\omega}$ such that $\bigstar\left(  A,\mathcal{F},X\right)  $
holds and $l\in\omega.$ There is $Y\in\left[  X\right]  ^{<\omega}$ such that
if $C_{1},...,C_{l}\in\mathcal{C}\left(  Y\right)  $ and $F\in\mathcal{C}$
then there is $s\in Y\cap\left[  A\right]  ^{<\omega}$ such that $s\subseteq
C_{1}\cap...\cap C_{l}\cap F.$ \label{PartircompactosMAD}
\end{proposition}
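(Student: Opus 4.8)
The plan is to derive Proposition~\ref{PartircompactosMAD} from Lemma~\ref{Partirestrellita} by trading the $l$ sets $C_{1},\dots,C_{l}$ for the single partition of $A$ into the atoms of the Boolean algebra they generate, and then arranging things so that the piece handed back by Lemma~\ref{Partirestrellita} is \emph{forced} to be exactly the atom $A\cap C_{1}\cap\dots\cap C_{l}$.

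First I would apply Lemma~\ref{Partirestrellita} with the same $\mathcal{F},\mathcal{C},X,A$ but with $2^{l}$ in place of $l$ (the definition of $Part_{m}(A)$ permits empty pieces, so after relabelling any partition of $A$ indexed by a $2^{l}$-element set is admissible). This produces an $n\in\omega$ such that for every partition of $A$ into $2^{l}$ (possibly empty) pieces, one of the pieces $B$ has the property that $X\cap\wp(B\cap n)$ contains a subset of every $F\in\mathcal{C}$. Then I would simply put
\[
Y=\{\,s\in X\mid s\subseteq A\cap n\,\},
\]
a finite subset of $X$ all of whose members are subsets of $A$, so that $Y\cap[\omega]^{<\omega}=Y$ and $Y\cap[A]^{<\omega}=Y$. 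It is essential here that $Y$ be \emph{all} of $X\cap\wp(A\cap n)$, not some smaller set.

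To verify that this $Y$ works the check runs as follows. Fix $C_{1},\dots,C_{l}\in\mathcal{C}(Y)$ and $F\in\mathcal{C}$ (if $\mathcal{C}=\emptyset$ there is nothing to prove, so assume not). For each $v=(v(1),\dots,v(l))\in\{0,1\}^{l}$ set
\[
A_{v}=A\cap\bigcap_{v(j)=1}C_{j}\cap\bigcap_{v(j)=0}(\omega\setminus C_{j}),
\]
so that $\langle A_{v}\mid v\in\{0,1\}^{l}\rangle$ is a partition of $A$ into $2^{l}$ pieces and $A_{\bar{1}}=A\cap C_{1}\cap\dots\cap C_{l}$, where $\bar{1}$ is the all-ones string. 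By the choice of $n$ there is some $v_{0}$ with $X\cap\wp(A_{v_{0}}\cap n)$ containing a subset of $F$; fix $s\in X$ with $s\subseteq A_{v_{0}}\cap n\cap F$. Since $s\subseteq A\cap n$ we get $s\in Y$, and since $X\in(\mathcal{F}^{<\omega})^{+}$ we have $s\neq\emptyset$. Now if some coordinate had $v_{0}(j)=0$, then $s\subseteq A_{v_{0}}\subseteq\omega\setminus C_{j}$, so $s\cap C_{j}=\emptyset$, which contradicts $C_{j}\in\mathcal{C}(Y)$ together with $\emptyset\neq s\in Y$. Hence $v_{0}=\bar{1}$, so $s\subseteq A_{\bar{1}}\cap F=A\cap C_{1}\cap\dots\cap C_{l}\cap F$; as $s\in Y$ and $s\subseteq A$, this $s$ is exactly the witness demanded by the conclusion.

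The argument is short, and the one genuinely load-bearing move is the last one: by letting $Y$ contain every element of $X$ below $n$ that Lemma~\ref{Partirestrellita} could conceivably produce, the hypothesis $C_{j}\in\mathcal{C}(Y)$ rules out every atom of the partition except $A\cap\bigcap_{j}C_{j}$, so Lemma~\ref{Partirestrellita} has no choice but to return the atom we want. I do not expect any obstacle beyond keeping the bookkeeping with $\{0,1\}^{l}$ and the conventions on $\mathcal{C}(\cdot)$ and $(\mathcal{F}^{<\omega})^{+}$ straight.
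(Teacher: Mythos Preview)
Your proof is correct and follows essentially the same route as the paper: apply Lemma~\ref{Partirestrellita} with $2^{l}$ in place of $l$, take $Y$ to be the elements of $X$ lying below the resulting bound $n$, form the Boolean-atom partition of $A$ determined by $C_{1},\dots,C_{l}$, and use the hypothesis $C_{j}\in\mathcal{C}(Y)$ to force the witnessing atom to be the all-ones piece. The only cosmetic difference is that you take $Y=X\cap\wp(A\cap n)$ while the paper takes $Y=X\cap\wp(n)$ (the paper's ``$\wp(l)$'' is evidently a typo for ``$\wp(n)$''); either choice works, since the witness $s$ produced by the lemma automatically lies in $A$.
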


\begin{proof}
Let $n$ such that for every $\left\langle B_{1},...,B_{2^{l}}\right\rangle \in
Part_{2^{l}}\left(  A\right)  $ and for every $F\in\mathcal{C}$ there is
$j\leq2^{l}$ for which $X\cap\wp\left(  B_{j}\cap n\right)  $ contains a
subset of $F.$ Let $Y=X\cap\wp\left(  l\right)  ,$ we will see that $Y$ has
the desired properties. Let $C_{1},...,C_{l}\in\mathcal{C}\left(  Y\right)  $
and $F\in\mathcal{C}.$ For every $s:l\longrightarrow2$ define $B_{s}$ as the
set of all $a\in A$ such that $a\in C_{i}$ if and only if $s\left(  i\right)
=1.$ Clearly $\left\langle B_{s}\right\rangle _{s\in2^{l}}\in Part_{2^{l}%
}\left(  A\right)  $ and we may conclude that there is $s$ such that
$Y\cap\wp\left(  B_{s}\cap n\right)  $ contains an element of $F.$ Since
$C_{1},...,C_{l}\in\mathcal{C}\left(  Y\right)  $ we conclude that $s$ must be
the constant function $1$, and this entails the desired conclusion.
\end{proof}

\qquad\ \ \ 

We can then finally conclude the following:

\begin{proposition}
If $\mathcal{A}$ is a Laflamme \textsf{MAD} family, then $\mathbb{F}_{\sigma
}\left(  \mathcal{A}\right)  $ forces that $\mathcal{\dot{U}}_{gen}\left(
\mathcal{A}\right)  $ is $\mathcal{B}$-Canjar for every $\mathfrak{b}$-family
$\mathcal{B}$ in the ground model. \label{MSDgenesCanjar}
\end{proposition}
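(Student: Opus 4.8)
The plan is to follow the proof of Proposition \ref{UgenesCanajr} almost verbatim, feeding in the combinatorics of Proposition \ref{ObtenerEstrellita} and Proposition \ref{PartircompactosMAD} in place of part 3 of Lemma \ref{Lema compactos}. Since $\mathbb{F}_{\sigma}(\mathcal{A})$ is $\sigma$-closed it adds no reals (so every ground model $\mathfrak{b}$-family remains one in the extension), and a routine density argument reduces the proposition to: \emph{given a $\mathfrak{b}$-family $\mathcal{B}\in V$, a condition $\mathcal{F}\in\mathbb{F}_{\sigma}(\mathcal{A})$ and a ground model sequence $\overline{X}=\langle X_{n}\rangle_{n\in\omega}$ of subsets of $[\omega]^{<\omega}\setminus\{\emptyset\}$ with $\mathcal{C}(X_{n})\subseteq\langle\mathcal{F}\cup\mathcal{I}(\mathcal{A})^{\ast}\rangle$ for every $n$ (equivalently $\mathcal{F}\Vdash\overline{X}\subseteq(\dot{\mathcal{U}}_{gen}(\mathcal{A})^{<\omega})^{+}$), find $\mathcal{G}\leq\mathcal{F}$ in $\mathbb{F}_{\sigma}(\mathcal{A})$ and $f\in\mathcal{B}$ with $\mathcal{C}(\overline{X}_{f})\subseteq\langle\mathcal{G}\cup\mathcal{I}(\mathcal{A})^{\ast}\rangle$.} Replacing $\mathcal{F}$ by $\langle\mathcal{F}\cup([\omega]^{<\omega})^{\ast}\rangle$ if needed (still a condition, being the $\supseteq^{\ast}$-upward closure of $\mathcal{F}$), we may assume $\mathcal{F}$ contains every cofinite set.

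First I would apply Proposition \ref{ObtenerEstrellita} to fix a countable $\mathcal{D}=\{A_{k}\mid k\in\omega\}\subseteq\mathcal{A}$ with $\bigstar(\mathcal{F},A_{k},X_{n})$ for all $k,n\in\omega$ (which in particular gives $X_{n}\in(\mathcal{F}^{<\omega})^{+}$), and an increasing sequence $\langle\mathcal{C}_{n}\rangle_{n\in\omega}$ of compact sets with $\mathcal{F}=\bigcup_{n}\mathcal{C}_{n}$. For each $n$ and each $k,m\leq n$, Proposition \ref{PartircompactosMAD} (applied to $\mathcal{F}$, the compact set $\mathcal{C}_{n}$, $X=X_{n}$, $A=A_{k}$, $l=m$) produces a finite $Y_{n,k,m}\subseteq X_{n}$ such that whenever $C_{1},\dots,C_{m}\in\mathcal{C}(Y_{n,k,m})$ and $F\in\mathcal{C}_{n}$ there is $s\in Y_{n,k,m}\cap[A_{k}]^{<\omega}$ with $s\subseteq C_{1}\cap\dots\cap C_{m}\cap F$. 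I would then choose $g\in\omega^{\omega}$ with $Y_{n,k,m}\subseteq\wp(g(n))$ for all $k,m\leq n$, use unboundedness of $\mathcal{B}$ to pick $f\in\mathcal{B}$ with $f\nleq^{\ast}g$, and set $\mathcal{G}:=\langle\mathcal{F}\cup\mathcal{C}(\overline{X}_{f})\rangle$.

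The key estimate to establish (the analogue of the filter-generation step in Proposition \ref{UgenesCanajr}) is: for every $F\in\mathcal{F}$, all $C_{1},\dots,C_{m}\in\mathcal{C}(\overline{X}_{f})$ and every $I\in\mathcal{I}(\mathcal{A})$, the set $(F\cap C_{1}\cap\dots\cap C_{m})\setminus I$ is nonempty. To prove it, write $I\subseteq^{\ast}A_{i_{1}}\cup\dots\cup A_{i_{l}}$ with $A_{i_{j}}\in\mathcal{A}$ and, since $\mathcal{D}$ is infinite, pick $A_{k}\in\mathcal{D}\setminus\{A_{i_{1}},\dots,A_{i_{l}}\}$, so $A_{k}\cap I$ is finite; fix $N$ with $A_{k}\cap I\subseteq N$. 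Then $F\setminus N\in\mathcal{F}$, so $F\setminus N\in\mathcal{C}_{n'}$ for some $n'$, and we may pick $r>\max(n',k,m)$ with $g(r)<f(r)$. Now $\overline{X}_{f}\supseteq X_{r}\cap\wp(f(r))\supseteq X_{r}\cap\wp(g(r))\supseteq Y_{r,k,m}$, so $C_{i}\in\mathcal{C}(\overline{X}_{f})\subseteq\mathcal{C}(Y_{r,k,m})$, while $F\setminus N\in\mathcal{C}_{n'}\subseteq\mathcal{C}_{r}$; by the defining property of $Y_{r,k,m}$ there is a nonempty $s\in Y_{r,k,m}\cap[A_{k}]^{<\omega}$ with $s\subseteq C_{1}\cap\dots\cap C_{m}\cap(F\setminus N)$, and since $s\subseteq A_{k}$ and $\min s\geq N$ we get $s\cap I=\emptyset$, so $\emptyset\neq s\subseteq(F\cap C_{1}\cap\dots\cap C_{m})\setminus I$.

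Granting this, the conclusion is routine. Given a finite intersection $P=F\cap C_{1}\cap\dots\cap C_{m}$ of members of $\mathcal{F}\cup\mathcal{C}(\overline{X}_{f})$ and any $I_{0}\in\mathcal{I}(\mathcal{A})$, applying the estimate with $I=I_{0}\cup N$ for every finite $N$ shows $P\setminus I_{0}$ is infinite; taking $I_{0}=\emptyset$ shows $P$ is infinite, so $\mathcal{G}$ is a proper filter, and each member of $\mathcal{G}$ is $\mathcal{I}(\mathcal{A})$-positive, hence $\mathcal{G}\cap\mathcal{I}(\mathcal{A})=\emptyset$. Since adjoining one compact set to an $F_{\sigma}$-filter leaves it $F_{\sigma}$ (once proper) and $\mathcal{G}\supseteq\mathcal{F}$, $\mathcal{G}$ is a condition with $\mathcal{G}\leq\mathcal{F}$; and $\mathcal{C}(\overline{X}_{f})\subseteq\mathcal{G}\subseteq\langle\mathcal{G}\cup\mathcal{I}(\mathcal{A})^{\ast}\rangle$ is precisely $\mathcal{G}\Vdash\overline{X}_{f}\in(\dot{\mathcal{U}}_{gen}(\mathcal{A})^{<\omega})^{+}$, completing the reduction. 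I expect the only real difficulty to be the estimate of the third paragraph: one must observe that $\mathcal{G}\cap\mathcal{I}(\mathcal{A})=\emptyset$ is the same as properness of $\langle\mathcal{F}\cup\mathcal{I}(\mathcal{A})^{\ast}\cup\mathcal{C}(\overline{X}_{f})\rangle$, and that the ideal element $I$ can be absorbed by passing to a single $A_{k}\in\mathcal{D}$ missing the finitely many generators of $I$ — after which Proposition \ref{PartircompactosMAD}, whose conclusion already forces the catching sets to lie inside $A_{k}$, does all the work and the diagonalization over $r$ is identical to the one in Proposition \ref{UgenesCanajr}.
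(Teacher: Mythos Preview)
Your proposal is correct and follows essentially the same approach as the paper: reduce to a density statement, invoke Proposition~\ref{ObtenerEstrellita} to obtain the $A_k$'s with the $\bigstar$ property, use Proposition~\ref{PartircompactosMAD} to define the bounding function $g$, pick $f\in\mathcal{B}$ not dominated by $g$, and set $\mathcal{G}=\langle\mathcal{F}\cup\mathcal{C}(\overline{X}_{f})\rangle$. The paper's proof compresses your third-paragraph estimate into the single phrase ``it is easy to see that $\mathcal{G}$ is a condition in $\mathbb{F}_{\sigma}(\mathcal{A})$''; your explicit argument---choosing $A_k\in\mathcal{D}$ almost disjoint from the finitely many generators of a given $I\in\mathcal{I}(\mathcal{A})$, absorbing the finite intersection $A_k\cap I$ into a cofinite correction of $F$, and then diagonalizing over an $r$ where $g(r)<f(r)$---is exactly the verification the paper omits.
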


\begin{proof}
It is enough to show that if $\mathcal{F}\Vdash``\overline{X}=\left\langle
X_{n}\right\rangle _{n\in\omega}\subseteq(\mathcal{\dot{U}}_{gen}\left(
\mathcal{A}\right)  ^{<\omega})^{+}\textquotedblright$ then there is
$\mathcal{G\leq F}$ and $f\in\mathcal{B}$ such that $\mathcal{C}\left(
\overline{X}_{f}\right)  \subseteq\mathcal{G}.$ Let $\mathcal{F=}%
\bigcup\mathcal{C}_{n}$ where each $\mathcal{C}_{n}$ is compact and they form
an increasing chain. By the previous results, we may find $\left\{  A_{n}\mid
n\in\omega\right\}  \subseteq\mathcal{A}$ such that $\bigstar\left(
A_{n},\mathcal{F},X_{m}\right)  $ holds for every $n,m\in\omega.$ We can then
find an increasing function $g:\omega\longrightarrow\omega$ such that the
following holds:

\begin{description}
\item[*)] For every $n\in\omega$ and for every $i\leq n,$ if $Y=X\cap
\wp\left(  g\left(  n\right)  \right)  $ then for every $C_{0},...,C_{n}%
\in\mathcal{C}\left(  Y\right)  $ and $F\in\mathcal{C}_{n}$ there is $s\in
Y\cap\left[  A_{i}\right]  ^{<\omega}$ such that $s\subseteq C_{0}\cap...\cap
C_{l}\cap F.$
\end{description}

\qquad\ \ 

Since $\mathcal{B}$ is unbounded, we can find $f\in\mathcal{B}$ that is not
dominated by $g.$ It is easy to see that $\mathcal{G}=\left\langle
\mathcal{F}\cup\mathcal{C}\left(  \overline{X}_{f}\right)  \right\rangle $ is
a condition in $\mathbb{F}_{\sigma}\left(  \mathcal{A}\right)  $ and has the
desired properties.
\end{proof}

\qquad\ \ 

We can conclude:

\begin{corollary}
Every \textsf{MAD} family can be destroyed with a forcing that is proper, adds
an unsplit real, preserves all $\mathfrak{b}$-families from the ground model
and does not add Cohen reals.
\end{corollary}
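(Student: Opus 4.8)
The plan is to handle the two cases according to whether the given \textsf{MAD} family $\mathcal{A}$ is Laflamme or not, exactly as the discussion preceding the statement suggests. If $\mathcal{A}$ is \emph{not} Laflamme, then $\mathcal{I}(\mathcal{A})$ extends to an $F_\sigma$ ideal, so $\mathcal{I}(\mathcal{A})^\ast$ extends to some $F_\sigma$-filter $\mathcal{F}_0$, and $\mathcal{F}_0$ is a condition in $\mathbb{F}_\sigma$. Below $\mathcal{F}_0$, the forcing $\mathbb{F}_\sigma\ast\mathbb{PT}(\dot{\mathcal U}_{gen})$ already destroys $\mathcal{A}$ (since $\mathbb{PT}(\dot{\mathcal U}_{gen})$ diagonalizes $\dot{\mathcal U}_{gen}\supseteq\mathcal{F}_0\supseteq\mathcal{I}(\mathcal{A})^\ast$, adding an infinite set almost disjoint from every member of $\mathcal{I}(\mathcal{A})$), and by the corollary to Proposition \ref{UgenesWCanjar} that forcing is proper, adds an unsplit real, preserves all ground-model $\mathfrak b$-families, and adds no Cohen reals. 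Restricting to the cone below $\mathcal{F}_0$ preserves all these properties, so we are done in this case.

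The substantive case is when $\mathcal{A}$ is Laflamme. Here I would take the forcing $\mathbb{P}=\mathbb{F}_\sigma(\mathcal{A})\ast\mathbb{PT}(\dot{\mathcal U}_{gen}(\mathcal{A}))$. That this destroys $\mathcal{A}$ is immediate from the definition of $\mathbb{F}_\sigma(\mathcal{A})$ together with the fact (Lemma: $\mathbb{PT}(\mathcal{F})$ diagonalizes $\mathcal{F}$) that $\mathbb{PT}(\dot{\mathcal U}_{gen}(\mathcal{A}))$ adds a real almost contained in every element of $\dot{\mathcal U}_{gen}(\mathcal{A})\supseteq\mathcal{I}(\mathcal{A})^\ast$, hence almost disjoint from every element of $\mathcal{A}$. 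Properness: $\mathbb{F}_\sigma(\mathcal{A})$ is $\sigma$-closed, and by Proposition \ref{MSDgenesCanjar} it forces $\dot{\mathcal U}_{gen}(\mathcal{A})$ to be $\mathcal{B}$-Canjar for every ground-model $\mathfrak b$-family $\mathcal{B}$; in particular $\dot{\mathcal U}_{gen}(\mathcal{A})$ is forced to be Canjar, so $\mathbb{PT}(\dot{\mathcal U}_{gen}(\mathcal{A}))$ is proper by the properness proposition for $\mathbb{PT}(\mathcal{F})$, and the two-step iteration of a $\sigma$-closed forcing with a proper one is proper. Preservation of $\mathfrak b$-families: fix a ground-model $\mathfrak b$-family $\mathcal{B}$; since $\mathbb{F}_\sigma(\mathcal{A})$ is $\sigma$-closed it adds no reals, so $\mathcal{B}$ remains a $\mathfrak b$-family in the intermediate model and $\dot{\mathcal U}_{gen}(\mathcal{A})$ is $\mathcal{B}$-Canjar there (Proposition \ref{MSDgenesCanjar}); then Proposition (``$\mathbb{PT}(\mathcal{F})$ preserves $\mathcal{B}$'') applies. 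Adding an unsplit real: $\mathbb{PT}(\dot{\mathcal U}_{gen}(\mathcal{A}))$ diagonalizes the ultrafilter $\dot{\mathcal U}_{gen}(\mathcal{A})$, and a real almost contained in every member of an ultrafilter is unsplit over the ground model. No Cohen reals: $\mathbb{F}_\sigma(\mathcal{A})$ is $\sigma$-closed (adds no reals at all), and $\mathbb{PT}(\dot{\mathcal U}_{gen}(\mathcal{A}))$ adds no Cohen reals over the intermediate model because $\dot{\mathcal U}_{gen}(\mathcal{A})$ is Canjar (Proposition: Canjar $\Rightarrow\mathbb{PT}(\mathcal{F})$ adds no half-Cohen, hence no Cohen, reals); since Cohen-ness is preserved downward, the composite adds none.

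The main obstacle is entirely bundled inside Proposition \ref{MSDgenesCanjar}, which we are entitled to invoke as already proved in the excerpt: the delicate point is that $\mathcal{A}$ being Laflamme is exactly what lets the Brendle-game analysis (Proposition \ref{ObtenerEstrellita}, via the compactness lemmas \ref{Partirestrellita} and \ref{PartircompactosMAD}) produce, for any given countable collection $\{X_n\}$ of sets with $\mathcal{C}(X_n)\subseteq\langle\mathcal{F}\cup\mathcal{I}(\mathcal{A})^\ast\rangle$, a single countable $\mathcal{D}\subseteq\mathcal{A}$ on which $\bigstar$ holds uniformly, and then the $\mathfrak b$-family's unboundedness supplies the witnessing $f$ so that $\mathcal{F}\cup\mathcal{C}(\overline X_f)$ still generates a filter. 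Assuming that proposition, the remaining verifications above are all bookkeeping: checking that each desired property survives restriction to a cone (non-Laflamme case) and survives a two-step iteration with a $\sigma$-closed first coordinate (Laflamme case), using that $\sigma$-closed forcings add no reals so that all the relevant combinatorial objects and the $\mathfrak b$-family pass unchanged into the intermediate extension.
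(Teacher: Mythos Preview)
Your proof is correct and follows essentially the same two-case split as the paper: use $\mathbb{F}_\sigma\ast\mathbb{PT}(\dot{\mathcal U}_{gen})$ below a suitable condition when $\mathcal{A}$ is not Laflamme, and $\mathbb{F}_\sigma(\mathcal{A})\ast\mathbb{PT}(\dot{\mathcal U}_{gen}(\mathcal{A}))$ when it is. The paper's own proof is the same two-line sketch; you have simply (and correctly) unpacked the verifications, though note that properness of $\mathbb{PT}(\mathcal{F})$ holds for \emph{any} filter $\mathcal{F}$, so you need not route that step through the Canjar property.
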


\begin{proof}
If $\mathcal{A}$ is not Laflamme, then it can be destroyed by $\mathbb{F}%
_{\sigma}\ast\mathbb{PT}(\mathcal{\dot{U}}_{gen})$ (below some condition). If
$\mathcal{A}$ is Laflamme, then we can destroy it with $\mathbb{F}_{\sigma
}\left(  \mathcal{A}\right)  \ast\mathbb{PT(}\mathcal{\dot{U}}_{gen}\left(
\mathcal{A}\right)  ).$
\end{proof}

\qquad\ \ \ \qquad\qquad

By iterating the forcings in the previous corollary, we get the following:

\begin{proposition}
[Shelah]There is a model of $\omega_{1}=\mathfrak{b<a}=\mathfrak{s=c}%
=\omega_{2}.$
\end{proposition}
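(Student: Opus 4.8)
The plan is to perform a countable support iteration $\langle\mathbb{P}_\alpha,\dot{\mathbb{Q}}_\alpha\mid\alpha\le\omega_2\rangle$ of length $\omega_2$ over a model $V$ of \textsf{GCH}, where each iterand $\dot{\mathbb{Q}}_\alpha$ is (a name for) one of the forcings supplied by the preceding corollary, chosen by a bookkeeping argument so as to destroy a designated \textsf{MAD} family. Concretely, fix a bookkeeping function that, using that $\mathbb{P}_{\omega_2}$ is proper and $\omega_2$-c.c.\ and that (inductively, by \textsf{CH} in each $V_\alpha:=V^{\mathbb{P}_\alpha}$) every iterand has a dense subset of size $\le\omega_1$, anticipates all $\mathbb{P}_\alpha$-names for \textsf{MAD} families; at stage $\alpha$ let $\dot{\mathcal{A}}_\alpha$ be the \textsf{MAD} family of $V_\alpha$ handed to us, and put $\dot{\mathbb{Q}}_\alpha=\mathbb{F}_\sigma(\dot{\mathcal{A}}_\alpha)\ast\mathbb{PT}(\dot{\mathcal{U}}_{gen}(\dot{\mathcal{A}}_\alpha))$ when $\dot{\mathcal{A}}_\alpha$ is Laflamme, and otherwise, having fixed in $V_\alpha$ an $F_\sigma$-filter $\mathcal{F}_0\supseteq\mathcal{I}(\dot{\mathcal{A}}_\alpha)^\ast$, let $\dot{\mathbb{Q}}_\alpha$ be $\mathbb{F}_\sigma$ restricted below $\mathcal{F}_0$ followed by $\mathbb{PT}(\dot{\mathcal{U}}_{gen})$. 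Each $\dot{\mathbb{Q}}_\alpha$ is proper and of size $\le\omega_1$, so a standard counting argument (with $V\models\textsf{GCH}$) gives that $\mathbb{P}_{\omega_2}$ is proper and $\omega_2$-c.c., $|\mathbb{P}_{\omega_2}|=\omega_2$, and \textsf{CH} holds in every $V_\alpha$; in particular $\mathfrak{c}=\omega_2$ in $V_{\omega_2}$.

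Next I would verify $\mathfrak{a}=\mathfrak{s}=\omega_2$; since both are $\le\mathfrak{c}=\omega_2$ it suffices to rule out the value $\omega_1$. If $\mathcal{A}\in V_{\omega_2}$ is \textsf{MAD} with $|\mathcal{A}|=\omega_1$, then by the $\omega_2$-c.c.\ together with properness $\mathcal{A}$ already lies in some $V_\alpha$, where it is still \textsf{MAD} (maximality being downward absolute); hence some later bookkeeping stage $\beta$ handles $\mathcal{A}$ and $\dot{\mathbb{Q}}_\beta$ destroys it --- in the non-Laflamme case the restriction below $\mathcal{F}_0$ guarantees $\mathcal{U}_{gen}\supseteq\mathcal{I}(\mathcal{A})^\ast$, so the $\mathbb{PT}$-generic real is almost disjoint from every member of $\mathcal{A}$, and the destruction is genuine rather than merely below a condition. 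Thus $\mathcal{A}$ is not maximal in $V_{\omega_2}$, so $\mathfrak{a}>\omega_1$. Likewise, any $\mathcal{S}\subseteq[\omega]^\omega$ of size $\omega_1$ in $V_{\omega_2}$ lies in some $V_\alpha$, and the $\mathbb{PT}(\dot{\mathcal{U}}_{gen}(\cdot))$-factor used at stage $\alpha$ adds a real that is almost contained in, or almost disjoint from, every ground-model subset of $\omega$ (because it diagonalizes the generic ultrafilter), hence is unsplit over $V_\alpha$; so $\mathcal{S}$ is not splitting and $\mathfrak{s}>\omega_1$.

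The crux is $\mathfrak{b}=\omega_1$, which I would obtain by showing $\mathbb{P}_{\omega_2}$ is weakly $\omega^\omega$-bounding over $V$: then $V\cap\omega^\omega$, of size $\omega_1$ by \textsf{CH}, remains unbounded in $V_{\omega_2}$, giving $\mathfrak{b}\le\omega_1$, and with the \textsf{ZFC} inequality $\mathfrak{b}\ge\omega_1$ we conclude $\mathfrak{b}=\omega_1$. By the corollary on countable support iterations preceding this proposition, it is enough that each $\mathbb{P}_\alpha$ forces $\dot{\mathbb{Q}}_\alpha$ to preserve the unboundedness of every well-ordered unbounded family of $V_\alpha$. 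Such a family is a $\mathfrak{b}$-family, so I argue in $V_\alpha$: the first factor of $\dot{\mathbb{Q}}_\alpha$ is $\sigma$-closed and adds no reals, hence preserves every $\mathfrak{b}$-family trivially; and after forcing with it, $\dot{\mathcal{U}}_{gen}(\dot{\mathcal{A}}_\alpha)$ is $\mathcal{B}$-Canjar for every ground-model $\mathfrak{b}$-family $\mathcal{B}$ --- by Proposition \ref{MSDgenesCanjar} in the Laflamme case, and by Proposition \ref{UgenesCanajr} (whose proof relativizes verbatim below the condition $\mathcal{F}_0$) in the non-Laflamme case --- so $\mathbb{PT}(\dot{\mathcal{U}}_{gen}(\dot{\mathcal{A}}_\alpha))$ preserves $\mathcal{B}$ by the proposition that $\mathbb{PT}(\mathcal{F})$ preserves $\mathcal{B}$ whenever $\mathcal{F}$ is $\mathcal{B}$-Canjar. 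Composing the two factors, $\dot{\mathbb{Q}}_\alpha$ preserves the unboundedness of all well-ordered unbounded families, the corollary applies, and $\mathfrak{b}=\omega_1$ in $V_{\omega_2}$; combined with the previous paragraph this yields $\omega_1=\mathfrak{b}<\mathfrak{a}=\mathfrak{s}=\mathfrak{c}=\omega_2$.

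I expect the main obstacle to be this last step: one must be sure the preservation of unbounded families propagates uniformly along the iteration --- i.e.\ that the $\mathcal{B}$-Canjar property of $\dot{\mathcal{U}}_{gen}(\dot{\mathcal{A}}_\alpha)$ holds for \emph{all} $\mathfrak{b}$-families of $V_\alpha$ at once (which is exactly what Proposition \ref{MSDgenesCanjar}, resting on the Brendle-game analysis, delivers), and that Shelah's preservation theorem for weakly $\omega^\omega$-bounding forcings is invoked in the form quoted above. A secondary point demanding care is the bookkeeping treatment of non-Laflamme \textsf{MAD} families: there one genuinely needs to restrict the $\mathbb{F}_\sigma$-factor below a witnessing $F_\sigma$-filter, so that the family is destroyed outright and not merely below some condition, while checking that this restriction retains $\sigma$-closedness and the $\mathcal{B}$-Canjar conclusion.
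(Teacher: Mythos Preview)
Your proposal is correct and follows exactly the approach the paper intends: the paper's own ``proof'' consists solely of the sentence ``By iterating the forcings in the previous corollary, we get the following,'' so you have simply spelled out the standard bookkeeping iteration argument together with the invocation of the earlier corollary on preservation of well-ordered unbounded families to secure $\mathfrak{b}=\omega_1$. Your explicit treatment of the non-Laflamme case (restricting $\mathbb{F}_\sigma$ below a witnessing $F_\sigma$-filter so that the destruction is outright rather than below a condition) is a detail the paper glosses over with ``below some condition,'' and your handling of it is the natural fix.
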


\qquad\ \ \ \ 

Note that if we iterate with countable support forcings of the type
$\mathbb{F}_{\sigma}\left(  \mathcal{A}\right)  \ast\mathbb{M(}\mathcal{\dot
{U}}_{gen}\left(  \mathcal{A}\right)  )$ and $\mathbb{F}_{\sigma}%
\ast\mathbb{M}(\mathcal{\dot{U}}_{gen}),$ we will get a model of $\omega
_{1}=\mathfrak{b<a}=\mathfrak{s=}$ \textsf{cov}$\left(  \mathcal{M}\right)
=\mathfrak{c}=\omega_{2}.$ In order to preserve $P$-points, we must use the
Miller forcing instead of the Mathias forcing, as we are going to show now. We
will prove that for every \textsf{MAD} family $\mathcal{A},$ the forcing
$\mathbb{F}_{\sigma}\left(  \mathcal{A}\right)  \ast\mathbb{PT(}%
\mathcal{\dot{U}}_{gen}\left(  \mathcal{A}\right)  \mathcal{)}$ preserves all
ground model $P$-points. We will need the following generalization of lemma
\ref{Partirestrellita}:

\begin{lemma}
Let $l\in\omega,\ \mathcal{F}$ a filter, $\mathcal{D}\subseteq\mathcal{F}$ a
compact set, $X_{1},...,X_{n}\subseteq\left[  \omega\right]  ^{<\omega
}\setminus\left\{  \emptyset\right\}  $ such that $\mathcal{C}\left(
X_{1}\right)  ,...,\mathcal{C}\left(  X_{n}\right)  \subseteq\mathcal{F}$ and
$A\in\left[  \omega\right]  ^{\omega}$ such that $\bigstar\left(
A,\mathcal{F},X_{i}\right)  $ holds for every $i\leq n.$ There is $m\in\omega$
such that for every $\left\langle B_{1},...,B_{l}\right\rangle \in
Part_{l}\left(  A\right)  ,$ there is $i\leq l$ such that for every
$F\in\mathcal{D}$ and for every $k\leq n,$ the set $\left(  B_{i}\cap
F\right)  \cap m$ contains an element of $X_{k}.$ \label{Partirestrellitagen}
\end{lemma}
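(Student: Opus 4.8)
The plan is to run the same compactness argument as in the proof of Lemma~\ref{Partirestrellita}, the new features being that we must catch elements of all of $X_1,\dots,X_n$ simultaneously, and that the ``good piece'' $B_i$ of a partition is only determined once the partition is given, so it cannot be pulled out in front. In particular one cannot simply apply Lemma~\ref{Partirestrellita} to each $X_k$ separately and take a maximum of the resulting bounds, because the witnessing index $i$ produced there depends on $k$ as well as on the partition; the argument has to be redone with the $X_k$'s quantified together.

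For each $m\in\omega$ let $U_m\subseteq Part_{l}(A)$ be the set of all $\langle B_1,\dots,B_l\rangle$ for which there is $i\leq l$ such that for every $F\in\mathcal{D}$ and every $k\leq n$ the set $(B_i\cap F)\cap m$ contains an element of $X_k$. First I would check that each $U_m$ is open and that $U_m\subseteq U_{m+1}$. For openness, recall that a point of the compact space $Part_{l}(A)$ is the function $A\to\{1,\dots,l\}$ recording which piece each element of $A$ lies in, so ``agreeing on $A\cap m$'' defines a basic open neighbourhood. If $\langle B_1,\dots,B_l\rangle\in U_m$ with witnessing index $i$ and $\langle B_1',\dots,B_l'\rangle$ agrees with it on $A\cap m$, then $B_i'\cap m=B_i\cap m$, so any $s\in X_k$ with $s\subseteq(B_i\cap F)\cap m$ also satisfies $s\subseteq(B_i'\cap F)\cap m$; hence the same index $i$ witnesses $\langle B_1',\dots,B_l'\rangle\in U_m$. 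Monotonicity in $m$ is immediate.

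Next I would show that $\{U_m\mid m\in\omega\}$ covers $Part_{l}(A)$; then compactness of $Part_{l}(A)$ together with monotonicity yields a single $m$ with $U_m=Part_{l}(A)$, which is exactly the desired conclusion. So fix a partition $\langle B_1,\dots,B_l\rangle$. Since $A\in\mathcal{F}^+$ is the disjoint union of the $B_j$ and $\mathcal{F}$ is free, some $B_i\in[A]^\omega\cap\mathcal{F}^+$. Fix $k\leq n$; by $\bigstar(A,\mathcal{F},X_k)$ we have $Catch(X_k,B_i)\in(\mathcal{F}^{<\omega})^+$, so the sets $W^k_m=\{F\in\mathcal{D}\mid\exists s\in X_k\,(s\subseteq B_i\wedge s\subseteq F\wedge s\subseteq m)\}$ are open in $\mathcal{D}$, increase with $m$, and cover $\mathcal{D}$ (every $F\in\mathcal{D}\subseteq\mathcal{F}$ contains some $s\in Catch(X_k,B_i)$, and then $F\in W^k_m$ once $\max(s)<m$). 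By compactness of $\mathcal{D}$ there is $m_k$ with $W^k_{m_k}=\mathcal{D}$; putting $m^{\ast}=\max_{k\leq n}m_k$ we obtain $\langle B_1,\dots,B_l\rangle\in U_{m^{\ast}}$ with witnessing index $i$. This finishes the cover, hence the proof.

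There is no genuine obstacle here; the only thing to organise carefully is the nesting of the two layers of compactness. The inner compactness, taken over $\mathcal{D}$ for a fixed piece $B_i$ and a fixed $k\leq n$, produces bounds $m_k$ that depend on the partition, and these are then absorbed by running the outer compactness over $Part_{l}(A)$ — exactly the reason one cannot extract a uniform $m$ by quoting Lemma~\ref{Partirestrellita} coordinatewise. Everything else is a routine verification that the sets involved are open in their respective compact spaces.
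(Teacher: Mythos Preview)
Your proof is correct and follows essentially the same approach as the paper: define the sets $U_m$, observe they form an open cover of the compact space $Part_l(A)$, and conclude. The paper's own proof is a two-line sketch that simply asserts the $U_m$ form an open cover; your version fills in precisely the details the paper omits, namely the openness of $U_m$ and the inner compactness argument over $\mathcal{D}$ needed to show every partition lies in some $U_m$.
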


\begin{proof}
Let $U_{m}$ be the set of all $\left\langle B_{1},...,B_{l}\right\rangle \in
Part_{l}\left(  A\right)  $ such that there is $i\leq l$ with the property
that if $F\in\mathcal{C}$ and $k\leq n$ then, $X_{k}\cap\wp\left(  B_{i}\cap
m\right)  $ contains a subset of $F.$ Note that $\left\{  U_{n}\mid n\in
\omega\right\}  $ is an open cover and the result follows since $Part_{l}%
\left(  A\right)  $ is compact.
\end{proof}

\qquad\ \ \ 

With this result, we can prove the following generalization of lemma
\ref{PartircompactosMAD}:

\begin{lemma}
Let $\mathcal{F}$ be a filter, $\mathcal{D\subseteq F}$ a compact set,
$X_{1},...,X_{n}\subseteq\left[  \omega\right]  ^{<\omega}$ such that
$\mathcal{C}\left(  X_{1}\right)  ,...,\mathcal{C}\left(  X_{n}\right)
\subseteq\mathcal{F}$ and $A\in\left[  \omega\right]  ^{\omega}$ such that
$\bigstar\left(  A,\mathcal{F},X_{i}\right)  $ holds for every $i\leq n.$
There are $Y_{1}\in\left[  X_{1}\right]  ^{<\omega},...,Y_{n}\in\left[
X_{n}\right]  ^{<\omega}$ such that for every $F\in\mathcal{D}$ and for every
$C_{i}^{1},...,C_{i}^{n}\in\mathcal{C}\left(  Y_{i}\right)  $ (with $i\leq
n$), for every $k\leq n$ there is $s\in Y_{k}\cap\left[  A\right]  ^{<\omega}$
such that $s\subseteq F\cap%
{\textstyle\bigcap\limits_{i,j\leq n}}
C_{i}^{j}.$
\end{lemma}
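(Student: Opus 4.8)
The plan is to reuse the colouring idea from the proof of Proposition~\ref{PartircompactosMAD}, but now to split $A$ according to its membership pattern in \emph{all} of the sets $C_{i}^{j}$ simultaneously, and to lean on the fact that Lemma~\ref{Partirestrellitagen} hands back one good piece of the partition that works uniformly for every index $k\leq n$. Concretely, first I would set $l=2^{n^{2}}$ and apply Lemma~\ref{Partirestrellitagen} to $\mathcal{F}$, $\mathcal{D}$, $X_{1},\dots,X_{n}$ and $A$ with this $l$, obtaining $m\in\omega$ such that for every $\langle B_{1},\dots,B_{2^{n^{2}}}\rangle\in Part_{2^{n^{2}}}(A)$ there is $i\leq 2^{n^{2}}$ so that for every $F\in\mathcal{D}$ and every $k\leq n$ the set $(B_{i}\cap F)\cap m$ contains an element of $X_{k}$. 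Then I would put $Y_{k}=X_{k}\cap\wp(m)$ for each $k\leq n$; since $m$ has only finitely many subsets, each $Y_{k}$ is a finite subset of $X_{k}$, as required.

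Now I would fix $F\in\mathcal{D}$ and sets $C_{i}^{j}\in\mathcal{C}(Y_{i})$ for $i,j\leq n$, and for each $\tau\in 2^{\{1,\dots,n\}^{2}}$ define $B_{\tau}=\{a\in A\mid\forall i,j\leq n\,(a\in C_{i}^{j}\iff\tau(i,j)=1)\}$. The family $\langle B_{\tau}\rangle_{\tau}$ is an element of $Part_{2^{n^{2}}}(A)$ (empty pieces are harmless), so by the choice of $m$ there is a type $\tau_{0}$ such that for every $k\leq n$ there is $s_{k}\in X_{k}$ with $s_{k}\subseteq(B_{\tau_{0}}\cap F)\cap m$. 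In particular $s_{k}\in X_{k}\cap\wp(m)=Y_{k}$, $s_{k}\subseteq B_{\tau_{0}}\subseteq A$, and $s_{k}\subseteq F$. It then remains to show that $\tau_{0}$ is the constant function $1$: once this is known, $B_{\tau_{0}}=A\cap\bigcap_{i,j\leq n}C_{i}^{j}$, and the sets $s_{k}$ are exactly what the statement demands.

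The crux, and the only delicate step, is ruling out $\tau_{0}(i_{0},j_{0})=0$ for some $i_{0},j_{0}\leq n$. If this held, then every point of $B_{\tau_{0}}$ would lie outside $C_{i_{0}}^{j_{0}}$, so the witness $s_{i_{0}}$ obtained above for the index $k=i_{0}$ would be disjoint from $C_{i_{0}}^{j_{0}}$; but $s_{i_{0}}\in Y_{i_{0}}$ and $C_{i_{0}}^{j_{0}}\in\mathcal{C}(Y_{i_{0}})$, so $C_{i_{0}}^{j_{0}}$ must meet every element of $Y_{i_{0}}$ --- a contradiction. This is precisely the point where it matters that Lemma~\ref{Partirestrellitagen} supplies a single piece $B_{\tau_{0}}$ good for \emph{all} $k$ at once: to use the hypothesis $C_{i_{0}}^{j_{0}}\in\mathcal{C}(Y_{i_{0}})$ one needs the witness for the \emph{particular} value $k=i_{0}$ to come from that same piece, so that it lands inside $Y_{i_{0}}$. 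The rest --- finiteness of the $Y_{k}$, the bookkeeping with $2^{n^{2}}$ types, and the final verification that $s_{k}\subseteq F\cap\bigcap_{i,j\leq n}C_{i}^{j}$ --- is routine, exactly as in Proposition~\ref{PartircompactosMAD}.
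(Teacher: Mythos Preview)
Your proposal is correct and follows essentially the same route as the paper: set the partition parameter to $2^{n^{2}}$, invoke Lemma~\ref{Partirestrellitagen} to obtain a single bound $m$, take $Y_{k}=X_{k}\cap\wp(m)$, partition $A$ by membership type in the sets $C_{i}^{j}$, and rule out any type with a $0$ entry by confronting the witness $s_{i_{0}}\in Y_{i_{0}}$ with the hypothesis $C_{i_{0}}^{j_{0}}\in\mathcal{C}(Y_{i_{0}})$. Your write-up of the contradiction step is in fact more explicit than the paper's, which compresses this into a single sentence.
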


\begin{proof}
Let $l=n^{2}$ and by lemma \ref{Partirestrellitagen}, we know there is
$m\in\omega$ such that for every $\left\langle B_{1},...,B_{2^{l}%
}\right\rangle \in Part_{2^{l}}\left(  A\right)  $ there is $i\leq2^{l}$ such
that for every $F\in\mathcal{D}$ and for every $k\leq n,$ the set $\left(
B_{i}\cap F\right)  \cap m$ contains an element of $X_{k}.$ Let $Y_{i}%
=X_{i}\cap\wp\left(  m\right)  $ for every $i\leq n.$ We will see that the
sets $Y_{1},...,Y_{n}$ have the desired properties. Let $C_{i}^{1}%
,...,C_{i}^{n}\in\mathcal{C}\left(  Y_{i}\right)  $ (with $i\leq n$) and
$F\in\mathcal{C}.$ For every $s:n\times n\longrightarrow2$ define $B_{s}$ as
the set of all $a\in A$ such that $a\in C_{i}^{j}$ if and only if $s\left(
i,j\right)  =1.$ Clearly $\left\langle B_{s}\right\rangle _{s\in2^{l}}\in
Part_{2^{l}}\left(  A_{l}\right)  $ and we may conclude that there is
$s:n\times n\longrightarrow2$ such that $Y_{i}\cap\wp\left(  B_{s}\cap
m\right)  $ contains an element of $F$ for every $i\leq n.$ Since $C_{i}%
^{1},...,C_{i}^{n}\in\mathcal{C}\left(  Y_{i}\right)  $ we conclude that $s$
must be the constant function $1$, and this entails the desired conclusion.
\end{proof}

\qquad\ \ \ \ \ \ \qquad\ \qquad\ 

We can now prove the following result, which is a combination of proposition
\ref{UgenesWCanjar} and proposition \ref{MSDgenesCanjar}:

\begin{proposition}
If $\mathcal{W}$ is a $P$-point and $\mathcal{A}$ is a Laflamme \textsf{MAD}
family, then $\mathbb{F}_{\sigma}\left(  \mathcal{A}\right)  $ forces that
$\mathcal{\dot{U}}_{gen}\left(  \mathcal{A}\right)  $ is $\mathcal{W}$-Canjar.
\end{proposition}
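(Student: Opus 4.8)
The plan is to run the proof by contradiction, amalgamating the two cited arguments. I would suppose that some $\mathcal{F}\in\mathbb{F}_{\sigma}\left(\mathcal{A}\right)$ and some name $\dot{\sigma}$ are such that $\mathcal{F}$ forces $\dot{\sigma}$ to be a winning strategy for player $\mathsf{I}$ in $\mathcal{H}\left(\mathcal{W},\mathcal{\dot{U}}_{gen}\left(\mathcal{A}\right)\right)$. Since $\mathbb{F}_{\sigma}\left(\mathcal{A}\right)$ is $\sigma$-closed, it adds no reals, so $\mathcal{W}$ remains a $P$-point and strategies for $\mathsf{I}$ are coded by ground model countable objects; I would therefore strengthen $\mathcal{F}$ so that $\dot{\sigma}=\sigma$ is a fixed ground model strategy, and write $\mathcal{F}=\bigcup_{n}\mathcal{C}_{n}$ as an increasing union of compact sets. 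The goal is to manufacture from $\sigma$ a winning strategy $\pi$ for player $\mathsf{I}$ in the ordinary $P$-point game $\mathcal{G}_{P\text{-}point}\left(\mathcal{W}\right)$, contradicting the Galvin--Shelah theorem.

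The first step is the positivity observation: if $p$ is a Miller tree that $\sigma$ can legally output and $s\in split\left(p\right)$, then $\mathcal{F}\Vdash``spsuc_{p}\left(s\right)\in(\mathcal{\dot{U}}_{gen}\left(\mathcal{A}\right)^{<\omega})^{+}\textquotedblright$, hence $\mathcal{C}\left(spsuc_{p}\left(s\right)\right)\subseteq\left\langle\mathcal{F}\cup\mathcal{I}\left(\mathcal{A}\right)^{\ast}\right\rangle$ by the lemma characterizing $\mathbb{F}_{\sigma}\left(\mathcal{I}\right)$-forcing. Once $\sigma$ is fixed the game tree of $\mathcal{H}$ branches only countably (player $\mathsf{II}$ plays from $\left[\omega\right]^{<\omega}\times\omega$), so the set $\mathcal{Z}$ of all such $spsuc_{p}\left(s\right)$ is countable; I would enumerate it as $\left\langle X_{n}\mid n\in\omega\right\rangle$ and apply Proposition \ref{ObtenerEstrellita} — the one place the Laflamme hypothesis and the Brendle game enter — to secure a countable $\mathcal{D}=\left\{A_{j}\mid j\in\omega\right\}\subseteq\mathcal{A}$ with $\bigstar\left(\mathcal{F},A_{j},X_{n}\right)$ for all $j,n$. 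Then for each finite $\mathcal{X}\subseteq\mathcal{Z}$ and each $j,k\in\omega$ I would fix, via the multi-set compactness lemma just proved (the generalization of Proposition \ref{PartircompactosMAD}), a book-keeping function shrinking each $X\in\mathcal{X}$ to a finite piece so robustly that intersecting any members of the corresponding $\mathcal{C}\left(\cdot\right)$-sets with any $F\in\mathcal{C}_{k}$ still leaves a witness from $X$ lying inside $A_{j}$.

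With this machinery in place, $\pi$ would be built exactly as in the proof of Proposition \ref{UgenesWCanjar}: player $\mathsf{I}$ secretly runs the side game $\mathcal{H}\left(\mathcal{W},\mathcal{\dot{U}}_{gen}\left(\mathcal{A}\right)\right)$ against $\sigma$, translating each response $z_{m}$ of player $\mathsf{II}$ in the $P$-point game into a sufficiently large integer $n_{m}$ to feed $\sigma$, then relaying $\sigma$'s next set back as the next $W$-move. The only modification is that $n_{m}$ must be chosen large enough to absorb the outputs of the book-keeping functions for $\mathcal{X}_{m}=\left\{spsuc_{p_{m}}\left(u\right)\mid u\in T\left(p_{m},n_{m-1}\right)\right\}$ and for every index $j\leq m$, rather than for a single level. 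This guarantees that the tree $q=\bigcup_{i}T\left(p_{i},n_{i}\right)$ arising from any run in which $\mathsf{II}$ beats $\pi$ has each $spsuc_{q}\left(u\right)$ meeting, for every fixed $j$ and cofinally in $k$, the book-keeping pieces tied to $A_{j}$ and $\mathcal{C}_{k}$. Consequently $\mathcal{F}\cup\left\{\mathcal{C}\left(spsuc_{q}\left(u\right)\right)\mid u\in split\left(q\right)\right\}$ has the finite intersection property and, crucially, every finite intersection of its members meets every $A_{j}$, hence is $\mathcal{I}\left(\mathcal{A}\right)$-positive by the basic characterization of $\mathcal{I}\left(\mathcal{A}\right)$-positivity; so the $F_{\sigma}$-filter $\mathcal{K}$ it generates lies in $\mathbb{F}_{\sigma}\left(\mathcal{A}\right)$, satisfies $\mathcal{K}\leq\mathcal{F}$, and forces $q\in\mathbb{PT}\left(\mathcal{\dot{U}}_{gen}\left(\mathcal{A}\right)\right)$. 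Since $\mathcal{F}$ forces $\sigma$ to be a winning strategy for $\mathsf{I}$ in $\mathcal{H}$, below $\mathcal{K}$ player $\mathsf{II}$ cannot win, yet $\mathcal{K}$ forces the first winning condition $\bigcup_{m}T_{m}\in\mathbb{PT}\left(\mathcal{\dot{U}}_{gen}\left(\mathcal{A}\right)\right)$; hence $\mathcal{K}\Vdash``\bigcup_{i}z_{i}\notin\mathcal{W}\textquotedblright$, and since $\bigcup_{i}z_{i}$ and $\mathcal{W}$ lie in $V$ this gives $\bigcup_{i}z_{i}\notin\mathcal{W}$ outright. As this holds for every run defeating $\pi$, $\pi$ is a winning strategy for $\mathsf{I}$ in $\mathcal{G}_{P\text{-}point}\left(\mathcal{W}\right)$ — the desired contradiction.

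I expect the main obstacle to be precisely this last verification: that the splitting successors of the limit tree $q$ glue into a bona fide condition of $\mathbb{F}_{\sigma}\left(\mathcal{A}\right)$, i.e. an $F_{\sigma}$-filter still \emph{disjoint from} $\mathcal{I}\left(\mathcal{A}\right)$. The $F_{\sigma}$ part and the finite intersection property are handled as in Proposition \ref{UgenesWCanjar}, but the disjointness from $\mathcal{I}\left(\mathcal{A}\right)$ is the genuinely new content and is the reason for threading the index $j$ through the book-keeping and for invoking the $\bigstar$-apparatus of Proposition \ref{ObtenerEstrellita} together with the compactness lemmas above; getting the quantifier bookkeeping right there — finitely much work at each of infinitely many stages, covering every $A_{j}$ and every level $\mathcal{C}_{k}$ in the limit — is the crux.
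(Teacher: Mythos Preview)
Your proposal is correct and follows essentially the same route as the paper's proof: argue by contradiction, reduce to a ground-model strategy $\sigma$ via $\sigma$-closure, enumerate the countable family of $spsuc_{p}(s)$'s, invoke Proposition~\ref{ObtenerEstrellita} to obtain the $\bigstar$-witnesses $\{A_{j}\}$, use the multi-set compactness lemma for the finite bookkeeping at each stage, and then translate $\sigma$ into a $P$-point-game strategy $\pi$ whose defeat yields a condition $\mathcal{K}\in\mathbb{F}_{\sigma}(\mathcal{A})$ forcing $q\in\mathbb{PT}(\dot{\mathcal{U}}_{gen}(\mathcal{A}))$. The paper makes one small move you leave implicit---after fixing $\sigma$ it first \emph{extends} $\mathcal{F}$ so that $\mathcal{C}(spsuc_{p}(s))\subseteq\mathcal{F}$ outright for every $p_{s}\in L$, which streamlines the later verification that $\mathcal{K}\in\mathbb{F}_{\sigma}(\mathcal{A})$---but this is a cosmetic convenience rather than a genuine additional idea, and your identification of the disjointness of $\mathcal{K}$ from $\mathcal{I}(\mathcal{A})$ as the crux is exactly right.
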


\begin{proof}
We will prove the proposition by contradiction. Assume there is $\mathcal{F}%
\in\mathbb{F}_{\sigma}\left(  \mathcal{A}\right)  $ and $\sigma$ such that
$\mathcal{F}$ forces that $\sigma$ is a winning strategy for player
$\mathsf{I}$ in $\mathcal{H\mathbb{(}W},\mathcal{\dot{U}}_{gen}\left(
\mathcal{A}\right)  )$. Since strategies for player $\mathsf{I}$ are countable
objects and since $\mathbb{F}_{\sigma}\left(  \mathcal{A}\right)  $ is
$\sigma$-closed, it is enough to consider ground model strategies. Let
$\mathcal{F}=%
{\textstyle\bigcup\limits_{n\in\omega}}
\mathcal{C}_{n}$ where $\left\langle \mathcal{C}_{n}\right\rangle _{n\in
\omega}$ is an increasing sequence of compact sets. We will use $\sigma$ to
construct a winning strategy for player $\mathsf{I}$ in the game
$\mathcal{G}_{P\text{-}point}\left(  \mathcal{W}\right)  $, which will be a contradiction.


Let $L$ be the collection of all $p_{s}$ such that $p$ is a possible response
of player $\mathsf{I}$ according to $\sigma$ and $s\in split\left(  p\right)
.$ In the same way as in the proof of proposition \ref{UgenesWCanjar}, if
$p_{s}\in L,$ then $\mathcal{F}\Vdash``spsuc_{p}\left(  s\right)
\in(\mathcal{\dot{U}}_{gen}^{<\omega}\mathcal{)}^{+}\textquotedblright,$ hence
in particular $\mathcal{C}\left(  spsuc_{p}\left(  s\right)  \right)
\subseteq\left\langle \mathcal{F\cup I}\left(  \mathcal{A}\right)  ^{\ast
}\right\rangle .$ Since $L$ is countable we may assume (by extending
$\mathcal{F}$ if necessary) that $\mathcal{C}\left(  spsuc_{p}\left(
s\right)  \right)  \subseteq\mathcal{F}$ for every $p_{s}\in L.$ By
\ref{ObtenerEstrellita}, we can find a family $\left\{  A_{n}\mid n\in
\omega\right\}  \subseteq\mathcal{A}$ such that $\bigstar\left(
A_{n},\mathcal{F},\mathcal{C}\left(  spsuc_{p}\left(  s\right)  \right)
\right)  $ holds for every $n\in\omega$ and $p_{s}\in L.$


For every $\mathcal{X}=\left\{  X_{1},...,X_{n}\right\}  \in\left[  \left\{
spsuc_{p}\left(  s\right)  \mid p\in L\wedge s\in split\left(  p\right)
\right\}  \right]  ^{<\omega}$ and for every $k\in\omega,$ fix a function
$F_{\left(  \mathcal{X},k\right)  }:\mathcal{X\longrightarrow}$ $\left[
\left[  \omega^{<\omega}\right]  \right]  ^{<\omega}$ with the following properties:

\begin{enumerate}
\item $Y_{i}=F_{\left(  \mathcal{X},k\right)  }\left(  X_{i}\right)
\in\left[  X_{i}\right]  ^{<\omega}$ for every $i\leq n.$

\item For every $B\in\mathcal{C}_{k}$, for every $C_{i}^{1},...,C_{i}^{n}%
\in\mathcal{C}\left(  Y_{i}\right)  $ (with $i\leq n$) and for every
$k_{1},k_{2}\leq n$, we have that $B\cap%
{\textstyle\bigcap\limits_{i,j\leq n}}
C_{i}^{j}$ contains an element of $Y_{k_{1}}\cap\left[  A_{k_{2}}\right]
^{<\omega}.$
\end{enumerate}

\qquad\ \ 

We know such $F_{\left(  \mathcal{X},k\right)  }$ exists by the previous
lemma. The proof now proceeds in a very similar way as the proof of
\ref{UgenesWCanjar}. We define $\pi$ a strategy for player $\mathsf{I}$ in
$\mathcal{G}_{P\text{-}point}\left(  \mathcal{W}\right)  $ as follows:

\begin{enumerate}
\item Player $\mathsf{I}$ starts by playing $W_{0}=\sigma\left(
\emptyset\right)  $.

\item Assume player $\mathsf{II}$ plays $z_{0}\in\left[  W_{0}\right]
^{<\omega}$. Let $p_{0}=\sigma\left(  W_{0},z_{0}\right)  $, $s_{0}$ be the
stem of $p_{0}$ and $\mathcal{X}_{0}=\left\{  spsuc_{p_{0}}\left(
s_{0}\right)  \right\}  .$ Define $n_{0}>d^{-1}\left(  s_{0}\right)  $ to be
the least integer such that$\ d^{-1}(s_{0}{}^{\frown}t)<n_{0}$ for all $t\in
F_{\left(  \mathcal{X}_{0},0\right)  }\left(  spsuc_{p_{0}}\left(
s_{0}\right)  \right)  $. Player $\mathsf{I}$ will play (in $\mathcal{G}%
_{P\text{-}point}\left(  \mathcal{W}\right)  $) $W_{1}=\sigma\left(
W_{0},z_{0},p_{0},n_{0}\right)  $.

\item In general, assume that it has been played the sequence $\left\langle
W_{0},z_{0},...,W_{m}\right\rangle .$ At the same time, the player
$\mathsf{I}$ has secretly been constructing a sequence $\left\langle
W_{0},z_{0},p_{0},n_{0},W_{1},z_{1},p_{1},n_{1}...,W_{m}\right\rangle $ that
is being forced to be a partial play of the game $\mathcal{H(W},\mathcal{\dot
{U}}_{gen}\left(  \mathcal{A}\right)  \mathcal{)}$ following $\sigma$ such
that for every $i<m,$ the integer $n_{i}$ has the following important
property: letting $\mathcal{X}_{i}=\left\{  spsuc_{p_{i}}\left(  u\right)
\mid u\in T\left(  p_{i},n_{i-1}\right)  \right\}  $ (we define $n_{-1}%
=d^{-1}\left(  s_{0}\right)  $), for every $t\in F_{\left(  \mathcal{X}%
_{i},i\right)  }\left(  spsuc_{p_{i}}\left(  u\right)  \right)  ,$ we have
that $d^{-1}(u^{\frown}t)<n_{i}.$ Assume that player $\mathsf{II}$ plays
$z_{m}$ as her next response in $\mathcal{H(W},\mathcal{\dot{U}}_{gen}\left(
\mathcal{A}\right)  ).$ Let $p_{m}$ be the tree $\sigma\left(  W_{0}%
,z_{0},n_{0},W_{1},...,W_{m},z_{m}\right)  $ and let $n_{m}>n_{m-1}$ the least
integer with the following property: letting $\mathcal{X}_{m}=\left\{
spsuc_{p_{m}}\left(  u\right)  \mid u\in T\left(  p_{m},n_{m-1}\right)
\right\}  $, for every $t\in F_{\left(  \mathcal{X}_{m},m\right)  }\left(
spsuc_{p_{m}}\left(  u\right)  \right)  ,$ we have that $d^{-1}(u^{\frown
}t)<n_{m}.$ Player $\mathsf{I}$ will play $W_{m+1}=\sigma\left(  W_{0}%
,z_{0},n_{0},W_{1},...,W_{m},z_{m},p_{m},n_{m}\right)  .$
\end{enumerate}

\qquad\ \ \ \ \ \qquad\ \ \ \ 

The game $\mathcal{G}_{P\text{-}point}\left(  \mathcal{W}\right)  :$

\qquad\ \ \qquad\ \ \ %

\begin{tabular}
[c]{|l|l|l|l|l|l|}\hline
$\mathsf{I}$ & $W_{0}$ &  & $W_{1}$ &  & $...$\\\hline
$\mathsf{II}$ &  & $z_{0}$ &  & $z_{1}$ & \\\hline
\end{tabular}

\qquad\ \ \ \qquad\ \ \qquad\ \ \qquad\ \ \ \ \ \ \newline

The game $\mathcal{H(W},\mathcal{\dot{U}}_{gen}\left(  \mathcal{A}\right)
\mathcal{)}:$

\qquad\ \ \ \qquad\ \ \ %

\begin{tabular}
[c]{|l|l|l|l|l|l|l|l|l|l|}\hline
$\mathsf{I}$ & $W_{0}$ &  & $p_{0}$ &  & $W_{1}$ &  & $p_{1}$ &  &
$...$\\\hline
$\mathsf{II}$ &  & $z_{0}$ &  & $n_{0}$ &  & $z_{1}$ &  & $n_{1}$ & \\\hline
\end{tabular}

\qquad\ \ \ \ \ \qquad\ \ \ \ \bigskip

We claim that $\pi$ is a winning strategy for player $\mathsf{I}$ in
$\mathcal{G}_{P\text{-}point}\left(  \mathcal{W}\right)  .$ Consider a run of
the game in which player $\mathsf{I}$ played according to $\pi.$ Let $Z=%
{\textstyle\bigcup\limits_{n\in\omega}}
z_{n},$ we will prove that $Z\notin\mathcal{U}.$ Let $q=%
{\textstyle\bigcup\limits_{i\in\omega}}
T\left(  p_{i},n_{i}\right)  $ be the tree that was constructed by player
$\mathsf{I}$ during the play. It is easy to see that $\mathcal{F\cup}\left\{
\mathcal{C}\left(  spsuc_{q}\left(  s\right)  \right)  \mid s\in split\left(
q\right)  \right\}  $ generates a condition of $\mathbb{F}_{\sigma}\left(
\mathcal{A}\right)  $, call if $\mathcal{K}.$ Note that $\mathcal{K\leq F}$
hence $\mathcal{K}$ forces that $\sigma$ is a winning strategy for player
$\mathsf{I}$ in $\mathcal{H\mathbb{(}W},\mathcal{\dot{U}}_{gen})$. Moreover,
$\mathcal{K}$ forces that $q\in\mathbb{PT(}\mathcal{\dot{U}}_{gen}\left(
\mathcal{A}\right)  ).$ Since player $\mathsf{I}$ is forced to win in
$\mathcal{H\mathbb{(}W},\mathcal{\dot{U}}_{gen}\left(  \mathcal{A}\right)  )$,
it must be the case that $Z\notin\mathcal{W}.$ This shows that $\pi$ is a
winning strategy for player $\mathsf{I}$ in $\mathcal{G}_{P\text{-}%
point}\left(  \mathcal{W}\right)  .$ Since player $\mathsf{I}$ can not have a
winning strategy in the $P$-point game, we get a contradiction.
\end{proof}

\qquad\ \ \ \qquad\ \ 

We can now answer the questions of Brendle and Shelah:

\begin{theorem}
Every \textsf{MAD} family can be destroyed with a proper forcing that
preserves all $P$-points from the ground model. In particular, it is
consistent that $\omega_{1}=\mathfrak{u<a}=\omega_{2}.$
\end{theorem}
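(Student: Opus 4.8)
The plan is simply to assemble the machinery developed above, the only genuinely new ingredient being the preservation of all ground–model $P$-points. First I would recall that, by the corollary above, every \textsf{MAD} family $\mathcal{A}$ is destroyed by a proper forcing $\mathbb{Q}_{\mathcal{A}}$: if $\mathcal{A}$ is not Laflamme this is a cone of $\mathbb{F}_{\sigma}\ast\mathbb{PT}(\mathcal{\dot{U}}_{gen})$ (restricted below a condition forcing $\mathcal{\dot{U}}_{gen}\supseteq\mathcal{I}(\mathcal{A})^{\ast}$, so that the destruction is outright), and if $\mathcal{A}$ is Laflamme this is $\mathbb{F}_{\sigma}(\mathcal{A})\ast\mathbb{PT}(\mathcal{\dot{U}}_{gen}(\mathcal{A}))$; properness and the fact that $\mathbb{PT}$ of a filter is proper are already in hand. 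The point to verify is that $\mathbb{Q}_{\mathcal{A}}$ preserves every $P$-point $\mathcal{W}$ of the ground model. The first factor is $\sigma$-closed, hence adds no reals, so $\mathcal{W}$ is still a $P$-point after forcing with it; then by proposition \ref{UgenesWCanjar} in the non-Laflamme case, and by the last proposition of the previous section in the Laflamme case, the generic ultrafilter ($\mathcal{\dot{U}}_{gen}$, resp.\ $\mathcal{\dot{U}}_{gen}(\mathcal{A})$) is forced to be $\mathcal{W}$-Canjar; and then its $\mathbb{PT}$-forcing preserves $\mathcal{W}$ by the $P$-point preservation result proved in Section 4. All of this passes to cones without change, so $\mathbb{Q}_{\mathcal{A}}$ preserves $\mathcal{W}$, proving the first assertion.

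For the consistency statement I would start from a model $V$ of \textsf{GCH} and perform a countable support iteration $\langle\mathbb{P}_{\alpha},\mathbb{\dot{Q}}_{\alpha}\mid\alpha\leq\omega_{2}\rangle$ of length $\omega_{2}$, where a standard bookkeeping hands us at stage $\alpha$ a $\mathbb{P}_{\alpha}$-name $\mathbb{\dot{A}}_{\alpha}$ for a \textsf{MAD} family of size $\leq\aleph_{1}$, and we let $\mathbb{\dot{Q}}_{\alpha}$ name the forcing $\mathbb{Q}_{\mathbb{\dot{A}}_{\alpha}}$ above. Each iterand is forced to be proper and of size $\leq\mathfrak{c}=\aleph_{1}$ (its conditions are $F_{\sigma}$-filters or Miller trees on $\omega$), so the iteration is proper, preserves \textsf{CH} at all stages below $\omega_{2}$, is $\aleph_{2}$-c.c., and satisfies $\mathfrak{c}=\aleph_{2}$ at the end. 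Now fix a $P$-point $\mathcal{U}\in V$ (available under \textsf{CH}); I claim $\mathbb{P}_{\omega_{2}}$ preserves $\mathcal{U}$, which I would prove by induction on $\alpha\leq\omega_{2}$. At a successor $\alpha+1$: by the inductive hypothesis $\mathbb{P}_{\alpha}$ preserves $\mathcal{U}$, and since $\mathbb{P}_{\alpha}$ is proper this forces $\mathcal{U}$ to remain a $P$-point (a standard fact, see \cite{Barty}), so by the first part of the proof $\mathbb{P}_{\alpha}\Vdash``\mathbb{\dot{Q}}_{\alpha}$ preserves $\mathcal{U}$'', whence $\mathbb{P}_{\alpha+1}$ preserves $\mathcal{U}$. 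At limits this is exactly Shelah's $P$-point preservation theorem for countable support iterations of proper forcings, stated in Section 4. Hence $\mathbb{P}_{\omega_{2}}$ preserves $\mathcal{U}$.

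It then follows that in $V^{\mathbb{P}_{\omega_{2}}}$ the family $\mathcal{U}$ still generates an ultrafilter, so $\mathfrak{u}\leq\left\vert \mathcal{U}\right\vert=\aleph_{1}$ and thus $\mathfrak{u}=\aleph_{1}$. On the other hand, suppose toward a contradiction that $\mathcal{A}$ were a \textsf{MAD} family of size $\aleph_{1}$ in $V^{\mathbb{P}_{\omega_{2}}}$; by the $\aleph_{2}$-c.c.\ and properness, $\mathcal{A}$ (coded by $\aleph_{1}$ reals) already appears in $V^{\mathbb{P}_{\alpha}}$ for some $\alpha<\omega_{2}$, and it is still \textsf{MAD} there (a witness to non-maximality in $V^{\mathbb{P}_{\alpha}}$ would survive to the final model). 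By the bookkeeping $\mathcal{A}$ is listed as $\mathbb{\dot{A}}_{\beta}$ for some $\beta\geq\alpha$, and $\mathbb{\dot{Q}}_{\beta}$ destroys it, so $\mathcal{A}$ is not \textsf{MAD} in $V^{\mathbb{P}_{\beta+1}}$, hence not in $V^{\mathbb{P}_{\omega_{2}}}$, a contradiction. Therefore $\mathfrak{a}>\aleph_{1}$, and since $\mathfrak{a}\leq\mathfrak{c}=\aleph_{2}$ we conclude $\omega_{1}=\mathfrak{u}<\mathfrak{a}=\mathfrak{c}=\omega_{2}$; as no large cardinal was used, this answers the problems of Shelah \cite{Shelah666} and of Brendle \cite{MADfamiliesandUltrafilters}.

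The two points that need care, and where the argument could go wrong, are: the reflection argument ensuring the bookkeeping really catches every $\aleph_{1}$-sized \textsf{MAD} family occurring at an intermediate stage (routine for $\aleph_{2}$-c.c.\ proper iterations over a model of \textsf{CH}, but it depends on keeping the iterands of size $\leq\aleph_{1}$); and, more importantly, the iterated preservation of the $P$-point $\mathcal{U}$. Our Section 4 results only yield single-step preservation, so Shelah's iteration theorem is the indispensable input here, together with the (standard) observation that a proper forcing preserving a $P$-point as an ultrafilter base forces it to be a $P$-point, which is what makes the induction at successor stages legitimate.
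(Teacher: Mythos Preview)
Your proposal is correct and follows exactly the approach the paper sets up; the paper itself states the theorem without proof, leaving it as an immediate consequence of the preceding machinery, and you have assembled that machinery accurately (the Laflamme/non-Laflamme split, $\mathcal{W}$-Canjarness of the generic ultrafilter, the $P$-point preservation result for $\mathbb{PT}(\mathcal{F})$, and Shelah's iteration theorem for $P$-point preservation at limits). Your care with the bookkeeping, the $\aleph_2$-c.c., and the observation that a preserved $P$-point stays a $P$-point under proper forcing are all the right details to make the argument go through.
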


\section{\textsf{MAD} families build up from closed sets}

\qquad\ \ \qquad\ \ \ \qquad\ \ 

The \emph{closed almost disjointness number} was introduced by Brendle and
Khomskii in \cite{BrendleKhomskii}. The invariant $\mathfrak{a}_{closed}$ is
defined as the smallest number of closed sets of $\left[  \omega\right]
^{\omega}$ such that its union is a \textsf{MAD} family. Since singletons are
closed, it follows that $\mathfrak{a}_{closed}\leq\mathfrak{a}$ and it is
uncountable by a result of Mathias (see \cite{HappyFamilies} or
\cite{MADTornquist}). The following are some known results regarding this
cardinal invariant:

\begin{proposition}
\qquad\ \qquad\ \qquad\ 

\begin{enumerate}
\item (Raghavan, T\"{o}rnquist independently) $\mathfrak{p}\leq\mathfrak{a}%
_{closed}$ (see \cite{MADTornquist}).

\item (Brendle and Khomskii) It is consistent that $\mathfrak{a}%
_{closed}<\mathfrak{b}$ (see \cite{BrendleKhomskii} and \cite{BrendleRaghavan}).

\item (Brendle and Raghavan) It is consistent that $\mathfrak{b<a}_{closed}$
(see \cite{BrendleRaghavan}).

\item (Raghavan and Shelah) $\mathfrak{d}=\omega_{1}$ implies $\mathfrak{a}%
_{closed}=\omega_{1}$ (see
\cite{ComparingtheClosedAlmostDisjointnessandDominatingNumbers} and
\cite{BrendleRaghavan}).
\end{enumerate}
\end{proposition}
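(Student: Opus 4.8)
Every clause of this proposition is a theorem from the literature, so the ``proof'' I would give amounts to recalling the cited arguments; I sketch each. For (1) one proves the stronger statement that a union of fewer than $\mathfrak{p}$ closed almost disjoint subfamilies of $\left[  \omega\right]  ^{\omega}$ is never \textsf{MAD}. Writing such a union as $\mathcal{A}=\bigcup_{\xi<\kappa}F_{\xi}$ with $\kappa<\mathfrak{p}$ and each $F_{\xi}$ closed \textsf{AD}, the engine is an analysis of closed \textsf{AD} families --- in essence, that such a family, viewed on a suitable positive set, can be diagonalized by an infinite set almost disjoint from all of it, where closedness enters through a rank/branching argument on the tree coding $F_{\xi}$. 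One then runs a recursion of length $\kappa$: keeping the invariant that the set constructed so far is almost disjoint from every $F_{\eta}$ treated up to that point, one applies the diagonalization step at successors and, since $\kappa<\mathfrak{p}$, passes to pseudointersections at limits; the limiting set witnesses that $\mathcal{A}$ is not maximal. The main obstacle is the bookkeeping that preserves the invariant through limit stages while keeping the diagonalization step always applicable; see \cite{MADTornquist}. (That $\mathfrak{a}_{closed}\geq\omega_{1}$, used implicitly, is Mathias's theorem that no closed \textsf{AD} family is \textsf{MAD}.)

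For (4), using $\mathfrak{d}=\omega_{1}$ fix a $\leq^{\ast}$-increasing dominating family $\left\langle f_{\alpha}\mid\alpha<\omega_{1}\right\rangle $ of increasing functions. One builds by recursion on $\alpha<\omega_{1}$ closed \textsf{AD} families $F_{\alpha}$ whose union is almost disjoint, so that $F_{\alpha}$ captures every infinite $X\subseteq\omega$ not captured earlier whose increasing enumeration is eventually dominated by $f_{\alpha}$ --- here ``captures $X$'' means some member of $F_{\alpha}$ meets $X$ infinitely, and the point is that the set of such $f_{\alpha}$-slow subsets of $\omega$ is $\sigma$-compact, and on a $\sigma$-compact subset of $\left[  \omega\right]  ^{\omega}$ one can place a closed \textsf{AD} family that is maximal among its members. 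Since $\left\langle f_{\alpha}\right\rangle $ dominates, every infinite set is captured at some stage, so $\bigcup_{\alpha<\omega_{1}}F_{\alpha}$ is \textsf{MAD} and $\mathfrak{a}_{closed}=\omega_{1}$. The obstacle is arranging the recursion so the $F_{\alpha}$ stay almost disjoint while collectively covering all slow sets; the details are in \cite{ComparingtheClosedAlmostDisjointnessandDominatingNumbers} and \cite{BrendleRaghavan}.

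Items (2) and (3) are consistency results established by iterated forcing. For (2), to obtain $\mathfrak{a}_{closed}=\omega_{1}<\omega_{2}=\mathfrak{b}$, Brendle and Khomskii prepare in a ground model a \textsf{MAD} family presented as a union of $\omega_{1}$ closed sets together with a preservation property, and then perform a countable support iteration of forcings that raise $\mathfrak{b}$ while keeping that presentation maximal; the crux is the preservation argument. For (3), to obtain $\mathfrak{b}=\omega_{1}<\mathfrak{a}_{closed}$, i.e. a model of $\mathfrak{b}=\omega_{1}$ in which no union of $\omega_{1}$ closed sets is \textsf{MAD}, Brendle and Raghavan iterate, with countable support and without adding dominating reals, a forcing destroying closed \textsf{AD}-decompositions of size $\omega_{1}$ --- a strengthening of the \textsf{MAD}-destroying forcings considered here --- the crux being the simultaneous preservation of a witness to $\mathfrak{b}=\omega_{1}$ and a bookkeeping over all such decompositions. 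See \cite{BrendleKhomskii} and \cite{BrendleRaghavan}.
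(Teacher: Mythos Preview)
The paper does not prove this proposition at all: it is stated as a summary of known results from the literature, each item accompanied only by a citation, with no argument given. Your sketches therefore go beyond what the paper provides, and there is no ``paper's own proof'' to compare against.

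That said, one correction to your sketch of (3): the Brendle--Raghavan argument is not a bespoke iteration of forcings ``destroying closed \textsf{AD}-decompositions''. Rather --- and this is exactly what the present paper exploits a few paragraphs later --- the same Shelah-style iteration that destroys \textsf{MAD} families while keeping $\mathfrak{b}=\omega_{1}$ already does the job, once one observes that the generic ultrafilter $\mathcal{U}_{gen}(\mathcal{A})$ is a $P$-point disjoint from $\mathcal{A}$, and that diagonalizing a $P$-point disjoint from a closed \textsf{AD} family $\mathcal{D}$ automatically produces a set in $\mathcal{D}^{\perp}$ (this is the Brendle--Raghavan lemma stated and proved just below the proposition in the paper). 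So the mechanism is not a new forcing but a new observation about what the old forcing accomplishes.
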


\qquad\ \ \ 

There are still many interesting open questions regarding $\mathfrak{a}%
_{closed}.$ The following problems are still open:

\begin{problem}
[Raghavan]Does $\mathfrak{h}\leq\mathfrak{a}_{closed}?$
\end{problem}

\begin{problem}
[Brendle, Khomskii]Does $\mathfrak{s}=\omega_{1}$ imply $\mathfrak{a}%
_{closed}=\omega_{1}?$
\end{problem}

\qquad\ \ 

The reader may consult \cite{BrendleKhomskii} or \cite{BrendleRaghavan} for
more information and open problems regarding $\mathfrak{a}_{closed}.$ If
$\mathcal{D}\subseteq\left[  \omega\right]  ^{\omega}$ is an \textsf{AD}
family, we denote its \emph{ortogonal }$\mathcal{D}^{\perp}$ as the set of all
$B\subseteq\omega$ that are almost disjoint with every element of
$\mathcal{D}.$

\qquad\ \qquad\ \qquad\ \qquad\ \qquad\ \qquad\ 

As mentioned before, Brendle and Raghavan proved that it is consistent that
$\mathfrak{b<a}_{closed}.$ In fact, they build two models in which this
inequality holds. One is using the creature forcing of Shelah and another was
using a \textsf{c.c.c. }forcing, similar to the model constructed in
\cite{MobandMAD}. In both cases, their forcings add Cohen reals. We will
combine their results with ours to build a model of $\mathfrak{b<a}_{closed}$
without adding Cohen reals, moreover; this inequality holds in the model
constructed in the previous section. The key result is the following
proposition, which was implicitly proved in the Lemma 7 of
\cite{BrendleRaghavan}:

\begin{proposition}
[Brendle, Raghavan]Let $\mathcal{D}\subseteq\left[  \omega\right]  ^{\omega}$
be a closed \textsf{AD} family. If $\mathcal{U}$ is a $P$-point such that
$\mathcal{D\cap U}=\emptyset,$ then there is $U\in\mathcal{U}\cap
\mathcal{D}^{\perp}.$
\end{proposition}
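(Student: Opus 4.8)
The plan is to argue by contradiction: suppose $\mathcal{U}$ is a $P$-point with $\mathcal{D}\cap\mathcal{U}=\emptyset$ but $\mathcal{U}\cap\mathcal{D}^{\perp}=\emptyset$. The second hypothesis says that every $U\in\mathcal{U}$ meets some element of $\mathcal{D}$ in an infinite set; since $U\notin\mathcal{D}$ for every $U\in\mathcal{U}$ (as $\mathcal{D}\cap\mathcal{U}=\emptyset$), and $\mathcal{U}$ is a filter, this should give us, for each $U\in\mathcal{U}$, a set $A_U\in\mathcal{D}$ with $|A_U\cap U|=\omega$ and $A_U\notin\mathcal{U}$. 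First I would set up the relevant function: fix an enumeration of $2^{<\omega}$ or work directly in $[\omega]^\omega$ with the product topology, and use closedness of $\mathcal{D}$ to get a tree $T\subseteq 2^{<\omega}$ (or $\omega^{<\omega}$) whose branches code $\mathcal{D}$. The key point to extract is that the map sending a witness to its corresponding element of $\mathcal{D}$ can be made continuous/Borel, so that a diagonalization of $\mathcal{U}$ along this tree stays within the closed set $\mathcal{D}$.

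The heart of the argument is to use the $P$-point game characterization (the Galvin--Shelah result quoted in the excerpt): player $\mathsf{I}$ has no winning strategy in $\mathcal{G}_{P\text{-}point}(\mathcal{U})$. I would describe a strategy for player $\mathsf{I}$ as follows. At each round, given the moves $z_0,\dots,z_{m-1}$ played so far by $\mathsf{II}$, player $\mathsf{I}$ has built a finite approximation $t_m\in T$ (a node of the tree coding $\mathcal{D}$) with the property that the partial branch is consistent with avoiding membership in $\mathcal{U}$ — more precisely, the finite information $t_m$ forces that the element of $\mathcal{D}$ being approximated is not in $\mathcal{U}$, because $\mathcal{D}\cap\mathcal{U}=\emptyset$ gives that no branch of $T$ lies in $\mathcal{U}$, and this must already be witnessed at some finite level along a dense set of extensions. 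Player $\mathsf{I}$ then plays $W_m\in\mathcal{U}$ small enough that whatever finite set $z_m\subseteq W_m$ player $\mathsf{II}$ responds with can be absorbed into an extension $t_{m+1}\supseteq t_m$ of the tree that still approximates an element of $\mathcal{D}$ and still witnesses avoidance of $\mathcal{U}$; here I would use the assumption $\mathcal{U}\cap\mathcal{D}^\perp=\emptyset$ to guarantee that such extensions exist — every $W_m\in\mathcal{U}$ meets some branch of $T$ infinitely, so $t_m$ has extensions inside $W_m$. If this strategy could be shown winning — i.e. $\bigcup_m z_m$ is the characteristic function of an element of $\mathcal{D}$ (hence a branch of $T$) and also lies in $\mathcal{U}$ — we contradict Galvin--Shelah, since that would be a winning strategy for $\mathsf{I}$.

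The main obstacle, and the place requiring the most care, is the tension in what player $\mathsf{I}$ needs to simultaneously guarantee: that the $z_m$'s he forces $\mathsf{II}$ into glue together to a single element of the \emph{closed} set $\mathcal{D}$ (this uses closedness plus a careful bookkeeping so that the approximations $t_m$ genuinely converge along one branch of $T$, not drift), while at the same time that limit element lies in $\mathcal{U}$ (which is exactly what is forbidden since $\mathcal{D}\cap\mathcal{U}=\emptyset$) — so the contradiction is really that \emph{if} $\mathsf{I}$ wins, he has forced $\mathsf{II}$ to build $U=\bigcup z_m\in\mathcal{U}$, and $U$ is by construction an element of $\mathcal{D}$, contradicting $\mathcal{D}\cap\mathcal{U}=\emptyset$ directly. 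So the logical structure is: assume no $U\in\mathcal{U}\cap\mathcal{D}^\perp$ exists $\Rightarrow$ build a strategy for $\mathsf{I}$ that forces the union of $\mathsf{II}$'s moves to be a member of $\mathcal{D}\cap\mathcal{U}$ whenever $\mathsf{II}$'s moves are "large enough" along the way $\Rightarrow$ since $\mathsf{I}$ cannot have a winning strategy, $\mathsf{II}$ can defeat it, meaning $\mathsf{II}$ can play so that $\bigcup z_m\notin\mathcal{U}$; but by the strategy's construction $\bigcup z_m\in\mathcal{D}$ and the failure to land in $\mathcal{U}$ must be traced back to some $W_m$ avoiding all of $\mathcal{D}$ in the tail — which contradicts $\mathcal{U}\cap\mathcal{D}^\perp=\emptyset$. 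Making this dichotomy airtight — ensuring that $\mathsf{I}$'s $W_m$ can always be chosen so that no matter what $z_m\subseteq W_m$ is played the approximation can proceed, and that the only way $\mathsf{II}$ escapes $\mathcal{U}$ is by a tail contained in $\mathcal{D}^\perp$ — is the delicate combinatorial step, and it is precisely the content of Lemma 7 of \cite{BrendleRaghavan} which we are permitted to invoke.
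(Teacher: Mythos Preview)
Your proposal has a genuine gap, and in fact the strategy you outline cannot work as stated.

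First, the logical structure is tangled: in $\mathcal{G}_{P\text{-}point}(\mathcal{U})$, player $\mathsf{II}$ wins when $\bigcup_m z_m\in\mathcal{U}$, so ``$\mathsf{I}$ forces $\bigcup z_m\in\mathcal{D}\cap\mathcal{U}$'' would be a win for $\mathsf{II}$, not $\mathsf{I}$, and ``$\mathsf{II}$ defeats $\sigma$'' means $\bigcup z_m\in\mathcal{U}$, not $\notin\mathcal{U}$. What you presumably want is a strategy for $\mathsf{I}$ ensuring $\bigcup_m z_m$ is contained in some element of $\mathcal{D}$ (hence not in $\mathcal{U}$). But this is impossible for a structural reason: player $\mathsf{I}$ must play $W_m\in\mathcal{U}$, while every $A\in\mathcal{D}$ satisfies $A\notin\mathcal{U}$, so $W_m\setminus A\in\mathcal{U}$ is nonempty (indeed infinite). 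Player $\mathsf{II}$ is free to pick $z_m\subseteq W_m$ meeting $\omega\setminus A$, and since $\mathcal{D}$ is almost disjoint, $\mathsf{II}$ can steer the union to have infinite intersection with infinitely many elements of $\mathcal{D}$, landing outside every single $A\in\mathcal{D}$. Your tree-of-approximations picture does not prevent this: there is no $W_m\in\mathcal{U}$ whose finite subsets are all absorbable into a single branch of the tree coding $\mathcal{D}$. Finally, closing by invoking ``Lemma~7 of \cite{BrendleRaghavan}'' is circular --- that lemma \emph{is} the proposition you are asked to prove.

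The paper's argument is entirely different and does not use the $P$-point game. It works with Laver forcing $\mathbb{L}(\mathcal{U})$: since $\mathcal{I}(\mathcal{D})$ is analytic, a theorem of Blass gives $p\in\mathbb{L}(\mathcal{U})$ with either $[p]\subseteq\mathcal{I}(\mathcal{D})$ or $[p]\cap\mathcal{I}(\mathcal{D})=\emptyset$. The $P$-point hypothesis is then used, not via the game, but to refine to $q\leq p$ whose successor sets are all $=^*U$ for a single $U\in\mathcal{U}$. The contradiction hypothesis $\mathcal{U}\cap\mathcal{D}^\perp=\emptyset$ yields infinitely many $A_n\in\mathcal{D}$ with $|A_n\cap U|=\omega$; one then builds two branches of $q$, one with image inside $A_0$ (so in $\mathcal{I}(\mathcal{D})$) and one meeting every $A_n$ infinitely (so not in $\mathcal{I}(\mathcal{D})$), contradicting the dichotomy. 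The closedness of $\mathcal{D}$ enters only through analyticity of $\mathcal{I}(\mathcal{D})$, and the $P$-point property enters through the well-known fact that $P$-points allow such homogenization of Laver trees --- not through the game.
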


\begin{proof}
Note that if $\mathcal{D\cap U}=\emptyset$ then $\mathcal{I}\left(
\mathcal{D}\right)  \cap\mathcal{U}=\emptyset$ since $\mathcal{U}$ is an
ultrafilter. We argue by contradiction, assume that $\mathcal{U}%
\cap\mathcal{D}^{\perp}=\emptyset.$ Note that this implies that for every
$B\in\mathcal{U}$ there are $\left\{  A_{n}\mid n\in\omega\right\}
\subseteq\mathcal{D}$ such that $\left\vert B\cap A_{n}\right\vert =\omega$
for every $n\in\omega.$ Consider the Laver forcing $\mathbb{L}\left(
\mathcal{U}\right)  .$ Since $\mathcal{I}\left(  \mathcal{D}\right)  $ is an
analytic set, by a result of Blass (see
\cite{SelectiveUltrafiltersandHomogeneity}), there is $p\in\mathbb{L}\left(
\mathcal{U}\right)  $ such that either $\left[  p\right]  \subseteq
\mathcal{I}\left(  \mathcal{D}\right)  $ or $\left[  p\right]  \cap
\mathcal{I}\left(  \mathcal{D}\right)  =\emptyset.$ Since $\mathcal{U}$ is a
$P$-point, we can find $q\leq p$ and $U\in\mathcal{U}$ such that
$suc_{q}\left(  s\right)  =^{\ast}U$ for every $s\in q$ such that $s$ extends
the stem of $q.$


Let $\left\{  A_{n}\mid n\in\omega\right\}  \subseteq\mathcal{D}$ such that
$A_{n}\cap U$ is infinite for every $n\in\omega.$ On one hand, we can find a
branch $f_{0}\in\left[  q\right]  $ such that $im\left(  f_{0}\right)
\subseteq A_{0},$ so $im\left(  f_{0}\right)  \in\mathcal{I}\left(
\mathcal{D}\right)  $, but on the other hand, we can find $f_{1}\in\left[
q\right]  $ such that $im\left(  f_{1}\right)  \cap A_{n}$ is infinite for
every $n\in\omega,$ so $im\left(  f_{1}\right)  \notin\mathcal{I}\left(
\mathcal{D}\right)  .$ These two statement are in contradiction since we know
that either $\left[  q\right]  \subseteq\mathcal{I}\left(  \mathcal{D}\right)
$ or $\left[  q\right]  \cap\mathcal{I}\left(  \mathcal{D}\right)
=\emptyset.$
\end{proof}

\qquad\ \ \ \ 

We can now conclude the following:

\begin{corollary}
Let $\mathcal{A}=%
{\textstyle\bigcup\limits_{\alpha\in\kappa}}
\mathcal{C}_{\alpha}$ be a \textsf{MAD} family such that each $\mathcal{C}%
_{\alpha}$ is a closed subset of $\left[  \omega\right]  ^{\omega}.$ Let
$\mathcal{U}$ be a $P$-point such that $\mathcal{A}\cap\mathcal{U}=\emptyset.$
If $\mathbb{P}$ is a forcing that diagonalizes $\mathcal{U},$ and
$G\subseteq\mathbb{P}$ is a generic filter, then $V\left[  G\right]
\models``\mathcal{A}^{V\left[  G\right]  }=%
{\textstyle\bigcup\limits_{\alpha\in\kappa}}
\mathcal{C}_{\alpha}^{V\left[  G\right]  }$ is not a \textsf{MAD}
family$\textquotedblright$ (where $\mathcal{C}_{\alpha}^{V\left[  G\right]  }$
is the reinterpretation of $\mathcal{C}_{\alpha}$ in the model $V\left[
G\right]  $).
\end{corollary}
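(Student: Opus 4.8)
The plan is to produce, in $V[G]$, a single infinite set that is almost disjoint from every member of $\mathcal{A}^{V[G]}=\bigcup_{\alpha\in\kappa}\mathcal{C}_{\alpha}^{V[G]}$; its mere existence witnesses that this family is not \textsf{MAD}. Since $\mathbb{P}$ diagonalizes $\mathcal{U}$, I would fix $D\in V[G]\cap\left[\omega\right]^{\omega}$ with $D\subseteq^{\ast}U$ for every $U$ in the ground-model ultrafilter $\mathcal{U}$, and claim that $D$ is the set we want.

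The real work is done in $V$. For each $\alpha<\kappa$, the set $\mathcal{C}_{\alpha}$ is a closed subset of $\left[\omega\right]^{\omega}$ which is an almost disjoint family (it is a subfamily of the \textsf{MAD} family $\mathcal{A}$), and $\mathcal{C}_{\alpha}\cap\mathcal{U}\subseteq\mathcal{A}\cap\mathcal{U}=\emptyset$. Hence the preceding proposition of Brendle and Raghavan applies and yields some $U_{\alpha}\in\mathcal{U}\cap\mathcal{C}_{\alpha}^{\perp}$, that is, $\left|U_{\alpha}\cap A\right|<\omega$ for every $A\in\mathcal{C}_{\alpha}$. It is essential that this be carried out inside $V$, since $\mathcal{U}$ need not generate an ultrafilter — let alone a $P$-point — in $V[G]$. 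Next, fix in $V$ a tree $T_{\alpha}\subseteq2^{<\omega}$ with $\left[T_{\alpha}\right]\cap\left[\omega\right]^{\omega}=\mathcal{C}_{\alpha}$ (identifying sets with characteristic functions), so that the reinterpretation $\mathcal{C}_{\alpha}^{V[G]}$ equals $\left[T_{\alpha}\right]^{V[G]}\cap\left[\omega\right]^{\omega}$. The assertion ``for all $x$, if $x\in\left[T_{\alpha}\right]$ and $x$ is infinite then $\left|x\cap U_{\alpha}\right|<\omega$'' is $\Pi^{1}_{1}$ in the real parameters $T_{\alpha},U_{\alpha}$ (the matrix after the leading $\forall x$ is arithmetic), hence absolute between $V$ and $V[G]$ by Mostowski absoluteness. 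It is true in $V$ by the choice of $U_{\alpha}$, therefore it is true in $V[G]$: $U_{\alpha}$ is almost disjoint from every element of $\mathcal{C}_{\alpha}^{V[G]}$.

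Putting the pieces together in $V[G]$: given $\alpha<\kappa$ and $E\in\mathcal{C}_{\alpha}^{V[G]}$, we have $D\cap E\subseteq^{\ast}U_{\alpha}\cap E$, and $U_{\alpha}\cap E$ is finite, so $D\cap E$ is finite. Thus $D$ is an infinite set almost disjoint from every member of $\bigcup_{\alpha<\kappa}\mathcal{C}_{\alpha}^{V[G]}=\mathcal{A}^{V[G]}$ (in particular $D\notin\mathcal{A}^{V[G]}$), so $\mathcal{A}^{V[G]}$ is not \textsf{MAD} in $V[G]$, as claimed. The one step I expect to require care is the middle one: recognizing that orthogonality of a fixed set to a closed almost disjoint family — once that family is presented via its defining tree — is a $\Pi^{1}_{1}$ property of the parameters, and hence unaffected by passing to the generic extension. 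Everything else is bookkeeping: that the diagonalizing real is almost contained in each $U_{\alpha}$, and the standard fact that an infinite set almost disjoint from all members of a family witnesses its non-maximality.
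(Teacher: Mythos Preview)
Your proof is correct and follows essentially the same approach as the paper: pick a pseudointersection of $\mathcal{U}$ in $V[G]$, use the Brendle--Raghavan proposition in $V$ to get $U_\alpha\in\mathcal{U}\cap\mathcal{C}_\alpha^{\perp}$ for each $\alpha$, argue by absoluteness that $U_\alpha$ remains orthogonal to $\mathcal{C}_\alpha^{V[G]}$, and conclude. Your treatment of the absoluteness step---making the $\Pi^1_1$ complexity explicit via the defining tree $T_\alpha$---is in fact more careful than the paper's, which simply asserts that the statement is absolute because $\mathcal{C}_\alpha$ is closed.
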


\begin{proof}
Let $G\subseteq\mathbb{P}$ be a generic filter. We argue in $V\left[
G\right]  .$ Let $B\in\left[  \omega\right]  ^{\omega}$ be a
pseudointersection of $\mathcal{U}.$ We claim that $B$ is almost disjoint with
$\mathcal{A}^{V\left[  G\right]  }.$ By the previous result, for every
$\alpha<\kappa$ there is $U_{\alpha}\in\mathcal{U}$ such that the following
statement holds in $V:$ \textquotedblleft$U_{\alpha}$ is almost disjoint with
every element of $\mathcal{C}_{\alpha}$\textquotedblright$.$ Since
$\mathcal{C}_{\alpha}$ is a closed set, this is an absolute statement, so
$U_{\alpha}$ is almost disjoint with every element of $\mathcal{C}_{\alpha
}^{V\left[  G\right]  }.$ Furthermore, since $B\subseteq^{\ast}U_{\alpha},$
then $B$ is also almost disjoint with every element of $\mathcal{C}_{\alpha
}^{V\left[  G\right]  }$ for every $\alpha<\kappa,$ hence $B$ is almost
disjoint with $\mathcal{A}^{V\left[  G\right]  }.$
\end{proof}

\qquad\ \ \ \qquad\ \ 

In \cite{Canjar} it was proved that Canjar ultrafilters are $P$-points
(moreover, Canjar ultrafilters are precisely the \textquotedblleft strong
$P$-points\textquotedblright, see \cite{HrusakVerner} for the definition of
strong $P$-point). In this way, we can conclude the following:

\begin{corollary}
The following statements are consistent with the axioms of \textsf{ZFC:}

\begin{enumerate}
\item (Brendle, Raghavan) $\omega_{1}=\mathfrak{b<a}_{closed}$ $\mathfrak{=}$
\textsf{cov}$\left(  \mathcal{M}\right)  =\mathfrak{c}=\omega_{2}.$

\item $\omega_{1}=\mathfrak{u}$ $\mathfrak{<a}_{closed}$ $\mathfrak{=}$
$\omega_{2}.$
\end{enumerate}
\end{corollary}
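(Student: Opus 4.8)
For both parts the plan is to re-run, over a model of \textsf{CH}, the countable support iteration of the preceding sections, enlarging the bookkeeping so that it destroys every \textsf{MAD} family presented as a union of $\aleph_1$ closed subsets of $[\omega]^\omega$ (this subsumes destroying every \textsf{MAD} family of size $\aleph_1$, since singletons are closed). Fix a bookkeeping function and perform a countable support iteration $\langle\mathbb{P}_\alpha,\dot{\mathbb{Q}}_\alpha\mid\alpha\leq\omega_2\rangle$. At a stage $\alpha$ at which the bookkeeping produces a name for a family $\mathcal{A}=\bigcup_{\xi<\omega_1}\mathcal{C}_\xi$ with each $\mathcal{C}_\xi$ closed and $\mathcal{A}$ \textsf{MAD} in $V[G_\alpha]$, let $\dot{\mathbb{Q}}_\alpha=\mathbb{F}_\sigma(\mathcal{A})\ast\mathbb{PT}(\dot{\mathcal{U}}_{gen}(\mathcal{A}))$ if $\mathcal{A}$ is Laflamme, and otherwise (so $\mathcal{I}(\mathcal{A})$ extends to a proper $F_\sigma$-ideal $\mathcal{J}$, whence $\mathcal{J}^\ast$ is a condition of $\mathbb{F}_\sigma$) let $\dot{\mathbb{Q}}_\alpha$ be $\mathbb{F}_\sigma\ast\mathbb{PT}(\dot{\mathcal{U}}_{gen})$ below $\mathcal{J}^\ast$; at every other stage let $\dot{\mathbb{Q}}_\alpha=\mathbb{F}_\sigma\ast\mathbb{PT}(\dot{\mathcal{U}}_{gen})$. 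For part (1) use $\mathbb{M}$ in place of $\mathbb{PT}$ everywhere. Each iterand is proper, and its second factor diagonalizes an ultrafilter that, by Propositions \ref{UgenesCanajr} and \ref{MSDgenesCanjar}, is $\mathcal{B}$-Canjar for every ground-model $\mathfrak{b}$-family $\mathcal{B}$; hence each iterand preserves the unboundedness of every well-ordered unbounded family, so by the Shelah-style preservation result proved above $\mathbb{P}_{\omega_2}$ is weakly $\omega^\omega$-bounding, and a ground-model dominating family of size $\aleph_1$ stays unbounded, giving $\mathfrak{b}=\omega_1$. For part (2), every iterand has the shape of one of the two forcings treated in the previous section (possibly below a condition), and the arguments there---which only use that the first factor is $\sigma$-closed and the relevant generic ultrafilter is $\mathcal{W}$-Canjar for every ground-model $P$-point $\mathcal{W}$---show that the iteration preserves every $P$-point of the starting model, so such a $P$-point survives to $V[G_{\omega_2}]$ and $\mathfrak{u}=\omega_1$. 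For part (1), the diagonalized ultrafilter at each stage is Canjar, hence not Ramsey, so each Mathias iterand adds a Cohen real, and adding Cohen reals cofinally forces $\mathsf{cov}(\mathcal{M})=\mathfrak{c}=\omega_2$.

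The destruction of the closed-union families uses the preceding corollary. At a stage $\alpha$ as above with $\mathcal{A}$ Laflamme, $\mathbb{F}_\sigma(\mathcal{A})$ is $\sigma$-closed, hence adds no reals and fixes both the $\mathcal{C}_\xi$ and the maximality of $\mathcal{A}$; it adds the ultrafilter $\mathcal{U}_{gen}(\mathcal{A})$, which is disjoint from $\mathcal{I}(\mathcal{A})\supseteq\mathcal{A}$ and, being $\mathcal{B}$-Canjar by Proposition \ref{MSDgenesCanjar} and hence Canjar, is a $P$-point. Since $\mathbb{PT}(\mathcal{U}_{gen}(\mathcal{A}))$ diagonalizes $\mathcal{U}_{gen}(\mathcal{A})$, the preceding corollary yields a $B\in V[G_{\alpha+1}]$ almost disjoint from every element of every $\mathcal{C}_\xi^{V[G_{\alpha+1}]}$, so $\bigcup_\xi\mathcal{C}_\xi^{V[G_{\alpha+1}]}$ is no longer \textsf{MAD}. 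In the non-Laflamme case, forcing $\mathbb{F}_\sigma$ below $\mathcal{J}^\ast$ makes $\mathcal{U}_{gen}\supseteq\mathcal{J}^\ast$, hence disjoint from $\mathcal{J}\supseteq\mathcal{I}(\mathcal{A})$ and still Canjar by Proposition \ref{UgenesCanajr}; again it is a $P$-point disjoint from $\mathcal{A}$ and the preceding corollary applies. The Mathias versions are identical.

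It remains to check that $V[G_{\omega_2}]\models\mathfrak{a}_{closed}=\omega_2$, which together with $\mathfrak{a}_{closed}\leq\mathfrak{a}\leq\mathfrak{c}=\omega_2$ completes both parts. Suppose toward a contradiction that $\mathcal{A}^\ast=\bigcup_{\xi<\omega_1}\mathcal{D}_\xi$ is \textsf{MAD} in $V[G_{\omega_2}]$ with each $\mathcal{D}_\xi$ closed. By properness and the $\aleph_2$-c.c. of the iteration, the whole $\omega_1$-sequence of codes for the $\mathcal{D}_\xi$ lies in $V[G_{\alpha_0}]$ for some $\alpha_0<\omega_2$. For $B\in[\omega]^\omega$ and a closed code $c$, the statement ``$B$ is almost disjoint from every branch of the tree coded by $c$'' is $\Pi^1_1$ in $B$ and $c$, hence absolute; so if for some $\alpha\geq\alpha_0$ there were $B\in V[G_\alpha]$ almost disjoint from $\bigcup_\xi\mathcal{D}_\xi^{V[G_\alpha]}$, then $B$ would remain almost disjoint from $\bigcup_\xi\mathcal{D}_\xi^{V[G_{\omega_2}]}$, contradicting maximality. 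Since \textsf{AD}-ness is downward absolute, it follows that $\bigcup_\xi\mathcal{D}_\xi^{V[G_\alpha]}$ is \textsf{MAD} in $V[G_\alpha]$ for every $\alpha\in[\alpha_0,\omega_2)$. Now let $\beta\geq\alpha_0$ be a stage at which the bookkeeping treats this sequence; then $\dot{\mathbb{Q}}_\beta$ destroys the \textsf{MAD} family $\bigcup_\xi\mathcal{D}_\xi^{V[G_\beta]}$, producing $B\in V[G_{\beta+1}]$ almost disjoint from $\bigcup_\xi\mathcal{D}_\xi^{V[G_{\beta+1}]}$, contradicting that this union is \textsf{MAD} in $V[G_{\beta+1}]$. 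Hence no such $\mathcal{A}^\ast$ exists, and $\mathfrak{a}_{closed}=\omega_2$.

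The only nonroutine step is this last persistence argument: a closed-union \textsf{MAD} family of the final model may have size $\omega_2$ and need not be \textsf{MAD} at the stage where its pieces first appear, so one must exploit the $\Pi^1_1$-absoluteness of ``$B$ is almost disjoint from a closed set'' to conclude both that such a family is \textsf{MAD} from some stage on and that a real destroying it at a later stage is never undone. Everything else is an assembly of results already available: the preceding corollary, the Canjar-ness of the generic ultrafilters, the implication Canjar $\Rightarrow$ $P$-point, the $P$-point preservation of the previous section, and Shelah's weak $\omega^\omega$-bounding preservation theorem. (For part (1) this construction recovers the Brendle--Raghavan model without the preliminary Cohen forcing $\mathbb{C}_{\omega_1}$; for part (2) it produces a model of $\mathfrak{u}<\mathfrak{a}_{closed}$ in which no Cohen reals are added.)
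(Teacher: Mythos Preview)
Your proof is correct and is precisely the argument the paper has in mind. The paper does not give an explicit proof of this corollary; it simply records, just before the statement, that Canjar ultrafilters are $P$-points, leaving the reader to observe that the generic ultrafilters $\mathcal{U}_{gen}$ and $\mathcal{U}_{gen}(\mathcal{A})$ diagonalized in the models of the earlier sections are therefore $P$-points disjoint from the relevant \textsf{MAD} families, so that the preceding corollary applies and the bookkeeping already in place forces $\mathfrak{a}_{closed}=\omega_2$. You have spelled this out carefully, including the one genuinely nontrivial point the paper suppresses: the $\Pi^1_1$-absoluteness argument showing that a closed-union \textsf{MAD} family in the final model is already \textsf{MAD} at every intermediate stage past the one where its codes appear, and that the set destroying it persists to the final model. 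Everything else is, as you say, an assembly of results proved earlier in the paper.
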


\section{Open Problems}

We will list some open problems the authors do not know how to answer.
Regarding the forcings $\mathbb{Q}\left(  \mathcal{F}\right)  $ of Sabok and
Zapletal, we know that they might or might not diagonalize $\mathcal{F}.$ It
would be interesting to know the answer of the following:

\begin{problem}
Is there a nice combinatorial characterization for the filters $\mathcal{F}$
for which $\mathbb{Q}\left(  \mathcal{F}\right)  $ diagonalizes $\mathcal{F}$?
\end{problem}

\qquad\ \ \ \ 

The forcings $\mathbb{M}\left(  \mathcal{F}\right)  $ and $\mathbb{L}\left(
\mathcal{F}\right)  $ have been proven to be very useful (see for example
\cite{Malhykyn}, \cite{AModelwithnoStronglySeparableAlmostDisjointFamilies},
\cite{SemiSelectiveCoideals}, \cite{MathiasPrikryandLaverPrikryTypeForcing},
\cite{FourandMore} or \cite{CountableFrechetGroups} for some applications of
this forcings). We expect the forcings $\mathbb{PT}\left(  \mathcal{F}\right)
$ to have interesting applications as well. It would then be useful to have a
deeper understanding of such forcings. For example we have the following:

\begin{problem}
Let $\mathcal{F}$ be a filter.

\begin{enumerate}
\item Is there a combinatorial characterization of when $\mathbb{PT}\left(
\mathcal{F}\right)  $ does not add Cohen reals?

\item Is there a combinatorial characterization of when $\mathbb{PT}\left(
\mathcal{F}\right)  $ does not add dominating reals?

\item Is there a filter $\mathcal{F}$ such that $\mathbb{PT}\left(
\mathcal{F}\right)  $ does not add dominating reals but $\mathbb{M}\left(
\mathcal{F}\right)  $ adds dominating reals?
\end{enumerate}
\end{problem}

\qquad\ \ 

It would be interesting if the previous properties have characterizations in
terms of the Kat\v{e}tov order, similar to the results for $\mathbb{Q}\left(
\mathcal{F}\right)  $ obtained in \cite{ZapletalSabok}. Regarding the
preservation of $P$-points, we have the following:

\begin{problem}
\qquad\ \ \ 

\begin{enumerate}
\item Let $\mathcal{U}$ be a $P$-point and $\mathcal{F}$ a Canjar filter such
that $\mathbb{PT}\left(  \mathcal{F}\right)  $ forces that $\mathcal{U}$
generates a non-meager filter. Does $\mathbb{PT}\left(  \mathcal{F}\right)  $
preserve $\mathcal{U}$?\footnote{This question was recently answered
positively by Chodounsk\'{y}, Verner and the first author. This result will be
published in another article.}

\item Assuming \textsf{CH}, is there a Canjar filter $\mathcal{F}$ such that
$\mathbb{PT}\left(  \mathcal{F}\right)  $ destroys all $P$-points?
\end{enumerate}
\end{problem}

\qquad\ \qquad\ \qquad\ \qquad\ \ \ \ \ 

Regarding half-Cohen reals, we do not know the answer of the following questions:

\begin{problem}
If $\mathbb{P}$ does not add half-Cohen reals and $\mathbb{P\Vdash
}``\mathbb{\dot{Q}}$ does not add half-Cohen reals$\textquotedblright,$ is it
true that $\mathbb{P\ast\dot{Q}}$ does not add half-Cohen reals?
\end{problem}

\begin{problem}
If $\delta$ is a limit ordinal, $\langle\mathbb{P}_{\alpha},\mathbb{\dot{R}%
}_{\alpha}\mid\alpha<\delta\rangle$ is a countable support iteration of proper
forcings such that each $\mathbb{P}_{\alpha}$ does not add half-Cohen reals,
is it true that $\mathbb{P}_{\delta}$ does not add half-Cohen reals? What if
each $\mathbb{P}_{\alpha}$ does not add dominating reals?
\end{problem}

\qquad\ \ \ 

Recall that not adding Cohen reals is not preserved under two step iteration
by the result of Zapletal in \cite{DimensionalForcing}.

\begin{acknowledgement}
The authors would like to thank Juris Stepr\={a}ns for valuable comments and
hours of stimulating conversations. We would also like to thank the referee for her/his generous comments that
improved the paper. The research for this paper was started
when the first author was a postdoc at York University, concluded after he
became a postdoc in the University of Toronto, and the final version was
finished while working for the Universidad Aut\'{o}noma de M\'{e}xico (UNAM). He would like to express his
gratitude to the three institutions. 
\end{acknowledgement}

\ \ \ 

\bibliographystyle{plain}
\bibliography{Bibliografia}

\qquad\ \ 

Osvaldo Guzm\'{a}n

Centro de Ciencias Matematicas, UNAM.

oguzman@matmor.unam.mx\qquad\ \ 

{}\qquad\ \ \ \qquad\ \ \qquad\qquad\qquad\ \ \ \ \ \ \ \qquad\ \ \ \ \ \ 

Damjan Kalajdzievski

York University

damjank7354@gmail.com
\end{document}